\newcommand\Vol{{\operatorname{Vol}}}
\newcommand\rank{{\operatorname{rank}}}
\newcommand\R{{\mathbb{R}}}
\newcommand\C{{\mathbb{C}}}
\newcommand\Q{{\mathbf{Q}}}
\renewcommand\P{{\mathbf{P}}}
\newcommand\E{{\mathbf{E}}}
\newcommand\diag{{\operatorname{diag}}}
\newcommand\Z{{\mathbb{Z}}}
\newcommand\F{{\mathbb{F}}}
\newcommand\col{{\mathbf{c}}}
\newcommand\row{{\mathbf{r}}}
\newcommand\ep{\varepsilon}
\newcommand\al{\alpha}
\newcommand\la{\lambda}
\newcommand\1{\mathbf{1}}
\newcommand\Ba{{\mathbf a}}
\newcommand\Bb{{\mathbf b}}
\newcommand\Bd{{\mathbf d}}
\newcommand\Bf{{\mathbf f}}
\newcommand\Bu{{\mathbf u}}
\newcommand\Bv{{\mathbf v}}
\newcommand\Bw{{\mathbf w}}
\newcommand\Bx{{\mathbf x}}
\newcommand\Bz{{\mathbf z}}
\newcommand\BG{{\mathbf G}}
\newcommand\BY{{\mathbf Y}}
\renewcommand\Pr{{\mathbf P }}
\newcommand\CC{{\mathcal C}}
\newcommand\CE{{\mathcal E}}
\newcommand\CF{{\mathcal F}}
\newcommand\CG{{\mathcal G}}
\newcommand\CN{{\mathcal N}}
\newcommand\CO{{\mathcal O}}
\newcommand\CS{{\mathcal S}}
\newcommand\CV{{\mathcal V}}
\newcommand\CW{{\mathcal W}}
\renewcommand\mod{\ \operatorname{mod}\ }
\newcommand\supp{\operatorname{supp}}
\newcommand\eps{\varepsilon}
\newcommand\Sp{\operatorname{Sp}}
\renewcommand\a{\alpha}
\newcommand\cok{\mathbf{Cok}}
\newcommand\Aut{\mathbf{Aut}}
\newcommand\bs{\backslash}
\newcommand{\ra}{\rightarrow}
\newcommand\isom{\simeq}
\newcommand\Hom{\operatorname{Hom}}
\newcommand\Sur{\operatorname{Sur}}
\newcommand\sub{\subset}
\newcommand\im{\operatorname{im}}
\newcommand\rk{{\operatorname{rank}}}
\newcommand\CP{{\mathfrak P}}
\newcommand\CB{{\mathfrak W}}
\newcommand\tensor{\otimes}
\newcommand\Bin{\operatorname{Binom}}
\newcommand\suppi{\operatorname{supp_{index}}}
\newcommand\Sym{\operatorname{Sym}}
\newcommand{\dc}{\gamma}
\theoremstyle{plain}
\numberwithin{equation}{section}
\newtheorem{prop}[equation]{Proposition}
\newtheorem{proposition}[equation]{Proposition}
\newtheorem{theorem}[equation]{Theorem}
\newtheorem{corollary}[equation]{Corollary}
\newtheorem{cor}[equation]{Corollary}
\newtheorem{lemma}[equation]{Lemma}
\newtheorem{fact}[equation]{Fact}
 \newtheorem{claim}[equation]{Claim}
\theoremstyle{definition}
  \newtheorem{definition}[equation]{Definition}
\theoremstyle{remark}
\newtheorem{remark}[equation]{Remark}
  \newcommand{\melanie}[1]{{\color{blue} \sf $\clubsuit\clubsuit\clubsuit$ Melanie: [#1]}}
    \newcommand{\HQ}[1]{{\color{red} \sf $\clubsuit$ Hoi's new comment: [#1]}}
        \newcommand{\HC}[1]{{\color{BrickRed} \sf $\clubsuit$ Hoi: [#1]}}
\begin{document}

\title{Local and global universality of random matrix cokernels}

\author{Hoi H. Nguyen}
\address{Department of Mathematics\\ The Ohio State University \\ 231 W 18th Ave \\ Columbus, OH 43210 USA}
 \email{nguyen.1261@math.osu.edu}

\author{Melanie Matchett Wood}
\address{Department of Mathematics\\
Harvard University\\
Science Center Room 325\\
1 Oxford Street\\
Cambridge, MA 02138 USA}  
\email{mmwood@math.harvard.edu}

\begin{abstract} In this paper we study the cokernels of various random integral matrix models, including random symmetric, random skew-symmetric, and random Laplacian matrices. We provide a systematic method to establish universality under very general randomness assumptions. Our highlights include both local and global universality of the cokernel statistics of all these models. In particular,  we find the probability that a sandpile group of an Erd\H{os}-R\'enyi random graph is cyclic, answering a question of Lorenzini from 2008.
\end{abstract}

\maketitle

\section{Introduction} 

Given a graph $G$, there is a naturally associated abelian group $S_G$, which has gone in the literature by many names, including the sandpile group, the critical group, the Jacobian, and the Picard group (due to its independent appearance in many subjects ranging from statistical mechanics to combinatorics to arithmetic geometry), see \cite{Bacher, Dhar, Lo0,Lo1,Rush}. %
A combinatorial way to interpret this group is via Chip-Firing game \cite{Big,GK}. More precisely, a (degree zero) divisor on $G$ is a function $\delta: V(G) \to \Z$ where $\sum_v \delta(v)=0$. One can naturally define the sum $\delta_1+\delta_2$ of two such divisors $\delta_1,\delta_2$ by $(\delta_1+\delta_2)(v) = \delta_1(v)+\delta_2(v), v\in V(G)$. Let $Div^0(G)$ be the group of (degree zero) divisors equipped with this addition. There is a natural equivalence relation over the elements of $Div^0(G)$, namely we say $\delta_1 \sim \delta_2$ if there is a sequence of chip-firing moves from $\delta_1$ to $\delta_2$, where these moves include ``borrow" (a borrow move at $v_0$ changes $\delta$ to $\delta_{\Bb,v_0}$, where $\delta_{\Bb,v_0}(v_0)= \delta(v_0)+d(v_0)$ and  $d(v_0)$ is the degree of $v_0$, and $\delta_{\Bb,v_0}(v') = \delta(v')-1$ for all neighbors $v'$ of $v_0$, as well as $\delta_{\Bb,v_0}(v)= \delta(v)$ at all other vertices) and ``firing" (a firing move at $v_0$  changes $\delta$ to $\delta_{\Bf,v_0}$ where $\delta_{\Bf,v_0}(v_0) =  \delta(v_0)-d(v_0)$ and $\delta_{\Bf,v_0}(v') = \delta(v')+1$ for all neighbors $v'$ of $v_0$, as well as $\delta_{\Bf,v_0}(v)= \delta(v)$ at all other vertices). The group $S_G$ is then simply the quotient $Div^0(G)/\sim$. 

We also invite the reader to Subsection \ref{sub:Lap} for further discussion on this fascinating group. One can see that $S_G$ is the cokernel of the (combinatorial) Laplacian of $G$, and its order is the number of spanning trees of $G$. About fifteen years ago, Lorenzini  \cite{Lorenzini2008} asked how often sandpile groups of graphs are cyclic, and a specific answer was conjectured in \cite{Clancy2015} (see also Table \ref{fig:ER}).  In this paper, we answer that question, proving the conjecture.

\begin{theorem}\label{theorem:cyclic:L_G}  Let $G\in G(n,1/2)$ be an Erd\H{os}-R\'enyi graph on $n$ vertices. We have
$$\lim_{n\to \infty} \P(S_G \mbox{ is cyclic}) =  \prod_{i=1}^\infty \zeta(2i+1)^{-1}\approx .7935$$
where $\zeta(s)$ is the Riemann zeta function. 
\end{theorem}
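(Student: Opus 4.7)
The plan is to reduce cyclicity to a condition on each Sylow $p$-subgroup, identify the limiting $p$-Sylow distribution via the universality theorems established earlier in the paper, and then assemble the answer over all primes using the Euler product for $\zeta$.

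\textbf{Local reduction.} A finite abelian group is cyclic if and only if each of its Sylow $p$-subgroups is cyclic, so
\[
\P(S_G\text{ is cyclic}) = \P\bigl((S_G)_p\text{ is cyclic for every prime }p\bigr).
\]
By the local universality result of the paper applied to the symmetric Laplacian $\bL_G$, for each fixed prime $p$ the $p$-Sylow $(S_G)_p$ converges in distribution as $n\to\infty$ to the Clancy--Kaplan--Leake--Payne--Wood measure $\mu_p$ on finite abelian $p$-groups, whose weights are explicit in terms of symmetric bilinear forms on the group. Summing $\mu_p$ over all cyclic $p$-groups $\Z/p^k$ and collapsing the $q$-series gives
\[
\mu_p(\text{cyclic}) \;=\; \prod_{i=1}^{\infty}\bigl(1-p^{-(2i+1)}\bigr).
\]

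\textbf{Tail bound.} To pass to the infinite product one needs a bound of the form $\P((S_G)_p \text{ not cyclic}) \leq C\,p^{-3}$, uniform in $n$, valid for all sufficiently large primes $p$. Since $(S_G)_p$ is cyclic iff the $\F_p$-corank of the reduced Laplacian $\bL_G$ is at most $1$, this amounts to a Littlewood--Offord--type anti-concentration estimate showing $\P(\dim_{\F_p}\ker \bL_G \geq 2) = O(p^{-3})$. Granting this, for any $\e>0$ one fixes $B=B(\e)$ with $\sum_{p>B}C p^{-3} < \e$, and obtains
\[
\P(S_G\text{ cyclic}) = \P\bigl((S_G)_p\text{ cyclic for all }p\leq B\bigr) + O(\e),
\]
uniformly in $n$.

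\textbf{Global assembly.} The global universality theorem of the paper yields joint convergence of $\bigoplus_{p\leq B}(S_G)_p$ to the product $\prod_{p\leq B}\mu_p$, so
\[
\lim_{n\to\infty}\P\bigl((S_G)_p\text{ cyclic for all } p\leq B\bigr) = \prod_{p\leq B}\prod_{i=1}^{\infty}\bigl(1-p^{-(2i+1)}\bigr).
\]
Sending first $n\to\infty$ and then $B\to\infty$ (which costs $O(\e)$ by the tail bound, and $\e$ was arbitrary), and exchanging the absolutely convergent double product, gives
\[
\lim_{n\to\infty}\P(S_G\text{ cyclic}) = \prod_{i=1}^{\infty}\prod_p\bigl(1-p^{-(2i+1)}\bigr) = \prod_{i=1}^{\infty}\zeta(2i+1)^{-1}
\]
by the Euler product $\zeta(s)=\prod_p(1-p^{-s})^{-1}$.

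\textbf{Main obstacle.} The delicate step is the uniform-in-$n$ tail bound at primes $p$ that may grow with $n$. A naive argument only gives $O(1/p)$, which is not summable. What is needed is a symmetric-matrix anti-concentration estimate for $\bL_G$ that takes into account both the symmetry and the row-sum (zero-kernel) constraint of the Laplacian, producing a quadratic gain per additional unit of corank. Establishing such a bound, and more generally upgrading the local universality to an input strong enough to control the product over all primes simultaneously, is where the heart of the work lies.
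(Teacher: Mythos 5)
Your overall skeleton (cyclicity prime-by-prime, local universality for a fixed finite set of primes, a tail estimate for the remaining primes, then assembly into $\prod_i\zeta(2i+1)^{-1}$) is the same as the paper's proof in Section~\ref{section:method}: the paper uses Theorem~\ref{theorem:W:lap} plus the computation of \cite[Proposition 9]{Clancy2015} for the primes below a cutoff $k_0$, and controls corank $\geq 2$ at primes $\geq k_0$ before letting $k_0\to\infty$. Your local reduction and the identification of $\mu_p(\text{cyclic})=\prod_{i\ge1}(1-p^{-2i-1})$ are fine.

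The genuine gap is in your ``Tail bound'' step. You posit a single estimate $\P(\dim_{\F_p}\ker(L_n/p)\geq 2)\leq Cp^{-3}$, uniform in $n$ and valid for all sufficiently large primes, and then sum it over all $p>B$. But the primes that can divide $|S_G|$ reach roughly $n^{n/2}$, and no argument in the paper (nor any known one) yields a clean $p^{-3}$ bound prime-by-prime out to that range. What the paper actually proves for individual primes is Proposition~\ref{prop:moderate:lap}: $\P(\rk(L_n/p)\le n-2)=O(p^{-3}+e^{-n^{c}})$, and only for $p\le e^{n^{c}}$; the additive $e^{-n^{c}}$ term is harmless when summed over the $\le e^{n^{c/2}}$ primes in that window but cannot be summed over the $\sim n^{cn}$ primes up to $n^{n/2}$, and the structure-counting arguments behind it break down entirely for super-exponential $p$. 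For those huge primes the paper abandons per-prime bounds and instead proves Proposition~\ref{prop:large:lap}: with probability $\geq 1-n^{-C}$ there is \emph{no} prime $p\geq e^{n^{c}}$ with corank $\geq 2$, treating all such primes simultaneously. This requires the machinery of Sections~\ref{section:bilinear}--\ref{section:largeprimes}: inverse Littlewood--Offord theorems for bilinear and quadratic forms, GAP-structured almost-normal vectors, and lifting such structure to characteristic zero so that a bad event at any huge prime forces a bad event at a single sub-exponential prime, where the moderate-prime estimates apply. Your proposal names this as the ``main obstacle'' but frames it as proving the uniform $p^{-3}$ bound, which is not the route that works; without the simultaneous-large-primes argument the assembly step ``$\P(S_G \text{ cyclic})=\P((S_G)_p \text{ cyclic for all } p\le B)+O(\e)$ uniformly in $n$'' is unjustified, and this is precisely where the heart of the paper lies.
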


 This theorem is one application of our development of new techniques for proving {global} statistics of random symmetric integral matrices (many of which also apply to random skew-symmetric integral matrices). %
  Previous work of the second author \cite{Wood2017} studied \emph{local} statistics, in particular the Sylow $p$-subgroups of the cokernels of random symmetric matrices for any finite set of primes $p$.  \emph{Global} properties of a finite abelian group are those that cannot necessarily be determined from a finite list of Sylow $p$-subgroups, such as cyclicty as we see above. 

Our work is the first to address the \emph{universality} aspect of these global statistics of random symmetric matrices, i.e. the extent to which the statistics, asymptotically, do not depend on the distribution of the entries of the matrices.
A significant part of Random Matrix Theory is to study the universality phenomenon of empirical spectral distributions under different matrix symmetries, under different sources of randomness, and  under different scalings. In this paper we are pursuing a  different direction of universality of random matrices, namely the behaviour of their cokernels as  abelian groups. (When the entries $m_{ij}$ of an $n\times n$ matrix $M$ are integers, the cokernel is defined as $\cok(M) = \Z^n/M( \Z^n).$)

We make the following definition to restrict the types of entries our random matrices will have. 
Let $\al>0$ be given, and fixed throughout the paper. %
We say a random integer $\xi$ is \emph{$\alpha$-balanced} if for every prime $p$ we have
\begin{equation}\label{eqn:alpha}
\max_{r \in \Z/p\Z} \P(\xi\equiv r \pmod{p}) \le 1-\alpha.
\end{equation}

For i.i.d symmetric matrices, we have a universality result analogous to Theorem~\ref{theorem:cyclic:L_G}.

\begin{theorem}\label{theorem:cyclic:sym} Let $M_n=M_{n \times n} =(x_{ij})_{1\le i,j\le n}$ be a random symmetric matrix 
with  upper triangular entries  $x_{ij}$ for $i\geq j$ each i.i.d. copies of an integral $\alpha$-balanced random variable $\xi$.  
We have
$$\lim_{n\to \infty} \P\Big(\cok(M_n) \mbox{ is cyclic}\Big) =  \prod_{i=1}^\infty \zeta(2i+1)^{-1}\approx .7935.$$
\end{theorem}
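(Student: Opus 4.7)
The plan is to reduce the global event $\{\cok(M_n) \text{ cyclic}\}$ to a Sylow-by-Sylow condition and then combine prime-by-prime local universality with a uniform tail bound over large primes. Observe that $\cok(M_n)$ is cyclic if and only if the Sylow $p$-subgroup is cyclic for every prime $p$, equivalently $\cork_p(M_n \bmod p) \le 1$ for all $p$.

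For any fixed finite set of primes $S$, the local universality of Sylow $p$-subgroups of symmetric random matrices from \cite{Wood2017}, together with the moment/surjection machinery extended in the present paper to all $\alpha$-balanced distributions, should give
\[
\lim_{n\to\infty}\Pr\bigl(\cork_p(M_n)\le 1 \text{ for all }p\in S\bigr)=\prod_{p\in S}\prod_{i=1}^{\infty}\bigl(1-p^{-(2i+1)}\bigr).
\]
The right-hand side is the probability that a Haar-random symmetric matrix over $\prod_{p\in S}\Z_p$ has cyclic Sylow $p$-subgroups at every $p\in S$, and it factors across primes by independence in the limit. For $H=\Z/p^k\Z$ the symmetric Cohen--Lenstra weights are constant in $k\ge 1$, and summing gives the cyclic-Sylow factor $\prod_{i\ge 1}(1-p^{-(2i+1)})$ at each prime.

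The main new step is a uniform-in-$n$ tail estimate showing that non-cyclicity cannot be caused by large primes:
\[
\sup_n\ \sum_{p>P_0}\Pr\bigl(\cork_p(M_n)\ge 2\bigr)\ \longrightarrow\ 0\qquad\text{as }P_0\to\infty.
\]
The natural target is $\Pr(\cork_p(M_n)\ge 2)\le C p^{-3}$, uniform in $n$ and in the distribution of $\xi$, matching the Haar value. I would split pairs $(v_1,v_2)$ of $\F_p$-linearly independent null vectors of $M_n$ according to an arithmetic structure on the $2$-plane they span (a Grassmannian analogue of the least common denominator): unstructured pairs are controlled by a Littlewood--Offord / Hal\'asz-type anti-concentration inequality applied row by row, where the symmetric coupling between $M_n v=0$ and $v^\top M_n=0$ is handled by exposing principal minors sequentially; structured pairs are counted directly and treated by a swapping argument that replaces a general $\alpha$-balanced entry by a Bernoulli one at the cost of a constant factor depending only on $\alpha$. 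Diagonal entries, which are unpaired in the symmetric model, are handled separately from off-diagonal ones.

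Combining the two inputs, for any $\eps>0$ pick $P_0$ so that Step 2 bounds the tail by $\eps$ uniformly in $n$; then $\Pr(\cok(M_n)\text{ cyclic})=\Pr(\cork_p\le 1 \text{ for all } p\le P_0)+O(\eps)$, and sending $n\to\infty$ and then $P_0\to\infty$ produces $\prod_p\prod_{i\ge 1}(1-p^{-(2i+1)})=\prod_{i\ge 1}\zeta(2i+1)^{-1}$. The hardest part is the uniform corank-$\ge 2$ bound in Step 2: one needs anti-concentration that is simultaneously robust under arbitrary $\alpha$-balanced entries, sensitive to the symmetric constraint, and decays fast enough in $p$ to sum over primes. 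Everything else is a soft combination of local universality and a Borel--Cantelli-type truncation.
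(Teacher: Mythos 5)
Your overall architecture (local universality for a finite set of primes, plus a tail bound over large primes, then let the cutoff go to infinity) matches the paper's, and Step 1 and the final limiting argument are fine. The genuine gap is Step 2 as you have formulated it: a bound $\P(\cork_p(M_n)\ge 2)\le Cp^{-3}$ \emph{uniform in $n$ and valid for every prime $p$}, to be summed over $p>P_0$. The tools you sketch (row-by-row Hal\'asz/Littlewood--Offord anti-concentration plus a structured/unstructured dichotomy for the kernel $2$-plane) inevitably produce a bound of the shape $O(p^{-3})+\text{(probability of a structured kernel vector)}$, and the structured term does \emph{not} decay in $p$ --- at best it is something like $e^{-n^{c}}$ or $n^{-cn}$, and the set of structured candidates mod $p$ only grows with $p$. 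This is harmless for $p\le e^{n^{c}}$, but between $e^{n^{c}}$ and the Hadamard bound $\approx n^{n/2}$ (the only range where a prime can divide a nonzero $(n-1)\times(n-1)$ minor) there are on the order of $n^{cn}$ primes, so an additive error independent of $p$ cannot be summed, and your unstructured bound itself (a product of single-row anti-concentration estimates, each at best $O(n^{-1/2})+1/p$) saturates at roughly $n^{-cn}$, which is \emph{weaker} than $p^{-3}$ near the top of that range. So the claimed uniform $Cp^{-3}$ is exactly the step that is missing, and nothing in the sketch supplies it.

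The paper circumvents this by not treating huge primes individually at all. For $p\le e^{n^{c}}$ it proves $\P(\rk(M_n/p)\le n-2)=O(p^{-3}+e^{-n^{c}})$ via a rank-evolution argument (Propositions \ref{prop:moderate:sym}, \ref{prop:rankevolution:sym}), and sums only over that range. For all $p\ge e^{n^{c}}$ simultaneously it shows (Proposition \ref{prop:large:sym}) that with probability $1-n^{-C}$ \emph{no} such prime has corank $\ge 2$: if some large prime did, an inverse Littlewood--Offord theorem for quadratic forms (Theorem \ref{theorem:ILO:quadratic}) forces a GAP-structured almost-normal vector mod $p$, which is then lifted to characteristic zero and reduced back to a single sub-exponential prime (Lemma \ref{lemma:passing}), where the moderate-prime estimates already rule it out; a ``watch list'' of the finitely many primes dividing $\det$ of the growing principal minors controls how coranks can persist. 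If you want to keep your per-prime formulation, you would need a new argument giving decay in $p$ uniformly up to $n^{n/2}$ --- e.g.\ something playing the role of the paper's simultaneous-prime/characteristic-zero step --- since the anti-concentration-plus-structure route alone cannot produce it.
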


Our methods also apply to i.i.d skew-symmetric matrices,  though the distributions are different (e.g. because the rank of a skew-symmetric matrix is always even).

\begin{theorem}\label{theorem:cyclic:alt}  Let $A_n=(x_{ij})_{1\le i,j\le n}$ be a random skew-symmetric matrix where the upper diagonal entries  $x_{ij},$ for $i> j$, are i.i.d. copies of an integral $\alpha$-balanced random variable $\xi$.
Let $\mathcal{S}$ be the set of finite abelian groups of the form $H\times H$, and for such a group $G$, let 
$\Sp(G)$ be the group of automorphisms of $G$ that preserve a fixed non-degenerate skew-symmetric bilinear pairing.
We have 
$$\lim_{n\to \infty} \P\Big(\cok(A_{2n}) \mbox{ is a square of a cyclic group}\Big)  = \zeta(2)  \prod_{i=1}^\infty \zeta(2i+1)^{-1} \prod_{p \textrm{ prime}} (1-p^{-2} + p^{-3}).$$
Furthermore, for any abelian group $C$
 \begin{equation}\label{E:oddprob}\lim_{n\to \infty}  \P\Big(\cok(A_{2n+1})\isom C\Big)  =
\begin{cases}\frac{1}{|\Sp(B)|}\prod_{i=1}^\infty \zeta(2i+1)^{-1}
&C=\Z\times B, \textrm{ for }B\in \mathcal{S}\\
0 & \textrm{otherwise}.
\end{cases} .
\end{equation}
 \end{theorem}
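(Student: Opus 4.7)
The plan is to combine three ingredients: a local universality statement for the Sylow $p$-subgroup of $\cok(A_n)$, uniform over any fixed finite set of primes and independent of the specific $\alpha$-balanced distribution of $\xi$; a closed-form description of the limiting Sylow-$p$ distribution as a Cohen--Lenstra-type measure on abelian $p$-groups equipped with a non-degenerate alternating pairing; and uniform tail bounds that allow passage from finitely many primes to all primes. For the local piece, I would compute the surjection moments $\E[\#\Sur(\cok(A_n), G)]$ for every finite abelian $p$-group $G$ and show they are asymptotically independent of the distribution of $\xi$. Via the identity
\[
\#\Sur(\cok(A_n), G) = \#\{F \in \Hom(\Z^n, G) : F \circ A_n = 0,\ F\text{ surjective}\},
\]
this reduces to counting solutions of a random skew-symmetric linear system valued in $G$, and the Fourier and anti-concentration machinery developed earlier in the paper (LCD estimates for $G$-valued vectors together with the sparse versus non-sparse dichotomy) forces the count to match its Haar counterpart whenever $\xi$ is $\alpha$-balanced. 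Invoking the moment-determines-distribution theorem for groups with a non-degenerate alternating pairing then yields the claimed limit: the Sylow-$p$-part of $\cok(A_{2n})$ equals $H \times H$ with probability proportional to $|\Sp(H \times H)|^{-1}$, and for odd-size matrices there is an additional free $\Z_p$ factor.

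Given the local limits, the global probabilities follow by the Chinese Remainder Theorem together with asymptotic independence across primes. In the even case, ``square of cyclic'' means the Sylow-$p$-part is $(\Z/p^k\Z)^2$ for some $k \ge 0$, and summing the local measure over $k$ produces an Euler factor at $p$ whose product over all primes equals $\zeta(2) \prod_{i \ge 1} \zeta(2i+1)^{-1} \prod_p (1 - p^{-2} + p^{-3})$ by elementary manipulation of the $|\Sp|$-orders. In the odd case the limit is already concentrated on groups of the form $\Z \oplus B$ with $B \in \mathcal{S}$, and the Sylow-$p$-factorization across primes yields directly \eqref{E:oddprob}, the overall constant $\prod_{i \ge 1} \zeta(2i+1)^{-1}$ arising as the product of normalizing factors for the Cohen--Lenstra-type measures.

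The main obstacle is the passage from finitely many primes to all primes. One must show that for every $\eps > 0$ there exists $P$ with
\[
\limsup_{n \to \infty} \P\bigl(\text{the Sylow-}p\text{-part of }\cok(A_n)\text{ is nontrivial for some }p > P\bigr) < \eps,
\]
so that no probability mass leaks to large primes. This tightness is more delicate than in the general i.i.d.\ case because a skew-symmetric matrix offers substantially less room for the standard anti-concentration arguments; the key technical input is a uniform bound, simultaneously in $p$ and in $n$, on the probability that $p$ divides sufficiently many Pfaffian-style subinvariants of $A_n$ to contribute nontrivially to $\cok(A_n) \otimes \Z_p$. Once tightness is secured, a standard dominated-convergence argument assembles the finite-prime limits into the displayed global formulas.
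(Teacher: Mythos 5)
Your outline reproduces the paper's broad strategy (universal moments for the local part, a moment-determination step, then a finite-to-all-primes passage), but two of the steps you treat as routine are precisely where the genuine difficulties lie, and as stated they would fail. First, the moment step: the limiting moments here are $\E\,\#\Sur(\cok(A_n),G)\to|\Sym^2 G|$, which is exactly the critical growth rate at which moments do \emph{not} determine a unique distribution --- indeed $\cok(A_{2n})$ and $\cok(A_{2n+1})$ have identical limiting moments but very different limiting laws. So there is no off-the-shelf ``moment-determines-distribution theorem'' to invoke; one must use the parity-dependent structural information (in the even case the law is supported on groups $H\times H$, in the odd case on $\Z/a\Z\times H\times H$ after reducing mod $a$) to break the tie. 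The paper does this by passing to the quotient by a uniform random element, which lowers the moments from $|\Sym^2 G|$ to $|\wedge^2 G|$, a range where uniqueness is known, and then observing that within each restricted support class the original group is recoverable from such a quotient. Without an argument of this kind your local step does not yield the two different limits you assert for even and odd $n$.

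Second, the passage to all primes is not a tightness statement about nontriviality of Sylow subgroups, and the displayed claim $\limsup_n\P(\exists p>P:\ \cok(A_n)_p\neq 0)<\eps$ is false in the even case: the torsion of $\cok(A_{2n})$ has order $\mathrm{Pf}(A_{2n})^2$, a typically enormous integer, so large primes divide it with probability tending to $1$; what must be controlled is not triviality but the event that the corank of $A_n$ mod $p$ exceeds $2$ (even case) resp.\ $1$ (odd case). More seriously, even with the corrected event, your plan of ``a uniform bound in $p$ and $n$ plus dominated convergence'' hides the hardest point: bounds of the form $O(p^{-3})+e^{-n^{c}}$ (which is what the anti-concentration/rank-evolution machinery actually produces) can be summed over primes $p\le e^{n^{c}}$, but there are roughly $n^{cn}$ primes between $e^{n^{c}}$ and the Hadamard bound $n^{n/2}$, so no per-prime bound that is merely exponentially small can be union-bounded over them. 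The paper instead treats all such primes \emph{simultaneously}: if the corank mod $p$ fails to improve as rows are added, an inverse Littlewood--Offord/quadratic inverse theorem forces a normal vector with GAP structure, and such structured vectors can be lifted to characteristic zero and then reduced mod a single moderate prime, where their existence was already shown to be very unlikely. Your proposal contains no mechanism of this kind, and this simultaneous treatment of large primes is the main new content needed to prove the theorem.
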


What allows us to prove global statistics is a new method to understand the behavior of the matrices modulo primes that are very large compared to $n$.
This is made more difficult by the dependence between the upper and lower triangular entries in the above models.  Moreover, it is significantly more challenging to handle the Laplacian model in Theorem~\ref{theorem:cyclic:L_G} because of the dependence of the diagonal on the other entries of the matrix.    We discuss our new techniques to over come these difficulties later in the introduction.

\begin{table}
\centering
  \begin{tabular}{ |c | c | c | c | }
  \hline
    n $\backslash$ \ q & .3 & .5 & .7 \\ \hline
    15 & .784255 & .792895 & .775746 \\ \hline
    30 & .793807 & .793570 & .793375 \\ \hline
    45 & .793308 & .793962 & .793637 \\ \hline
    60 & .793436 & .793694 & .79354 \\ 
    \hline
  \end{tabular}
  \caption{Clancy et.al. computed in \cite{Clancy2015} the Jacobians of $10^6$ connected random graphs with $n$ vertices and edge probability $q$ and this chart gives the probability that the Jacobian is cyclic in each case.}
\label{fig:ER}
\end{table}

\begin{center}
{ \bf Past work and further results of this paper}
\end{center}

\subsection{Cokernels of random integral non-symmetric matrices}

For an abelian group $G$ and a prime $p$, we write $G_p$ for the Sylow $p$-subgroup of $G$. For a set $P$ of primes, we write $G_P:=\prod_{p\in P}G_p$, the product of the Sylow $p$-subgroups of $G$ for all $p\in P$.
Motivated by the Cohen-Lenstra heuristics for the distribution of class groups of number fields, the following has been shown by the second author.

\begin{theorem}\label{theorem:W:i.i.d.}\cite[Corollary 3.4]{W1}
Let $Q_n$ be a random matrix with entries 
 i.i.d copies of a $\alpha$-balanced random  integer $\xi$.  Let $B$ be any finite abelian group, and let $P$ be a finite set of primes including all those that divide $|B|$. Then
$$
\lim_{n\ra\infty} \P\Big((\cok(Q_{n}))_P \simeq B \Big) = \frac{1}{|\Aut(B)|}\prod_{p\in P} \prod_{k=1}^\infty (1-p^{-k}). 
$$
\end{theorem}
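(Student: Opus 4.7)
The plan is to apply the moments method for distributions on isomorphism classes of finite abelian groups. Specifically, I would compute the surjection moments $\E|\Sur(\cok(Q_n),G)|$ for every finite abelian group $G$ whose order is divisible only by primes in $P$, show each converges to $1$ as $n\to\infty$, and then invoke the moment-recovery theorem that pins down the Cohen--Lenstra measure.

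First I would rewrite the moment combinatorially. A surjection $\cok(Q_n)\twoheadrightarrow G$ is the same data as a surjection $F:\Z^n\twoheadrightarrow G$ whose kernel contains the columns of $Q_n$; writing $v_i:=F(e_i)$, this is a generating tuple $(v_1,\dots,v_n)\in G^n$ together with the conditions $\sum_i x_{ij}v_i=0$ in $G$ for each $j$. Independence of the columns then gives
\begin{equation*}
\E|\Sur(\cok(Q_n),G)|=\sum_{\substack{(v_1,\dots,v_n)\in G^n \\ \text{generating}}}\prod_{j=1}^n \P\!\Bigl(\sum_{i=1}^n \xi_{ij}v_i=0\text{ in }G\Bigr),
\end{equation*}
with $\xi_{ij}$ i.i.d.\ copies of $\xi$. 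The central Fourier estimate, forced by the $\alpha$-balanced assumption, is that for any tuple $(v_1,\dots,v_n)$ generating $G$,
\begin{equation*}
\P\!\Bigl(\sum_i \xi_i v_i=0\Bigr)=\frac{1}{|G|}\sum_{\chi\in\widehat{G}}\prod_{i=1}^n \E\chi(\xi_i v_i)=\frac{1}{|G|}\bigl(1+O\bigl((1-c_\alpha)^{c n}\bigr)\bigr),
\end{equation*}
since every non-trivial character $\chi$ has kernel a proper subgroup of $G$, and the $\alpha$-balanced hypothesis yields a uniform gap $|\E\chi(\xi v)|\le 1-c_\alpha$ whenever $v\notin\ker\chi$; a generating tuple must place at least a positive fraction of its entries outside any fixed proper subgroup, with the ``unbalanced'' exceptional tuples treated separately.

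Summing over generating tuples then yields
\begin{equation*}
\E|\Sur(\cok(Q_n),G)|=(1+o(1))\,\frac{|\Sur(\Z^n,G)|}{|G|^n}\;\longrightarrow\;1,
\end{equation*}
since $|\Sur(\Z^n,G)|/|G|^n$ is the density of generating $n$-tuples in $G^n$, which tends to $1$ as $n\to\infty$ for fixed $G$. A standard M\"obius inversion over the subgroup lattice of $G$ (the abelian-group moment-recovery theorem) then transforms the limiting moments $\{1\}_G$ into $\P(\cok(Q_n)_P\simeq B)\to \frac{1}{|\Aut(B)|}\prod_{p\in P}\prod_{k\ge 1}(1-p^{-k})$.

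The hard part is establishing the Fourier estimate uniformly over generating tuples in $G^n$: tuples whose entries concentrate inside a proper subgroup $H\subsetneq G$ can evade the naive character-sum bound, so one must show these unbalanced tuples contribute a vanishing share of the moment. This is handled by stratifying the sum over the smallest subgroup containing most of the $v_i$ and combining the gap $(1-c_\alpha)^{\Omega(n)}$ with a combinatorial bound on the number of restricted tuples; control over large primes $p$ is automatic because $P$ is a fixed finite set, keeping $|G|$ bounded throughout.
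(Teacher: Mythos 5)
This theorem is not proved in the present paper at all — it is quoted from \cite[Corollary 3.4]{W1} — and your moment-method argument (writing $\E|\Sur(\cok(Q_n),G)|$ as a sum over tuples, estimating each column probability by character sums using the $\alpha$-balanced hypothesis, handling tuples concentrated in a proper subgroup by a stratified count, and then invoking the moment-uniqueness theorem to recover the Cohen--Lenstra probabilities) is essentially the proof given in that cited source, and mirrors the code/depth decomposition this paper carries out in Section \ref{section:small:alt:1} for the skew-symmetric analogue. One caution: your parenthetical claim that every generating tuple places a positive fraction of its entries outside each proper subgroup is false as stated (take $v_1$ a generator and all other $v_i=0$), so the entire burden falls on the stratification you sketch in your last paragraph — which is precisely the ``code of distance $\delta n$'' versus ``depth $D$'' dichotomy of \cite{W1} — and your constant $c_\alpha$ in the bound $|\E\chi(\xi v)|\le 1-c_\alpha$ must be allowed to depend on the exponent of $G$ (it degrades like $\alpha/a^2$ for characters of prime-power order $a$), which is harmless here since $P$ and $B$ are fixed.
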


\begin{remark}\label{R:i.i.d.} For a particular finite abelian group $B$, by taking $P$ larger and larger and since $\prod_{p \textrm{ prime}} (1-p^{-1})=0$, we have 
$$\lim_{n\ra\infty} \P(\cok (Q_{n})\isom B)=0.$$ 
\end{remark}

While the above results hold for {\it local} statistics (i.e. when $\cok(M_{n})_P$ is isomorphic to a given finite abelian group), it is natural to study {\it global} statistics, such as how often $\cok (Q_{n})$ is cyclic \footnote{Clearly one could also ask about other global properties, but cyclicity seems to be one of the most natural ones.}. 
 For example, the following gives a main result of \cite{NgW} in the case where the matrix entries do not change with $n$ (the case of interest in this paper).

\begin{theorem}\label{theorem:cyclic:i.i.d.}\cite[Theorem 1.2]{NgW}  Let $Q_n=Q_{n \times n} =(x_{ij})_{1\le i,j\le n}$ be a random matrix where the entries $x_{ij}$ are i.i.d. copies of an integral $\alpha$-balanced random  integer $\xi$.
We have
 \begin{align*}
\lim_{n\to \infty} \P\Big(\cok(Q_{n\times n}) \mbox{ is cyclic}\Big) = \prod_{p \textrm{ prime}} (1 + \frac{1}{p(p-1)}) \prod_{k=2}^\infty \zeta(k)^{-1}.
\end{align*}
 \end{theorem}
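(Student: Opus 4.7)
The plan is to reduce this global cyclicity question to the local-at-$p$ statistics provided by Theorem~\ref{theorem:W:i.i.d.}, combined with a tail bound controlling the contribution of large primes. Since $\cok(Q_n)$ is cyclic iff $\cok(Q_n)_p$ is cyclic for every prime $p$, equivalently iff $\cork_{\F_p}(Q_n \bmod p) \leq 1$ for every $p$, I would fix a threshold $C$, set $P_C = \{p \leq C\}$, and decompose
\[
\P(\cok(Q_n) \text{ cyclic}) = \P\bigl(\cok(Q_n)_{P_C} \text{ cyclic}\bigr) - R_C(n),
\]
where $R_C(n)$ denotes the probability that $\cok(Q_n)_p$ is cyclic for all $p \leq C$ but not for some $p > C$. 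By Theorem~\ref{theorem:W:i.i.d.}, summed over all cyclic $p$-group shapes, the first term has an explicit limit as $n \to \infty$, while $R_C(n) \leq \sum_{p > C} \P(\cork_{\F_p}(Q_n) \geq 2)$.

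The limit of the main term is evaluated by a direct Cohen-Lenstra computation. For each $p$, summing $1/|\Aut(\Z/p^k\Z)|$ over $k \geq 0$ (with $|\Aut(\Z/p^k\Z)| = p^{k-1}(p-1)$ for $k \geq 1$) and multiplying by $\prod_j (1-p^{-j})$ gives the per-prime cyclicity probability as $\prod_j(1-p^{-j}) \cdot \tfrac{p^2-p+1}{(p-1)^2}$. Using the identity
\[
(1-p^{-1}) \cdot \tfrac{p^2-p+1}{(p-1)^2} = 1 + \tfrac{1}{p(p-1)}
\]
to pull out the divergent $j=1$ Euler factor, the product over all primes rearranges into the stated $\prod_p\bigl(1 + \tfrac{1}{p(p-1)}\bigr) \prod_{k\geq 2}\zeta(k)^{-1}$.

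The main obstacle, and the real substance beyond local universality, is establishing uniformly in $n$ that $\sum_{p > C} \P(\cork_{\F_p}(Q_n) \geq 2) \to 0$ as $C \to \infty$. A first-moment bound gives
\[
\P(\cork_{\F_p}(Q_n) \geq 2) \leq \sum_{\substack{V \subset \F_p^n \\ \dim V = 2}} \prod_{i=1}^n \P(r_i \perp V),
\]
where $r_i$ are the rows of $Q_n$. For ``generic'' 2-planes $V$ the $\alpha$-balanced hypothesis combined with an Erd\H{o}s-Littlewood-Offord-type concentration estimate on $\F_p$ gives $\P(r_i \perp V) = O(p^{-2})$ per row, and since there are $\approx p^{2n-4}$ such $V$, the total contribution is $O(p^{-4})$, which is summable in $p$. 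The technical heart is handling ``structured'' 2-planes (those with a defining vector concentrated in a short generalized arithmetic progression modulo $p$), where a single-row probability can exceed $p^{-2}$; these are controlled by stratifying according to the Littlewood-Offord concentration of $V$ via inverse Littlewood-Offord theory over $\F_p$, and showing that the reduced count of such structured 2-planes compensates for the weaker per-row bound, keeping the total contribution $O(p^{-4})$ uniformly in $n$. Once this tail bound is in place, sending $n \to \infty$ and then $C \to \infty$ in the decomposition yields the theorem.
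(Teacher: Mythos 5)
Your reduction of cyclicity to ``$\cork_{\F_p}(Q_n)\le 1$ for every $p$,'' your use of Theorem~\ref{theorem:W:i.i.d.} for the primes $p\le C$, and the Euler-product computation $\prod_{j\ge1}(1-p^{-j})\cdot\frac{p^2-p+1}{(p-1)^2}=\bigl(1+\frac{1}{p(p-1)}\bigr)\prod_{j\ge2}(1-p^{-j})$ are all correct and match the intended strategy. The gap is in the tail bound, which is the actual substance of the theorem. Your claim that a first moment over $2$-planes, refined by inverse Littlewood--Offord stratification, yields $\P(\cork_{\F_p}(Q_n)\ge 2)=O(p^{-4})$ uniformly in $n$ and summably over \emph{all} primes cannot work. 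The relevant primes run up to roughly $n^{n/2}$ (Hadamard), and once $\log p \gg n^{c}$ the union bound over the $\approx p^{2n-4}$ two-planes is hopeless: no anti-concentration estimate can push a single-row probability $\P(r_i\cdot w=0)$ below a floor of order $1/p+e^{-n^{c}}$ even for completely unstructured $w$ (cf. Theorem~\ref{theorem:LO} and Proposition~\ref{prop:structure:subexp:sym}), so the per-plane probability is at best $\bigl(e^{-n^{c}}\bigr)^{2n}$ while the number of planes is $p^{2n-4}\ge e^{2n\cdot n^{c'}}$ for $c'>c$; the structured planes are not the obstruction, the sheer count of unstructured ones is. Worse, the asserted bound is simply false in the top range: if two disjoint pairs of columns of $Q_n$ coincide (probability at least $q^{2n}$ for a fixed constant $q>0$ depending on $\xi$), then $\rank_{\Q}(Q_n)\le n-2$ and hence $\cork_{\F_p}(Q_n)\ge 2$ for \emph{every} prime, and $q^{2n}\gg p^{-4}\approx n^{-2n}$ when $p\approx n^{n/2}$. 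Finally, even a per-prime bound of size $e^{-cn}$ cannot be summed over the $\sim n^{n/2}$ primes in play; this is exactly the point the paper makes before Propositions~\ref{prop:large:sym}--\ref{prop:large:alt}.

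What is actually done in \cite{NgW} (this paper only cites the statement; its Sections~\ref{section:bilinear}--\ref{section:largeprimes} carry out the analogous, harder argument for the symmetric, skew-symmetric and Laplacian models) is a three-regime split rather than your two: small primes via the local statistics as you do; moderate primes $p\le e^{n^{c}}$, where a per-prime bound of the shape $O(p^{-3}+e^{-n^{c}})$ is proved (so it \emph{is} summable over that range); and all primes $p\ge e^{n^{c}}$ treated \emph{simultaneously}. In that last regime one shows that corank $\ge 2$ at some large prime forces a normal vector whose coordinates essentially lie in a small generalized arithmetic progression mod $p$ (inverse Littlewood--Offord), lifts this GAP-structured vector to a structured integral normal vector in characteristic zero (the coefficients are polynomially bounded, cf. Lemma~\ref{lemma:passing}), and then reduces modulo a \emph{single} auxiliary prime in $[e^{n^{c/2}},e^{n^{c}}]$, where the moderate-prime machinery shows such structured normal vectors occur with probability $n^{-\Theta(n)}$; Hadamard's bound caps the primes one must consider at $n^{n/2}$. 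Without some argument of this ``all large primes at once'' type, your decomposition leaves the event $\{\exists\, p> e^{n^{c}}:\ \cork_{\F_p}(Q_n)\ge2\}$ uncontrolled, and the proof is incomplete.
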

 
%
 
 \begin{comment}
This probability has been seen in several papers studying the probability that a random lattice in $\Z^n$ is co-cyclic (gives cyclic quotient), in cases when these lattices are drawn from the nicest, most uniform distributions, e.g. uniform on lattices up to index $X$ with $X\ra\infty$ \cite{CKK,NS,Pet}, or with basis with uniform entries in $[-X,X]$ with $X\ra\infty$ \cite{SW}. We note that the i.i.d. model here is more relevant to the Cohen-Lenstra heuristic, and as the reader will see, the cyclicity probability here is different from Theorem \ref{theorem:cyclic:sym} for symmetric matrices. 
\melanie{The first part of the previous sentence I am not so sure about, and the second half seems awkward here.  Maybe this whole paragraph can be replaced by a paragraph near our new results that explains that these probabilities may have been seen before in uniform-like distributions, but that is rather different from proving them here.  This could be added to the heuristic paragraph in fact, but should be in the symmetric case. }
\end{comment}

We also showed the following generalization of Theorem~\ref{theorem:cyclic:i.i.d.}. 

\begin{theorem}\label{theorem:prodcyc:i.i.d.}\cite[Theorem 2.5]{NgW} Let $Q_n=Q_{n \times n}$ be as in Theorem~\ref{theorem:cyclic:i.i.d.}. 
Let $B$ be a finite abelian group and let $k_0$ be larger than any prime divisor of $|B|$, and define $C_B=\{B\times C\,|\, C \textrm{ cyclic, } p\nmid |C| \textrm{ for }1<p<k_0 \}$, the set of groups differing from $B$ by a cyclic group with order only divisible by primes at least $k_0$. 
 Then, we have
  $$\lim_{n\to \infty}  \P\Big(\cok(Q_{ n \times n})\in C_B\Big)  =  \frac{1}{|\Aut(B)|}   \prod_{\substack{p<k_0\\p \textrm{ prime}}} (1-p^{-1}) \prod_{\substack{p\geq k_0\\p \textrm{ prime}}} (1 + \frac{1}{p(p-1)})
  \prod_{k=2}^\infty \zeta(k)^{-1}.$$
 \end{theorem}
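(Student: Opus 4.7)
My plan is to reduce the claim to two pieces: (a) the joint distribution of the Sylow $p$-parts of $\cok(Q_n)$ at the finitely many primes $p < k_0$, where Theorem~\ref{theorem:W:i.i.d.} pins things down, and (b) the probability that the Sylow $p$-part is cyclic for all $p \geq k_0$, where a tail estimate is needed. Accordingly, I would write $E = \{\cok(Q_n) \in C_B\}$ as $E_< \cap E_\geq$, with $E_<$ the event $\cok(Q_n)_p \simeq B_p$ for every prime $p < k_0$ and $E_\geq$ the event $\cok(Q_n)_p$ is cyclic for every prime $p \geq k_0$. I would introduce an auxiliary threshold $k_1 > k_0$ (ultimately sent to infinity) and further decompose $E_\geq = E_{[k_0,k_1)} \cap E_{\geq k_1}$, where $E_{[k_0,k_1)}$ requires cyclicity for primes in that range and $E_{\geq k_1}$ for primes beyond.

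For the bulk contribution, fix $k_1$ and set $P = \{p \text{ prime}: p < k_1\}$. Let $\CA$ be the countable family of finite abelian groups supported in $P$ of the form $B \times C$ with $C$ cyclic and supported on primes in $[k_0, k_1)$. By Theorem~\ref{theorem:W:i.i.d.}, for each $A \in \CA$,
$$\lim_{n\to\infty} \P(\cok(Q_n)_P \simeq A) = \frac{1}{|\Aut(A)|}\prod_{p < k_1}\prod_{k=1}^{\infty}(1 - p^{-k}).$$
Since both the pre-limit and the limit are probability measures on the countable set of finite abelian groups supported in $P$, Scheff\'e's lemma promotes pointwise convergence to total-variation convergence, allowing me to sum over the infinite family $\CA$ and interchange the limit with the sum. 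Using the coprimality $\gcd(|B|,|C|) = 1$ (so that $|\Aut(B\times C)| = |\Aut(B)|\cdot|\Aut(C)|$) together with the Cohen--Lenstra identity $\sum_{k \geq 0}|\Aut(\Z/p^k\Z)|^{-1} = 1 + p/(p-1)^2$, a routine prime-by-prime factorization collapses the sum to
$$\frac{1}{|\Aut(B)|}\prod_{p<k_0}(1-p^{-1})\prod_{k_0 \leq p < k_1}\Bigl(1 + \frac{1}{p(p-1)}\Bigr)\prod_{p < k_1}\prod_{k=2}^{\infty}(1-p^{-k}).$$

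The hard part will be a tail estimate uniform in $n$, namely
$$\sup_n \P\bigl(\cok(Q_n)_p \text{ is not cyclic}\bigr) = \sup_n \P\bigl(\cork_p(Q_n \bmod p) \geq 2\bigr) \leq \frac{C}{p^2},$$
with $C$ depending only on $\alpha$. Granting this, a union bound gives $\limsup_n \P(E_{\geq k_1}^c) \leq \sum_{p \geq k_1} C/p^2 \to 0$ as $k_1 \to \infty$, so the expression in the previous paragraph passes to the limit and reaches
$$\frac{1}{|\Aut(B)|}\prod_{p < k_0}(1 - p^{-1})\prod_{p \geq k_0}\Bigl(1 + \frac{1}{p(p-1)}\Bigr)\prod_{k=2}^{\infty}\zeta(k)^{-1},$$
after recognizing $\prod_p \prod_{k \geq 2}(1 - p^{-k}) = \prod_{k \geq 2}\zeta(k)^{-1}$. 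I expect the main obstacle to be this uniform-in-$n$ bound: the probability that two independent columns with $\alpha$-balanced entries simultaneously fall in a codimension-$\geq 2$ subspace mod $p$ must decay summably in $p$ while being controlled for every $n$. This is a Littlewood--Offord-type anti-concentration question, and it is precisely where the $\alpha$-balanced hypothesis gets used beyond the reach of the finite-$p$ local universality Theorem~\ref{theorem:W:i.i.d.}.
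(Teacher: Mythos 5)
Your small-prime/bulk computation is sound and matches the standard route: use the local statistics (Theorem~\ref{theorem:W:i.i.d.}) for the primes below a threshold, sum over the countable family $B\times C$ (the interchange of limit and sum is legitimate, whether via Scheff\'e or via the fact that the limiting probabilities sum to $1$, cf.\ Remark~\ref{R:odd1}), and the prime-by-prime identity $\bigl(\sum_{k\ge 0}|\Aut(\Z/p^k\Z)|^{-1}\bigr)(1-p^{-1})=1+\tfrac{1}{p(p-1)}$ does collapse the sum to the stated constant up to the tail. (A minor point you gloss over: $\cok(Q_n)\in C_B$ also requires $\cok(Q_n)$ finite, i.e.\ $Q_n$ nonsingular over $\Q$; this is absorbed since the singularity probability is $o(1)$.)

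The genuine gap is your tail estimate. You assert $\sup_n \P(\rk(Q_n/p)\le n-2)\le C/p^2$ uniformly in $n$ and for \emph{all} primes $p$, and then union bound over $p\ge k_1$. No such clean bound is available, and it is precisely the obstruction that forces the paper's (and \cite{NgW}'s) architecture. What the Littlewood--Offord/rank-evolution analysis yields is a bound of the shape $O(p^{-3}+e^{-n^{c}})$ for $p\le e^{n^{c}}$ (Proposition~\ref{prop:moderate:sym} here; the i.i.d.\ analogue in \cite{NgW} is the same in spirit): the additive error $e^{-n^{c}}$ comes from events (sparse or structured normal vectors) that are not tied to a specific $p$, and it cannot be removed. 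For $p$ near or above $e^{n^{c}}$ your claimed bound $C/p^2\le Ce^{-2n^{c}}$ is far stronger than anything these methods give, and summing the honest bound $O(p^{-3}+e^{-n^{c}})$ over the roughly $n^{\Theta(n)}$ primes below the Hadamard bound $n^{n/2}$ diverges — the paper says this explicitly at the start of Section~\ref{section:largeprimes}. The resolution is qualitatively different from a per-prime union bound: for $e^{n^{c}}\le p\le n^{n/2}$ one shows that the failure event for \emph{any} such prime forces a structured (GAP) almost-normal vector, lifts this structure to characteristic zero (Lemma~\ref{lemma:passing}), and thereby reduces all large primes simultaneously to a single prime in a manageable range, giving $\P(\exists\,p\ge e^{n^{c}}:\ \rk(Q_n/p)\le n-2)\le n^{-C}$ (the i.i.d.\ counterpart of Propositions~\ref{prop:large:sym}--\ref{prop:large:alt}). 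Without this simultaneous large-prime argument — or an actual proof of your uniform $C/p^2$ bound, which would itself be a substantial new result — your plan does not close.
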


\subsection{Cokernels of random integral symmetric matrices} 
Since the behavior of the empirical spectral distribution is quite different for general versus symmetric matrices (see for instance \cite{BSbook, Edelman, TVcir} and \cite{Mehta, Pastur,  Wigner}), and also because the cokernels of symmetric matrices naturally have pairings \cite{CLP}, one might expect that the cokernel statistics of symmetric matrices are different from the non-symmetric case. 
For local statistics, the second author showed the following result on the local statistics.

\begin{theorem}\label{theorem:W:sym}\cite[Corollary 9.2]{Wood2017} Let $M_n=M_{n \times n} =(x_{ij})_{1\le i,j\le n}$ be a random symmetric matrix 
 with upper triangular entries  $x_{ij}$, for  $i\geq  j$, that are i.i.d. copies of an  $\alpha$-balanced random integer $\xi$.
Let $B$ be any finite abelian group, and let $P$ be a finite set of primes including all those that divide $|B|$. Then  
$$\lim_{n\to \infty} \P((\cok(M_n))_P \isom B) = \frac{\#\{ \mbox{symmetric, bilinear, perfect } \phi: B \times B \to \C^\ast \}}{|B| |\Aut(B)|} \prod_{p\in P}\prod_{k\ge 0} (1 -p^{-2k-1})$$
and for any $p\in P$
$$\lim_{n \to \infty} \P(\rank(M_n/p) = n-r) = p^{-r(r+1)/2}  \prod_{i=r+1}^\infty (1 -p^{-i}) \prod_{i=1}^\infty (1 - p^{-2i})^{-1}.$$
\end{theorem}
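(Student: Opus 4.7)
The natural approach is the \emph{method of moments} for random abelian groups. Rather than computing $\P((\cok M_n)_P\simeq B)$ directly, I would first compute the surjection moments $\E[\#\Sur(\cok M_n, G)]$ for every finite abelian group $G$ whose order is supported on $P$, and then invert. Starting from the identity
\begin{equation*}
\#\Sur(\cok M_n, G)=\sum_{F\in \Sur(\Z^n,G)} \mathbf{1}\{F\circ M_n=0\},
\end{equation*}
the expectation becomes a sum over surjective $n$-tuples $(v_1,\ldots,v_n)\in G^n$ of $\P\bigl(\sum_j x_{ij}v_j=0\text{ in }G\text{ for every row }i\bigr)$. The feature that distinguishes this from the i.i.d.\ analysis underlying Theorem~\ref{theorem:W:i.i.d.} is the coupling caused by symmetry: each upper-triangular entry $x_{ij}$ with $i<j$ appears in exactly two row equations, so the events across rows are not independent.

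The central computation is Fourier-analytic on $G$. Expanding each row indicator via characters of $G$, the joint probability factors over the truly independent entries $\{x_{ij}\}_{i\le j}$; the factor produced by $x_{ij}$ with $i<j$ couples the character pair $(\psi_i,\psi_j)$ with the vector pair $(v_j,v_i)$, which is precisely what produces a \emph{symmetric} pairing structure, while $x_{ii}$ produces an extra diagonal character factor on $v_i$. Summing over character tuples and over surjective $F$ yields, in the limit $n\to\infty$, a count of symmetric bilinear maps $G\times G\to \C^\ast$. The hard part of this step is controlling the ``non-generic'' character tuples along which the Fourier cancellation is weak; this is exactly where the $\alpha$-balanced hypothesis is used, via a Littlewood--Offord-type anti-concentration inequality in $G$ that confines such tuples to a thin set whose contribution decays with $n$.

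With the moments in hand, I would invoke the moment-inversion theorem of \cite[\S 8]{Wood2017}: because the symmetric-pairing moments grow only polynomially in $|G|$, the random Sylow-$P$-subgroup $\cok(M_n)_P$ is determined in the limit by them, and $\P(\cok(M_n)_P\simeq B)$ is given by an explicit M\"obius-type sum over finite abelian groups $H\supseteq B$ tested against these moments. Evaluating this sum yields exactly the factor $\#\{\text{perfect symmetric }\phi\colon B\times B\to \C^\ast\}/(|B|\,|\Aut B|)$ together with the Euler product $\prod_{p\in P}\prod_{k\ge 0}(1-p^{-2k-1})$, the latter being the standard zeta correction arising from the surjectivity constraint. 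Finally, the rank statement follows by specialization to $G=\F_p^{\,r}$: a direct count (or equivalently a universality reduction from $\alpha$-balanced to uniform entries mod $p$, performed by a row-swapping argument in the spirit of the i.i.d.\ case) reduces the problem to the classical corank distribution of a uniformly random symmetric matrix over $\F_p$, whose limit as $n\to\infty$ is the stated product $p^{-r(r+1)/2}\prod_{i=r+1}^\infty(1-p^{-i})\prod_{i=1}^\infty(1-p^{-2i})^{-1}$.
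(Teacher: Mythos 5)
First, a point of context: the paper does not prove Theorem~\ref{theorem:W:sym} at all — it is imported verbatim from \cite[Corollary 9.2]{Wood2017} and used as an input for the small-prime regime. So the relevant comparison is with the proof in that cited work, whose architecture the present paper reproduces in Sections~\ref{section:small:alt:1}--\ref{section:small:alt:2} for the skew-symmetric analogue. Your overall strategy — compute $\E[\#\Sur(\cok(M_n),G)]$ by summing $\P(FM_n=0)$ over $F\in\Sur(\Z^n,G)$, expand each row condition in characters so that the symmetry couples $(C(v_j),F(v_i))$ with $(C(v_i),F(v_j))$, use $\alpha$-balancedness to kill the non-generic (weak/robust, non-code) character--surjection pairs, then invert moments via \cite[\S 8]{Wood2017}, and get the rank statement by specializing to $G=\F_p^r$ — is exactly the route taken there, and mirrored here for $A_n$.

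The genuine problem is your identification of the limiting moments. For symmetric matrices the moment is $\#\wedge^2 G$, the number of \emph{alternating} bilinear maps, not "a count of symmetric bilinear maps $G\times G\to\C^\ast$" (which is $\#\Sym(G\tensor G,\C^\ast)=\#\Sym^2 G$): in the symmetric case the special characters $C$ killing all coefficients $E(C,F,i,j)$ are the ones built from alternating forms (this is precisely the "$\Sym^2$ and $\wedge^2$ reversed" remark in the proof of Lemma~\ref{L:FullFcode}), and one can check numerically from the stated corank distribution that, e.g., $\E\#\Sur(\cok(M_n),\Z/2\Z)\to 1=\#\wedge^2(\Z/2\Z)$, not $2$. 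The symmetric perfect pairings only enter the \emph{answer} after inversion, not the moments. This matters because the moment-determinacy step you invoke would fail as you state it: the criterion in \cite[Theorem 8.3]{Wood2017} is not "polynomial growth in $|G|$" but a bound at the scale of $\#\wedge^2 G$, and moments of size $\#\Sym^2 G$ famously do \emph{not} determine the distribution — that is exactly the obstruction the present paper has to overcome in the skew-symmetric case (Theorem~\ref{T:almom} gives moments $\#\Sym^2 G$, and Theorem~\ref{T:mainmomlimit} restores uniqueness only by adding a priori structural restrictions on the support, distinguishing the even and odd cases). With the moments correctly identified as $\#\wedge^2 G$, your inversion step (and the identification of the limit with the symmetric-pairing formula, e.g.\ by matching against the Haar-uniform computation of \cite{Clancy2015}) goes through; as written, the sketch is internally inconsistent at this step. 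The final reduction of the rank statement to the uniform $\F_p$ corank distribution is fine.
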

Here $M/p$ is the matrix of entries modulo $p$. Note that if $B = \oplus_{i} \Z/p^{\la_i}\Z$ with $\la_1\ge \la_2 \ge \dots$, and with conjugate partition $\la_1' \ge \la_2' \ge \dots $ then 
$$\# \{\mbox{sym., bilinear, perfect } \phi: B \times B \to \C^\ast \} = p^{-\sum_i \la_j'(\la_i'+1)/2} \prod_{i=1}^{\la_1} \prod_{j=1}^{\lfloor (\la_i' - \la_{i+1}')/2 \rfloor} (1-p^{-2j})^{-1}.$$

\begin{remark}\label{R:sym} Similarly to Remark \ref{R:i.i.d.}, for a particular finite abelian group $B$, by taking $P$ larger and larger, for random symmetric matrices $M_n$ as above we also have 
$$\lim_{n\ra\infty} \P(\cok (M_{n})\isom B)=0.$$ 
\end{remark}

Now for global statistics such as cyclicity, we will first explain a heuristic guess for this probability. Note that $\cok(M_{n})$ is cyclic if and only if its reduction to modulo $p$ is cyclic for all primes $p$. We then make two idealized heuristic assumptions on $M_{n}$.  (i) (uniformity assumption) Assume that for each prime $p$ the entries of $M_{n}$ are uniformly distributed modulo $p$. In this case, it is classical that the probability that $M_n$ is cyclic is $(1-p^{-n-1})\prod_{i=1}^{(n-2)/2} (1-p^{-2i-1})$ when $n$ is even and 
$\prod_{i=1}^{(n-1)/2} (1-p^{-2i-1})$ when $n$ is odd.  (ii) (independence assumption) We next assume that the statistics of $M_{n}$ reduced to modulo $p$ are asymptotically mutually independent for all primes $p$. Under these assumptions, as $n \to \infty$, the probability that $M_{n}$ is surjective would be asymptotically the product of all of the surjectivity probabilities modulo $p$, which leads to the conjecture that  $\cok (M_{n})$ is cyclic with asymptotic probability $\prod_{i=1}^\infty \zeta(2i+1)^{-1}$, which is around 0.7935. 
The matrices in this paper do not have to satisfy either assumption, and indeed they can violate them dramatically.  For example, if the matrix entries only take values $0$ and $1$, then they cannot be uniformly distributed mod any prime $>2$, and the matrix entries mod $3$ are not only not independent from the entries mod $5$, but they are in fact determined by the entries mod $5$.  

One of the main goals of this paper is to show that the heuristic gives a correct prediction even when the matrices fail the above assumptions dramatically.
The result for the probability of cyclicity was stated in Theorem~\ref{theorem:cyclic:sym}.
We can in fact show a little bit more,  similarly to Theorem \ref{theorem:prodcyc:i.i.d.}, and one of our main results is the following.
\begin{theorem}\label{theorem:prodcyc:sym} Let $M_n=M_{n \times n}$ be random symmetric matrices as in Theorem~\ref{theorem:cyclic:sym}. 
Let $B$ be a finite abelian group and let $k_0$ be larger than any prime divisor of $|B|$, and define $C_B=\{B\times C\,|\, C \textrm{ cyclic, } p\nmid |C| \textrm{ for }1<p<k_0 \}$, the set of groups differing from $B$ by a cyclic group with order only divisible by primes at least $k_0$. 
 Then, we have
  $$\lim_{n\to \infty}  \P\Big(\cok(M_{ n \times n})\in C_B\Big)  =  \frac{\#\{ \mbox{symmetric, bilinear, perfect } \phi: B \times B \to \C^\ast \}}{|B| |\Aut(B)|}   \prod_{\substack{p<k_0\\p \textrm{ prime}}} (1-p^{-1})  \prod_{i=1}^\infty \zeta(2i+1)^{-1}.$$
 \end{theorem}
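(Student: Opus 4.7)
The plan is to reduce Theorem~\ref{theorem:prodcyc:sym} to the local symmetric universality result (Theorem~\ref{theorem:W:sym}) applied at the small primes, together with the large-prime cyclicity tail that already powers Theorem~\ref{theorem:cyclic:sym}. Set $P_0=\{p\text{ prime}:p<k_0\}$. Since every prime divisor of $|B|$ lies in $P_0$, the event $\{\cok(M_n)\in C_B\}$ decomposes exactly as
$$\mathcal{E}_{\mathrm{small}}\cap\mathcal{E}_{\mathrm{large}},\quad \mathcal{E}_{\mathrm{small}}=\{(\cok M_n)_{P_0}\cong B\},\quad \mathcal{E}_{\mathrm{large}}=\bigcap_{p\ge k_0}\{(\cok M_n)_p\text{ cyclic}\}.$$

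For any fixed truncation $A\ge k_0$, I would first apply Theorem~\ref{theorem:W:sym} to the finite set $P_0\cup\{p:k_0\le p\le A\}$; this controls the joint distribution of the Sylow parts at all these primes simultaneously. The $P_0$-contribution is immediately read off as $\frac{\#\{\mbox{sym., bilinear, perfect }\phi:B\times B\to\C^\ast\}}{|B||\Aut(B)|}\prod_{p\in P_0}\prod_{k\ge 0}(1-p^{-2k-1})$. For each prime $p\in[k_0,A]$ I sum the Theorem~\ref{theorem:W:sym} weight over cyclic $p$-groups: for $B=\Z/p^a\Z$ the ratio $\#\mathrm{pairings}(B)/(|B||\Aut B|)$ simplifies to $p^{-a}$, and the geometric series $\sum_{a\ge 0}p^{-a}=(1-p^{-1})^{-1}$ absorbs exactly one factor from $\prod_{k\ge 0}(1-p^{-2k-1})$, yielding per-prime cyclicity probability $\prod_{k\ge 1}(1-p^{-2k-1})$. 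Factorisation across primes is automatic in Theorem~\ref{theorem:W:sym}, so $\mathcal{E}_{\mathrm{small}}$ and the bounded-window part of $\mathcal{E}_{\mathrm{large}}$ are asymptotically independent and their joint limit is the obvious product.

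The main obstacle is the uniform-in-$n$ tail bound
$$\sup_n\,\P\Bigl(\exists\,p>A\text{ prime with }(\cok M_n)_p\text{ not cyclic}\Bigr)\xrightarrow[A\to\infty]{}0,$$
which is the technical heart of Theorem~\ref{theorem:cyclic:sym}. For primes $p$ substantially larger than $n$, non-cyclicity at $p$ forces $v_p(\det M_n)\ge 2$ together with divisibility of all $(n-1)\times(n-1)$ minors, and a standard Chinese-Remainder argument bounds the probability of this event by $O(p^{-2})$, which is summable over $p>A$. The delicate range is $A<p\lesssim n$: here $M_n\bmod p$ can have nontrivial corank and one needs anti-concentration estimates for $\alpha$-balanced symmetric bilinear forms on $\F_p^n$ that are robust as $p$ grows with $n$ and that respect the upper/lower triangular dependence. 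This is precisely the new machinery the paper develops for the symmetric (and Laplacian) model, and I would apply it verbatim, noting that the tail event depends on $M_n$ only through reductions modulo primes coprime to $\prod_{p\in P_0}p$, so it is asymptotically independent of $\mathcal{E}_{\mathrm{small}}$.

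Combining the three ingredients and letting $A\to\infty$ after $n\to\infty$, the product
$$\prod_{p<k_0}\prod_{k\ge 0}(1-p^{-2k-1})\cdot\prod_{p\ge k_0}\prod_{k\ge 1}(1-p^{-2k-1})=\prod_{p<k_0}(1-p^{-1})\cdot\prod_{i=1}^{\infty}\zeta(2i+1)^{-1}$$
produces the stated limit, with the pairing prefactor inherited directly from Theorem~\ref{theorem:W:sym}. In short, the only ingredient beyond Theorem~\ref{theorem:W:sym} is the large-prime tail already needed for Theorem~\ref{theorem:cyclic:sym}, and fixing the Sylow structure at $P_0$ simply replaces the Theorem~\ref{theorem:W:sym}-derived cyclicity contribution $\prod_{p\in P_0}(1-p^{-1})$ by the $B$-specific weight without changing anything else.
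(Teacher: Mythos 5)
Your overall reduction is exactly the paper's: the paper proves this theorem by the same argument as Theorems~\ref{theorem:cyclic:sym}--\ref{theorem:cyclic:L_G}, namely Theorem~\ref{theorem:W:sym} at the primes below a cutoff (with the sum over cyclic extensions of $B$, where your computation that the pairing weight for $\Z/p^a\Z$ is $p^{-a}$ and the resulting per-prime factor $\prod_{k\ge1}(1-p^{-2k-1})$ is correct), plus the rank bounds $\rk(M_n/p)\ge n-1$ for all larger primes, plus a $\liminf/\limsup$ argument letting the cutoff tend to infinity. The bookkeeping of the constants is also right.

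The genuine problem is your description of the large-prime tail. You claim that for $p$ substantially larger than $n$ a ``standard Chinese-Remainder argument'' gives a bound $O(p^{-2})$ for non-cyclicity at $p$, summable over $p>A$, and that the delicate range is $A<p\lesssim n$. This inverts where the difficulty lies and the claimed step is unsupported: for $\alpha$-balanced (bounded) entries there is no known elementary per-prime bound of this kind that is uniform over the range $e^{n^{c}}\le p\le n^{n/2}$, and the paper's moderate-prime estimate (Proposition~\ref{prop:moderate:sym}, valid for all $p\le e^{n^c}$, so far beyond $p\asymp n$) carries an additive error $e^{-n^{c}}$ that can only be summed over primes up to roughly $e^{n^{c/2}}$ --- the paper states explicitly that such bounds cannot be summed over all primes. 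For $p\ge e^{n^{c}}$ the paper must treat \emph{all} primes simultaneously (Proposition~\ref{prop:large:sym}), via the inverse Littlewood--Offord/GAP structure of almost-normal vectors and the lifting to characteristic zero in Sections~\ref{section:bilinear}--\ref{section:largeprimes}; nothing resembling a prime-by-prime CRT bound is available there. Since you ultimately defer to ``the large-prime tail already needed for Theorem~\ref{theorem:cyclic:sym},'' the skeleton of your proof is the paper's and is fine, but as written the justification of that tail is wrong: you should simply invoke Propositions~\ref{prop:moderate:sym} and~\ref{prop:large:sym} in the same way the proof of Theorem~\ref{theorem:cyclic:sym} does (sum the $O(p^{-3}+e^{-n^{c}})$ bound over $k_0\le p\le e^{n^{c/2}}$, then use the simultaneous bound for $p\ge e^{n^{c}}$), rather than assert a summable $O(p^{-2})$ estimate for huge primes.
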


\subsection{Cokernels of skew-symmetric matrices}
Now we discuss another random matrix model whose cokernel universality aspect has not been addressed in the literature \footnote{We refer the reader to \cite{MR,SS} and the references therein for universality aspects of the empirical spectral distribution of these random matrices.}. Let $A_n=(x_{ij})_{1\le i,j\le n}$ be an  skew-symmetric (i.e.  alternating) random matrix where $x_{ij},$ for  $1\le i< j\le n$ are i.i.d. copies of an $\alpha$-balanced random integer $\xi$ (and $x_{ii}=0$ and $x_{ji}=-x_{ij}$). In the special case that $\xi_{ij}$ are i.i.d. taking values in $\{-1,0,1\}$, we can view $A_n$ as the adjacency matrix of a random tournament graph on $n$ vertices. 
The cokernel distribution of Haar distributed skew-symmetric matrices over $\Z_p$ was studied in \cite[Theorem 3.9]{Bhargava2015b} in connection to heuristics for various statistics of elliptic curves.

To introduce our results, we will need some more notation. We let ${\bf 1}_{n \textrm{ odd}}$ be $1$ if $n$ is odd and $0$ otherwise.   For a finite set of primes $P$, 
we say a group is a \emph{$P$-group} if its order is a product of powers of primes in $P$, and we
let $\mathcal{S}_P$ be the set of $P$-groups in $\mathcal{S}$ (squares of abelian groups). A finite abelian group has a non-degenerate skew-symmetric linear pairing to $\C^*$ if and only if it is of the form $H\times H$  (see \cite[Proposition 2]{Delaunay2001}).  A group $G$ of this form has a unique such pairing up to isomorphism, and we let $\Sp(G)$ be the group of automorphisms of $G$ that preserve the pairing.  If $A$ is a skew-symmetric integral matrix, then it has even rank and   
$\cok_{\operatorname{tors}}(A)$, the set of elements of $\cok(A)$ of finite order, is in $\mathcal{S}$ \cite[Sections 3.4 and 3.5]{Bhargava2015b}.

In this paper, we first show the following local statistics analogous to Theorems \ref{theorem:W:i.i.d.} and \ref{theorem:W:sym}.
\begin{theorem}\label{theorem:W:alt} Let $A_n=(x_{ij})_{1\le i,j\le n}$ be a random skew-symmetric matrix where the upper diagonal entries are $x_{ij},$ for $i> j$ are i.i.d. copies of an  $\alpha$-balanced random integer $\xi$.
Let $B$ be any finite abelian group, and let $P$ be a finite set of primes including all those that divide $|B|$.  Then
\begin{align*}
\lim_{n\ra\infty} \P(\rank (A_n) =n-{\bf 1}_{n \textrm{ odd}} )&=1\\
\lim_{n\ra\infty} \P((\cok (A_{2n}))_P\isom B) &=
\begin{cases}
\frac{|B|}{|\Sp(B)|} \prod_{p\in P} \prod_{i=0}^\infty (1-p^{-2i-1})
&B\in \mathcal{S}_P\\
0 & \textrm{otherwise}
\end{cases}
\\
\lim_{n\ra\infty} \P((\cok_{\operatorname{tors}}(A_{2n+1}))_P\isom B) &=\begin{cases}\frac{1}{|\Sp(B)|} \prod_{p\in P} \prod_{i=1}^\infty (1-p^{-2i-1})
&B\in \mathcal{S}_P\\
0 & \textrm{otherwise}.
\end{cases}
\end{align*}

Furthermore, for any prime $p$ and non-negative integer $r$, we have
\begin{align*}
\lim_{n\ra\infty} \P(\rank ( A_{2n}/p)=2n-2r)&=p^{-r(2r-1)}\frac{\prod_{i=r}^{\infty}(1-p^{-2i-1}) }{ \prod_{k=1}^r (1-p^{-2k})} \\
\lim_{n\ra\infty} \P(\rank ( A_{2n+1}/p)=2n-2r)&=p^{-r(2r+1)}\frac{\prod_{i=r+1}^{\infty}(1-p^{-2i-1}) }{ \prod_{k=1}^r (1-p^{-2k})}.
\end{align*}
\end{theorem}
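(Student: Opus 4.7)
The plan is to prove Theorem~\ref{theorem:W:alt} via the surjection-moments method, following the strategy the second author used for random symmetric matrices in \cite{Wood2017} and adapting it to the skew-symmetric setting, where the target limit distribution is the symplectic analog of the Cohen--Lenstra measure studied by Bhargava et al.\ in \cite{Bhargava2015b}. For every finite abelian group $G$ I would compute the moment $\E[\#\Sur(\cok(A_n),G)]$, verify that it converges to the symplectic moment of the predicted limit, and then conclude via a moment-determines-distribution theorem.

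To carry out the moment calculation, encode a surjection $\phi:\Z^n\twoheadrightarrow G$ as a generating tuple $(v_1,\ldots,v_n)\in G^n$. Then $\phi$ factors through $\cok(A_n)$ iff $\sum_{i\ne j} v_i\, x_{ij}=0$ in $G$ for each column index $j$. The independent entries are $\{x_{ij}:i>j\}$, and each such $x_{ij}$ appears in exactly two of the $n$ equations (contributing $v_i$ to equation $j$ and $-v_j$ to equation $i$, reflecting the skew-symmetric coupling). A Fourier expansion over $\wh{G}^n$ expresses the joint probability as the average of $\prod_{i>j}\wh{\xi}(\chi_j(v_i)-\chi_i(v_j))$ over characters $(\chi_1,\ldots,\chi_n)$, and the $\alpha$-balanced hypothesis forces each factor to be bounded away from $1$ in modulus unless its argument vanishes. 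Summing the leading ``diagonal'' contribution over generating tuples and grouping by the resulting symplectic type of $(v_i)$---which is what forces $B$ to lie in $\mathcal S$ when $n$ is even---should yield the moment identities matching the symplectic Cohen--Lenstra moments computed in \cite[Sections 3.4 and 3.5]{Bhargava2015b}. The Sylow $p$-convergence then follows from a moment-uniqueness theorem analogous to the one used in \cite{Wood2017}, and the two rank mod $p$ formulas drop out by specializing $G=(\Z/p\Z)^r$ in the moment formula and summing over surjections.

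The main obstacle is the anti-concentration step: bounding the contribution of ``non-generic'' tuples $(v_1,\ldots,v_n)$ for which the Fourier arguments $\chi_j(v_i)-\chi_i(v_j)$ vanish too often. In the symmetric case of \cite{Wood2017}, an Odlyzko swap of one off-diagonal entry perturbs exactly two column equations symmetrically; in the skew-symmetric case each swap perturbs two equations with opposite signs, and so the swap argument must be reorganized to act on edge-difference patterns $\chi_j-\chi_i$ rather than on individual characters. I would formulate a structured-versus-unstructured dichotomy for $(v_i)$ (an inverse Littlewood--Offord input adapted to the skew incidence), handle the unstructured case by the modified paired-swap argument, and bootstrap from modulus $p$ to modulus $p^k$ by the $p$-adic filtration induction of \cite{Wood2017}. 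Finally, the integral rank statement $\rank(A_n)=n-{\bf 1}_{n\textrm{ odd}}$ reduces to showing that the $n\times n$ Pfaffian (for $n$ even) or some $(n-1)\times(n-1)$ principal sub-Pfaffian (for $n$ odd) is a nontrivial polynomial in the independent entries $\{x_{ij}:i>j\}$ and hence nonzero with probability tending to $1$, by an $\alpha$-balanced Schwartz--Zippel bound.
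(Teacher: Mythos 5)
Your moment computation is essentially the paper's: Theorem~\ref{T:almom} establishes $\lim_{n\ra\infty}\E(\#\Sur(\cok(A_n),G))=|\Sym^2 G|$ by adapting the Fourier/coefficient analysis of \cite{Wood2017} (special, weak and robust characters $C$, codes, depth), which is the same structured-versus-unstructured dichotomy you sketch for the tuples $(v_i)$. The genuine gap is your last step: there is no ``moment-uniqueness theorem analogous to the one used in \cite{Wood2017}'' that applies to these moments. The moments $|\Sym^2 G|$ sit exactly at the threshold where uniqueness fails; in fact the even- and odd-dimensional skew-symmetric matrices have \emph{identical} limiting moments but manifestly different limiting laws (a.s.\ finite cokernel versus $\Z\times(\textrm{finite})$), so no theorem of the form ``these moments determine the distribution'' can hold without extra input, and your argument as written cannot distinguish the $|B|/|\Sp(B)|$ answer in the even case from the $1/|\Sp(B)|$ answer in the odd case. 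The paper resolves this by proving a new uniqueness statement (Theorem~\ref{T:mainmomlimit}): one restricts to random groups supported on $\mathcal{S}_P$ (even case) or on groups of the form $\Z/a\Z\times H\times H$ (odd case) --- a restriction coming from deterministic linear algebra, namely that the torsion of the cokernel of an integral skew-symmetric matrix carries a nondegenerate alternating pairing (\cite{Bhargava2015b}, \cite{Delaunay2001}), not, as you suggest, from ``grouping by symplectic type'' inside the moment sum --- and then quotients by a uniform random element, which divides the $A$-moment by $|A|$ and so replaces $|\Sym^2 A|$ by $|\wedge^2 A|$, small enough for \cite[Theorem 8.3]{Wood2017}; a Hall polynomial/Pieri rule argument shows that, under the support restriction, the law of the group is recoverable from the law of its quotient by one element. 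Some device of this kind is indispensable, and its absence is the central hole in your plan (the mod-$p$ rank formulas likewise require the determined distributions, compared against the Haar computations of \cite{Bhargava2015b} and \cite{Delaunay2001}, not just the moments for $G=(\Z/p\Z)^r$).

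A secondary problem: your Schwartz--Zippel argument for $\rank(A_n)=n-{\bf 1}_{n \textrm{ odd}}$ fails for $\alpha$-balanced entries. The Pfaffian is a multilinear polynomial of degree about $n/2$ in the $x_{ij}$, and when the entries take only two values a nonzero multilinear polynomial can vanish with probability exponentially close to $1$ (already $x_{12}x_{34}\cdots$ shows that degree-over-support bounds give nothing here); nonsingularity of random skew-symmetric matrices with such entries is a genuine theorem, which the paper only obtains quantitatively from its moderate-prime machinery (Corollary~\ref{cor:singularity}). For the qualitative statement in Theorem~\ref{theorem:W:alt} the correct cheap route is the one the paper takes: once the limiting Sylow (or mod-$p$ corank) distributions are known, the free-rank claim follows by Fatou's lemma, or equivalently by letting $p\ra\infty$ after $n\ra\infty$ in the bound on $\P(\rank(A_n/p)\le n-2)$; you should replace the Pfaffian argument by that deduction.
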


The distribution of the torsion of the cokernels is different for odd and even dimensional matrices.  Note the additional factor of $|B|$ in the even case, and well as the normalization constants starting their products in different places.

\begin{remark}\label{R:even0} Similarly to Remarks \ref{R:i.i.d.} and \ref{R:sym}, for a particular finite abelian group $B$ we have 
$$\lim_{n\ra\infty} \P(\cok(A_{2n})\isom B)=0.$$ 
\end{remark}

Then, for global statistics we prove Theorem~\ref{theorem:cyclic:alt}, giving the cyclicity probability in the even dimensional case, and the non-zero probabilities of each group in the odd dimensional case.

\begin{remark}\label{R:odd1}
In contrast to Remark~\ref{R:even0}, we have that the probabilities of Theorem~\ref{theorem:cyclic:alt} for the odd dimensional case in \eqref{E:oddprob} sum to $1$ \cite[Theorem 9]{Delaunay2001}.  Thus one can, using Fatou's lemma, determine the asymptotic probability that the cokernel has any property (e.g. cyclicity) by summing the probabilities of the groups with that property (see \cite[Lemma 2.4]{NgW}).
\end{remark}

Additionally, analogously to Theorems \ref{theorem:prodcyc:i.i.d.} and \ref{theorem:prodcyc:sym}, one of our main results is the following.
\begin{theorem}\label{theorem:prodcyc:alt} Let $A_n$ be as in Theorem~\ref{theorem:cyclic:alt}. 
Let $B \in \CS$ be a finite abelian group and let $k_0$ be larger than any prime divisor of $|B|$, and define $C_{B}=\{B \times C \times C\,|\, C \textrm{ cyclic, } p\nmid |C| \textrm{ for }1<p<k_0 \}$. Then, we have 
 $$\lim_{n\to \infty}  \P\Big(\cok(A_{2n})\in C_B\Big)  =  \frac{|B|}{|\Sp(B)|}    \prod_{i=1}^\infty \zeta(2i+1)^{-1}   \prod_{\substack{p< k_0\\p \textrm{ prime}}} (1-p^{-1}) \prod_{\substack{p\geq k_0\\p \textrm{ prime}}} (1-p^{-2})^{-1}(1-p^{-2} + p^{-3}).$$

 \end{theorem}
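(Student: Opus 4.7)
The plan is to follow the template of Theorems \ref{theorem:prodcyc:i.i.d.} and \ref{theorem:prodcyc:sym}, combining the local statistics of Theorem \ref{theorem:W:alt} with a uniform-in-$n$ tail estimate ruling out the $p$-Sylow subgroup of the cokernel from being ``too large'' at any single prime $p$ beyond a threshold $M$. For each $M\geq k_0$, let $C_B^{(M)}$ denote the set of finite abelian groups whose $p$-Sylow subgroup is $B_p$ for every prime $p<k_0$ and is of the form $(\Z/p^{a}\Z)^{2}$ for some $a\geq 0$ for every prime $k_0\leq p\leq M$. Then $C_B\subseteq C_B^{(M)}$, and the event $\{\cok(A_{2n})\in C_B^{(M)}\setminus C_B\}$ is contained in
$$D_M^n\;:=\;\{\rank A_{2n}<2n\}\;\cup\;\{\rank_{\F_p}(A_{2n}\bmod p)\leq 2n-4\ \text{ for some prime }p>M\},$$
the first piece of which has probability $o(1)$ as $n\to\infty$ by Theorem \ref{theorem:W:alt}, while the second piece encodes exactly the failure of $(\cok A_{2n})_p\isom (\Z/p^{a}\Z)^{2}$ at some $p>M$ (since $\cok_{\operatorname{tors}}(A_{2n})$ lies in $\CS$, so its $p$-rank is always even).

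For each fixed $M$, the event $\{\cok(A_{2n})\in C_B^{(M)}\}$ depends only on the Sylow subgroups at primes $p\leq M$, so Theorem \ref{theorem:W:alt} applied to $P=\{p\leq M\}$ gives an exact limiting formula as a finite sum over allowed local group types. Using $|\Sp((\Z/p^{a}\Z)^{2})|=p^{3a-2}(p^{2}-1)$ for $a\geq 1$, a direct computation yields the per-prime factor
$$\sum_{a\geq 0}\frac{p^{2a}}{|\Sp((\Z/p^{a}\Z)^{2})|}\prod_{i=0}^{\infty}(1-p^{-2i-1})\;=\;\prod_{i=1}^{\infty}(1-p^{-2i-1})\cdot(1-p^{-2})^{-1}(1-p^{-2}+p^{-3})$$
for each $p\geq k_0$. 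Combining with the small-prime contributions $\frac{|B_p|}{|\Sp(B_p)|}\prod_{i=0}^{\infty}(1-p^{-2i-1})$ for primes $p\mid|B|$ and $\prod_{i=0}^{\infty}(1-p^{-2i-1})$ for the remaining $p<k_0$, and regrouping factors of $(1-p^{-1})$, yields precisely the Euler product displayed in the theorem, with the infinite product $\prod_{p\geq k_0}$ truncated to $k_0\leq p\leq M$; this truncation converges to the claimed value as $M\to\infty$.

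The main obstacle is to establish the uniform tail estimate
$$\sup_{n}\ \P(D_M^n)\;\xrightarrow[M\to\infty]{}\;0.$$
For each fixed $p$, the $r=2$ case of Theorem \ref{theorem:W:alt} gives $\lim_n\P(\rank_{\F_p}(A_{2n}\bmod p)\leq 2n-4)=O(p^{-6})$, which is summable in $p$, so the remaining task is to upgrade this pointwise control to a bound $\P(\rank_{\F_p}(A_{2n}\bmod p)\leq 2n-4)\leq Cp^{-6}$ uniform in $n$ and $p$, the difficult regime being primes $p$ possibly much larger than $n$. We expect to obtain this in the spirit of the non-symmetric treatment in \cite{NgW} and the local symmetric treatment leading to \cite{Wood2017}, combining Odlyzko-type anti-concentration (small-ball) bounds for $\alpha$-balanced random variables with an inverse Littlewood--Offord analysis adapted to the skew-symmetric structure, exploiting the independence of the strictly upper-triangular entries of $A_{2n}$. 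Once this uniform bound is in hand, a standard squeeze between $\P(\cok(A_{2n})\in C_B)$ and $\P(\cok(A_{2n})\in C_B^{(M)})$ justifies interchanging $\lim_{n}$ and $\lim_{M}$, completing the proof.
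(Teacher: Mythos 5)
Your reduction to a tail estimate at large primes, and your local computation at the primes $p\leq M$ (the identity $\sum_{a\geq 0}p^{2a}/|\Sp((\Z/p^a\Z)^2)|\cdot\prod_{i\geq 0}(1-p^{-2i-1})=(1-p^{-2})^{-1}(1-p^{-2}+p^{-3})\prod_{i\geq 1}(1-p^{-2i-1})$), are consistent with the paper, modulo the minor point that the sum over allowed local types at primes in $[k_0,M]$ is infinite, so passing from Theorem~\ref{theorem:W:alt} to $\lim_n\P(\cok(A_{2n})\in C_B^{(M)})$ needs the "probabilities sum to $1$" argument (as in Remark~\ref{R:odd1} and the use of \cite[Theorem 9]{Delaunay2001} in the proof of Theorem~\ref{theorem:cyclic:alt}), not just a termwise application.

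The genuine gap is in the large-prime step, and it is not merely technical: the uniform bound $\P(\rank_{\F_p}(A_{2n}\bmod p)\leq 2n-4)\leq Cp^{-6}$ uniformly in $n$ \emph{and} $p$ is false. Once $p$ exceeds every $(2n-2)\times(2n-2)$ minor of $A_{2n}$ in absolute value (e.g.\ $p>(C_\xi\sqrt{n})^{n}$), the rank of $A_{2n}$ mod $p$ coincides with its rank over $\Q$, so $\P(\rank_{\F_p}(A_{2n}\bmod p)\leq 2n-4)$ stabilizes, for fixed $n$, at the positive constant $\P(\rank_{\Q}(A_{2n})\leq 2n-4)$ and does not decay in $p$ at all; in particular no per-prime bound decaying in $p$ can be summed over the infinitely many primes beyond $e^{n^c}$, which is exactly why such a sum cannot produce $\limsup_n\P(D_M^n)\to 0$. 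The paper circumvents this by splitting the primes: for moderate primes $p\leq e^{n^c}$ it proves the per-prime bound $O(p^{-3}+e^{-n^c})$ (Proposition~\ref{prop:moderate:alt}), summable over that finite range, and for \emph{all} primes $p\geq e^{n^c}$ it proves a single simultaneous statement (Proposition~\ref{prop:large:alt}) via the inverse Littlewood--Offord/GAP structure theorems of Sections~\ref{section:bilinear}--\ref{section:largeprimes}: a rank defect at some huge prime forces a structured almost-normal vector mod $p$, which is lifted to characteristic zero (Lemma~\ref{lemma:passing}) and then reduced back to a single prime of moderate size $e^{n^{c/2}}\leq p'\leq e^{n^c}$, where it is ruled out with probability $1-n^{-\Theta(n)}$. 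Your proposal gestures at inverse Littlewood--Offord input but frames it as yielding a prime-by-prime bound uniform in $p$; the essential missing idea is this transfer of the structural event simultaneously across all large primes through $\Z$, rather than any per-prime estimate.
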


\subsection{Laplacian of random graphs}\label{sub:Lap} Now we turn to one of our main motivating applications. Given a matrix  $M=(x_{ij})_{1\le i,j\le n}$, let $L_M$ be the Laplacian corresponding to $M$,  with entries
\begin{equation}\label{eqn:Lap}
L_{ij}=\begin{cases}
x_{ij} & \mbox{ if } j \neq i\\
-\displaystyle{\sum_{\substack{1\leq k \leq n\\ k\neq i}} x_{ki}}  & \mbox{ if } j=i,
\end{cases}
\end{equation}
and note the columns of $L_M$ all sum to $0$.
Let $\Z_0$ be the set of vectors in $\Z^{n}$ of zero-sum. We write $S_M$ for the cokernel $\Z_0^{n}/L_M\Z^{n}$. 
In this paper we will be focusing on the case when $M$ is the adjacency matrix of a graph $G$ (directed or undirected), in which case we write $L_G$ for $L_M$. In this case this abelian group, denoted by $S_G$, is called the {\it sandpile group} (or the Jacobian, the Picard group, the critical group) of $G$. It is well known that the order of $S_G$ is the number of spanning trees of $G$ when $G$ is undirected, and there is a similar result for rooted spanning trees when $G$ is directed.
  Note that the study of sandpile groups for directed and undirected graphs and their implications has been an extremely active research direction in recent years. (There is a vast literature on this, which is impossible to list even a small portion of it; we refer the reader to for instance \cite{BN1, CC,FL,FL2,GK} and the references therein.) 
  The behavior of $S_G$ in general is highly non-trivial, for instance it is not clear if there is a way to relate $S_G$ to $S_{G'}$ if they differ by only a few graph theoretic operations! The task of describing $S_G$ for a given $G$ is naturally daunting, and it is perhaps too complicated to wish for a theory to describe $S_G$ properly for most $G$. In another direction, it is natural to trade off a detailed description of each $G$ to have a general pictures on which group structures they might have.

Motivated by this, the current authors showed  the following analog of Theorem \ref{theorem:W:i.i.d.} for sandpile groups of random directed graphs.

\begin{theorem}\label{theorem:W:S_G}\cite[Theorem 1.6]{NgW} Let $G$ be a random directed Erd\H{o}s-R\'enyi graph where each directed edge is chosen independently with probability $1/2$.
  Let $B$ be a finite abelian group.  %
  Then
$$
\lim_{n\ra\infty} \P\Big(S_G \simeq B \Big) = \frac{1}{|B||\Aut(B)|} \prod_{i=2}^{\infty} \zeta(i)^{-1}.
$$
\end{theorem}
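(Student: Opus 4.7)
The plan is to prove this by the method of moments. The target is to show that for every finite abelian group $A$,
$$\lim_{n\to\infty}\E\bigl[\#\Sur(S_G,A)\bigr] = \frac{1}{|A|}.$$
Combined with the Cohen--Lenstra identities $\sum_B\frac{\#\Sur(B,A)}{|B||\Aut(B)|}=1/|A|$ and $\sum_B\frac{1}{|B||\Aut(B)|}=\prod_{i\ge 2}\zeta(i)$, this moment limit uniquely determines the distribution as $\mu(B) = \frac{1}{|B||\Aut(B)|}\prod_{i\ge 2}\zeta(i)^{-1}$, once one invokes a moment-determination theorem for random finite abelian groups in the style of the second author's earlier work \cite{W1}. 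The latter also requires a uniform-in-$n$ bound of the form $\E[\#\Sur(S_G,A)]=O_A(1)$, which should drop out of the same analysis.

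The moment computation itself translates homomorphism counts into lattice point counts. Since $S_G=\Z_0^n/L_G\Z^n$ and the columns of $L_G$ sum to zero, a homomorphism $S_G\to A$ lifts to a vector $c\in A^n$ with $L_G^Tc=0$, and two lifts agree iff they differ by a constant vector, so
$$\#\Hom(S_G,A) = \frac{1}{|A|}\#\{c\in A^n : L_G^Tc=0\}.$$
The decisive algebraic identity
$$(L_G^Tc)_j = \sum_{i\ne j}x_{ij}(c_i-c_j)$$
defuses the apparent dependence between diagonal and off-diagonal entries of $L_G$: each coordinate of $L_G^Tc$ involves only the independent edge variables $\{x_{ij}\}_{i\ne j}$ with second index $j$, so the events $\{(L_G^Tc)_j=0\}_{j=1}^n$ are mutually independent. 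Fourier inversion on $A$ then gives
$$\E\bigl[\#\Hom(S_G,A)\bigr] = \frac{1}{|A|}\sum_{c\in A^n}\prod_{j=1}^n\frac{1}{|A|}\sum_{\chi\in\wh A}\prod_{i\ne j}\frac{1+\chi(c_i-c_j)}{2}.$$

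The principal obstacle is the combinatorial evaluation of this double sum. I would partition $c\in A^n$ by the subgroup $H\le A$ generated by the differences $\{c_i-c_j\}$: the trivial character on $A/H$ produces the main term, while each nontrivial character $\chi\in\wh A$ contributes a factor bounded by $r_A^{N_\chi(c,j)}$, with $r_A=\max\{|1+\chi(a)|/2 : \chi(a)\ne 1\}<1$ and $N_\chi(c,j)=\#\{i\ne j:\chi(c_i-c_j)\ne 1\}$. The core combinatorial claim to establish is that for any $c$ whose coordinates are not concentrated on a single coset of a proper subgroup of $A$, there exists a nontrivial $\chi$ with $\sum_j N_\chi(c,j)=\Omega(n)$, making that error exponentially small. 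Summing the main terms over $H\le A$ then gives $\E[\#\Hom(S_G,A)]\to\sum_{H\le A}1/|H|$, which by M\"obius inversion on the subgroup lattice of $A$ converts to the target moment limit and finishes the proof.
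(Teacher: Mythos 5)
Your moment computation is, in outline, the right first step and is essentially how the local statistics are obtained in \cite{NgW}: the lifting identity $\#\Hom(S_G,A)=|A|^{-1}\#\{c\in A^n: c^{T}L_G=0\}$, the observation that the $j$-th coordinate $\sum_{i\neq j}x_{ij}(c_i-c_j)$ only involves the independent variables with second index $j$ (so the $n$ coordinate events are independent in the directed model), and the target limit $\E\#\Sur(S_G,A)\ra 1/|A|$ are all correct. (Two small repairs there: the identity you quote should be $\sum_B \#\Sur(B,A)/(|B||\Aut(B)|)=|A|^{-1}\prod_{i\geq 2}\zeta(i)$, and in the error analysis you need the bound for \emph{every} character nontrivial on the subgroup generated by the differences, plus a separate count of the nearly-structured $c$ -- the code/depth bookkeeping of \cite{W1} -- rather than the existence of a single good $\chi$.)

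The genuine gap is the final step: convergence of all $A$-moments to $1/|A|$, even with the uniform bound $O_A(1)$, does \emph{not} determine $\lim_n\P(S_G\isom B)$. The moment-determinacy theorems of \cite{W1,Wood2017} only identify, for each finite set $P$ of primes, the limiting distribution of the Sylow $P$-part (equivalently of $S_G\otimes\Z/a\Z$ for fixed $a$); they say nothing about primes growing with $n$, and mass can escape there. Concretely, if $Y$ has the claimed limit distribution and $p_n>n$ is prime, the random groups $X_n$ equal to $Y$ or to $Y\times\Z/p_n\Z$ with probability $1/2$ each satisfy $\#\Sur(X_n,A)=\#\Sur(Y,A)$ for every fixed $A$ once $p_n\nmid|A|$, so they have the same limiting moments, yet $\P(X_n\isom B)\ra \frac{1}{2}\P(Y\isom B)$. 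So after the moment computation you still owe the statement that, uniformly in $n$, the probability that $S_G$ has nontrivial $p$-part for some prime $p>k_0$ tends to $0$ as $k_0\ra\infty$. Since $|S_G|$ can be of size $n^{\Theta(n)}$, this forces you to control $\rank(L_G/p)$ for moderate primes (a summable $O(p^{-2})$ bound, which uses the effective corank-one structure coming from $\Z_0^n$) and for all primes up to $n^{n/2}$ simultaneously; in \cite{NgW} this is done with inverse Littlewood--Offord/structure arguments and constitutes the bulk of that proof (it is the analogue of Propositions \ref{prop:moderate:sym} and \ref{prop:large:sym} and Sections \ref{section:lemmas}--\ref{section:largeprimes} here). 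This local-versus-global issue is precisely what makes the theorem nontrivial, and it is absent from your plan.
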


\begin{remark}
As in Remark~\ref{R:odd1}, these limiting  probabilities sum to $1$ \cite[Lemma 3.2]{W1} and thus imply limiting probabilities of any other properties.  
\end{remark}

This result has an interesting combinatorial application 
 that  about $45.58\%$ of simple digraphs $G=(V,E)$ has the property that  a chip configuration $\sigma$ on $\Gamma$ stabilizes after a finite number of legal firings if and only if $|\sigma| \le |E|-|V|$; we refer the reader to \cite{FL,NgW} for further details. Notice that the results in \cite{NgW} also extend to the regime that $\al$ is allowed to depend on $n$ (such as $\al_n \ge n^{-1+o(1)}$ and to rectangular matrices, and other important statistics). %

Now we turn to undirected graphs.
For local statistics, the second author showed the following analog of Theorem \ref{theorem:W:sym}

\begin{theorem}\cite[Theorem 1.1]{Wood2017}\label{theorem:W:lap} Let $G\in G(n,1/2)$ 
be an Erd\H{os}-R\'enyi graph on $n$ vertices. Let $B$ be a finite abelian group. Let $P$ be a finite set of primes including all those dividing
$|B|$. Then  
$$\lim_{n\to \infty} \P((S_G)_P \isom B) = \frac{\#\{ \mbox{symmetric, bilinear, perfect } \phi: B \times B \to \C^\ast \}}{|B| |\Aut(B)|} \prod_{p\in P}\prod_{k\ge 0} (1 -p^{-2k-1}).$$
\end{theorem}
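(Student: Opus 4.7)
The plan is to use the moment method for random finite abelian groups, as developed in the second author's earlier works. By the moment--inversion theorem for surjection statistics of random abelian $P$-groups, the distribution of $(S_G)_P$ is determined by the limiting moments $M_H := \lim_{n\to\infty} \E|\Sur(S_G,H)|$ as $H$ ranges over finite abelian $P$-groups, provided modest growth bounds. The target identity is that these moments agree with those of the Cohen--Lenstra--Delaunay measure that assigns a group $B$ the probability stated in Theorem~\ref{theorem:W:lap}; moment inversion then yields the formula, and the rank statement for $L_G/p$ is a parallel output obtained by specializing $H=(\Z/p\Z)^r$.

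To compute $M_H$, I would work with the reduced Laplacian $L^{(n)}$ obtained by deleting the last row and column of $L_G$, so that $\cok(L^{(n)})=S_G$ by the matrix--tree identification. Writing $M_H=\sum_F \Pr(F\circ L^{(n)}=0)$ over surjections $F:\Z^{n-1}\twoheadrightarrow H$, and using $x_{ij}=x_{ji}$ together with the diagonal relation $L_{jj}=-\sum_{k\neq j}x_{kj}$, each coordinate of the equation $F\circ L^{(n)}=0$ reads
\[
\sum_{\substack{i\neq j\\ i\le n-1}} x_{ij}\bigl(F(e_i)-F(e_j)\bigr) \;-\; x_{jn}\,F(e_j) \;=\; 0 \text{ in } H, \qquad 1\le j\le n-1.
\]
Expand each indicator via Fourier analysis on $H$ and take expectations over the independent Bernoulli$(1/2)$'s $x_{ij}$ (for $i<j\le n-1$) and $x_{jn}$ (for $j<n$); this converts $M_H$ into a character sum over $(\chi_1,\ldots,\chi_{n-1})\in\hat H^{n-1}$, weighted by products of simple trigonometric factors of the form $\tfrac{1}{2}(1+e^{2\pi i\theta})$ with $\theta$ a $\Z$-linear expression in $F(e_i)$ and $\chi_j$.

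The character sum is then analyzed by separating its contributions according to the ``structure'' of the pair $(F,\chi_\bullet)$. The dominant term comes from pairs for which every Fourier factor equals $1$; these are in natural correspondence with symmetric bilinear structures on the image of $F$ in $H$, reflecting that $L_G^T=L_G$ endows $S_G$ with a canonical symmetric perfect pairing $S_G\times S_G\to\Q/\Z$. Summing the dominant contributions across $F$ and matching coefficients produces the Cohen--Lenstra--Delaunay moments, while the remaining ``non-structured'' pairs must be shown to contribute $o(1)$ by anti-concentration.

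The main obstacle, highlighted in the introduction, is the dependence of $L^{(n)}$'s diagonal on the off-diagonal entries: the Laplacian has strictly less independent randomness than a generic symmetric matrix, so a direct application of the symmetric-matrix machinery does not go through. The key observation rescuing the argument is that the deleted column $x_{1n},\ldots,x_{n-1,n}$ survives in $L^{(n)}$ \emph{only} through the diagonal entries; these $n-1$ Bernoullis are independent of every other entry of $L^{(n)}$ and supply genuine independent randomness on the diagonal. One can then run a Hal\'asz-type anti-concentration estimate jointly on these ``diagonal-only'' variables and the off-diagonal Bernoullis, extending the scheme used in the i.i.d.\ symmetric setting, to bound the non-structured contribution by $o(1)$ and conclude $M_H$ converges to the CLD moment for every finite abelian $P$-group $H$.
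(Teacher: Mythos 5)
First, a framing remark: the paper does not prove Theorem \ref{theorem:W:lap} at all — it is quoted from \cite[Theorem 1.1]{Wood2017} — so the relevant comparison is with the proof in that source. Your overall architecture (compute $\lim_n\E\,\#\Sur(S_G,H)$ by expanding the equations $F\circ L^{(n)}=0$ into a character sum over $\hat H^{\,n-1}$, isolate the "structured" pairs $(F,\chi_\bullet)$ whose Fourier factors are all trivial, kill the rest by Hal\'asz-type anti-concentration, then invoke moment determinacy) is indeed the strategy of \cite{Wood2017}, and it is the same template the present paper follows for the skew-symmetric case in Sections \ref{section:small:alt:1}--\ref{section:small:alt:2}. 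Your device of using the deleted column to supply independent diagonal randomness is a legitimate variant of how \cite{Wood2017} handles the dependent diagonal (there the diagonal cancels into differences $F(e_i)-F(e_j)$ in the coefficients), and the "code/depth" analysis of non-code $F$ that you subsume under "anti-concentration" is routine but necessary.

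There is, however, a concrete error at the decisive step: the special pairs are parametrized by \emph{alternating} forms on $H$, not symmetric ones. In your displayed system the coefficient of $x_{ij}$ ($i\neq j\le n-1$) contributes the phase $\chi_j(Fe_i-Fe_j)+\chi_i(Fe_j-Fe_i)$, and the $x_{jn}$ term forces $\chi_j(Fe_j)=0$; writing $\chi_j=\beta(Fe_j,\cdot)$ these conditions say precisely that $\beta(g,g)=0$ and $\beta$ is antisymmetric, i.e.\ alternating (a symmetric $\beta$ satisfying them is forced to vanish). Hence the limiting moments are $\#\wedge^2 H$, not $\#\Sym^2 H$; the canonical \emph{symmetric} perfect pairing on $S_G$ appears in the limiting distribution (the weights $\#\{\text{sym.\ perfect }\phi\}/(|B||\Aut(B)|)$), not as the count of special characters. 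This matters twice. First, the distribution in Theorem \ref{theorem:W:lap} has $H$-moments $\#\wedge^2 H$, so matching against symmetric-form counts would identify the wrong limit — $\#\Sym^2 H$ are exactly the moments of the skew-symmetric cokernels treated in this paper, whose limit laws are different. Second, as the paper emphasizes, moments of size $\#\Sym^2 H$ are too large for the determinacy theorem you invoke (\cite[Theorem 8.3]{Wood2017} applies for growth like $\#\wedge^2 H$, and the $\#\Sym^2$ regime famously fails to determine a unique distribution), so with your identification the final inversion step would not even be available. With the special set corrected to alternating forms, your outline does reconstruct the argument of \cite{Wood2017}; note also that the rank statement you mention is not part of Theorem \ref{theorem:W:lap} as stated.
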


{\bf Definition of $L_n$:}
Sometimes it is more convenient to work with the model $L_n$, obtained from $L_G=L_{M}$ (see \eqref{eqn:Lap}) where $M$ is the adjacency matrix of $G$, an Erd\H{os}-R\'enyi graph on $n+1$ vertices, by deleting the last row and column.   More precisely,  we let $x_{ij}$ be independent copies of a uniform random element of $\{0,1\}$ for $1\leq i <j \leq n+1$ and let $L_n$ be the $n\times n$ matrix with entries 
\begin{equation}\label{eqn:redLap}
L_{ij}=\begin{cases}
x_{ij} & \mbox{ if } 1\leq i<j\leq n\\
x_{ji} & \mbox{ if } 1\leq j < i \leq n\\
-\displaystyle{\sum_{\substack{1\leq k \leq n+1\\ k\neq i}} x_{ki}}  & \mbox{ if } 1\leq i=j \leq n.
\end{cases}
\end{equation}
Unlike $L_G$, the determinant of $L_n$ is not necessarily zero. 
When $G$ is an undirected graph, projection onto the first $n$ coordinates gives an isomorphism $\Z^{n+1}_0\ra \Z^n$ that induces an isomorphism $\cok(L_n)\isom S_G$ (where 
$\cok(L_n)=\Z^n/L_n\Z^n$).  (This happens when $G$ is undirected because $L_G$ has rows summing to $0$, so the last column does not contribute to the column space.)

We note that the cyclicity event considered in Theorem \ref{theorem:cyclic:L_G} applied to $L_n$ (and its Smith Normal Form) is equivalent with the event that the greatest common divisor of all $(n-1)\times (n-1)$ minors of $L_n$ is 1. Hence Theorem \ref{theorem:cyclic:L_G} asserts that this event has probability $\approx .7935$ as well. 
The global statistics of Theorem~\ref{theorem:cyclic:L_G} are one of the central applications of the new methods of this paper.
We can also extend Theorem \ref{theorem:prodcyc:sym} to the model $S_G$ and $L_n$.

\begin{theorem}\label{theorem:prodcyc:lap}\label{theorem:cyclic:lap} %
 Let $G\in G(n,1/2)$ 
be an Erd\H{os}-R\'enyi graph on $n$ vertices. 
Let $B$ be a finite abelian group and let $k_0$ be larger than any prime divisor of $|B|$, and define $C_B=\{B\times C\,|\, C \textrm{ cyclic, } p\nmid |C| \textrm{ for }1<p<k_0 \}$. Then, we have
  $$\lim_{n\to \infty} \P(S_G \in C_B) =  \frac{\#\{ \mbox{symmetric, bilinear, perfect } \phi: B \times B \to \C^\ast \}}{|B| |\Aut(B)|}   \prod_{\substack{p<k_0\\p \textrm{ prime}}} (1-p^{-1})  \prod_{i=1}^\infty \zeta(2i+1)^{-1}$$
  and similarly for $\lim_{n\to \infty} \P(\cok(L_n)\in C_B)$.
 \end{theorem}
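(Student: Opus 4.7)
The approach is to deduce this global statistic from the local universality statement of Theorem~\ref{theorem:W:lap} by a prime-cutoff argument: fix a cutoff $k_0$, use the local result to pin down the Sylow $p$-subgroups for $p<k_0$, and control the contribution of the remaining primes by a uniform tail estimate. Because projection onto the first $n$ coordinates gives $\cok(L_n)\isom S_G$ in the undirected case, it is enough to prove the statement for $L_n$.

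Write $P'=\{p\text{ prime}:p<k_0\}$. By the definition of $C_B$,
$$\{\cok(L_n)\in C_B\}=\{(\cok(L_n))_{P'}\isom B\}\cap\bigcap_{p\ge k_0}\{(\cok(L_n))_p\text{ cyclic}\},$$
so a union bound yields the sandwich
$$0\le \P\big((\cok(L_n))_{P'}\isom B\big)-\P(\cok(L_n)\in C_B)\le \sum_{p\ge k_0}\P\big((\cok(L_n))_p\text{ not cyclic}\big).$$
Since $P'$ contains every prime dividing $|B|$, Theorem~\ref{theorem:W:lap} gives
$$\lim_{n\to\infty}\P\big((\cok(L_n))_{P'}\isom B\big)=\frac{\#\{\text{sym.\ bilin.\ perfect }\phi\}}{|B|\,|\Aut(B)|}\prod_{p<k_0}\prod_{k\ge 0}(1-p^{-2k-1}).$$
Factoring $\prod_{k\ge 0}(1-p^{-2k-1})=(1-p^{-1})\prod_{k\ge 1}(1-p^{-2k-1})$ and using the Euler product identity $\prod_p\prod_{k\ge 1}(1-p^{-2k-1})=\prod_{i\ge 1}\zeta(2i+1)^{-1}$, the $k_0\to\infty$ limit of this expression is precisely the constant asserted in the theorem.

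The remaining and main task is the uniform tail bound
$$\limsup_{n\to\infty}\sum_{p\ge k_0}\P\big((\cok(L_n))_p\text{ not cyclic}\big)\xrightarrow[k_0\to\infty]{}0.$$
Since $(\cok(L_n))_p$ fails to be cyclic exactly when $\operatorname{corank}(L_n\bmod p)\ge 2$, this amounts to an estimate of the shape $\P(\operatorname{corank}(L_n\bmod p)\ge 2)=O(p^{-2-\delta})$ valid uniformly in $n$. I expect this to be the hardest step, for exactly the reason stressed in the introduction: the diagonal entries of $L_n$ are determined by the off-diagonal entries, so the row-by-row inverse Littlewood--Offord step that drives the i.i.d.\ symmetric case (Theorem~\ref{theorem:prodcyc:sym}) cannot be applied verbatim. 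My plan is to adapt the symmetric argument by conditioning on all entries outside a single row/column and using the remaining genuinely independent $\{0,1\}$ variables to win anti-concentration for that row against any fixed pair of candidate kernel vectors $\bv,\bw\in\F_p^n$; the coupling introduced through the diagonal contributes only a rank-one perturbation that can be absorbed at this stage. One then unions over a compressible/incompressible decomposition of projective pairs $(\bv,\bw)$, handling the compressible stratum by direct counting (exploiting the graph structure to bound the chance that $L_n$ kills a sparse vector modulo $p$), and the incompressible stratum by the anti-concentration estimate. Once the tail bound is in hand, taking $n\to\infty$ and then $k_0\to\infty$ in the sandwich yields the claim.
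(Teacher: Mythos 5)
Your overall skeleton (reduce to $L_n$, use Theorem~\ref{theorem:W:lap} for the primes below $k_0$, control the larger primes by corank bounds) is aligned with the paper, but as written there are two genuine gaps. First, a bookkeeping gap: with a single cutoff $k_0$ your sandwich traps $\lim_n \P(\cok(L_n)\in C_B)$ between $\lim_n\P((\cok(L_n))_{P'}\isom B)$ and that quantity minus the tail sum, and this cannot identify the asserted constant for the \emph{fixed} $k_0$ of the theorem. Indeed the asserted constant differs from $\lim_n\P((\cok(L_n))_{P'}\isom B)$ by the factor $\prod_{p\ge k_0}\prod_{i\ge1}(1-p^{-2i-1})=1-\Theta(k_0^{-2})$, which is of exactly the same order as your tail bound, so your argument only brackets the limit within $O(k_0^{-2})$; and you cannot "let $k_0\to\infty$" because both $C_B$ and the claimed constant depend on $k_0$. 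The repair (and what the paper does, being "very analogous" to the proofs in Section~\ref{section:method}) is to introduce a second cutoff $K\to\infty$ with $k_0$ fixed: apply Theorem~\ref{theorem:W:lap} to the set of primes $<K$ and sum its limiting probabilities over the finite family $\{B\times C: C \text{ cyclic with prime divisors in } [k_0,K)\}$ (summation justified as in Remark~\ref{R:odd1} and \cite[Proposition 9]{Clancy2015}), handle $K\le p\le e^{n^{c}}$ by Proposition~\ref{prop:moderate:lap}, and all $p\ge e^{n^{c}}$ simultaneously by Proposition~\ref{prop:large:lap}, then let $K\to\infty$.

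Second, and more seriously, the proposed tail estimate $\P(\operatorname{corank}(L_n/p)\ge 2)=O(p^{-2-\delta})$ uniformly in $n$ is false, and no variant of the row-conditioning/inverse Littlewood--Offord union bound can deliver something summable over \emph{all} primes. If $G(n+1,1/2)$ has two isolated vertices (probability $\asymp n^2 4^{-n}$), the reduced Laplacian has rational corank $\ge 2$ and hence corank $\ge 2$ modulo \emph{every} prime; so already for $p\ge 5^n$ the left-hand side exceeds $p^{-2-\delta}$ by an exponential factor, and for $p$ above the Hadamard bound it equals $\P(\operatorname{corank}_{\Q}\ge2)$, independent of $p$. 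More structurally, any argument of the type you describe carries an additive error at best of size $e^{-n^{c}}$ per prime (even nonsingularity of $L_n$ is only known at this quality, Corollary~\ref{cor:singularity}), and there are $\exp(\Theta(n\log n))$ primes below the determinant bound $n^{n/2}$, so these errors cannot be summed -- this is exactly the obstruction the paper flags at the start of Section~\ref{section:bilinear}. That is why the paper proves the per-prime bound $O(p^{-3}+e^{-n^{c}})$ only in the moderate range $p\le e^{n^{c}}$ (Proposition~\ref{prop:moderate:lap}, itself requiring the structure analysis of Sections~\ref{section:lemmas}--\ref{section:rankevolving}), and for $p\ge e^{n^{c}}$ replaces per-prime estimates by a structural dichotomy: failure of the rank to grow forces a GAP-structured almost normal vector (Theorem~\ref{theorem:ILO:quadratic}), a condition that is lifted to characteristic zero and pushed back down to a single sub-exponential prime (Lemma~\ref{lemma:passing}), yielding the simultaneous statement of Proposition~\ref{prop:large:lap}. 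Finally, your remark that the diagonal dependence is "only a rank-one perturbation" understates the difficulty: the entries of a given row also appear in the diagonal of every other row, which is why the paper needs the $AB$-shuffle/reshuffling and the passage to symmetric matrices with prescribed diagonal (Lemmas~\ref{lemma:strucnotstruc}, \ref{lemma:strucnotstruc:sym}); but even granting that step, the summation-over-primes issue above is fatal to the plan as stated.
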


\subsection{Proof methods} 
 Our approach is to study the cokernels over all primes in order to understand the cokernels over $\Z$.   For each $n$, the approach we use for each prime $p$ depends on the size of $p$ relative to $n$.
\begin{itemize}
\item (Small primes, local statistics, Sections \ref{section:small:alt:1} and \ref{section:small:alt:2}) In our first interval of primes we use Theorem \ref{theorem:W:sym}, Theorem \ref{theorem:W:alt},  and Theorem \ref{theorem:W:lap} to study the $P$-parts of the cokernels, where $P$ is a product of a few fixed primes, and $n \to \infty$. 
In this paper, we only need to prove Theorem \ref{theorem:W:alt} for the skew-symmetric case.  In \cite{Wood2017}, the second author proved the local statistics for symmetric matrices by determining the (group-theoretic) moments of the distribution and proving that moments determine a unique distribution when they don't grow too quickly.  In this paper, we find the moments in the skew-symmetric case, but they grow too quickly to determine a unique distribution.  Indeed, even and odd dimensional skew-symmetric matrices have the same moments but very different distributions.  However, one can leverage further deterministic information about the group structures that can arise to prove that under those restrictions (which are different in the odd and even dimensional cases) that the moments indeed determine a unique distribution.
\vskip .2in
\item (Moderate primes, dynamics and rank statistics, Sections \ref{section:lemmas}, \ref{section:normalvector}, and \ref{section:rankevolving}) Next we study the cokernel modulo a prime $p$ as long as $p$ is sufficiently large and $p \le \exp(n^c)$ for some small constant $c$. Here we provide a very fine approximation of the rank evolution by a combinatorial method, which uses some ingredients from \cite{FJ, FJLS,KNg} and  \cite{LMNg}. This method centers around the study of non-structureness of the normal vectors of random subspaces spanned by the columns (Propositions \ref{prop:structure:subexp:sym}, \ref{prop:structure:subexp:lap}). One of the most challenging parts here is to find the right notion of structures, for which we can estimate very precisely the number of structured vectors even when $p$ is sub-exponentially large and when the matrix entries are not dependent, especially in the Laplacian case. 
\vskip .2in
\item (Large primes, simultaneously trivial statistics, Sections  \ref{section:bilinear} and \ref{section:largeprimes}) In the last stage we study primes $ \exp(n^c)< p < n^{n/2}$. We provide an inverse result characterizing quadratic forms of large concentration probability by building on our previous works \cite{Ng, NgW}. Two highlights of this part for the Laplacian case include the passing of certain rare events from all such large $p$ simultaneously to an event over $\Z$ (Lemma \ref{lemma:passing}), and the innovative part of passing from the Laplacian model to the random symmetric one with prescribed diagonal entries via Lemmas \ref{lemma:strucnotstruc}, \ref{lemma:strucnotstruc:sym}. Here, unlike the combinatorial structures used in the moderate prime parts, our structures are arithmetic (GAP), an extremely important structure that we must have to pass to all primes.

\end{itemize} 
 
As mentioned, for moderate and large primes, the Laplacian model poses a significant challenge because the diagonal entries of this model depend on all other entries. For instance, although this is not our main focus, to our best understanding it is not even known before our work that the Laplacian matrix $L_n$ is non-singular with sub-exponentially high probability, see Corollary \ref{cor:singularity}. Beside the highlights above, among our other technical contributions, the proofs of Lemma \ref{lemma:sparse:2},  Proposition \ref{prop:structure:subexp:lap'}, and Lemma \ref{lemma:strucnotstruc:sym} involve ``structure propagation", a way to deal with partially structured vectors by a series of conditionings. This method seems to be useful and of independent interest.

\subsection{Notations} We use $[n]$ for $\{1,\dots,n\}$. For an index set $I \subset [n]$, we write $I^c$ for the complement of $I$ in $[n]$. We denote the order of groups and sets using either absolute value signs $|\cdot|$ or $\#$. 

{\bf Probability:} We write $\P$ for probability and $\E$ for expected value. For an event $\mathcal{E}$, we write $\bar{\mathcal{E}}$ for its complement. 
We use $\wedge$ for logical \emph{and}.
\vskip .05in

{\bf Analysis:} We write $\exp(x)$ for the exponential function $e^x$. We write $\|.\|_{\R/\Z}$ to be the distance to the nearest integer. Throughout this paper, if not specified otherwise, $\al, A, C, C_\ast, c, c', c_i, K, T, \delta, \eta, \eps, \eps_0$, etc, 
will denote positive constants. When it does not create confusion, the same letter may denote different constants in different parts of the proof. 
 The value of the constants may depend on other constants we have chosen, but will never depend on the dimension $n$, which is regarded as an asymptotic parameter going to infinity. More specifically,  we say ``$f(n,\dots )= O_S(g(n,\dots))$'', or ``$f(n,\dots ) \ll_S g(n,\dots)$'', where $S$ is a subset of the parameters, to mean for any values $v_1,\dots,v_m$ of the parameters in $S$,
there is exists a constant $K>0$ depending on $v_1,\dots,v_m$, such that for all $n$, $|f(n,\dots )|\leq Kg(n,\dots).$ 
Also, we write $f(n,\dots )= \Theta_S(g(n,\dots))$ if $f(n,\dots )= O_S(g(n,\dots))$ and $g(n,\dots )= O_S(f(n,\dots))$. In many cases $S$ can be empty, in which case ``$f(n,\dots )= O(g(n,\dots))$", or ``$f(n,\dots ) \ll g(n,\dots)$",  means $|f(n,\dots )|\leq K g(n,\dots)$ where $K$ is an absolute positive constant. We also write $k=\omega(1)$ if $k\to \infty$ with $n$.  
\vskip .05in

{\bf Linear Algebra:} For a vector $\Bw=(w_1,\dots,w_n)$ we let $\supp(\Bw)=\{i\in[n] | w_i\ne 0\}$. We will also write $X \cdot \Bw$ for the dot product $\sum_{i=1}^n x_i w_i$.
We say $\Bw$ is a \emph{normal} vector for a subspace $H$ if $X\cdot \Bw=0$ for every $X\in H$. For a given index set $J \subset [n]$ and a vector $X= (x_1,\dots, x_n)$, we write $X|_J$ or sometimes $X_J$ to be the subvector of $X$ of components indexed from $J$. In this case we say $\suppi(X_J)=J$ (that is $\suppi(.)$ gives the index set for all the coordinates of a vector, even those that are $0$), and $|X_J|$, the dimension of $X_J$, is simply  $|J|$. 
 Similarly, if $H$ is a subspace of $\F_p^n$ then $H|_J$ or $H_J$ is the subspace spanned by $X|_J$ for $ X\in H$.  Finally, for $I, J \subset [n]$, the matrix $M_{I \times J}$ is the submatrix of the rows and columns indexed from $I$ and $J$ respectively. Sometimes we will also write $M_n$ for $M_{n\times n}$ if there is no confusion. Sometimes, for a matrix $M$ we write $\row_i(M)$ and $\col_i(M)$ for the $i$-th row and column respectively.
 
 For a prime $p$, we write $\F_p$ for the finite field with $p$ elements.
 For a matrix $M$ with coefficients in $\Z$, we write $M/p$ for the matrix with coefficients in $\F_p$ obtained from $M$ by reduction mod $p$.

\vskip .05in

{\bf Group Theory}: The exponent of a finite abelian group is the smallest positive integer $a$ such that $aG=0$. For a prime $p$, a finite abelian $p$-group is isomorphic to 
$\bigoplus_{i=1}^r \Z/p^{\lambda_i}\Z$ for some positive integers $\lambda_1\geq \lambda_2\geq\dots\geq \lambda_r$.
We call the partition $\lambda$ the \emph{type} of the abelian $p$-group. 
The {symmetric power} $\Sym^2 G$ is defined to be the quotient of $G\tensor G$ by the subgroup generated by elements of the form $g_1\tensor g_2-g_2\tensor g_1$.
If $G_p$ is type $\lambda$, generated by $e_i$ with relations $p^{\lambda_i}e_i=0$, then
$\Sym^2 G_p $ is generated by the $e_i  e_j$ for $i\leq j$ with relations
$p^{\lambda_j} e_i e_j=0$.
Similarly, $\wedge^2 G$ is defined to be the quotient of $G\tensor G$ by the subgroup generated by elements of the form $g\tensor g$, and $\wedge^2 G_\lambda=
\oplus_{i} (\Z/p^{\lambda_i} \Z)^{ i-1}$.  We write $\langle g_1,\dots \rangle$ for the subgroup generated by $g_1,\dots$.

{\bf Random groups}: For a random group $X$, the \emph{$G$-moment} of $X$ is $\E(\#\Sur(X,G))$, where $\Sur(X,G)$ denotes the surjective group homomorphisms from $X$ to $G$.

\subsection{Laplacian sampling}

We now introduce introduce various equivalent models of Laplacian matrices to be used.

{\bf Laplacian models:} for undirected graphs we will sample the Laplacian as follows.

\begin{definition}[Laplacian for random graphs]\label{def:lap} The model $L_n=(L_{ij})$ can be obtained via two phases of randomness.
\begin{itemize}
\item Phase 1: Assume that the vertices of $G=G_{n+1}$ are ordered as $\CO=(v_1,\dots, v_n,v_{n+1})$. We first sample $x_{ij}$ (via $G(n+1,1/2)$) and compute the degrees $d_1,\dots, d_{n+1}$ of the vertices, and then subtract those from the diagonals of $M_G$ to form $L_G$. We delete the last row and column to form  $L_n$. Notice that hence the column vectors of $L_n$ are not necessarily orthogonal to $\1=(1,\dots,1)$. 
\vskip .1in
\item Phase 2: Given an ordering $\CO'=(v_1',\dots, v_n',v_{n+1})$ of the vertices, we reshuffle the neighbors of the consecutive vertex pairs adapted to $\CO'$ as follows. 
For $N=0,\dots, n-1$,
we make the following modifications to the graph.
We  consider the set $I_{N}$ of indices $1\le j\le N$ (or vertices $v_i'$ from $\{v_1',\dots, v_{N}'\}$) where $v_j'$ is connected to {\it exactly one} of $v_{N+1}'$ or $v_{N+2}'$. Then for each $j \in I_{N}$, 
we flip a fair coin to either keep or swap whether each of $(v_j',v_{N+1})$ and $(v_j',v_{N+2})$ is an edge (see Figure \ref{figure:swap}).
 In other words, if $X=(x_1,\dots,x_{N})$ and $Y=(y_1,\dots, y_{N})$ are the (restricted) column vectors associated to $v_{N+1}'$ and $v_{N+2}'$, then $I_{N}$ is the collection of indices $j$ where $(x_j,y_j) =(0,1)$ or $(1,0)$. We then flip a fair coin to reassign $(x_j,y_j)$ to $(0,1)$ or $(1,0)$. We iterate this process until $N=n$, and call this model $ L_n(\CO')$.
\end{itemize}
\end{definition}

\begin{definition}
Let $L_n$ be the random matrix (reduced Laplacian) defined above in terms of the random integers $x_{ij}$.  For $A,B$ disjoint subsets of $[n+1]$, and an involution $b\mapsto \bar{b}$ on $B$,
we define the \emph{$AB$-shuffle} of $L_n$ to be the $n\times n$ matrix $L'_n$ with entries as follows.
For each $i\in A$, we (independently) make a random set $B_i$ by putting  each orbit of $B$ under the involution in $B_i$ independently with probability $1/2$.  Then, for $1\leq i\ne j \leq n+1$ we define random integers
 $x'_{ij}=x_{i\bar{j}}$ for $i\in A$ and $j\in B_i$; and
  $x'_{ij}=x_{\bar{i}j}$ for $j\in A$ and $i\in B_j$; and $x'_{ij}=x_{ij}$ for all other $i\neq j$.
  We then define $L'_n$ as in \eqref{eqn:redLap} with the $x'_{ij}$ replacing the $x_{ij}$.
\end{definition}

One can see that for any $A,B,$ the $AB$-shuffle of $L_n$ has the same distribution as $L_n$ by  considering the probability of obtaining any particular matrix.
Critically for our applications, the $AB$-shuffle leaves all the diagonal entries of the $([n]\setminus B)\times([n]\setminus B)$ submatrix of $L_n$ fixed.    
We can use the same definition to apply the $AB$-shuffle to our other models $M_n$ and $A_n$, and we sometimes do in order to give a proof for all three cases at once, but the shuffle is never necessary in these cases.

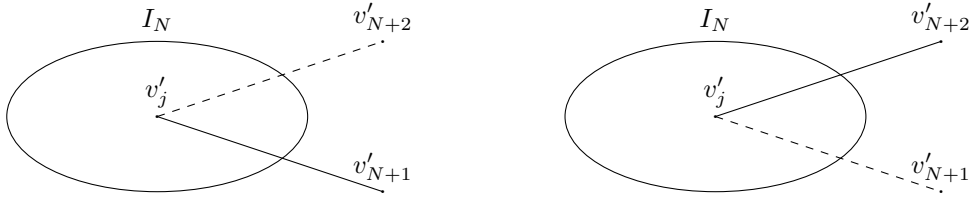
\begin{figure}\centering

\begin{tikzpicture}
\draw (2,2) ellipse (2cm and 1cm) node[above = .4in]{$I_{N}$};
\draw (2,2)  node{.} node[above]{$v_j'$} -- (5,1) node{.} node[above]{$v_{N+1}'$};
\draw [dashed] (2,2)  -- (5,3) node{.} node[above]{$v_{N+2}'$};

\end{tikzpicture}
\hfil
\begin{tikzpicture}
\draw (2,2) ellipse (2cm and 1cm) node[above = .4in]{$I_{N}$};
\draw  [dashed] (2,2)  node{.} node[above]{$v_j'$} -- (5,1) node{.} node[above]{$v_{N+1}'$};
\draw (2,2)  -- (5,3) node{.} node[above]{$v_{N+2}'$};

\end{tikzpicture}

\caption{Swapping neighbors.}
\label{figure:swap}

\end{figure}

To some extent, our reshuffling is similar to \cite{Mckay} and \cite{Cook,Tik} where shufflings/switchings were used within random graphs and random matrices. However our implementation here is rather straightforward. In Phase 1, by Chernoff's bound  
\begin{equation}\label{eqn:degreesq}
\P(\wedge_{i=1}^{n+1}(n/2 - t\sqrt{n} \le d_i \le n/2 + t\sqrt{n}) \ge 1-2n \exp(-2t^2), t>0.
\end{equation}
It is clear that the neighbor reshuffling process in Phase 2 does not change the distribution of $L_n$,  which we summarize below
\begin{fact}[Equivalence of models]\label{fact:EoM} The distribution of %
$L_n$ %
obtained in Phase 1, is the same as the distribution of $L_n(\CO')$
obtained in Phase 2, for any ordering $\CO'$. 
\end{fact}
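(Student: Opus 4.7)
The plan is to prove by induction on $N = 0, 1, \ldots, n-1$ that after the $N$-th neighbor-reshuffling step of Phase 2, the joint distribution of the adjacency entries $\{x_{ij}^{(N)} : 1 \le i < j \le n+1\}$ is identical to that of an independent family of $\mathrm{Bernoulli}(1/2)$ random variables (that is, to a fresh draw from $G(n+1,1/2)$). Once this is established, since the reduced Laplacian defined in \eqref{eqn:redLap} is a deterministic function of the adjacency entries, the equivalence in distribution of $L_n$ and $L_n(\CO')$ will follow immediately by comparing the final matrix with the one produced by Phase 1 alone.

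The base case $N=0$ is precisely Phase 1. For the inductive step, suppose the entries after step $N-1$ are i.i.d.\ $\mathrm{Bernoulli}(1/2)$. At step $N$ we inspect, for each $j \in [N]$, the pair
$\bigl(x_{j,N+1}^{(N-1)}, x_{j,N+2}^{(N-1)}\bigr)$; if it lies in $\{(0,1),(1,0)\}$ (so $j \in I_N$) we flip an independent fair coin and either keep the pair or swap its two coordinates, while pairs in $\{(0,0),(1,1)\}$ and all other matrix entries are left untouched. The fresh coins are mutually independent and independent of the current matrix.

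The core exchangeability observation is then the following: by the inductive hypothesis, for each $j \in [N]$ the pair $\bigl(x_{j,N+1}^{(N-1)}, x_{j,N+2}^{(N-1)}\bigr)$ is uniform on $\{0,1\}^2$, and these pairs are independent across $j$. Conditional on membership in $I_N$, each pair is uniform on $\{(0,1),(1,0)\}$; applying an independent fair keep-or-swap coin clearly returns a distribution that is again uniform on $\{(0,1),(1,0)\}$. Independence across different $j \in I_N$ is preserved because the coins are independent and only touch entries in row $j$, columns $N+1,N+2$. Combined with the untouched entries, the joint distribution of the full matrix $(x^{(N)}_{ij})_{i<j}$ is once again i.i.d.\ $\mathrm{Bernoulli}(1/2)$, closing the induction. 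Iterating up to $N=n-1$ yields the claim.

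The step to watch most carefully — and the closest thing to an obstacle — is to verify that the swap preserves not just the marginal distribution of the individual modified pair, but the full joint distribution over all entries. This works because (i) the swap at step $N$ is measurable with respect to columns $N+1,N+2$ and fresh external randomness, so it cannot disturb any other entries; (ii) the fresh coins used across different $j \in I_N$ are mutually independent, mirroring the product structure of the Erd\H os–R\'enyi distribution; and (iii) successive swap steps for $N$ and $N+1$ act on overlapping columns (both touch column $N+2$), but the inductive framework handles this by re-establishing the i.i.d.\ property after each step before the next one begins. No other ingredient is needed beyond this elementary exchangeability argument, so the fact follows without any auxiliary lemmas.
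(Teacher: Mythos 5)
Your proof is correct and follows essentially the same route as the paper: the paper also reduces to showing that a single reshuffling step preserves the law of $G(n+1,1/2)$ (phrased there as: conditioned on the upper-left $N\times N$ block, every upper-left $(N+2)\times(N+2)$ block is equally likely after the step), which is exactly the content of your inductive step. Your write-up merely makes explicit the uniform-pair-plus-fair-coin computation and the independence bookkeeping that the paper leaves to the reader.
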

\begin{proof} It suffices to show for $\CO' = (v_1',\dots, v_n',v_{n+1})$, 
that step $N$ of our reshuffling does not change the distribution of the Erd\H{o}s-R\'enyi graph $G(n+1,1/2)$ or its adjacency matrix.  
However, it is easy to see that conditioned on the upper-left $N\times N$ submatrix of the adjacency matrix,  after the shuffling at step $N$,
 every possible upper-left $(N+2)\times (N+2)$ submatrix of the adjacency matrix is equally likely, which proves the claim.
\end{proof}
Additionally, we will also rely on the following Chernoff's bound which says that the degrees are near $n/2$
and the reshuffling process significantly creates extra randomness,.  For $c>0$, we have
\begin{equation}\label{eqn:concentration} 
\P\Big(\wedge_{k \ge c n} |I_k| \in (k/2 - t\sqrt{n}, k/2+ t\sqrt{n}) \Big) \ge 1- n\exp(-\Theta_c(t^2)), t>0.
\end{equation}

In later applications we will choose either $t=c\sqrt{n}$ or $t=n^{1/2-c}$ for a small constant $c$. To end the discussion, in Section \ref{section:largeprimes} we will also make use of the following model (of random graphs of given degree sequence). Let $c>0$ be a constant, and assume that $(1/2-c)n \le d_i \le (1/2+c)n$ for all $1\le i\le n+1$. The number of such degree sequences $\Bd$ is simply bounded by $(2c n)^{n+1}$.  Thus with $\CG_{\Bd}$ being the collection of all simple graphs on $n+1$ vertices with degrees $\Bd$, we have 
$$\sum_{\Bd, \P(G(n+1,1/2) \in \CG_{\Bd}) < n^{-4n} }\P\big(G(n+1,1/2) \in \CG_{\Bd}\big) \le (2c n)^{n+1}n^{-4n} \le n^{2n}.$$
Hence, conditioning on the event $(1/2-c)n \le d_i \le (1/2+c)n$ for all $1\le i\le n+1$, it is natural to only focus on degree sequences $\Bd$ where 
$$\P\big(G(n+1,1/2) \in \CG_{\Bd}\big) \ge n^{-4n}.$$ 
Let $\mathcal{D}_g=\mathcal{D}_g(c,n)$ denote the collection of such degree sequences.

\begin{definition}[Laplacian for random graphs of good degree sequence]\label{def:lap'} Let $c>0$ be given sufficiently small. 
\begin{itemize}
\item Phase 1: we will choose each $\Bd$ from the degree sequence $\mathcal{D}_g$ with probability 
$$\P(\Bd \text{ is sampled}) = \frac{\P\big(G(n+1,1/2)\in \CG_{\Bd}\big)}{\sum_{\Bd' \in \mathcal{D}_g} \P\big(G(n+1,1/2) \in \CG_{\Bd'}\big)}.$$ 
\vskip .1in
\item Phase 2:  we sample a random graph $G$ from $\CG_{\Bd}$ with probability 
$$\frac{\P\big(G(n+1,1/2)= G\big)}{\P\big(G(n+1,1/2) \in \CG_{\Bd}\big)}.$$ 
\item We then obtain $L_n$ from the Laplacian of $G$ as usual. 
\end{itemize}
\end{definition}

\section{Proof of the global statistics from the main inputs}\label{section:method}

In this section we give the proofs of Theorems \ref{theorem:cyclic:sym}, \ref{theorem:cyclic:alt} and \ref{theorem:cyclic:L_G}
from their main inputs, which will then be the focus of the rest of the paper. 
As mentioned, for the small primes  we will use Theorems \ref{theorem:W:sym} and \ref{theorem:W:lap} on symmetric matrices and  graph Laplacians, and we will prove
 Theorem \ref{theorem:W:alt} on skew-symmetric matrices in Section~\ref{section:small:alt:2}.  For the moderate primes and for the symmetric and Laplacian models we show the following.  
\begin{prop}\label{prop:moderate:sym}\label{prop:moderate:lap} Let $G_n$ be either $M_n$ as in Theorem~\ref{theorem:W:sym} or $L_n$ as in Theorem~\ref{theorem:prodcyc:lap}. There exists a sufficiently small $c>0$  such that the following holds for every prime  $p \le e^{n^c}$ 
\begin{equation}\label{eqn:medium}
\P(\rk(G_n/p)\le n-2) =  O(\frac{1}{p^3} +e^{-n^{c}}).
\end{equation}
\end{prop}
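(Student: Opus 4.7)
My plan is to track the corank $r_k := \mathrm{corank}(G_k/p)$ of the top-left $k\times k$ principal submatrix as $k$ grows from some $k_0 = \Theta(n^{1/2})$ up to $n$, and to show that the corank behaves like a (sub)random walk on $\{0,1,2,\dots\}$ that rarely reaches $\ge 2$. The Schur-complement calculation for symmetric matrices shows $r_k - r_{k-1} \in \{-1,0,+1\}$, and moreover that $r_k \ge r_{k-1}$ requires the new column $c_k$ to satisfy $c_k \perp K_{k-1}$ in $\F_p^{k-1}$, where $K_{k-1} := \ker(G_{k-1}/p)$; the strict increase $r_k = r_{k-1}+1$ requires an additional anti-concentration event on the new diagonal entry (namely a certain quadratic form in $c_k$ vanishing mod $p$).

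The main per-step input will be a Halász/inverse–Littlewood–Offord anti-concentration estimate: conditional on $K_{k-1}$ having a basis of unstructured vectors in the sense of Propositions~\ref{prop:structure:subexp:sym} and \ref{prop:structure:subexp:lap}, we get
$$\P(c_k \perp K_{k-1} \mid G_{k-1}) \;\le\; (C/p)^{r_{k-1}}$$
up to an $e^{-n^c}$ error, and an additional $O(1/p)$ for the diagonal condition. Propositions \ref{prop:structure:subexp:sym}/\ref{prop:structure:subexp:lap} supply the unstructured-kernel hypothesis throughout the trajectory, so the ``bad'' event that the kernel is structured contributes only $O(e^{-n^c})$. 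Combining per-step estimates over all trajectories in a standard random-walk telescoping,
$$\P(r_n \ge 2) \;\le\; \sum_{\text{trajectories ending at } r_n\ge 2} \prod_{k \,:\, \Delta r_k \ge 0} \frac{C}{p^{r_{k-1}+\1_{\Delta r_k = +1}}} \;+\; e^{-n^c},$$
and the dominant contribution is from trajectories that stay at corank $\le 1$ until a brief excursion to $2$; summing the geometric series in the number of up-moves and excursion lengths yields the target $O(1/p^3)$. This matches exactly the tail $p^{-r(r+1)/2}$ of Theorem~\ref{theorem:W:sym} for $r=2$.

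For the Laplacian model, the extra obstacle is that the diagonal entries of $L_n$ are minus the sum of the corresponding row's off-diagonal entries, so the new diagonal entry $L_{kk}$ is \emph{not} independent of the previously exposed columns. My remedy is to interleave the $AB$-shuffle (Fact~\ref{fact:EoM}) with the exposure: at each step, shuffle the relevant column indices so that the ``new'' diagonal entry is built from fresh off-diagonal coin flips that are independent of $G_{k-1}$. This preserves the joint distribution of $L_n$ while restoring enough independence for the Halász bound. Proposition~\ref{prop:structure:subexp:lap} is phrased precisely to be compatible with the shuffled model.

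The hardest part of the argument will be this Laplacian decoupling step: we must ensure simultaneously that (i) after the shuffle, the null vector of the already-exposed block is still unstructured in the required quantitative sense, and (ii) the post-shuffle new column/diagonal entries are genuinely independent of the conditioning. This is what forces the proof of the non-structure Proposition~\ref{prop:structure:subexp:lap} to be significantly more delicate than in the i.i.d.\ symmetric case, and motivates the ``structure propagation'' device mentioned in the introduction. Once these two ingredients are in hand, the random-walk estimate proceeds in a unified way for both $M_n$ and $L_n$, yielding \eqref{eqn:medium}.
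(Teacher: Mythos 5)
Your skeleton is essentially the paper's: track the corank of the growing principal minors, obtain per-step transition probabilities of order $p^{-r}$ (new column orthogonal to the kernel) and $p^{-r-1}$ (additionally a quadratic compatibility condition) conditioned on the non-structure of generalized normal vectors from Propositions~\ref{prop:structure:subexp:sym} and \ref{prop:structure:subexp:lap}, and in the Laplacian case interleave the reshuffling of Definition~\ref{def:lap}/Fact~\ref{fact:EoM} so that the newly exposed column carries fresh balanced randomness on the adapted index set $I_{N+1}$ — this is exactly Propositions~\ref{prop:rankevolution:sym} and \ref{prop:rankevolution:lap}. Where you genuinely diverge is the endgame: the paper does not sum over corank trajectories by hand, but compares the corank chain (started at $N_0=\lfloor cn\rfloor$, with initial corank controlled by Lemma~\ref{lemma:quadratic}) to the chain of a uniform symmetric matrix over $\F_p$ grown from the same initial minor, using the comparison theorem (Theorem~\ref{theorem:comparison}) and the exactly known corank distribution of the uniform model; this yields the total-variation bounds of Theorems~\ref{theorem:rankstatistics:sym} and \ref{theorem:rankstatistics:lap}, from which $\P(\mathrm{corank}\ge 2)=O(p^{-3})$ is read off. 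Your direct trajectory computation could be made to work, but two steps fail as you state them.

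First, the ``additional $O(1/p)$ for the diagonal condition'' cannot come from the new diagonal entry: the entries are only $\alpha$-balanced (they may take just two values while $p$ is huge), and in the Laplacian model the new diagonal is not even independent of the exposed block. The factor $1/p$ for an up-move must come from anti-concentration of the quadratic form $\sum_{i,j}b_{ij}x_ix_j$ in the fresh coordinates (with $B$ the inverse of a full-rank minor), and this is not a direct application of linear Hal\'asz: it needs the Cauchy--Schwarz decoupling of Lemmas~\ref{lemma:decoupling1} and \ref{lemma:quad:lap}, which in turn consumes the non-structure statement for generalized normal vectors with \emph{nonzero} $Y_0$, applied to the vectors $B'Y''$ — an ``unstructured kernel'' hypothesis alone does not suffice. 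Without this, the up-move from corank $0$ costs $O(1)$ and from corank $1$ only $O(1/p)$, and your bound degrades to $O(1/p)$ rather than $O(1/p^3)$. Second, your trajectory sum is vacuous as written: every step with $r_{k-1}=0$ and $\Delta r_k=0$ contributes a factor $C/p^{0}=C\ge 1$, so the product over all steps with $\Delta r_k\ge 0$ blows up; the accounting must be restricted to the costly steps (up-moves and stays at positive corank), and one must also track the accumulation of the $e^{-n^{c}}$ conditional errors and of the exceptional events on which the structure propositions fail — precisely the bookkeeping that Theorem~\ref{theorem:comparison} is imported to handle. Two smaller corrections: starting at $k_0=\Theta(n^{1/2})$ is outside the regime in which the moderate-prime machinery is proved (one needs $cn\le N\le n$ and $p\le \exp(N^{c})$, plus a bound on the initial corank, supplied at $N_0=\lfloor cn\rfloor$ by Lemma~\ref{lemma:quadratic}); and in the Laplacian shuffle the fresh coin flips land in the off-diagonal entries of the new column over $I_{N+1}$, not in the new diagonal entry, which stays dependent and is absorbed into the quadratic form as above.
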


The behavior for the skew-symmetric case is slightly different, as the ranks are always even.
\begin{prop}\label{prop:moderate:alt} 
Let $A_n$ be as in Theorem~\ref{theorem:W:alt}.
There exists a sufficiently small $c>0$ such that the following holds for every prime $p \le e^{n^c}$: 
\begin{equation}\label{eqn:medium:alt}
\P(\rk(A_n/p)\le n-3) =  O(\frac{1}{p^3} +e^{-n^{c}}).
\end{equation}
\end{prop}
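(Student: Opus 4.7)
The plan is to adapt the rank-evolution strategy used for the symmetric case in Proposition \ref{prop:moderate:sym} to the skew-symmetric setting, where a key complication is that $\rk(A_n/p)$ is always \emph{even} for odd primes $p$. In particular, $\{\rk(A_n/p)\le n-3\}$ is equivalent to $\{\rk(A_n/p)\le n-4\}$ when $n$ is even (corank at least $4$), and to $\{\rk(A_n/p)\le n-3\}$ being corank at least $2$ above the generic value when $n$ is odd. Either way, one must show that the probability of a ``double rank drop beyond generic'' is $O(1/p^3+e^{-n^c})$.

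First I would expose a principal $(n-k)\times(n-k)$ submatrix for some $k=\Theta(\log n)$ and analyze how the rank of $A_n/p$ evolves as the remaining rows and their symmetric columns are added in pairs (the skew-symmetry forces this pairing). At each step, whether the rank goes up by $2$ or stays the same is governed by whether the newly added column lies in the span of the previous columns, equivalently by whether certain dot products $\bv^\top \col_j(A_n)$ vanish modulo $p$ for all normal vectors $\bv$ of the current column span. Here $\bv$ ranges over a codimension-determined subspace of the left kernel, and the relevant dot products involve entries $x_{ij}$ that are $\alpha$-balanced and independent of the previously exposed submatrix (up to the skew-symmetric identification $x_{ji}=-x_{ij}$, which can be handled by carefully choosing which coordinates of $\bv$ interact with which rows/columns).

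The central input is a structural dichotomy (analogous to Propositions \ref{prop:structure:subexp:sym} and \ref{prop:structure:subexp:lap}): any normal vector $\bv$ either is ``unstructured,'' in which case a Littlewood--Offord--type bound gives that $\Pr(\bv\cdot X=0 \bmod p)\le 1/p + O(e^{-n^c})$ for a fresh column $X$; or it is ``structured,'' in which case the number of such $\bv\in \F_p^n$ (up to scaling) is small enough to union bound away. Summing the unstructured contributions across the $k$ pair-steps and using independence of the exposed column increments, the probability that the rank is at least two pairs below generic is bounded by $\binom{k}{2}(1/p)^{2}\cdot (1/p) = O(1/p^3)$, where the extra factor of $1/p$ comes from the skew-symmetric pairing forcing an additional concentration event (the normal direction must be preserved under the skew-form once two independent kernel vectors exist). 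Structured normal vectors contribute the $e^{-n^c}$ term.

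The main obstacle will be proving the structural dichotomy in the skew-symmetric case with the correct constants: the dependence between $x_{ij}$ and $x_{ji}$ alters the standard Littlewood--Offord argument because the vector $\bv$ interacts with a column via entries that also appear (with a sign flip) in the corresponding row. A key technical step is thus to use an $AB$-shuffle-style argument (or a direct parity/involution argument) to decouple enough entries so that genuine independence is available, and then to verify that the structured vectors can be enumerated with good enough bounds $p^{O(1)}$ so that they are killed by the subexponential concentration of $\bv\cdot X$. Once this is established, combining with the parity constraint on $\rk(A_n/p)$ yields the claimed bound $O(1/p^3+e^{-n^c})$ uniformly over $p\le e^{n^c}$.
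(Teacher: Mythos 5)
Your proposal has the right general shape (structure/anti-structure dichotomy for normal vectors of the exposed column span, plus the parity observation that skew-symmetry forces rank jumps of $0$ or $2$), but it has a genuine gap in the initialization of the rank-evolution. You propose to expose a principal $(n-k)\times(n-k)$ submatrix with $k=\Theta(\log n)$ and recover the rank over the last $k$ steps. Since each step can decrease the corank by at most $1$, this only works if the corank of $A_{n-k}/p$ is already $O(\log n)$ with probability $1-O(p^{-3}+e^{-n^c})$ --- but that is essentially the statement being proved, for a skew-symmetric matrix of nearly the same size, so the argument is circular unless you run an induction or a much longer window. The only non-circular a priori corank bound available at this stage (the quadratic-repulsion estimate, Lemma~\ref{lemma:quadratic}) gives corank $O(n^{1/2+\eps})$, which forces a window of polynomial length. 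This is exactly why the paper starts the evolution at $N_0=\lfloor cn\rfloor$, runs it for $\Theta(n)$ steps (the per-step error $O(e^{-n^c})$ from Proposition~\ref{prop:structure:subexp:sym} and Lemma~\ref{lemma:highdim} accumulates harmlessly), and then, rather than bookkeeping failure events by hand, compares the whole trajectory with the uniform skew-symmetric chain via the Markov-chain comparison (Theorem~\ref{theorem:comparison}) and reads off the tail from the explicit limiting distributions $\mu_{alt,e}$, $\mu_{alt,o}$.

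A second, related problem is your probability count $\binom{k}{2}(1/p)^2\cdot(1/p)$. The ``extra factor of $1/p$ coming from the skew-symmetric pairing'' is not a real mechanism: the pairing's only role is the deterministic implication that if the new column lies in the old column span then the new row does too (so the rank jumps by $0$ instead of $1$); it produces no additional concentration event. The correct source of the exponent $3$ is that a rank-stay when the current corank is $j$ has conditional probability $\approx p^{-j}$ (Proposition~\ref{prop:rankevolution:alt}), so reaching final corank $\ge 3$ in the odd case costs at least $p^{-1}\cdot p^{-2}$ along the trajectory, and in the even case (where corank $\ge3$ forces corank $\ge4$) the cost is $p^{-6}$; this is what the comparison with $\mu_{alt,o}$ and $\mu_{alt,e}$ delivers. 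Finally, a minor point: no $AB$-shuffle or decoupling is needed in the skew-symmetric case, since when you expose column $N+1$ its entries $x_{i,N+1}$, $i\le N$, are fresh and independent of $A_N$ (and hence of any normal vector determined by $A_N$); the dependence $x_{N+1,i}=-x_{i,N+1}$ only ties the new row to the new column and is absorbed by the parity argument.
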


 Even such a small error bound cannot be summed over all primes, and so for very large primes we consider all primes together.  Here we separate into  two cases, for the symmetric and Laplacian model we show the following.

\begin{prop}\label{prop:large:sym}\label{prop:large:lap} Let $G_n$ be either $M_n$ as in Theorem~\ref{theorem:W:sym} or $L_n$  as in Theorem~\ref{theorem:prodcyc:lap}. For any given $C,c>0$ the following holds for sufficiently large $n$ 
\begin{equation}\label{eqn:large:lap}  
\P\Big(\exists \mbox{ prime } p\geq e^{n^{c}} : \rk(G_n/p)\le n-2
\Big) \le n^{-C}.
\end{equation}
\end{prop}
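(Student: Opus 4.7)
The plan is to rule out a rank drop of at least $2$ modulo every prime $p \ge e^{n^c}$, by splitting this range into two regimes and then gluing the analyses via a uniform passing argument.

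First I would handle the very-large prime regime $p > n^{n/2}$. The event $\rk(G_n/p) \le n-2$ is equivalent to $p$ dividing the gcd $D_{n-1}$ of all $(n-1)\times(n-1)$ minors of $G_n$. By the Hadamard bound, every such minor has absolute value at most $n^{(n-1)/2}\cdot K^{n-1}$ for a constant $K$ depending only on the distribution of $\xi$, so for $p > n^{n/2}$ (and $n$ large) this forces each such minor to vanish identically over $\Z$; equivalently, $\rk_{\Q}(G_n) \le n-2$. For random symmetric (resp.\ Laplacian) matrices this integer rank condition can be driven down to probability $n^{-C}$ by a standard row-exposure argument combined with a basic Littlewood--Offord step applied twice.

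Next, for the substantive intermediate range $e^{n^c} \le p \le n^{n/2}$, the plan is to use an inverse Littlewood--Offord analysis of normal vectors. Conditioning on $n-2$ of the rows (in the symmetric case using the $AB$-shuffle from the preliminaries to preserve enough fresh randomness in the remaining two rows and to keep the diagonals untouched), the event $\rk(G_n/p) \le n-2$ forces a pair of linearly independent vectors $\Bu, \Bv \in \F_p^n$ that are normal to the column space. Any such $\Bu$ must satisfy $\max_r \P(\Bu \cdot X \equiv r \pmod p) \gg 1/p$ by a nontrivial margin, because $\Bu$ is orthogonal to many nearly-independent random columns $X$. The inverse theorem built in \cite{Ng, NgW} would then force (a suitable lift of) $\Bu$ to be arithmetically structured, i.e.\ essentially contained in a generalized arithmetic progression of small volume. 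A careful enumeration of such structured vectors, combined with the $n$-fold orthogonality constraint, would yield a probability bound strong enough to be summed over all primes in this range.

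The hardest part, and where I expect the main obstacle, is the Laplacian model: the diagonal of $L_n$ is a linear function of the off-diagonal entries, so the row-exposure step and the direct inverse Littlewood--Offord bound do not apply without modification. I plan to circumvent this via the ``structure propagation'' technique outlined in the proof-methods subsection, together with the reductions in Lemmas~\ref{lemma:strucnotstruc} and~\ref{lemma:strucnotstruc:sym}, which convert the Laplacian analysis into that of a symmetric matrix with a prescribed diagonal so that the previous arithmetic-structure enumeration can be re-used. Finally, to upgrade the per-prime bound into a simultaneous bound over all $p \ge e^{n^c}$, I would invoke Lemma~\ref{lemma:passing}, which translates a family of rare events indexed by such primes into a single event defined directly over $\Z$, exploiting a Hadamard-style bound limiting how many large primes can simultaneously divide $D_{n-1}$.
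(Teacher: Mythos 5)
Your plan has a genuine gap in the core range $e^{n^c}\le p\le n^{n/2}$. A per-prime inverse Littlewood--Offord analysis followed by an enumeration of structured normal vectors cannot produce "a probability bound strong enough to be summed over all primes in this range": there are roughly $n^{n/2}$ such primes, and any enumeration of structured vectors mod $p$ carries factors that grow with $p$ (GAP generators, exceptional coordinates), so even a hypothetical per-prime bound of the form $e^{-cn}$ is useless against a union bound of size $n^{\Theta(n)}$ --- this is exactly the obstruction stated at the start of Section~\ref{section:bilinear}. Your closing appeal to Lemma~\ref{lemma:passing} gestures at the right repair but contradicts the summation claim and leaves unresolved where any probability is actually estimated: in the paper no probabilistic bound is ever proved at a large prime. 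Instead the argument is organized around a deterministic "watch list" of at most $O(n^2)$ primes (only primes dividing $\det(G_N)\neq 0$ can be bad, by Hadamard, on the event of Corollary~\ref{cor:singularity}), the increment-by-two statement Proposition~\ref{prop:fullgrowth}, and a transfer of the structured-normal-vector event through $\Z$ (Lemma~\ref{lemma:passing}) down to a \emph{single} moderate prime $e^{n^{c/2}}\le p'\le e^{n^{c}}$, where Proposition~\ref{prop:GAP:lap} gives the $n^{-\Theta(n)}$ bound. None of this architecture is recoverable from the proposal as written.

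There is a second gap in how you extract randomness. Conditioning on $n-2$ rows of a symmetric matrix leaves (by symmetry) only a $2\times 2$ corner of fresh entries, so there is no "$n$-fold orthogonality constraint" with independent coordinates; the AB-shuffle does not change this, and in any case the inference that a normal vector of the realized matrix "must" have large concentration probability is backwards --- one has to split into structured and unstructured candidate normal vectors and count, which is where the upper bound $p\le \exp(n^c)$ was essential in the moderate-prime sections. The paper's actual mechanism is incremental: one row and column are exposed at a time, and the failure of the rank to jump by two is governed by a \emph{quadratic} form in the new entries, which is why the new inverse theorems for bilinear and quadratic forms (Theorem~\ref{theorem:ILO:bilinear}, Theorem~\ref{theorem:ILO:quadratic}, Corollary~\ref{cor:ILO:quadratic:lap}) enter in Lemma~\ref{lemma:non-zero}; your plan omits this quadratic ingredient entirely. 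Finally, for $p> n^{n/2}$ your reduction to $\rk_\Q(G_n)\le n-2$ is fine, but dismissing that event as a "standard row-exposure plus basic Littlewood--Offord" estimate is unjustified for the Laplacian, whose diagonal dependence is precisely the obstruction; the paper obtains it from Corollary~\ref{cor:singularity}, which itself rests on the moderate-prime rank machinery rather than on any standard argument.
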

For the skew-symmetric case we show the following. 
\begin{prop}\label{prop:large:alt} Let $A_n$ be as in Theorem~\ref{theorem:W:alt}. For any given $C,c>0$, the following holds for sufficiently large $n$
\begin{equation}\label{eqn:large:alt}  
\P\Big(\exists \mbox{ prime } p\geq e^{n^{c}} :  \rk(A_n/p)\le n-3
\Big) \le n^{-C}.
\end{equation}
\end{prop}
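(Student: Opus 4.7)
The plan is to adapt the strategy that the paper uses for the symmetric and Laplacian models (Propositions~\ref{prop:large:sym}--\ref{prop:large:lap}) to the skew-symmetric setting. First, I would unpack what $\rk(A_n/p)\le n-3$ really means: since a skew-symmetric matrix over $\F_p$ always has even rank, this forces the left null space to have dimension at least $3$ when $n$ is odd and at least $4$ when $n$ is even. In either case the event implies the existence of three linearly independent vectors $\bv_1,\bv_2,\bv_3 \in \F_p^n$, each satisfying $A_n \bv_i \equiv 0 \pmod p$, equivalently $\bv_i \cdot \col_j(A_n) \equiv 0 \pmod p$ for every column $j$.

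Next, I would apply the inverse Littlewood--Offord theory for quadratic/bilinear forms developed in \cite{Ng,NgW} (and used in Section~\ref{section:bilinear} of this paper) to argue that any such simultaneous null vector $\bv_i$ must be highly \emph{structured}: it must lie in a generalized arithmetic progression of bounded rank and volume, up to a negligible exceptional set. The key technical issue is that the upper- and lower-triangular entries of $A_n$ are coupled by $x_{ji} = -x_{ij}$. I would handle this exactly as in the symmetric and Laplacian analyses: use the $AB$-shuffle (applied here to the skew-symmetric model, as explicitly allowed in the definition) to re-randomize a carefully chosen block of entries while freezing the rest, then expose one row/column at a time to feed into the concentration/inverse theorem.

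Once each $\bv_i$ is known to be structured modulo every prime $p \ge e^{n^c}$, I would invoke the simultaneous passage lemma analogous to Lemma~\ref{lemma:passing}, which lifts structures existing modulo many large primes to a single arithmetic structure over $\Z$. This converts the quantification ``$\exists\, p \ge e^{n^c}$'' into a single integer statement that does not depend on $p$. A volume bound on triples of structured integer vectors, combined with a straightforward union bound over the (bounded) possible GAP parameters, then gives probability at most $n^{-C}$ as required.

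The main obstacle, in my view, is the joint structure of the triple $(\bv_1,\bv_2,\bv_3)$: applying the single-vector inverse theorem to each $\bv_i$ separately only gives a GAP for each, and one must verify that the three GAPs (or their joint span) are small enough that the counting estimate beats the trivial $p^{3n}$ possibilities for a triple over $\F_p$. This is precisely where the gain from corank $\ge 3$ (rather than corank $\ge 2$ in the symmetric/Laplacian case) must be leveraged, and where the skew-symmetry — specifically, the automatic zero diagonal and the antisymmetry of the bilinear form $\bv_i^T A_n \bv_j$ — will need to be used to rule out degenerate configurations.
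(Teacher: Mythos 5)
There is a genuine gap, and it is precisely at the step you pass over most quickly: the claim that every null vector of $A_n/p$ is structured ``up to a negligible exceptional set,'' uniformly in $p$. The inverse Littlewood--Offord theorems only produce structure when a concentration probability is large; the complementary event (an \emph{unstructured} null vector mod $p$) must then be shown to be rare, and this cannot be done simultaneously for all $p\geq e^{n^c}$ by your route. A union bound over primes fails (there are super-exponentially many primes up to $(C_\xi\sqrt n)^n$, and restricting to the polynomially many primes dividing the relevant minors conditions on a matrix-dependent event, destroying the per-prime estimate), and the lifting Lemma~\ref{lemma:passing} applies \emph{only} to structured vectors --- structure is exactly what is ``passable to all large primes at once,'' while unstructured vectors do not lift to $\Z$. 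The paper avoids ever bounding the unstructured event at huge primes: it runs an iterative row/column exposure with a watch list $\CB_N$ of bad primes determined by the first $N$ columns (Proposition~\ref{prop:fullgrowth:alt}), and uses the inverse theorem in contrapositive form --- if the conditional probability that the rank fails to jump by $2$ at step $N$ exceeds $n^{-C}$, then $A_N$ has a \emph{fully structured} almost normal vector (here only the linear Theorem~\ref{theorem:ILO} is needed, since for skew-symmetric matrices $\rk(A_{N+1})\le\rk(A_N)+1$ already forces $\rk(A_{N+1})=\rk(A_N)$, so the quadratic inverse theorem you invoke is not required). The structured event is then transferred to all large primes simultaneously via Lemma~\ref{lemma:passing} and killed at a single moderate prime by Proposition~\ref{prop:GAP:lap}, and the final bound $n^{-C}$ in \eqref{eqn:large:alt} arises from summing the per-step conditional $n^{-C}$ failure probabilities over $O(n)$ steps and the polynomially many watched primes. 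Your static, one-shot argument on the full matrix has no substitute for this conditional mechanism, and without it the assertion ``each $\bv_i$ is structured modulo every $p\ge e^{n^c}$'' is unsupported.

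Your proposed endgame also misreads where the corank threshold enters. A counting bound on triples $(\bv_1,\bv_2,\bv_3)$ of structured vectors ``beating $p^{3n}$'' is neither needed nor the source of the gain: once one is reduced to structured vectors at a single moderate prime, a \emph{single} structured null vector already has probability $n^{-\Theta(n)}$ (Proposition~\ref{prop:GAP:lap}), far below the polynomial target, so no joint structure of a triple is exploited. The threshold $n-3$ is instead used through parity bookkeeping along the watch list: skew-symmetric ranks jump by $2$, a prime newly entering the watch list does so with corank exactly $2$ in the even-dimensional case (corank $1$ at the final odd step), and under the rank-growth event of Proposition~\ref{prop:fullgrowth:alt} this forces $\rk(A_{2n}/p)\ge 2n-2$ and $\rk(A_{2n+1}/p)\ge 2n$ for all $p\ge e^{n^c}$, which is exactly \eqref{eqn:large:alt}. (Minor point: the $AB$-shuffle is unnecessary here; the paper uses it only for the Laplacian model, where the diagonal depends on the other entries.)
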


Propositions \ref{prop:moderate:sym} and \ref{prop:moderate:alt} will follow from our work in Sections \ref{section:lemmas}, \ref{section:normalvector}, and \ref{section:rankevolving}, while Propositions \ref{prop:large:sym} and \ref{prop:large:alt} will be justified in Sections  \ref{section:bilinear} and \ref{section:largeprimes}.
We will now show how these results can be combined to obtain our main results.  

\begin{proof}[Proof of Theorems ~\ref{theorem:cyclic:L_G} and ~\ref{theorem:cyclic:sym}] We will use the fact that $\cok(G_n)$ (where $G_n$ is either the symmetric $M_n$ or the Laplacian $L_n$) is cyclic if and only if for every prime $p$, the matrix $G$ mod $p$ has rank at least $n- 1$. 
Fix an integer $k_0$.
By Proposition \ref{prop:moderate:lap}, we have
\begin{align*}
\P\Big( \rk(G_n/p)\le n-2
\mbox{ for some $k_0 \le p \le e^{n^{c/2}}$ } \Big) &= \sum_{p=k_0}^{e^{n^{c/2}}} O(p^{-3} +e^{-n^{c}}) \\  
&= O(\frac{1}{k_0^2}+e^{-n^{c/2}}).
\end{align*} 
Combined  with Proposition~\ref{prop:large:lap} we obtain
$$\P\Big(  \rk(G_n/p)\le n-2
\mbox{  for some }  p\geq k_0 \Big) =  O( \frac{1}{k_0^2}+e^{-n^{c/2}}+ n^{-C}).$$

Let $P$ be the set of primes $<k_0$, and let $\mathcal{C}$ be the set of all cyclic abelian $P$-groups.
We note that the probabilities in Theorems~\ref{theorem:W:sym} and \ref{theorem:W:lap} sum to $1$ over all $P$-groups (e.g., from \cite[Proposition 7]{Clancy2015}
and the orbit-stabilizer theorem as in the proof of \cite[Corollary 9.2]{Wood2017}).
Thus, as in Remark~\ref{R:odd1}, we can determine the asymptotic probability that $(\cok (G_n))_P\in \mathcal{C}$ by summing the probabilities of Theorem~\ref{theorem:W:sym} over all groups in $\mathcal{C}$, which is done in \cite[Proposition 9]{Clancy2015} (note the sum factors over primes $p$).  
So we conclude, 
\begin{equation}\label{eqn:B:comp}
\lim_{n\ra\infty} \P((\cok (G_n))_P\in \mathcal{C})=\prod_{p<k_0} \prod_{i=1}^\infty (1-p^{-2i-1}).
\end{equation}

As this is true for any fixed $k_0$, we can take $k_0\ra\infty$  and combine with the bounds for $p\ge k_0$ to obtain
$$ \liminf_{n \to \infty} \P\Big(\cok(G_{n}) \mbox{ is cyclic}\Big) \geq   \prod_{p \textrm{ prime}} \prod_{i=1}^\infty (1-p^{-2i-1}) =  \prod_{i=1}^\infty \zeta(2i+1)^{-1}.$$
The upper bound for $\limsup$ follows from \eqref{eqn:B:comp}. 
\end{proof}

  \begin{proof}[Proof of Theorem~\ref{theorem:cyclic:alt}]  
\begin{comment}  
   For the odd case, because $\cok(A_{2n+1})$ is always of the form $\Z^{2m+1} \times G \times G$, it is cyclic if and only if it is $\Z$, that is for every prime $p$, the matrix $A_{2n+1}$ mod $p$ has rank $2n$ (equivalently, at least $2n-1$ because the rank is always even). 
  Let $k_0$ be sufficiently large. By Proposition \ref{prop:moderate:alt}, for $n$ large enough we have
\begin{align*}
\P\Big(A_{2n+1} \mbox{ mod $p$ has rank at most $2n-2$ for some $k_0 \le p \le e^{n^{c/2}}$ } \Big) &\le \sum_{p=k_0}^{e^{n^{c/2}}} O(p^{-3} +e^{-n^{c}}) \\  
&= O(\frac{1}{k_0^2}+e^{-n^{c/2}}).
\end{align*} 
Combined  with Eq.~\eqref{eqn:large:alt} we obtain
$$\P\Big(A_{2n+1} \mbox{ mod $p$ has rank at least $2n-1$ for all $ p\geq k_0 $}\Big) \ge 1-  O( \frac{1}{k_0^2}+e^{-n^{c/2}}+ n^{-C}).$$
Now let $S_{k_0}$ be the sum of the Sylow $p$-subgroups of $\cok(A_{2n+1})$ for $p<k_0$. By Theorem \ref{theorem:W:alt} and  by \cite{C,Mac}
$$\lim_{n\to \infty} \P((S_{k_0})_p \simeq \Z/p\Z) =\prod_{i=1}^\infty (1-p^{-2i-1}).$$
Apply Theorem \ref{theorem:W:alt} again for $P=\prod_{p<k_0} p$ and take $k_0\ra\infty$, and then combine with the bounds for $p\ge k_0$ to obtain
$$ \liminf_{n \to \infty} \P\Big(\cok(A_{2n+1}) \mbox{ is cyclic}\Big) \geq   \prod_{p \textrm{ prime}} \prod_{i=1}^\infty (1-p^{-2i-1}) =  \prod_{i=1}^\infty \zeta(2i+1)^{-1}.$$
One can easily see that the upper bound for $\limsup$ follows from Theorem \ref{theorem:W:alt}, and hence we obtain the desired asymptotic statistics for the odd case.

We argue similarly for the even case. 
\end{comment}

The group $\cok (A_{2n})$ is of the form $\Z^{2m} \times G$, for some $G\in\mathcal{S}$.
The even rank follows from the fact that the rank of a skew-symmetric matrix is even.  The condition on the torsion part follows from the fact that the torsion has a skew-symmetric non-degenerate perfect pairing (\cite[Sections 3.4 and 3.5]{Bhargava2015b} and \cite[Proposition 2]{Delaunay2001}). 
Thus $\cok(A_{2n})$
is the square of a cyclic group if and only if for every prime $p$, the matrix $A_{2n}$ mod $p$ has rank at least $2n-2$. 
The rest of the proof is exactly like the proof of Theorems ~\ref{theorem:cyclic:sym} and ~\ref{theorem:cyclic:L_G} above, using
Theorem \ref{theorem:W:alt} and Propositions~\ref{prop:moderate:alt} and \ref{prop:large:alt} as input.
To see that the probabilities in Theorem~\ref{theorem:W:alt} sum to $1$, we can use an argument as in \cite[Theorem 9]{Delaunay2001}, but only taking the product over finitely many primes.  Then, to compute the sum of the probabilities over squares of cyclic $P$-groups, we can reason as in \cite[Example E]{Delaunay2001}.

The group $\cok (A_{2n+1})$ is of the form $\Z^{2m+1} \times G$, for some $G\in\mathcal{S}$, by the same reasoning as in the even dimensional case.
 If $P$ is the set of primes $<k_0$, where we take $k_0$ larger than the largest prime dividing $C$, then we have that $\cok (A_{2n+1})\isom \Z \times C$
 if and only if $(\cok_{\operatorname{tors}} (A_{2n+1}))_P\isom C$ and for all primes $p\geq k_0$ we have 
$\rank ( A_{2n+1}/p)\geq (2n+1)-2$.   Then the proof follows as above, using Theorem \ref{theorem:W:alt},
 Propositions~\ref{prop:moderate:alt} and \ref{prop:large:alt}, and \cite[Theorem 9]{Delaunay2001}.
 \begin{comment}
Let $k_0$ be sufficiently large. By Proposition \ref{prop:moderate:alt}, for $n$ large enough we have
\begin{align*}
\P\Big(A_{2n} \mbox{ mod $p$ has rank at most $2n-3$ for some $k_0 \le p \le e^{n^{c/2}}$ } \Big) &= \sum_{p=k_0}^{e^{n^{c/2}}} O(p^{-3} +e^{-n^{c}}) \\  
&= O(\frac{1}{k_0^5}+e^{-n^{c/2}}).
\end{align*} 
Combined  with Eq.~\eqref{eqn:large:alt} we obtain
$$\P\Big(A_{2n} \mbox{ mod $p$ has rank at least $2n-2$ for all $ p\geq k_0 $}\Big) \ge 1-  O( \frac{1}{k_0^2}+e^{-n^{c/2}}+ n^{-C}).$$
Now let $S_{k_0}$ be the sum of the Sylow $p$-subgroups of $\cok(A_{2n})$ for $p<k_0$. By Theorem \ref{theorem:W:alt} and  by \cite{C,Mac} 
$$ \P((S_{k_0})_p \simeq 1)+ \P((S_{k_0})_p \simeq (\Z/p\Z)^2)  =\frac{\prod_{i=0}^{\infty}(1-p^{-2i-1}) }{ 1}  
+p^{-1}\frac{\prod_{i=1}^{\infty}(1-p^{-2i-1}) }{ (1-p^{-2})}.$$
Apply Theorem \ref{theorem:W:alt} again for $P=\prod_{p<k_0} p$, take $k_0\ra\infty$, and then combine with the bounds for $p\ge k_0$ 
\begin{align*}
\liminf_{n \to \infty} \P\Big(\cok(A_{2n}) \mbox{ is square of cyclic}\Big) &\geq   \prod_{p \textrm{ prime}}  \frac{\prod_{i=0}^{\infty}(1-p^{-2i-1}) }{ 1}  
+p^{-1}\frac{\prod_{i=1}^{\infty}(1-p^{-2i-1}) }{ (1-p^{-2})}\\
&  =  \frac{\zeta(2)}{\prod_{i=1}^\infty \zeta(2i+1)}   \prod_{p \textrm{ prime}} (1-p^{-2}+p^{-3}  ).
\end{align*}
To this end, the upper bound for $\limsup$ follows from Theorem \ref{theorem:W:alt} for $\cok(A_{2n})$, and hence we have the identity in the $n \to \infty$ limit.
\end{comment}
\end{proof}

The proofs of Theorem \ref{theorem:prodcyc:sym}, Theorem \ref{theorem:prodcyc:alt}, and Theorem \ref{theorem:prodcyc:lap} are very analogous.

\section{Treatment for small primes: determination of the moments}\label{section:small:alt:1}

We will prove Theorem~\ref{theorem:W:alt} by finding the moments (as random groups, see \cite[Section 3.3]{Clancy2015}) of the distribution  $\cok(A_{2n})$, in the following theorem.
\begin{theorem}\label{T:almom}
Let $A_n$ be as in Theorem~\ref{theorem:W:alt}, and $G$ be any finite abelian group.  We have
$$
\lim_{n\ra\infty} \E(|\Sur(\cok(A_{n}),G)|)=|\Sym^2 G|.
$$
\end{theorem}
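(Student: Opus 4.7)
The plan is to compute the moment by enumerating surjective homomorphisms and using Fourier analysis on $G^n$. Since a homomorphism $F: \Z^n \to G$ factors through $\cok(A_n)$ precisely when $F A_n = 0$ in $G^n$, we have
$$\E|\Sur(\cok(A_n), G)| = \sum_{F \in \Sur(\Z^n, G)} \P(F A_n = 0).$$
For each fixed surjective $F = (F_1, \ldots, F_n) \in G^n$, I would apply Fourier inversion on $G^n$ to write $\P(F A_n = 0) = |G|^{-n} \sum_{\psi \in \widehat{G}^n} \E[\chi_\psi(F A_n)]$. Because $A_{ij} = \xi_{ij}$ for $i<j$ and $A_{ji} = -\xi_{ij}$, tracking how each $\xi_{kl}$ enters the various columns and using independence, the expectation factors as
$$\E[\chi_\psi(F A_n)] = \prod_{k<l} \widehat{\xi}\bigl(\alpha_{kl}(F,\psi)\bigr), \qquad \alpha_{kl}(F, \psi) := \psi_l(F_k)\,\psi_k(F_l)^{-1},$$
where $\widehat{\xi}(\alpha) = \E[\alpha^{\xi}]$ is the characteristic function of $\xi$ evaluated at the root of unity $\alpha_{kl}$.

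The expected main term comes from the \emph{symmetric} $\psi$, i.e., those with $\alpha_{kl}(F,\psi) = 1$ for every $k<l$; for each such $\psi$ every factor equals $1$. These are precisely the $\psi$ for which the bilinear form $B(x,y) := \psi(y)(F(x))$ on $\Z^n \times \Z^n$ is symmetric. The map $\psi \mapsto B$ is injective because $F$ is surjective, and its image (for symmetric $B$) consists of bilinear forms factoring through $F \tensor F: \Z^n \tensor \Z^n \to G \tensor G$, hence corresponds bijectively to symmetric bilinear forms $G \times G \to \C^*$. By Pontryagin duality for finite abelian groups, the number of such forms is $|\Hom(\Sym^2 G, \C^*)| = |\Sym^2 G|$. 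Combined with the standard fact that $|\Sur(\Z^n, G)|/|G|^n \to 1$ as $n \to \infty$ (valid in any parity of $n$, which is why the same limit $|\Sym^2 G|$ arises for all $n$), this identifies the limiting main term as exactly $|\Sym^2 G|$.

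The main obstacle is to show the contribution from non-symmetric $\psi$ is negligible. Since each $\alpha_{kl}$ takes values in the group of roots of unity of order dividing $\exp(G)$, the $\alpha$-balanced hypothesis yields $|\widehat{\xi}(\alpha_{kl})| \leq 1 - \eta$ for some $\eta = \eta(\alpha, G) > 0$ whenever $\alpha_{kl} \neq 1$. Hence the total error is bounded by
$$\frac{1}{|G|^n} \sum_{F \in \Sur(\Z^n, G)} \sum_{\psi \text{ non-symmetric for } F} (1-\eta)^{T(F,\psi)},$$
where $T(F,\psi) = \#\{k<l : \alpha_{kl}(F,\psi) \neq 1\}$. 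The strategy, in analogy with the moment method of \cite{Wood2017, W1}, is to interchange the order of summation and bound, for each $\psi$, the number of $F$ for which $T(F,\psi)$ is small. Exploiting the symmetry between the roles of $F$ and $\psi$ (both lie in $G^n$-like spaces under a fixed identification $G \isom \widehat{G}$), the inner $F$-sum is controlled by a rigidity statement: the columns of $F$ must satisfy many linear relations $\psi_l(F_k) = \psi_k(F_l)$ determined by $\psi$, forcing $F$ into a low-dimensional subvariety whose size we estimate by classifying $\psi$ into types according to the structure of these constraints. A column-swapping (replacement) argument then upgrades per-coordinate bounds into the exponential decay $(1-\eta)^{\Omega(n)}$ needed to make the full error sum $o(|\Sym^2 G|)$, completing the proof.
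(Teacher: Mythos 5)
Your setup and main-term computation are correct and essentially coincide with the paper's argument: your expansion over characters $\psi\in\Hom(\Z^n,G^*)$ is the paper's sum over $C$, your coefficients $\alpha_{kl}$ are multiplicative versions of the paper's $E(C,F,k,l)$, and your observation that for surjective $F$ the characters with all $\alpha_{kl}=1$ are exactly those of the form $\psi=\gamma\circ F$ for a symmetric bilinear form on $G$, hence $|\Sym^2 G|$ in number, is a clean rendering of Lemma~\ref{L:sfinj}. The gap is in the error estimate, which is the actual content of the theorem. First, the bound you aim for --- a uniform $(1-\eta)^{\Omega(n)}$ over all non-symmetric pairs --- is false for surjective $F$ that are far from being codes: take $G=\Z/p\Z$, $F=(g,0,\dots,0)$ with $g$ a generator, and $\psi$ supported on coordinates $1,2$ with $\psi_2\neq 0$; then exactly one coefficient $\alpha_{kl}$ is nontrivial, so this non-symmetric pair contributes a constant, not something exponentially small. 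Such $F$ are precisely why the paper works with ``codes of distance $\delta n$'' and disposes of the remaining surjections by the depth argument (the analogues of \cite[Lemmas 5.2, 5.4]{Wood2017}), bounding their number and $\P(FA_n=0)$ directly rather than through the character sum; your sketch has no substitute for this step.

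Second, even for code $F$, a linear exponent cannot close the argument: there are $|G|^n$ characters, so bounding each non-symmetric term by $(1-\eta)^{cn}$ and summing gives an error of order $|G|^n(1-\eta)^{cn}$, which tends to zero only if $(1-\eta)^c<|G|^{-1}$ --- false in general, since $\eta$ depends on the balancedness constant and $\exp(G)$ and may be tiny. This is why the paper needs the special/weak/robust trichotomy: robust $\psi$ have $\Theta(n^2)$ nonzero coefficients (contribution $e^{-cn^2}$, which beats $|G|^n$), while weak non-special $\psi$ have only linearly many nonzero coefficients but number at most $C_G\binom{n}{\lceil \dc n\rceil-1}|G|^{\dc n}$, with $\dc$ chosen small relative to $\delta$. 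Moreover, the claim that non-special $\psi$ for a code $F$ force at least $\delta n/2$ nonzero coefficients is itself a nontrivial lemma (the analogue of \cite[Lemma 3.7]{Wood2017}, proved via an order computation on the image of $m_F$), not a consequence of surjectivity alone. Your proposed interchange of summation and ``column-swapping'' could in principle be pursued by symmetry between $F$ and $\psi$, but the same counting burden reappears and the proposal does not carry it out; as written, the proof would fail at the error term.
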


However, these moments are exactly large enough that they do not determine a unique distribution.  In particular, the moments
do not see whether $n$ is even or odd, yet we know the distributions of $\cok(A_{n})$ are quite different in these cases because they are usually finite groups when $n$ is even
and always infinite groups when $n$ is odd.  So we will prove a new theorem on the moment problem for finite abelian groups to show that when we take into account this further information, that a unique distribution is determined by the moments.

\subsection{Proof of Theorem~\ref{T:almom}} 
In fact, we will prove the rate of convergence in Theorem~\ref{T:almom} is exponential in $n$.
The proof of this follows the proof of \cite[Theorem 1.2]{Wood2017} closely, which is the analogous result for symmetric matrices. 
Only small modifications are required, so we will be brief.

Let $a$ be a positive integer and let $G$ be a finite abelian group with $aG=0$. Let $R$ be the ring $\Z/a\Z$. 
For an $R$-module $A$, let $A^*:=\Hom(A,R)$.  
We define the $R$-module $V=R^n$, with a distinguished basis $v_1,\dots,v_n$ of $V$, and a dual basis $v_1^*,\dots, v_n^*$ of $V^*$.
We have 
\begin{align}\label{E:momsum}
\E(|\Sur(\cok(A_n),G  |)=\sum_{F\in \Hom(V,G)} \P(FA_n=0).
\end{align}
%
%
%
%
%
%
\begin{comment}
Then $F$ descends to a homomorphism from $V/col_V(M)$ if and only if $col_V(M)\sub \ker(F)$, or equivalently if the composite $FM\in \Hom(W,G)$ is $0$.
So, 
$$
\# \Hom(V/col_V(M),G) = \sum_{F\in \Hom(V,G)} 1_{FM=0},
$$
and similarly,
$$
\# \Sur(V/col_V(M),G) = \sum_{F\in \Sur(V,G)} 1_{FM=0},
$$
Suppose that $p^e G=0$, i.e. $G=\bigoplus_{i=1}^{r} \Z/p^{\lambda_i}\Z$ and $e\geq \lambda_1
\geq \lambda_2\dots \geq \lambda_r$.
\end{comment}
Let $\xi$ be a primitive $a$th root of unity. %
We view $A_n$ as an element of $\Hom(V^*,V)$, and have
$$
\P(FA_n=0) =\frac{1}{|G|^n} \sum_{C\in \Hom(\Hom(V^*,G),R)) } \E (\xi^{C(FA_n)}).
$$
We have a natural isomorphism $\Hom(\Hom(V^*,G),R))\isom \Hom(V,G^*)$ and so we often view $C$ in this latter group.
We write $e: G^* \times G\ra R$ for the map that evaluates a homomorphism.  
Since $A_n$ is skew-symmetric, we have
\begin{align*}
&C(FA_n)=\sum_{i=1}^n \sum_{j=1}^n 
e(C(v_j) ,F(v_i) ) x_{ij}\\&=\sum_{i=1}^n \sum_{j=i+1}^n 
(e(C(v_j) ,F(v_i) ) -e(C(v_i) ,F(v_j) )  )x_{ij} .
\end{align*}
For $i<j$ we define, 
$E(C,F,i,j):=e(C(v_j) ,F(v_i) ) -e(C(v_i) ,F(v_j) )$, and so
\begin{equation}\label{E:factor}
\P(FA_n=0) =\frac{1}{|G|^n} \sum_{C\in \Hom(\Hom(V^*,G),R)) } \prod_{i\leq j} \E (\xi^{E(C,F,i,j)x_{ij}}).
\end{equation}
We will show many of these $E(C,F,i,j)$ coefficients are non-zero.

For $F\in \Hom(V,G)$ and $C\in \Hom(V,G^*)$, we have a map
 $\phi_{F,C} \in \Hom(V, G \oplus G^*)$ given by adding $F$ and $C$.
Similarly, we have a map $\phi_{C,F} \in \Hom(V, G^* \oplus G)$ given by adding $C$ and $F$.
Note has $V$ has distinguished submodules $V_\sigma$ generated by the $v_i$ with $i\not \in \sigma$ for each $\sigma\sub [n]$.  So $V_\sigma$ comes from not using the coordinates in $\sigma$.
Clearly, for any submodule $U$ of $V$, 
$$\ker(\phi_{F,C}|_{U} )\sub \ker(F|_{U} ).$$
Now we will define the key structural properties of $C$ and $F$ that determines if enough of the coefficients $E(C,F,i,j)$ are non-zero.

\begin{definition}
Let $0<\dc<1$ be a real number which we will specify later in the proof.
Given $F$, we say $C$ is \emph{robust} (for $F$)  if for every $\sigma\sub[n]$ with $|\sigma|<\dc n$,
$$
\ker(\phi_{F,C}|_{V_\sigma} )\ne \ker(F|_{V_\sigma} ).
$$
Otherwise, we say $C$ is \emph{weak} for $F$. 

We say that $F\in \Hom(V,G)$ is a \emph{code} of distance $w$, if for every $\sigma\sub [n]$ with $|\sigma|<w$, we have $FV_\sigma=G$.
In other words, $F$ is not only surjective, but would still be surjective if we throw out (any) fewer than $w$ of the standard basis vectors from $V$.  
\end{definition}

For $C=0$, of course all the $E(C,F,i,j)$ are $0$.  However, given $F$, there are other $C$ for which this can happen, and next we will identify those $C$.
Given an $F\in\Hom(V,G)$, we have a map
\begin{align}\label{E:mexplicit}
m_F :\Hom(V,G^*) &\ra \wedge^2 V^*\notag\\
C &\mapsto  \sum_{i=1}^n \sum_{j=i+1}^n (e(C(v_j) ,F(v_i) ) -e(C(v_i) ,F(v_j) )  )v_i^* \wedge v_j^* .
\end{align} 
We now determine some elements $C\in \Hom(V,G^*)$ that are in the kernel of $m_F$, i.e. all the $E(C,F,i,j)$ are $0$.  The following construction corrects the construction in \cite{Wood2017} and translates it to the skew-symmetric case.
Let $\Sym(G\tensor G, R)$ denote the subset of $\Hom(G\tensor G, R)$ that are symmetric, i.e. $\alpha$ such that $\alpha (g_1\tensor g_2)=\alpha(g_2\tensor g_1)$ for all $g_1,g_2\in G$.  
So we have a map
\begin{align*}
s_F: \Sym(G\tensor G, R) &\ra \Hom(V,G^*)\\
\alpha &\mapsto \left(v\mapsto (g\mapsto  \alpha(F(v)\tensor g) )  \right).
\end{align*}
Using Equation~\eqref{E:mexplicit}, we will check that  $\im(s_F)\sub \ker (m_F)$.
The $v_a^* \wedge v_b^*$ coefficient of $m_F(s_F(\alpha))$ is %
\begin{align*}
&e(C(v_b) ,F(v_a) ) - e(C(v_a) ,F(v_b) ) = 
\alpha(F(v_b)\tensor F(v_a) ) - \alpha(F(v_a)\tensor F(v_b) )
=0.
\end{align*}
We call the $C$ in  $\im(s_F)$ \emph{special} for $F$.

\begin{lemma}\label{L:sfinj}
If $FV=G$, then we have that $s_F$ is injective.  In particular, $\#\Sym^2 G| \#\ker(m_F)$.
\end{lemma}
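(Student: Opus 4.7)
The plan is to establish injectivity of $s_F$ directly from the surjectivity of $F$, and then convert this into a divisibility statement by counting homomorphism groups.

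For injectivity, I would suppose $\alpha \in \Sym(G\tensor G, R)$ lies in $\ker(s_F)$. Unpacking the definition, $s_F(\alpha)=0$ means that for every $v \in V$ and every $g \in G$, the element $\alpha(F(v)\tensor g) \in R$ vanishes. Since $FV=G$ by hypothesis, as $v$ ranges over $V$ the element $F(v)$ ranges over all of $G$. Thus $\alpha(g_1 \tensor g_2) = 0$ for every $g_1, g_2 \in G$, forcing $\alpha=0$. This is the only real content of the injectivity claim.

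For the divisibility statement, I would use the universal property of the symmetric square to identify $\Sym(G\tensor G, R)$ with $\Hom(\Sym^2 G, R)$. Since $aG=0$ we have $a(\Sym^2 G)=0$, so $\Sym^2 G$ is annihilated by $a$ and hence embeds into some power of $R=\Z/a\Z$. A standard fact for finite abelian groups annihilated by $a$ is that $|\Hom(H,\Z/a\Z)|=|H|$ (reduce to cyclic summands $\Z/m\Z$ with $m\mid a$, where $\Hom(\Z/m\Z, \Z/a\Z)\isom \Z/m\Z$). Applying this with $H=\Sym^2 G$ gives $|\Sym(G\tensor G, R)|=|\Sym^2 G|$.

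Combining these: $s_F$ is an injective homomorphism of abelian groups whose image lies in $\ker(m_F)$ (already established before the lemma). Therefore $\im(s_F)$ is a subgroup of $\ker(m_F)$ of order exactly $|\Sym^2 G|$, and Lagrange's theorem yields $\#\Sym^2 G \mid \#\ker(m_F)$. There is no significant obstacle here; the argument is essentially a two-line diagram chase once the counting identity for $\Hom(\Sym^2 G, \Z/a\Z)$ is in hand.
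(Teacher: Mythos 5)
Your proof is correct, but it takes a genuinely different and more elementary route than the paper. You kill the kernel directly: $s_F(\alpha)=0$ means $\alpha(F(v)\tensor g)=0$ for all $v\in V$, $g\in G$, and since $FV=G$ the pure tensors $F(v)\tensor g$ exhaust all pure tensors, which generate $G\tensor G$, so $\alpha=0$; combined with the counting identity $|\Sym(G\tensor G,R)|=|\Hom(\Sym^2 G,R)|=|\Sym^2 G|$ (valid since $a\,\Sym^2 G=0$) and the already-verified containment $\im(s_F)\sub\ker(m_F)$, Lagrange gives $\#\Sym^2 G\mid\#\ker(m_F)$. The paper instead proves the divisibility $\#\Sym^2 G\mid \#\im(s_F)$ by an explicit computation: it reduces to the case of a $p$-group of type $\lambda$, chooses $\tau\sub[n]$ so that the $Fv_i$, $i\in\tau$, generate $G$, builds a basis $w_1,\dots,w_r$ of the free module $W$ with $Fw_j=e_j$, and computes $s'_F(e_{ij}^*+e_{ji}^*)=w_i^*\tensor e_j^*+p^{\lambda_i-\lambda_j}w_j^*\tensor e_i^*$ and $s'_F(e_{ii}^*)=w_i^*\tensor e_i^*$ together with their orders, concluding $p^{\lambda_1+2\lambda_2+\dots+r\lambda_r}\mid\#\im(s_F)$; since the image order also divides the domain order, injectivity follows. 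For the lemma as stated your argument is shorter and fully rigorous. What the paper's heavier computation buys is the explicit basis-and-orders machinery that is reused immediately afterwards (in the skew-symmetric adaptation of \cite[Lemma 3.7]{Wood2017} cited in the proof of Lemma~\ref{L:FullFcode}), where one needs a lower bound on the size of the image of a quotient of $m_F$, not merely injectivity of $s_F$; your clean kernel argument does not by itself supply that input, though it is not required for the present statement.
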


\begin{proof}
Note $|\Sym(G\tensor G, R)|=|\Sym^2 G|$. %
It suffices to show that $\#\Sym^2 G| \#\im(s_F)$. 
Since everything in sight can be written as a direct sum of Sylow $p$-subgroups, we can reduce to the case that $G$ is a $p$-group of type $\lambda$ (and accordingly assume $R=Z/p^e\Z$). Let $r=\lambda_1'$.

By \cite[Lemma 3.4]{Wood2017}, we can find $\tau\sub [n]$ with $|\tau|=r$ such that $Fv_i$ generate $G$ for $i\in\tau$.  Let $W$ be the submodule of $V$ generated by the $v_i$ for $v\in \tau$.
Let $e_i$ generate $G$ with relations $p^{\lambda_i} e_i=0$.
Let $w_j\in W$ be such that $Fw_j=e_j$.
Let $W'\sub W$ be the subgroup of $W$ generated by the $w_j$.  As in \cite[Lemma 3.6]{Wood2017}, by Nakayama's Lemma we have that $W'=W$.
Since the $r$ elements $w_1,\dots, w_r$, generate the free rank $r$ $R$-module $W$, they must be a basis, and we have a dual basis $w_i^*$ of $W^*$.

We have that $\Hom(G\tensor G, R)$ is generated by $e_{ij}^*$ with relations
$p^{\min(\lambda_i,\lambda_j)}e_{ij}^*$ and $e_{ij}^*(e_a\tensor e_b)$ 1 if $a=i$ and $b=j$ and $0$ otherwise.  Also,
$\Sym(G\tensor G, R)$ is generated by $e_{ij}^*+ e_{ji}^*$ for $i<j$ and $e_{ii}^*$.
Let $G^*$ be generated by $e_1^*, \dots ,e_r^* $ with relations $p^{\lambda_i}e_i^*=0$, 
and such that  $e_i^* e_i =p^{e-\lambda_i}$, and for $i\ne j$ we have $e_i^* e_j =0 $.

Recall we have
$$
s_F: \Sym(G\tensor G, R) \ra \Hom(V,G^*).
$$
We can take the further quotient 
$$
s'_F: \Sym(G\tensor G, R) \ra \Hom(W,G^*).
$$
We see that by the definition of $s_F$
$$
s_F(e_{ij}^*+ e_{ji}^*)(w_a)(e_b)
= (e_{ij}^*+ e_{ji}^*)(e_a\tensor e_b)
$$
and
$$
s_F(e_{ii}^*)(w_a)(e_b)
= (e_{ii}^*)(e_a\tensor e_b)
$$
Recall that since $W$ is a free $R$-module, the natural map $W^* \tensor G^* \ra \Hom(W,G^*)$ is an isomorphism.  
By noting the values on each $w_a$ and $e_b$ above, we can confirm that for $i<j$
$$
s'_F(e_{ij}^*+ e_{ji}^*)= w_i^* \tensor  e_j^* + p^{\lambda_i-\lambda_j} w_j^* \tensor e_i^*,.
$$
which has order $p^{\lambda_j}$.
Also, $$
s'_F(e_{ii}^*)= w_i^* \tensor  e_i^*.
$$
which has order $p^{\lambda_i}$.
We can conclude that
$$
p^{\lambda_1+2\lambda_2+2\lambda_3+\dots +r\lambda_r} \mid \#\im(s'_F) \mid \#\im(s_F).
$$
\end{proof}

We now give a good bound on the probability that a code descends to a map from the cokernel of a random matrix.

\begin{lemma}\label{L:FullFcode}
Given $\delta>0$, there there is a $c>0$
and a real number $K$ (depending only on $\alpha, \delta, a,$ and $G$)
  such that for $F\in \Hom(V,G)$  a code of distance $\delta n$ and $A\in\Hom(V^*,G)$, we have, for all $n$,
\begin{align*}
 \left|\P(FA_n=0) - |\Sym^2 G||G|^{-n}\right| &\leq
 \frac{K\exp(-cn)}{|G|^{n}}
\end{align*}
and
\begin{align*}
 \P(FA_n=A) \leq  K|G|^{-n}.
\end{align*}
\end{lemma}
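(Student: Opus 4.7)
The plan is to apply the Fourier identity \eqref{E:factor}, which writes
\[\P(FA_n=0) = \frac{1}{|G|^n} \sum_{C\in \Hom(V,G^*)} \prod_{i<j} \E(\xi^{E(C,F,i,j)x_{ij}}),\]
and to partition the sum over $C$ into three classes whose contributions live on separate scales: the \emph{special} $C$ (those in $\im(s_F)$), the \emph{robust} $C$, and the \emph{non-special weak} $C$. For the special class every coefficient $E(C,F,i,j)$ vanishes, so each term is $1$; since $F$ is a code it is surjective, so Lemma~\ref{L:sfinj} gives $|\im(s_F)|=|\Sym^2 G|$, and the special $C$ contribute exactly $|\Sym^2 G|/|G|^n$, which is the main term. (Note: special always implies weak, since $C=s_F(\alpha)$ makes $C(v)$ a function of $F(v)$, hence $\ker\phi_{F,C}|_{V_\sigma}=\ker F|_{V_\sigma}$ for every $\sigma$.)

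The $\alpha$-balanced hypothesis supplies the analytic gap: there is $c_1=c_1(\alpha,a)>0$ with $|\E(\xi^{tx_{ij}})|\le 1-c_1$ for every $t\in R\setminus\{0\}$ (apply \eqref{eqn:alpha} for a prime $p\mid a$ whose image does not annihilate $t$). So if $E(C,F,i,j)\ne 0$ for at least $N$ pairs, the product is bounded by $(1-c_1)^N$. What we then need are three structural inputs -- the skew-symmetric analogues of Lemmas~3.1, 3.7--3.9 of \cite{Wood2017}, obtained by rerunning those arguments with our definitions of $s_F$ and $m_F$ from \eqref{E:mexplicit} and the $\wedge^2$-based bookkeeping already tested in the verification of Lemma~\ref{L:sfinj}: (i) the number of weak $C$ is at most $C_G\binom{n}{\lceil \dc n\rceil-1}|G|^{\dc n}$; (ii) if $C$ is robust then at least $c_2 n^2$ of the $E(C,F,i,j)$ are nonzero; (iii) if $C$ is non-special then at least $c_3 n$ of them are nonzero.

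Combining: the robust contribution to the Fourier sum is at most $|\Hom(V,G^*)|(1-c_1)^{c_2 n^2}=|G|^n e^{-\Theta(n^2)}$, and the non-special weak contribution is at most
\[C_G\binom{n}{\lceil\dc n\rceil-1}|G|^{\dc n}(1-c_1)^{c_3 n}\le\exp\!\big(n(\dc\log(e/\dc)+\dc\log|G|-c_1 c_3)\big),\]
which, for $\dc$ chosen sufficiently small in terms of $c_1,c_3,|G|$, is at most $e^{-cn}$. Dividing by $|G|^n$ gives the first inequality with constants depending only on $\alpha,\delta,a,G$. The second inequality follows from Fourier inversion,
\[\P(FA_n=A)=\frac{1}{|G|^n}\sum_C\xi^{-C(A)}\prod_{i<j}\E(\xi^{E(C,F,i,j)x_{ij}}),\]
together with the triangle inequality: the same case analysis gives $\sum_C\big|\prod_{i<j}\E(\xi^{E(C,F,i,j)x_{ij}})\big|\le |\Sym^2 G|+O(e^{-cn})\le K$ uniformly in $A$, whence $\P(FA_n=A)\le K|G|^{-n}$.

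The main obstacle I expect is input (iii), the linear lower bound for non-special $C$. I would argue by contradiction: assume fewer than $c_3 n$ of the $E(C,F,i,j)$ are nonzero, localise on the at most $\delta n$ bad coordinates by passing to the quotient $m'_F$ of $m_F$ that kills all $v_i^*\wedge v_j^*$ outside the bad set, and show $|\im(m'_F)|\ge |G|^n/|\Sym^2 G|$ by constructing basis-adapted ``pivot'' coordinates in $V$ (using the fact that $F$ is a code to extract $r=\lambda_1'$ coordinates whose $F$-images generate $G$). Together with Lemma~\ref{L:sfinj} this forces $\ker(m'_F)=\im(s_F)$, contradicting $C\notin\im(s_F)$. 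Setting up the correct skew-symmetric bookkeeping -- in particular replacing $v_i^*\cdot v_j^*$ by $v_i^*\wedge v_j^*$ and the symmetric generators $e_{ij}^*+e_{ji}^*$ by the antisymmetric combinations used implicitly in \eqref{E:mexplicit} -- is the delicate step, but the verification of Lemma~\ref{L:sfinj} already shows that the relevant index calculus goes through.
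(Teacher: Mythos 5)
Your proposal follows essentially the same route as the paper: the Fourier expansion \eqref{E:factor}, the three-way split of $C$ into special, non-special weak, and robust, the count of weak $C$ and the linear/quadratic lower bounds on the number of nonzero $E(C,F,i,j)$ (the skew-symmetric analogues of \cite[Lemmas 3.1, 3.5, 3.7]{Wood2017}, proved exactly as you sketch with $s_F$, $m_F$ and Lemma~\ref{L:sfinj} replacing their symmetric counterparts), and the triangle-inequality argument for $\P(FA_n=A)$. One small slip in your sketch of input (iii): the quotient $m'_F$ should kill the coordinates $v_i^*\wedge v_j^*$ with \emph{both} indices in the bad set $\sigma$ (so that $m'_F(C)=0$ while the image stays of size at least $|G|^n/|\Sym^2 G|$), not those outside it, but the rest of your argument makes clear this is what you intend.
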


\begin{proof}[Proof of Lemma~\ref{L:FullFcode}]
We closely follow the proof of \cite[Lemma 4.1]{Wood2017}.
We have
$$
\P(FA_n=A) =\frac{1}{|G|^{n}}\sum_{C\in \Hom(V,G^*) } \E (\xi^{C(FA_n-A)}),
$$
and we break the sum into $3$ pieces based on when $G$ is special, not special and weak, or robust.

Given $F$, there are $|\Sym^2 G|$ special $C$ for which $\xi^{C(FA_n)}=1$ for all $A_n$ by Lemma~\ref{L:sfinj}.
In the sum above, these $C$ contribute $|\Sym^2 G||G|^{-n}$ when $A=0$ and at most $|\Sym^2 G||G|^{-n}$ in absolute value for any $A$.

From \cite[Lemma 3.1]{Wood2017}, we have
that the number of $C\in \Hom(V,G^*)$ such that $C$ is weak for $F$ is at most
$
C_G \binom{n}{\lceil \dc n \rceil -1} |G|^{\dc  n }.
$
 If  $C$ is not special for $F$, we have
$$
|\E (\xi^{C(FA_n-A)})| =|\E (\xi^{C(-A)})\prod_{1\leq i < j\leq n} \E(\xi ^{E(C,F,i,j)x_{ij}}  ) |\leq \exp(-\alpha \delta n/(2a^2)).
$$
This follows because \cite[Lemma 3.7]{Wood2017} tells us there are at least $\delta n/2$ of the $E(C,F,i,j)$ are non-zero, and then \cite[Lemma 4.1]{Wood2017}
bounds those factors by $\exp(-\alpha /a^2)$.  (The proof of \cite[Lemma 3.7]{Wood2017} goes through for our definition of $m_F$ using Lemma~\ref{L:sfinj} in place of \cite[Lemma 3.6]{Wood2017}, and
with the roles of $\Sym^2$ and $\wedge^2$ being reversed through the proof.)

Now, given a robust $C$ for $F$,  \cite[Lemma 3.5]{Wood2017} gives a lower bound on the number of non-zero $E(C,F,i,j)$.
The proof of \cite[Lemma 3.5]{Wood2017} goes through in this setting as long as we modify the definition of the map $t$ in 
\cite{Wood2017} so that now $t((a_1,b_1),(b_2,a_2)=e(b_2,a_1)-e(b_1,a_2)$.  With this definition, the proofs of \cite[Lemma 3.2, Corollary 3.3, Lemma 3.5]{Wood2017} still hold. 
We then have that at least $\dc \delta n^2/(2|G|^2|P|)$ of the $E(C,F,i,j)$ are non-zero (where $P$ is the set of primes dividing $a$). 
 So if $C$ is robust for $F$, we conclude that
$$ 
|\E (\xi^{C(FA_n-A)})| \leq \exp(-\alpha \dc \delta n^2/(2|G|^2|P|a^2)).
$$

\begin{comment} 
In conclusion
\begin{align*}
%
 &\left|\P(FX=A) - \frac{1}{|G|^{n}} \sum_{C \in \Hom(V,G^*), \textrm{ special}}|\E (\xi^{C(FX-A)})\right| \\
&\leq
 \frac{1}{|G|^{n}} \sum_{C \in \Hom(V,G^*), \textrm{ not special}}| \E (\xi^{C(FX-A)})|\\
 &\leq
 \frac{1}{|G|^{n}} 
\left( 
 C_G \binom{n}{\lceil \dc n \rceil -1} |G|^{\dc  n }
%
 \exp(-\alpha \delta n/(2a^2))+ |G|^{n} \exp(-\alpha \dc \delta n^2/(2|G|^2|P|a^2))
 \right).
\end{align*}
%
\end{comment}
Putting these bounds together, for any $c>0$ such that $c< \alpha \delta /(2a^2)$, given
 given $\delta, \alpha, G,c$, we can choose $\dc$ sufficiently small so that we have
\begin{align*}
& \left|\P(FX=A) -  \frac{1}{|G|^{n}} \sum_{C \in \Hom(V,G^*), \textrm{ special}} \E (\xi^{C(FX-A)})\right| \\&\leq
 \frac{1}{|G|^{n}} \left(C_G \exp(-cn)  + \exp(\log(|G|)n-\alpha \dc \delta n^2/(2|G|^2|P|a^2)) \right),
\end{align*}
and the lemma follows.
\end{proof}

\begin{definition}
For an integer $D$ with prime factorization $\prod_i p_i^{e_i}$, let $\ell(D)=\sum_i e_i$.
 The \emph{depth} of an $F\in\Hom(V,G)$ is the maximal positive $D$ such that
there is a $\sigma\sub [n]$ with $|\sigma|< \ell(D)\delta n$ such that $D=[G:FV_\sigma]$, or is $1$ if there is no such $D$. 
\end{definition}

Now we will complete the proof of Theorem~\ref{T:almom}, using the sum in \eqref{E:momsum}.
We let $K$ change in each line, as long as it is a constant depending only on $\alpha,G,\delta,a$.
We then apply \cite[Lemmas 5.2, 5.4]{Wood2017} (whose proofs go through in the current case) to bound the number of $F$ of each depth and their corresponding probabilities of $FA_n=0$, and we 
obtain
\begin{align*}
\sum_{\substack{F\in \Sur(V,G)\\ F\textrm{ not  code of distance $\delta n$}
} }
\P(FA_n=0)
&\leq 
\sum_{\substack{D>1\\ D\mid\#G}} \sum_{\substack{F\in \Sur(V,G)\\ F\textrm{  depth $D$}}}
\P(FA_n=0) \\
&\leq \sum_{\substack{D>1\\ D\mid\#G}} 
K\binom{n}{\lceil \ell(D)\delta n \rceil -1} |G|^nD^{-n+\ell(D)\delta n}
  e^{-\alpha(1-\ell(D)\delta)n} (a|G|/D)^{-(1-\ell(D)\delta)n} \\ %
 &\leq  K   e^{-cn},
\end{align*}
for any $0<c<\alpha$, and $\delta$ chosen small enough in terms of $c$.
Similarly,
\begin{align*}
\sum_{\substack{F\in \Hom(V,G)\\ F\textrm{ not  code of distance $\delta n$}
} }
|\Sym^2 G||G|^{-n}
 &\leq  K 
 e^{-cn},
\end{align*}
for any $0<c<\log(2)$, and $\delta$ small enough in terms of $c$.
Using Lemma~\ref{L:FullFcode},
\begin{align*}
\sum_{\substack{F\in \Sur(V,G)\\ F\textrm{  code of distance $\delta n$}
} }
\left| \P(FA_n=0) -|\Sym^2 G| |G|^{-n} \right|
&\leq 
 Ke^{-cn} . 
\end{align*}
Combining these estimates, we conclude the proof of Theorem~\ref{T:almom}.

\section{Treatment for small primes: Moments determining the distribution}\label{section:small:alt:2}

Now we prove Theorem \ref{theorem:W:alt} by showing that the moments in Theorem~\ref{T:almom} determine unique distributions over certain restricted families of groups.  
When the moments are bounded by quantities only slightly smaller than those in Theorem~\ref{T:almom}, the result 
\cite[Theorem 8.3]{Wood2017}  shows that moments determine a unique distribution on finite abelian $P$-groups (see \cite{Wood2017} for some of the history of this moment problem).  However, the moments in Theorem~\ref{T:almom},
famously, do not determine unique distribution.  These moments arise as the distribution of moments of the predicted distribution of Selmer groups of random elliptic curves in the heuristics of Poonen and Rains \cite{Poonen2012}, further developed by Bhargava, Kane, Lenstra, Poonen and Rains  \cite{Bhargava2015b}.
There are two different distributions predicted depending on whether the parity of the elliptic curve is even or odd, and those are the distributions we see as $\cok (A_n)$ in the even and odd dimensional cases, respectively.  With a bit more information on the groups, we show that we can recover the distribution as follows.

\begin{theorem}\label{T:mainmomlimit}
Let $P$ be a finite set of primes, and $X_n$ and $Y_n$ random abelian $P$-groups for each integer $n$
either (1) all supported on groups in $\mathcal{S}_P$, or (2) all
supported on groups of the form $\Z/a\Z \times G \times G$, for some integer $a$ and all abelian $P$-groups $G$ with $aG=0$.
If there is a constant $C$ such that for every finite abelian $P$-group $A$ we have
$$
\lim_{n\ra\infty} \E(\#\Sur(X_n,A))=\lim_{n\ra\infty} \E(\#\Sur(Y_n,A))\leq C |\Sym^2 A|,
$$
then for every finite abelian $P$-group $A$ we have $\lim_{n\ra\infty} \P(X_n\isom A)=\lim_{n\ra\infty} \P(Y_n\isom A).$
\end{theorem}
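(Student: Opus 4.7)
The plan is to proceed in three stages: (i) tightness of $\{X_n\}$ and $\{Y_n\}$ on the countable set of isomorphism classes in the restricted support, (ii) extraction of subsequential distributional limits whose moments agree with $f(A) := \lim_n \E(\#\Sur(X_n, A)) = \lim_n \E(\#\Sur(Y_n, A))$, and (iii) uniqueness of a probability measure on the restricted class with these prescribed moments.

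For (i), apply the hypothesis to $A = (\Z/p^k\Z)^m$. Whenever $X_n$ admits such an $A$ as a quotient one has $\#\Sur(X_n, A) = \Theta(p^{2rkm})$, where $r$ is the relevant rank parameter attached to the Sylow $p$-part of $X_n$; while $|\Sym^2 A| = p^{km(m+1)/2}$. A suitable choice of $k$ and $m$ then yields, via Markov, super-polynomial decay in the "size" of the Sylow-$p$ type of $X_n$. Combined over the finite set $P$, this produces tightness of $\{X_n\}$ (and similarly $\{Y_n\}$) on the countable support class, uniformly in $n$.

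For (ii), tightness plus a diagonal extraction yields subsequential weak limits $\mu_X = \sum_B q_B\, \delta_B$ and $\mu_Y = \sum_B q'_B\, \delta_B$, each a probability measure on the support class. The uniform moment bound enables dominated convergence in $\sum_B \#\Sur(B, A)\, \P(X_{n_j} \isom B)$, so that $\E_{\mu_X}(\#\Sur(\cdot, A)) = f(A) = \E_{\mu_Y}(\#\Sur(\cdot, A))$ for every finite abelian $P$-group $A$.

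For (iii), the main obstacle, the moments $f(A) \le C|\Sym^2 A|$ are famously insufficient on the class of \emph{all} finite abelian $P$-groups to uniquely determine a measure: indeed Theorem~\ref{theorem:cyclic:alt} produces two distinct such measures from $\cok(A_{2n})$ and $\cok(A_{2n+1})$. The restriction to class (1) or (2) must therefore be used decisively. The idea is to adapt \cite[Theorem 8.3]{Wood2017} by exploiting that structural restriction. For $B$ in the support class of bounded $p$-rank, $\#\Sur(B, A) = 0$ whenever $A$ has strictly larger $p$-rank, so moment constraints with $A$ of high rank interact only with the high-rank tail of the measure, already controlled by (i). Executing a M\"obius-type inversion along the quotient poset of the support class (with the natural $|\Sp(B)|$-weighting suggested by Theorem~\ref{theorem:W:alt}) expresses each $q_B$ as an absolutely convergent explicit linear combination $q_B = \sum_A \alpha_{B,A}\, f(A)$; absolute convergence uses the $|\Sym^2 A|$ bound paired against the "doubly-indexed" structure of the support (only $H \times H$, or $\Z/a\Z \times H \times H$, appear, halving the effective growth rate back into the regime of \cite[Theorem 8.3]{Wood2017}). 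The same formula yields $q'_B = \sum_A \alpha_{B,A}\, f(A)$, forcing $\mu_X = \mu_Y$. Since every subsequence has this same limit, the full-sequence limits in the statement exist and coincide. The hard part is stage (iii), quantifying precisely how the symplectic/cyclic-times-square structure of the support cuts the effective degree of the moment problem to make the inversion converge.
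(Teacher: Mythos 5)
Your stages (i) and (ii) are routine and fine in outline, but stage (iii) — which you yourself flag as "the hard part" — is the entire content of the theorem, and what you sketch there has a genuine gap. You assert that a M\"obius-type inversion over the quotient poset converges absolutely because the support consists of squares (or $\Z/a\Z$ times squares), "halving the effective growth rate back into the regime of \cite[Theorem 8.3]{Wood2017}." That is not the right mechanism, and as stated it cannot be: the two distinct limit laws with identical moments $|\Sym^2 A|$ (the even- and odd-dimensional skew-symmetric cokernel distributions of Theorem~\ref{theorem:W:alt}) are each already supported on exactly such a restricted class, so no bookkeeping of growth rates alone can restore determinacy — the restriction must enter through a structural property of the class that an inversion can exploit, and you never identify one. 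Concretely, you give no inversion coefficients $\alpha_{B,A}$, no proof of absolute convergence, and, most importantly, no argument that the inverted sum actually recovers the point masses $q_B$ rather than some other linear functional; the $|\Sp(B)|$-weighting you invoke is borrowed from a specific symplectic distribution and has no role in a uniqueness statement about arbitrary $X_n,Y_n$ satisfying the moment bound.

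For comparison, the paper resolves (iii) by a different and short device: replace $X_n$ by $X_n/\langle x_n\rangle$ for $x_n$ a uniform random element. This divides every $A$-moment by $|A|$, so the bound $C|\Sym^2 A|$ becomes $C|\Sym^2 A|/|A|=C|\wedge^2 A|$, which is inside the determinate regime of \cite[Theorem 8.3]{Wood2017}; hence the quotient distributions of $X_n$ and $Y_n$ have equal limits. The restricted support is then used exactly once, through a combinatorial fact (Hall polynomials, Littlewood--Richardson vanishing, Pieri rule): a group in $\mathcal{S}_P$, or of the form $\Z/a\Z\times G\times G$, is determined by any quotient by a single element — given a partition $\nu$, at most one admissible $\lambda$ has $\nu$ as a one-element quotient type. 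This lets one write $\P(X_n\isom A)$ as a finite sum of probabilities for the quotient variable and transfer the equality back. An inversion-style proof along your lines can in principle be made to work (the same class-structure input appears there as the statement that no surjection between two groups of the class has kernel of rank exactly one), but without that structural ingredient, and without the convergence and identity verifications, your stage (iii) is a placeholder rather than a proof.
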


This theorem can be shown in few different ways, and the following convenient argument is based on  work of the second author with W. Sawin.

\begin{proof}
A finite abelian $P$-group $G$ is in $\mathcal{S}_P$ if and only if for each prime $p\in P$, if the Sylow $p$-subgroup $G_p$ is of type $\lambda$, then all the $\lambda_i'$ are even.  Similarly, a finite abelian $P$-group is of the form in the second condition if and only if for each $p\in P$, if the Sylow $p$-subgroup $G_p$ is of type $\lambda$, we have that $\lambda_1=e$ where $p^e$ is the maximal power of $p$ dividing $a$, and all the non-zero $\lambda_i'$ are odd. Let $\mathcal{T}$ be the set of groups under consideration (either $\mathcal{S}_P$ or the groups described in the second case).  

Given a finite abelian group $G\in \mathcal{T}$, we can determine the isomorphism type of $G$ from any quotient of $G$ by one element.
To see this, we note that the Hall polynomial $g^{\lambda}_{\mu,\nu}(p)$ counts the number of subgroups of type $\mu$ of the finite abelian $p$-group of type $\lambda$ where the quotient has type $\nu$.  A subgroup generated by one element will correspond to a $\mu$ with $\mu_2=0$.  The Hall polynomial is zero, i.e. there are no such subgroups, when the Littlewood-Richardson coefficient $c^{\lambda}_{\mu,\nu}=0$ \cite[II:(4.3)]{Macdonald2015}.  
By the Pieri rule, we have that if $\mu_2=0$, then $c^{\lambda}_{\mu,\nu}=0$ whenever $\lambda'_i-\nu'_i\geq 2$ for some $i$.
Thus given any partition $\nu$, there is at most one partition $\lambda\in \mathcal{T}$  %
such that a group of type $\nu$ is a quotient of q group of type $\lambda$ by one element.  
Let $\mathcal{A}_G$ be the (finite) set of all finite abelian groups that are a quotient of $G$ by one element.

Let $Z$ be a random group and $z$ a uniform random element of $Z$.  Note  that for every finite abelian $P$-group $A$, we have
$
\E(\#\Sur(Z/\langle z\rangle ,A   ))=\E(\#\Sur(Z ,A   ))/|A|,
$
since or each surjection $Z\ra A$, there is a $|A|^{-1}$ probability that the image of $z$ is $0$.  Let $x_n,y_n$ be uniform random elements of $X_n,Y_n$ respectively.
Then for every finite abelian $P$-group $A$, we have
$$
\lim_{n\ra\infty} \E(\#\Sur(X_n/\langle x_n\rangle,A))=\lim_{n\ra\infty} \E(\#\Sur(Y_n/\langle y_n\rangle,A))\leq C |\Sym^2 A|/|A|= C|\wedge^2 A|.
$$
These moments do grow slowly enough to determine a unique distribution.
By the proof of \cite[Theorem 8.3]{Wood2017}, we have that for all $A$
$$
\lim_{n\ra\infty} \P(X_n/\langle x_n\rangle\isom A)=\lim_{n\ra\infty} \P(Y_n/\langle y_n\rangle\isom A),
$$
 and thus since 
 $$
\lim_{n\ra\infty} \P(X_n\isom A)=\sum_{B\in \mathcal{A}_G} \lim_{n\ra\infty} \P(X_n/\langle x_n\rangle\isom A),
$$
and similarly for $Y_n$, we have
 $$
\lim_{n\ra\infty} \P(X_n\isom A)=\lim_{n\ra\infty} \P(Y_n\isom A).
$$ 
\end{proof}

\subsection{Proof of Theorem~\ref{theorem:W:alt}}
Now we show that these results can be applied to $\cok(A_{n})$ to determine their distribution.
We will need a distribution that we know about to compare moments with.  
For a prime $p$, let $Y_n(p)$ be the cokernel of a skew-symmetric matrix with entries in $\Z_p$ (the $p$-adic integers) drawn from the additive Haar measure on such skew-symmetric matrices.
Then for any finite abelian $p$-group $G$, we have $\lim_{n\ra\infty} \E(\#\Sur(Y_n(p),G))=|\Sym^2 G|$.  
This follows, for example, by letting $a$ be the exponent of $G$, and noting than if we choose our entries $\zeta$ uniform from $0$ to $a-1$, then $Y_n$ and $A_n$ have the same distribution mod $a$, and thus the same $G$-moment (though also a much simpler argument can be given in the uniform case, as in \cite[Theorem 11]{Clancy2015}). 

 It has been shown \cite[Theorem 3.9]{Bhargava2015b} that as  $n\ra\infty$ the distributions of $Y_{2n}(p)$ approach the distribution of a random finite abelian $p$-group $Y(p)$ with
$$
\lim_{n\ra\infty }\P(Y_{2n}(p)\isom G)=\P(Y(p)\isom G)=\frac{|G|}{|\Sp(G)|}\prod_{i=0}^\infty (1-p^{-2i-1}),
$$
for $G\in \mathcal{S}_p$ (and $\P(Y(p)\isom G)=0$ for all other $G$). In the proof of \cite[Theorem 8.3]{Wood2017}, it is shown that if we have sequence of random finite abelian $P$-groups $Z_n$ such that for every finite abelian $P$-group $A$, the limits $\lim_{n\ra\infty} \P(Z_n\isom A)$ and 
$\lim_{n\ra\infty} \E(\#\Sur(Z_n,A))$ exist and are finite, then
$$\sum_{B \textrm{ $P$-group}} (\lim_{n\ra\infty} \P(Z_n\isom B)) \#\Sur(B,A)=
\lim_{n\ra\infty} \E(\#\Sur(Z_n,A)).$$
 Thus, $\E(\#\Sur(Y(p),G))=|\Sym^2 G|$.
 
Further,  we have, with probability $1$, that $Y_{2n+1}(p)\isom \Z_p \times G$ for a finite group $G$.   This is because the upper $n-1\times n-1$ submatrix has determinant $0$ with probability $0$ (the Haar measure of a hypersurface in $\Z_p^{k}$ is 0, see e.g. \cite[Proposition 2.1 (b)]{Bhargava2015b} in the easy case when $X$ is affine space and the measure is Haar measure).  By \cite[Theorem 3.11]{Bhargava2015b}, we have a random finite abelian $p$ group $Z(p)$ such that
$$
\lim_{n\ra\infty }\P(Y_{2n+1}(p)\isom \Z_p \times G)=\P(Z(p)\isom G)=\frac{1}{|\Sp(G)|}\prod_{i=1}^\infty (1-p^{-2i-1}).
$$
for $G\in \mathcal{S}_p$ (and $\P(Y_{2n+1}(p)\isom \Z_p \times G)=0$ for all other $G$).  
We can consider the random finite abelian groups $Y_{2n+1}(p) \tensor \Z/p^e\Z$.  
So as above, taking $e$ so that $p^eG=0$, we have 
\begin{align*}
\E(\#\Sur(\Z_p\times Z(p) ,G))=&\E(\#\Sur((\Z_p\times Z(p))\tensor \Z/p^e\Z,G))
\\
=&\lim_{n\ra\infty} \E(\#\Sur(Y_{2n+1}(p) \tensor \Z/p^e\Z,G))\\
=&|\Sym^2 G|.
\end{align*}

Now we return to $\cok(A_{n})$. In the even dimensional case, as mentioned in the proof of Theorem~\ref{theorem:cyclic:alt}, the group $\cok(A_{2n})$ is of the form $\Z^{2m} \times G$, for some $G\in\mathcal{S}$.
Since $m$ could be positive with positive probability, we need to reduce mod $a$ for some $a$ so we are in the setting of finite abelian groups.
For a positive integer $a$, we have that $\cok(A_{2n}) \tensor \Z/a\Z$ is a random finite abelian group satisfying the first condition of Theorem~\ref{T:mainmomlimit}.
Let $P$ be the set of the primes dividing $a$, and for every finite abelian $P$-group $G$, we have
\begin{align*}
\lim_{n\ra\infty} \E(\#\Sur(\cok(A_{2n}) \tensor \Z/a\Z,G)|)=\E(\#\Sur(\prod_{p\in P} Y(p) \tensor \Z/a\Z,G))=
\begin{cases}
|\Sym^2 G| &\textrm{ if $aG=0$}\\
0 & \textrm{otherwise}.
\end{cases}
\end{align*}
Thus, by Theorem~\ref{T:mainmomlimit}, we have that, for all $G$, 
\begin{equation}\label{E:evencut}
\lim_{n\ra\infty} \P(\cok(A_{2n}) \tensor \Z/a\Z\isom G)=\P(\prod_{p\in P} Y(p) \tensor \Z/a\Z\isom G).
\end{equation}

In particular, given a finite set of primes $P$, and an abelian $P$-group $G$, we can take $a=\prod_{p\in P} p^{e+1}$, where
$(\prod_{p\in P} p^{e})G=0$.  Let $\Z_P:=(\prod_{p\in P} \Z_p)$.
Then we have $\cok(A_{2n}) \tensor \Z_P \isom G$ if and only if $\cok(A_{2n}) \tensor \Z/a\Z\isom G$, and similarly for $\prod_{p\in P} Y(p)$.
So, we conclude for every finite abelian $P$-group $G$ that
\begin{align*}
\lim_{n\ra\infty} \P(\cok(A_{2n}) \tensor \Z_P \isom G)=\P(\prod_{p\in P} Y(p) \isom G)=\begin{cases}
\frac{|G|}{|\Sp(G)|}\prod_{p\in P}\prod_{i=0}^\infty (1-p^{-2i-1})&\textrm{ if $G\in\mathcal{S}_P$}\\
0 & \textrm{otherwise}.
\end{cases}
\end{align*}
In particular, since $Y(p)$ is supported on finite groups, we have $\lim_{n\ra\infty} \P(|\cok(A_{2n})|<\infty)=1$ by Fatou's lemma.
Thus, it follows that with asymptotic probability $1$, we have $\cok(A_{2n}) \tensor \Z_P \isom (\cok(A_{2n}))_P$.

In the odd dimensional case, the group $\cok(A_{2n+1})$ is of the form $\Z^{2m+1} \times G$, for some $G\in\mathcal{S}$.  
For a positive integer $a$, we have that $\cok (A_{2n+1}) \tensor \Z/a\Z$ is a random finite abelian group satisfying the second condition of Theorem~\ref{T:mainmomlimit}.
Let $P$ be the set of the primes dividing $a$, and for every finite abelian $P$-group $G$, we have
\begin{align*}
\lim_{n\ra\infty} \E(\#\Sur(\cok (A_{2n+1}) \tensor \Z/a\Z,G))=\E(\#\Sur(\prod_{p\in P} (\Z_p\tensor Z(p)) \tensor \Z/a\Z,G))=
\begin{cases}
|\Sym^2 G| &\textrm{ if $aG=0$}\\
0 & \textrm{otherwise}.
\end{cases}
\end{align*}
Thus, by Theorem~\ref{T:mainmomlimit}, we have that, for all $G$, 
\begin{equation}\label{E:oddcut}
\lim_{n\ra\infty} \P(\cok (A_{2n+1}) \tensor \Z/a\Z\isom G)=\P(\prod_{p\in P} (\Z_p\tensor Z(p)) \tensor \Z/a\Z\isom G).
\end{equation}

In particular, given a finite set of primes $P$, and an abelian $P$-group $G$, we can take $a=\prod_{p\in P} p^{e+1}$, where
$(\prod_{p\in P} p^{e})G=0$.  
Then we have $\cok (A_{2n+1}) \tensor \Z_P\isom  \Z_P \times G$ if and only if $\cok (A_{2n+1}) \tensor \Z/a\Z\isom \Z/a\Z \times G$, and similarly for $\prod_{p\in P} \Z_p \times Z(p)$.
So, we conclude for every finite abelian $P$-group $G$ that
\begin{align*}
\lim_{n\ra\infty} \P(\cok (A_{2n+1}) \tensor \Z_P\isom \Z_P\times  G)=\P(\prod_{p\in P} Z(p) \isom G)=\begin{cases}
\frac{|G|}{|\Sp(G)|}\prod_{p\in P}\prod_{i=0}^\infty (1-p^{-2i-1})&\textrm{ if $G\in\mathcal{S}_P$}\\
0 & \textrm{otherwise}.
\end{cases}
\end{align*}
In particular, since $Z(p)$ is supported on finite groups, we have $\lim_{n\ra\infty} \P(\rank (\cok (A_{2n+1}) )=2n)=1$ by Fatou's lemma.  Thus, it follows that with asymptotic probability $1$, we have $\cok(A_{2n})  \isom \Z \times G$, and
$\cok (A_{2n+1}) \tensor \Z_P\isom \Z_P\times  G$ if and only if $(\cok_{\operatorname{tors}} A_{2n+1})_P\isom G$.

It  also follows from \eqref{E:evencut} and \eqref{E:oddcut}, using $a=p$, that the ranks of $\cok(A_{2n})$ and $\cok (A_{2n+1})$ tensored with $\F_p$, in the limit, match those of $Y(p)$ and $\Z_p\times Z(p)$ tensored with $\F_p$, respectively.  These ranks are given in \cite[Example F]{Delaunay2001} (see also \cite[Theorem 1.10]{Bhargava2015b}).  This gives the matrix rank statement of Theorem~\ref{theorem:W:alt}.

\section{Treatment for moderate primes: structures of the generalized normal vectors}\label{section:lemmas}
Throughout Sections \ref{section:lemmas}, \ref{section:normalvector} and \ref{section:rankevolving} of our treatment of the moderate primes $N$ is a running parameter where, if not specified otherwise, we will assume 
$$cn \le N \le n,$$
for some constant $c>0$.
In this first section of the treatment we will gather various useful ingredients, most of which are of different natures. Two key results are Proposition \ref{prop:structure:subexp:sym} and Proposition \ref{prop:structure:subexp:lap}. 

\subsection{Odlyzko's type results} As a warm-up let us introduce some elementary tools. The first ingredient is a variant of Odlyzko's lemma (see \cite{KKS, M1,NgW,TV}).

\begin{lemma}\label{lemma:O} Let $0<\alpha<1$ be given. Let $\F$ be a field.
For a deterministic subspace $V$ of $\F^N$ of dimension $d$ and a random vector $X\in \F^N$ with entries being i.i.d. copies of $\xi$ satisfying \eqref{eqn:alpha},  we have
$$\P(X \in V) \le (1-\alpha)^{N-d}.$$
\end{lemma}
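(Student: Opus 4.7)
The plan is to exploit the fact that a subspace of dimension $d$ in $\F^N$ is completely determined by any choice of $d$ ``pivot'' coordinates, leaving the remaining $N-d$ coordinates forced. One then exposes the pivots first and uses the anticoncentration hypothesis \eqref{eqn:alpha} coordinate-by-coordinate on the non-pivots. More precisely, I would pick a basis of $V$ and form the corresponding $d\times N$ generator matrix; since it has rank $d$, there exists an index set $J\subset [N]$ with $|J|=d$ consisting of linearly independent columns, so that the projection $\pi_J\colon V\to \F^J$ is a linear bijection. Setting $I=[N]\setminus J$, we obtain a deterministic affine-linear map $\phi\colon \F^J\to \F^I$ such that any $v\in V$ satisfies $v|_I=\phi(v|_J)$.

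The conclusion then follows from a one-step conditioning. Since the entries of $X$ are i.i.d., the subvectors $X|_J$ and $X|_I$ are independent, and the entries of $X|_I$ are mutually independent with common distribution $\xi$ (viewed in $\F=\F_p$). Conditioning on $X|_J$, the event $\{X\in V\}$ forces $X|_I=\phi(X|_J)$, i.e.\ each of the $N-d$ independent entries of $X|_I$ must equal a prescribed element of $\F$. By \eqref{eqn:alpha}, each single coordinate event has probability at most $1-\alpha$, so by independence
\[
\P\bigl(X|_I=\phi(X|_J)\,\big|\,X|_J\bigr)\le (1-\alpha)^{N-d},
\]
and taking expectation over $X|_J$ gives $\P(X\in V)\le (1-\alpha)^{N-d}$. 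There is no real obstacle here; the only step requiring a small remark is the existence of $d$ linearly independent columns, which is immediate from $\dim V=d$, and the identification of \eqref{eqn:alpha} with anticoncentration in $\F_p$ when one applies the lemma to matrices reduced mod $p$.
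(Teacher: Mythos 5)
Your argument is correct and is exactly the standard Odlyzko-type exposure argument the paper has in mind (the paper states Lemma \ref{lemma:O} without proof, citing it as a known variant, and its Lemma \ref{cor:O} is proved "the same way"): pick $d$ coordinates on which the projection of $V$ is bijective, condition on them, and pay a factor $(1-\alpha)$ for each of the $N-d$ forced coordinates via \eqref{eqn:alpha}. No gaps; your closing remark correctly identifies the only point needing care, namely that \eqref{eqn:alpha} supplies the pointwise mass bound in $\F_p$ when the lemma is applied to matrices reduced mod $p$.
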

We also have a slight variation of Lemma~\ref{lemma:O} that has the same proof. 
\begin{lemma}\label{cor:O} Let $V$ be a deterministic subspace of $\F^N$ of dimension $d$ and $I\sub[N]$, for which $V_{I}$ has full dimension $d$. 
Let $J$ be a subset of $I^c$ such that $|J|=k$, and let $X\in \F^N$ be a random vector where the entries over $J$ and $J^c$ are independent, and the entries over $J$ are i.i.d. copies of $\xi$ satisfying \eqref{eqn:alpha}. Then we have
$$\P(X \in V) \le (1-\alpha)^{k}.$$
\end{lemma}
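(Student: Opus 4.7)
The plan is to reduce to a direct conditioning argument, in the same spirit as Lemma~\ref{lemma:O}. The essential use of the new hypothesis $\dim V_I = \dim V = d$ is that the coordinate projection $\pi_I : V \to \F^I$ is injective; hence for every $j \in I^c$ there is a fixed linear functional $\ell_j$ on $V_I$ (equivalently, on $V$) such that every $v \in V$ satisfies $v_j = \ell_j(v_I)$.

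Next I would condition on all entries $(X_i)_{i \notin J}$. Since $J \subset I^c$, this in particular reveals $X_I$, so for each $j \in J$ the value $\ell_j(X_I)$ becomes a deterministic element of $\F$. The event $\{X \in V\}$ then forces, among other conditions, the $k$ pointwise equalities $X_j = \ell_j(X_I)$ for $j \in J$. By hypothesis the entries $(X_j)_{j \in J}$ are i.i.d.\ copies of an $\alpha$-balanced $\xi$ and are independent of $(X_i)_{i \notin J}$, so \eqref{eqn:alpha} gives $\P(X_j = \ell_j(X_I) \mid (X_i)_{i \notin J}) \le 1 - \alpha$ for each $j \in J$, and these conditional events are jointly independent. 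Multiplying the $k$ bounds and taking expectation over the conditioning yields $\P(X \in V) \le (1-\alpha)^k$.

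I do not expect any real obstacle here. The role of the hypothesis $\dim V_I = d$ is precisely to produce the linear functionals $\ell_j$ that rigidly determine each $I^c$-coordinate from the $I$-coordinates, and once those are in hand the estimate is immediate from independence and $\alpha$-balancedness on $J$. The only thing worth noting is that \eqref{eqn:alpha} is phrased for residues modulo primes, so the lemma will be applied with $\F = \F_p$, which is exactly the setting required for the rank-evolution arguments in Sections~\ref{section:normalvector} and~\ref{section:rankevolving}.
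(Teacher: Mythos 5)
Your argument is correct and is exactly the standard Odlyzko-type conditioning that the paper invokes (it states Lemma~\ref{cor:O} "has the same proof" as Lemma~\ref{lemma:O}): injectivity of $\pi_I$ on $V$ yields linear functionals determining each coordinate in $J\subset I^c$ from $X_I$, and conditioning on $(X_i)_{i\notin J}$ gives $k$ independent events each of probability at most $1-\alpha$ by \eqref{eqn:alpha}. No gap; the only cosmetic point is to extend each $\ell_j$ from $V_I$ to $\F^I$ (or note that $X_I\notin V_I$ forces conditional probability $0$), which is immediate.
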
 

{\bf Definition of $L_N$:}
In what follows,  we abuse notation by writing $L_N$ for the matrix %
obtained from $L_n$ (defined in the paragraph after Theorem \ref{theorem:W:lap})
 by restricting to its first $N$ rows and columns. 
In particular, whenever we write $L_N$, there is also always an implicit $n$ and random integers $x_{ij}$ for $1\leq i< j \leq n$.
In particular,  $L_N$ is a function of {\it all} the random variables $x_{kl}, 1\le k<l \le n+1$, including those with $k,\ell>N$.
This notation only applies when the subscript of $L$ is the capital letter $N$.

\begin{remark}
 When our random matrix is symmetric or skew-symmetric, the sparsity (number of $0$ coefficients) of vectors plays a large role in the arguments.  When the matrix is the graph Laplacian, the analog of sparseness is having a large number of coefficients of the same value, even if that value is not $0$.  We will, when referring non-technically to our arguments, call this analog \emph{sparsity} as well.
\end{remark}

\begin{lemma}\label{lemma:sparse:1} Let $\eps, \delta>0$ be sufficiently small constants (given $\alpha$ from \eqref{eqn:alpha}), and let $N$
 be a sufficiently large positive integer (given $\eps, \delta$ and $\al$) and $n\geq N$. 
  Let $p$ be a given prime. 
\begin{enumerate} 
\item (full rank of thin matrices) Let $1\le k_0\le (1-\eps)n/2$. Then with probability at least $1- n(1-1/2)^{n-2k_0-1}$ the matrix generated by $k_0$ columns $X_{1},\dots, X_{k_0}$ of $L_n$ over $\F_p$ has full rank. In particular we have that $\rank(L_n/p) \ge (1-\eps)n/2$ 
with probability at least  $1- n(1-1/2)^{\eps n-1}$. 
\vskip .05in
\item  (non-sparsity of generalized normal vectors)  Let $Y_0$ be a fixed vector in $\F_p^N$. The following holds with probability at least $1- (1-1/2)^{N/2}$:  any non-zero vector $\Bv$ for which $L_{N} \Bv$  agrees with $Y_0$ in at least $(1-\eps)N$ coordinates  must have the property that $\Bv -a \1$ has at least $\delta N$ non-zero components for any $a\in \F_p$.

\end{enumerate}

The same conclusions holds for the symmetric and skew-symmetric models $M_n$ and $A_n$, where $(1-1/2)$ is replaced by $(1-\al)$, and in (2) the vector $\Bv-a \1$ is replaced by $\Bv$. 

\end{lemma}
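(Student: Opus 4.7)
The plan is to treat parts (1) and (2) separately, in each case reducing to Odlyzko-type anti-concentration (Lemma~\ref{cor:O}) applied with the right ``fresh'' random coordinates. For the Laplacian models this will require the shuffling device of Definition~\ref{def:lap} to compensate for the dependence between the diagonal and off-diagonal entries.

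For \textbf{part (1)}, expose the columns one at a time in the natural order. In the symmetric case $M_n$ (the skew-symmetric case $A_n$ is identical), conditioning on $X_1,\dots,X_{k-1}$ leaves the entries of $X_k$ at positions $k,k+1,\dots,n$ as $n-k+1$ fresh i.i.d.\ copies of $\xi$, while the entries above the diagonal are already determined. The span $V:=\mathrm{span}(X_1,\dots,X_{k-1})$ has dimension $\le k-1$, and projecting onto the fresh coordinates and applying Lemma~\ref{cor:O} gives $\P(X_k\in V)\le (1-\alpha)^{n-2k+O(1)}$. Summing for $k\le k_0$ yields the claimed union bound; the rank statement is the specialization $k_0=\lfloor(1-\eps)n/2\rfloor$. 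For the Laplacian $L_n$ the raw columns are no longer linear-in-$\xi$, so instead we appeal to the Phase~2 shuffling of Definition~\ref{def:lap}: at the $N$-th shuffle step, the $|I_N|$ independent coin flips swap the entries of columns $v_{N+1}'$ and $v_{N+2}'$ at a set of positions that, by \eqref{eqn:concentration}, has size $\gtrsim n-k$ with overwhelming probability. These coins introduce fresh Bernoulli$(1/2)$ randomness into the freshly revealed column, and Lemma~\ref{cor:O} applied with the coin flips (replacing $1-\alpha$ by $1/2$) gives the $(1/2)^{n-2k+O(1)}$ bound per step.

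For \textbf{part (2)} in the symmetric case, fix $\Bv\ne 0$ with $S:=\supp(\Bv)$, $|S|<\delta N$. For each $i\in S^c$ one has
\[
(M_n\Bv)_i=\sum_{j\in S}M_{ij}v_j,
\]
a nontrivial linear combination of i.i.d.\ copies of $\xi$. Crucially, for distinct $i,i'\in S^c$, the entries $\{M_{ij}:j\in S\}$ and $\{M_{i'j}:j\in S\}$ are disjoint (the symmetry relation would require $j=i'$, impossible since $i'\notin S\ni j$), so the events $\{(M_n\Bv)_i=(Y_0)_i\}$ are mutually independent with probability $\le 1-\alpha$ each by Lemma~\ref{lemma:O}. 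A Chernoff bound dominates the number of matches by $\mathrm{Bin}(|S^c|,1-\alpha)$ and shows the chance of $\ge(1-\eps-\delta)N$ matches is $\exp(-c(\alpha,\eps,\delta)N)$. Union-bounding over the at most $\binom{N}{<\delta N}p^{\delta N}$ choices of sparse $\Bv$ and choosing $\delta$ small enough then gives the claimed $(1-\alpha)^{N/2}$ bound.

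For \textbf{part (2)} in the Laplacian case, write $\Bv=a\1+\Bw$ with $T:=\supp(\Bw)$, $|T|<\delta N$, and $\Bw\ne0$ (the degenerate case $\Bv=a\1$ is handled separately below). A short computation using $(L_n)_{ii}=-\sum_{k\ne i}x_{ki}$ yields, for $i\in T^c\cap[N]$,
\[
(L_N\Bv)_i=-a\sum_{k=N+1}^{n+1}x_{ik}+\sum_{j\in T}x_{ij}w_j.
\]
All primitive variables $x_{ik}$ and $x_{ij}$ appearing for different rows $i\in T^c$ are distinct (the first group uses $k>N\ge i$, the second uses $j\in T$ disjoint from $i\in T^c$), so the $(L_N\Bv)_i$ for $i\in T^c$ are independent nontrivial linear combinations of Bernoulli$(1/2)$'s (nontrivial because either $\Bw\ne 0$ contributes a nonzero $w_j$-term, or $a\ne 0$ together with $n\ge N$ makes the first sum nonempty). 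Hence each matches $(Y_0)_i$ with probability $\le 1/2$, and the same Chernoff/union-bound mechanism as in the symmetric case produces the $(1-1/2)^{N/2}$ bound. The edge case $\Bv=a\1$ with $a\ne 0$ reduces directly to the anti-concentration of $-a\sum_{k>N}x_{ik}$ across independent rows $i\in[N]$.

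The \emph{main obstacle} is the Laplacian case in part (2): the dependence of $L_{ii}$ on all the off-diagonal entries blocks the naive row-independence argument. The key maneuver, as above, is to re-express $(L_N\Bv)_i$ in terms of the primitive $x_{ij}$ rather than the composite $L_{ij}$; this bookkeeping both restores independence across rows and clarifies why $a\1$ (not $0$) is the correct sparsity center for the Laplacian, since $L_n\1$ concentrates on the few ``boundary'' variables $x_{i,n+1},\dots$ rather than vanishing.
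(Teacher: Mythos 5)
Your per-vector analysis in part (2) is fine (the row-disjointness observation for $M_n$ and the rewriting of $(L_N\Bv)_i$ in terms of the primitive $x_{ij}$ are both correct), but the aggregation step has a genuine gap: you union-bound over all sparse vectors, which costs a factor of order $\binom{N}{\delta N}p^{\delta N}$ (plus a factor $p$ for the shift $a$ in the Laplacian case), against a per-vector probability of order $(1-\alpha)^{cN}$ that does not improve with $p$. The lemma places no upper bound on $p$ and is applied in the paper with $p$ as large as $e^{n^{c}}$ (Section \ref{section:largeprimes}), so for such $p$ the product $p^{\delta N}(1-\alpha)^{cN}$ is enormous and no choice of the constant $\delta$ rescues it. The paper's proof avoids enumerating values in $\F_p$ altogether: it shows the bad event forces the existence of index sets $I,J,K$ with either (a) $M_{(J\setminus I)\times I}$ having dependent columns, or (b) $M_{K\times I}$ full rank and $Y_{J\setminus I}$ in the span of the columns $(X_i)_{J\setminus I}$, $i\in I$; in case (b) one conditions on the rows $K$, which pins down the dependence coefficients $c_i$, and then the event forces the fresh coordinates of a single column to take prescribed values. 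The union bound is then only over the index sets $I,J,K$ (and the choice of the column $j$), with no $p$-dependent factor, which is what yields a bound valid for every prime. Your degenerate case $\Bv=a\1$ has the same defect (a factor $p$ from enumerating $a$); there one must instead solve for $a$ from one row and then use the remaining independent rows.

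There is also a quantitative error in your part (1) for the Laplacian: the Phase~2 coins at step $N$ randomize only the entries of columns $v'_{N+1},v'_{N+2}$ at positions in $I_N\subseteq[N]$, and by \eqref{eqn:concentration} one has $|I_N|\approx N/2$, not $\gtrsim n-k$; moreover these positions sit \emph{above} the diagonal of the freshly exposed column, not in the rows $k+1,\dots,n$ where you need fresh randomness. For the thin matrices in part (1) the relevant steps have $N\le k_0$, so this mechanism would give at best a bound like $(1/2)^{k/2}$ per step, which is useless for small $k$ and cannot yield $1-n(1/2)^{n-2k_0-1}$. The reshuffling is in fact unnecessary here: as in the paper, simply delete the first $k+1$ coordinates of $X_1,\dots,X_{k+1}$. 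All diagonal entries of these columns lie in the deleted coordinates, so the truncated new column consists of raw off-diagonal variables $x_{i,k+1}$, $i\ge k+2$, independent of the truncated previous columns, and Lemma~\ref{lemma:O} gives $(1-\alpha)^{n-1-2k}$ (with $\alpha=1/2$ for the Laplacian) exactly as in your symmetric-case argument.
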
 
In the special case that $Y_0$ is the zero vector,  this shows that with high probability the (actual)
 normal vectors are non-sparse.

Note that in the proof it suffices to choose $\eps, \delta$ to be slightly smaller than $\al$. When $\eps,\delta$ get smaller the probability bounds get better in (2), but we obtain a worse bound on the number of non-zero entries. In applications, we usually assume $N\ge \delta_0 n$ for a given positive constant $\delta_0$.

\begin{proof}[Proof of Lemma \ref{lemma:sparse:1}]%

In what follows, for the Laplacian case we understand that $\al=1/2$. For (1), for each $k\le k_0$ let $\CE_k$ be the event that $X_{k+1} \in span(X_1,\dots, X_k)$. This belongs to the event $\CE_k'$ that $X_{k+1}' \in span (X_1',\dots, X_k')$, where $X_i'$ is obtained from $X_i$ by eliminating the first $(k+1)$ coordinates. Now for this event, conditioning on any realization of $X_1',\dots, X_k'$, the probability $X_{k+1}'$ belongs to $span (X_1',\dots, X_k')$, as $X_{k+1}'$ is now independent from $X_1',\dots, X_k'$, is at most $(1-\al)^{(n-1-k)-k}$ by Lemma \ref{lemma:O}. Thus
$$\P(\CE_k) \le \P(\CE_k') \le (1-\a)^{n-1-2k}.$$
Summing over $k$ we obtain (1).

We now prove (2). 
Let $L'$ be the $(N+1)\times (N+1)$ symmetric matrix with with upper-left $N\times N$ submatrix $L_N$ and rows summing to $0$.
We let $M$ be the matrix of interest,  either $L'$ or $M_N$ or $A_N$,  %
and let $X_i$ be the columns of $M$.

We will show that the complement of the event under consideration in (2)  implies that there exist $I,J\sub [N+1]$ and $K\sub J\setminus I$
with $|I|=|K|=\lfloor \delta N \rfloor +1$ and $|J|=\lceil (1-\epsilon) N \rceil$
such that either
\begin{enumerate}
\item[(a)]$M_{(J\setminus I) \times I}$ has linearly dependent columns, or

\item[(b)] $M_{K\times I}$ is full rank and $Y_{J\setminus I}$ is in the span of $(X_i)_{J\setminus I}$ for $i\in I$.
\end{enumerate}

If the event of (2) fails, in the Laplacian case we let $\Bw \in \F_p^{N+1}$ be the vector whose first $N$ coordinates agree with $\Bv -a \1$ and whose last coordinate is $-a$.  We have $(L'\Bw)_{[N]} =L_N \Bv$.  We let $\Bw=\Bv$ in the $M_N$ or $A_N$ cases.
Then we have $\Bw$ such that $(M\Bw)_{[N]}$ agrees with $Y_0$ in at least $(1-\epsilon)N$ coordinates, and let $J$ be the set of such coordinates.  Further $\Bw$ has at most $\lfloor \delta N \rfloor +1$ non-zero entries, and we let $I$ index a set containing those entries.
If (a) fails, then we let $K$ be a subset of rows of $M_{(J\setminus I) \times I}$ such that $M_{K\times I}$ is full rank.  
Then $(M\Bw)_J=(Y_0)_J$ and $\supp(\Bw)\sub I$ implies that $Y_{J\setminus I}$ is in the span of $(X_i)_{J\setminus I}$ for $i\in I$.

Now we bound the probability of (a) and (b) for fixed $I,J,K$.  
Note that $M_{J\setminus I \times I}$ has independent, $\alpha$-balanced entries, so the chance that it has linearly dependent columns is at most
$|I|(1-\alpha)^{|J\setminus I|-|I|+1}$ by applying Lemma~\ref{lemma:O} as we expose one column at a time.
In case $(b)$, we condition on the values of $M_{J\setminus I \times I}$ in rows $K$.
We have $Y_K=\sum_{i\in I} c_i (X_i)_K$ for unique (non-random, after our conditioning) values $c_i$ not all $0$ (since $M_{K\times I}$ is full rank).  Since $Y_{J\setminus I}$ is in the span of $(X_i)_{J\setminus I}$ for $i\in I$, we must 
have $Y_{J\setminus I}=\sum_{i\in I} c_i (X_i)_{J\setminus I}$.  Suppose $c_j\neq 0$ for some index $j\in I$.  Then we further condition on all values of $M_{J\setminus I \times I}$ except those in column $j$, and we see that  $Y_{J\setminus I}=\sum_{i\in I} c_i (X_i)_{J\setminus I}$ implies each coordinate of $(X_j)_{(J\setminus I)\setminus K}$ must be some fixed value, which happens with probability at most $(1-\alpha)^{|(J\setminus I)\setminus K|}$.

Taking union bounds over the choices of $I,J,K$, we have that the complement of the event in (2) happens with probability at most
$$
 \binom{N+1}{\lfloor \delta N \rfloor +1}^2 \binom{N+1}{\lfloor \eps N \rfloor}
(\delta N+2) 
 (1-\al)^{(1-\eps -2 \delta )N-2}.
$$ 
As long as $\eps, \delta$ are  sufficiently small and $N$ is sufficiently large, the above is bounded by $(1-\al)^{N/2} $.
\end{proof}

\subsection{Quadratic repulsion} In our next lemma we provide another useful estimate, which in practice will be more powerful than (1) of Lemma \ref{lemma:sparse:1}.

\begin{lemma}\label{lemma:quadratic} There exist positive constants $c,c'$ (which might depend on $\al$ in the symmetric and skew-symmetric cases) such that the following holds.  We let $ k \le N \le m \le n$ be positive integers.  Let $G_n$ be either the model $M_{n}, A_n$ or $L_n$ from Theorem \ref{theorem:cyclic:sym}, Theorem \ref{theorem:cyclic:alt} or Theorem \ref{theorem:cyclic:lap} 
respectively. Then for each prime $p$, 
 with probability at least $1 - \exp(-ck^2 + c'm)$, for any subset $I,J$ of size $N$ of $[m]$ the matrix $G_{I \times J}/p$ has rank at least $N - k$. 
\end{lemma}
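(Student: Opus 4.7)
The plan is to combine a union bound over the choices of $I, J \subset [m]$ with a direct ``column-exposure'' argument using the Odlyzko-type estimates (Lemma~\ref{lemma:O} and Corollary~\ref{cor:O}). I will first bound, for a fixed pair $(I, J)$ with $|I| = |J| = N$, the probability $\P(\operatorname{rank}(G_{I\times J}/p) \le N-k-1)$ by $\exp(c_0 m - c_1 k^2)$ for absolute constants $c_0, c_1 > 0$, and then union bound over the $\binom{m}{N}^2 \le 4^m$ such pairs, which absorbs the extra $4^m$ into the $\exp(c' m)$ factor.

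For the fixed-$(I,J)$ bound, I expose the $N$ columns of $G_{I\times J}/p$ one at a time and let $T$ denote the number of ``failures,'' i.e., columns that lie in the $\F_p$-span of the previously exposed ones; then $\operatorname{rank}(G_{I\times J}/p) = N - T$. For a prescribed failure pattern $t_1 < t_2 < \cdots < t_{k+1}$ in $[N]$, the span of the first $t_i - 1$ columns has dimension at most $t_i - i$, so by Lemma~\ref{lemma:O} (in the iid case) the conditional probability of a failure at step $t_i$ is at most $(1-\alpha)^{N - (t_i - i)}$. Multiplying these and noting $\sum_{i=1}^{k+1} (N - t_i + i) \ge (k+1)^2$ (with minimum attained at $t_i = N - k - 1 + i$), and summing over the at most $2^N$ patterns, yields
\[
\P(T \ge k+1) \le 2^N (1-\alpha)^{(k+1)^2} \le \exp(N \log 2 - \alpha k^2).
\]
The final union bound over $(I, J)$ then produces the required $\exp(-c k^2 + c' m)$ bound.

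For the symmetric and skew-symmetric models $M_n, A_n$, the entries of a column exposed at step $t$ in rows $j_s \in I$ with $s < t$ are already determined by the symmetry $M_{ij} = \pm M_{ji}$. To handle this I expose the columns in two phases: first those indexed by $J \setminus I$, for which all $N$ row entries are fresh so the iid argument above applies verbatim via Lemma~\ref{lemma:O}; then those indexed by $J \cap I$, for which at the $s$-th column of this phase there are $N - s + 1$ fresh coordinates. Applying Corollary~\ref{cor:O} to the fresh coordinates --- after checking that the previously exposed columns project to full dimension on the already-known rows, which holds with overwhelming probability thanks to Lemma~\ref{lemma:sparse:1} --- a careful bookkeeping shows that the total failure exponent remains $\Omega(k^2)$, so the same union-bound argument closes these cases.

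The main obstacle, as one would expect, is the Laplacian model $L_n$, since each diagonal entry equals the negative sum of the other entries of the corresponding column of the adjacency matrix on $n+1$ vertices. My plan is to exploit the neighbor-reshuffling (Phase~2 of Definition~\ref{def:lap}) together with the $AB$-shuffle, which preserve the distribution of $L_n$ while leaving the diagonal entries in $([n] \setminus B) \times ([n]\setminus B)$ fixed and injecting independent off-diagonal randomness from ``outside'' into the columns of $G_{I \times J}$. After conditioning on the degree sequence and on the entries outside $I \cup J$, this shuffle produces enough conditionally independent fresh randomness in the columns of $G_{I\times J}$ to rerun the symmetric-case column-exposure argument. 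The delicate step is the bookkeeping: at each exposure one has to track which entries are fresh after the shuffle, which are determined by symmetry, and which are constrained by the diagonal relation, while ensuring the total ``fresh minus rank'' exponent at the failure times remains quadratic in $k$.
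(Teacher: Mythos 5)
Your i.i.d.-style column-exposure argument is fine for phase one, but it has a genuine gap exactly where the difficulty of this lemma lives: the phase-two steps in the symmetric/skew-symmetric case, and the entire Laplacian case. In phase two, to apply Corollary~\ref{cor:O} to the fresh coordinates you need the span $V$ of the previously exposed columns (restricted to rows $I$) to be pinned down by coordinates disjoint from the fresh ones, i.e.\ you need $\rank(G_{D\times E})$ to be (close to) $\rank(G_{I\times E})$, where $E$ is the set of exposed columns and $D$ the already-determined rows. This is a nontrivial random rank condition of exactly the same flavor as Lemma~\ref{lemma:quadratic} itself, so invoking it threatens circularity; and it is certainly not supplied by Lemma~\ref{lemma:sparse:1}(1), which only treats thin matrices with at most $(1-\eps)n/2$ columns, whereas the pinning condition is needed precisely at the late steps (where $|E|$ is close to $N$ and failures are cheapest). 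Without it, the per-failure exponent degrades from the number of fresh coordinates by $\dim V-\dim V_D$, and your claimed total exponent $\Omega(k^2)$ is not established; for symmetric matrices over $\F_p$ there is no deterministic inequality forcing $\rank(G_{E\times E})$ to be near $\rank(G_{I\times E})$ (one can have $G_{E\times E}=0$ while $G_{(I\setminus E)\times E}$ has full rank). For the Laplacian you only state a plan and explicitly defer the ``delicate bookkeeping,'' which is where the whole problem sits.

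The paper's proof avoids sequential exposure altogether and is worth internalizing. If $\rank(G_{I\times J}/p)<N-k$, one fixes a maximal nonsingular minor $G_{I_0\times J_0}$ with $|I_0|=|J_0|<N-k$ such that $\rank(G_{I\times J})=\rank(G_{I_0\times J_0})$, then chooses \emph{disjoint} sets $A\subset I\setminus I_0$ and $B\subset J\setminus J_0$ of size about $k/2$ each. Conditioning on $G_{I_0\times J}$ and $G_{I\times J_0}$ (in the Laplacian case on all of $G_n$ before an $AB$-shuffle), the nonsingularity of $G_{I_0\times J_0}$ determines, for each column $j\in B$, unique coefficients $c_{j\ell}$ such that rank stagnation forces $(X_j)_A=\sum_{\ell\in J_0}c_{j\ell}(X_\ell)_A$; since $A$ and $B$ are disjoint, the $A\times B$ block consists of independent $\alpha$-balanced entries (for the Laplacian, the $AB$-shuffle supplies $1/2$-balanced entries in most of that block while keeping the relevant diagonal entries fixed, with a Chernoff bound controlling the exceptional columns), so these forced identities cost $\exp(-\Omega(k^2))$, and a union bound over the at most $2^{4m}$ choices of $I,J,I_0,J_0$ gives the stated $1-\exp(-ck^2+c'm)$. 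In short, the quadratic exponent comes from a static $k/2\times k/2$ block of genuinely independent entries pinned by the full-rank minor, not from summing per-step failure probabilities; this is what bypasses both the pinning issue and the Laplacian diagonal dependence that your outline leaves unresolved.
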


Note that this result (as well as Lemma \ref{lemma:sparse:1} above) holds for any $p$. In can be seen as a finite field analog of the quadratic repulsion phenomenon of eigenvalues of Wigner matrices (see for instance \cite{ESY, NgTV, NgF}). Also, in application we will assume $k\ge n^{1/2+\eps}$ for $\eps>0$, in which case, even after taking a union bound over all choices of $I$ and $J$,
we still obtain a bound of type $1-\exp(-\Theta(k^2))$ when $n$ is sufficiently large.

\begin{comment}

\begin{figure}%

\centering
%

%
\begin{tikzpicture}
\draw (0,0) -- (4,0) -- (4,4) -- (0,4) -- (0,0);
\draw (0,2) -- (2,2)--(2,4)--(0,4) ; 
\draw (0,2) -- (2,2) node[above  left =.15in]{$L_{J_0 \times J_0}$};   
\draw (0,4) -- (1,4) node[above =.1in]{$J_0$};
\draw (0,1.5) -- (2.5,1.5)--(2.5,4)--(0,4) ; 
\draw (0,4) -- (3,4) node[above left =.05in]{$J'\bs J_0$};
\draw [line width=1pt] (2,4)--(2.5,4);
\draw (0,1) -- (3,1)--(3,4)--(0,4);
\draw [dashed] (2.5,1.5)--(2.5,1);
\draw [dashed] (0,1.25) -- (2.5,1.25); 
\draw [black, line width=2pt]  (2,1.25)--(2.25,1.25) node[above =0.05in]{$N_i\cap(J'\bs J_0)$};
\draw (1,4) node{.};
 \draw (2,4) node{.};
\end{tikzpicture}
\caption{Creating extra randomness.}
\label{figure:quadratic}
\end{figure}

\end{comment}

\begin{proof}[Proof of Lemma \ref{lemma:quadratic}] 
The complement of the event in the lemma implies that there exist $I,J\sub [m]$ with $|I|=|J|=N$  and $I_0\sub I$ and $J_0\sub J$ with $|I_0|=|J_0|<N-k$ such that
 $G_{I_0\times J_0}$ is non-singular and $\rk (G_{I\times J})=\rk  (G_{I_0\times J_0}).$ We now bound the probability of this latter event for a fixed $I,J,I_0,J_0$.  

Since $|I\setminus I_0|,|J\setminus J_0|>k$, we can choose $A\sub I\setminus I_0$ and $B\sub J\setminus J_0$ such that $A$ and $B$ are disjoint, $|B|$ is even, and $|A|\geq k/2$ and $k/2\geq |B|\geq k/2-2$.  
(Here 
$A$ takes $k/2$ or $(k+1)/2$, and then $B$ takes $k/2$ minus 0 or $1$ or $(k-1)/2$ minus 0 or 1, to be even.)
  We pick an arbitrary fixed-point-free involution on $B$. 
In the Laplacian case, we will start with $G_n$, and then do an $AB$-shuffle on $G_n$ resulting in $G'_n$ (with entries $x'_{ij}$), and we will bound the probability that $G'_{I_0\times J_0}$ is non-singular and 
$\rk (G'_{I\times J})=\rk  (G'_{I_0\times J_0}).$  In the non-Laplacian case,  we let $G'_n=G_n$.

In the Laplacian case,  let $B_0$ contain one element of each involution orbit of $B$.
For $j\in B_0$, let $A_j$  be the subset of $i\in A$ for which $x_{ij}\neq x_{i\bar{j}}$.
We call $j\in B_0$ bad if $|A_j|< |A|/4$.
By the Chernoff bound,  
 we have that  $j$ is bad with  probability at most $\exp(-|A|/8)$.
 Let $\mathcal{E}$ be the event that there are at least $|B_0|/2$ bad $j$.
  There are at most $2^{|B_0|}$ subsets of $B_0$ of size at least $|B_0|/2$, and so 
$\P(\mathcal{E})\leq 2^{|B_0|} \exp(-|A|/8)^{|B_0|/2}$.

We now condition on $G_n$ in the Laplacian case and  $G'_{I_0\times J}$ and $G'_{I\times J_0}$ in the other cases. 
Let $X_i$ be the $i$th column of $G'_n$.  If $G_{I_0\times J_0}=G'_{I_0\times J_0}$ is non-singular and $\rk (G'_{I\times J})=\rk  (G'_{I_0\times J_0}),$
then for each $j\in J\setminus J_0$, we have that there are unique constants $c_{j\ell}$ (determines by our conditioned values) such that
$(X_j)_{[I_0]}=\sum_{\ell\in J_0} c_{j\ell} (X_\ell)_{[I_0]}$.
 In order for $\rk (G'_{I\times J})=\rk  (G'_{I_0\times J_0}),$ we must also have, for each $j\in B$, that
 $(X_j)_{A}=\sum_{\ell\in J_0} c_{j\ell} (X_\ell)_{A}$.  

In the Laplacian case,  after our conditioning,
 the matrix entries of $G'_{A\times B_0}$ are independent.  Further, an entry $x'_{i,j}$ for $i\in A$ and $j\in B_0$ is $1/2$-balanced if $x_{ij}\neq x_{i\bar{j}}$.
Unless $\mathcal{E}$ occurs,  we have at least $|B_0|/2$ values $j\in B_0$ such that for at least $|A|/4$ values of $i\in A$ we have that 
$x'_{i,j}$ is $1/2$-balanced.
The probability that 
  $(X_j)_{A_j}=\sum_{\ell\in J_0} c_{j\ell} (X_\ell)_{A_j}$ for such a $j\in B_0$ is at most $(1/2)^{|A|/4}$,  and thus
  the probability this holds for all such $j\in B_0$
is at most  $(1/2)^{|A||B_0|/8}+\P(\mathcal{E})$.

 In the non-Laplacian case, the entries of $G'_{A\times B}$ are independent and $\alpha$-balanced.
 Thus the probability that 
  $(X_j)_{A}=\sum_{\ell\in J_0} c_{j\ell} (X_\ell)_{A}$ for all $j\in B$ is at most $(1-\alpha)^{|A||B|}.$
  
Thus, in any case, the  probability that, for a fixed $I,J,I_0,J_0$, we have $G'_{I_0\times J_0}$ is non-singular and 
$\rk (G'_{I\times J})=\rk  (G'_{I_0\times J_0})$ is at most
$$
2^{|B_0|}e^{-|A||B_0|/16}+ \max(1/2,1-\alpha)^{|A||B_0|/8}\leq e^{k/4-k(k/4-1)/32} +e^{ -c_0  k(k/4-1)/16 },
$$  
for some $c_0$ dependong on at most $\alpha$.
Since, there are at most $2^{4m}$ choices of $I,J,I_0,J_0$, and $G'_n$ has the same distribution as $G_n$, the lemma follows.
\end{proof}

\subsection{Concentration discrepancy for general $\al$-balanced random variables} \label{SS:ConcDisc}

We next introduce some modifications to simplify the general models of symmetric, skew-symmetric and Laplacian matrices $M_N, A_N, L_N$ considered in Theorem \ref{theorem:cyclic:sym}, Theorem \ref{theorem:cyclic:alt}, and Theorem \ref{theorem:cyclic:lap}.

Throughout Section~\ref{SS:ConcDisc}
 we assume that $p\ge 3$ is a prime number. Let $\Bw=(w_1,\dots, w_N)$ be a non-zero deterministic vector in $\F_p^N$. We use a strategy
  from \cite{NgW} to bound the probability $\P(x_1 w_1 +\dots + x_N w_N =a)$, where $x_i$ are i.i.d. copies of a random $\alpha$-balanced $\xi\in \F_p$. 

Let $\psi := \xi-\xi'$ be the symmetrization of $\xi$ and let $\psi'$ be a lazy version of $\psi$ so that
\[\P(\psi'=x) = \begin{cases}\frac{1}{2}\P(\psi=x) \mbox{ if } x\neq 0\\\frac{1}{2}\P(\psi=x) + \frac{1}{2}, \mbox{ if } x=0.\end{cases}\]
Notice that $\psi'$ is symmetric as $\psi$ is symmetric (i.e. $\P(\psi'=a)=\P(\psi'=-a)$ for all $a$), and we can check that $\max_x \P(\psi =x)  \le 1 -\alpha$, and so
$$\max_{x\in \F_p} \P(\psi'=x) \le 1 -\alpha/2.$$ 

Let $\pm t_1, \pm t_2,\dots$ be the non-zero values taken by $\psi'$ with positive probability (with $\pm t_i\ne t_j$ for $i\ne j$), and let 
$\beta_j:=2\P(\psi' =t_j) = 2\P(\psi' =-t_j)$ and $\beta_0:=\P(\psi'=0)$.
Let
\begin{equation}\label{eqn:S}
S:=x_1 w_1 +\dots + x_n w_N = X \cdot \Bw,
\end{equation} 
where $X=(x_1,\dots, x_N)$. 

Consider $a  \in \F_p$ where  $\P( S=a)$ is maximum (or minimum). Using the standard notation $e_p(x)$ for $\exp(2\pi \sqrt{-1} x/p )$, we have
\begin{equation*} \P(S=a)= \E \frac{1}{p} \sum_{t \in \F_p} e_p (t (S-a)) = \E \frac{1}{p} \sum_{t \in \F_p} e_p (t S) e_p(-t a) = \frac{1}{p}+ \E \frac{1}{p} \sum_{t \in \F_p, t \neq 0} e_p (t S) e_p(-t a).\end{equation*}
So by independence, 
\begin{align*}
|\P(S=a)- \frac{1}{p} |&\le \frac{1}{p} \sum_{  t\in \F_p, t \neq 0} |\E e_p (t S)| \le \frac{1}{p} \sum_{  t\in \F_p, t \neq 0} \prod_{i=1}^N |\E e_p(t x_i w_i)|\\ 
&\le  \frac{1}{p} \sum_{  t\in \F_p, t \neq 0} \prod_{i=1}^N (\frac{1}{2}(|\E e_p(t x_i w_i)|^2+1)) \le \frac{1}{p} \sum_{  t\in \F_p, t \neq 0}  \prod_{i=1}^N |\E e_p( t \psi'  w_i)| \\
& =  \frac{1}{p} \sum_{  t\in \F_p, t \neq 0} \prod_{i=1}^N ( \beta_0 +\sum_{j=1}^l \beta_j  \cos \frac{2\pi t t_j w_i}{p}) \le \frac{1}{p} \sum_{t \in \F_p, t \neq 0}  \prod_{i=1}^N (\beta_0 + \sum_{j=1}^l \beta_j  |\cos \frac{\pi t t_j w_i}{p} |) 
\end{align*}
where we made the change of variable $t \rightarrow t /2$ (in $\F_p$ for $p\ge 3$) and used the triangle inequality.

By convexity, we have that   $|\sin  \pi z | \ge 2 \|z\|_{\R/\Z}$ for any $z\in \R$, where $\|z\|_{\R/\Z}$ is the distance of $z$ to the nearest integer. Thus, $| \cos \frac{\pi x}{p}|  \le  1- \frac{1}{2} \sin^2 \frac{\pi x}{p}  \le 1 -2 \|\frac{x}{p} \|_{\R/\Z}^2$. Hence for each $w_i$
$$\beta_0 + \sum_{j=1}^l \beta_j  |\cos \frac{\pi t t_j w_i}{p} | \le 1 - 2 \sum_{j=1}^l \beta_j \|\frac{t t_j w_i}{p} \|_{\R/\Z}^2 \le \exp(-2 \sum_{j=1}^l \beta_j \|\frac{t t_j w_i}{p} \|_{\R/\Z}^2).$$
Consequently, by Jensen's inequality and the fact that $\sum_{j=1}^l \beta_j = 1-\beta_0 \ge \al/2$ we obtain a key inequality
\begin{align}\label{eqn:Fourier}  
|\P(S=a)- \frac{1}{p} | & \le  \frac{1}{p} \sum_{t \in \F_p, t \neq 0}  \prod_{i=1}^N (\beta_0 + \sum_{j=1}^l \beta_j  |\cos \frac{\pi t t_j w_i}{p} |)   \le  \frac{1}{p} \sum_{t \in \F_p, t \neq 0} \exp( - 2 \sum_{i=1}^N \sum_{j=1}^l \beta_j \|\frac{t t_j w_i}{p} \|_{\R/\Z}^2)  \nonumber \\
& \le  \frac{1}{p} \sum_{t \in \F_p, t \neq 0} \exp( - 2 \sum_{j=1}^l \frac{\beta_j}{\sum_{j=1}^l  \beta_j} (\sum_{j=1}^l \beta_j) \sum_{i=1}^N  \|\frac{t t_j w_i}{p} \|_{\R/\Z}^2)  \nonumber \\
&\le  \frac{1}{p} \sum_{t \in \F_p, t \neq 0} \sum_{j=1}^l   \frac{\beta_j}{\sum_{j=1}^l  \beta_j}  \exp( - 2 ( \sum_{j=1}^l \beta_j )\sum_{i=1}^N\|\frac{t t_j w_i}{p} \|_{\R/\Z}^2)  \nonumber \\
&\le  \sum_{j=1}^l   \frac{\beta_j}{\sum_{j=1}^l  \beta_j}   \frac{1}{p} \sum_{t \in \F_p, t \neq 0} \exp( - 2 (\al/2)\sum_{i=1}^N\|\frac{t t_j w_i}{p} \|_{\R/\Z}^2)  \nonumber \\
&= \frac{1}{p} \sum_{t \in \F_p, t \neq 0} \exp( - \al \sum_{i=1}^N\|\frac{t w_i}{p} \|_{\R/\Z}^2).
\end{align}
Motivated by this, we define $\rho(\Bw) = \rho_\al(\Bw)$, the {\it concentration discrepancy} of $\Bw$, to be 
\begin{equation}\label{eqn:rho}
\rho(\Bw):= \frac{1}{p} \sum_{t \in \F_p, t \neq 0} \exp( -\al \sum_{i=1}^N\|\frac{t w_i}{p} \|_{\R/\Z}^2).
\end{equation}

Similarly, when we analyze a Laplacian matrix
we will define 
$\rho_{L}(\Bw)=\rho_{L,\al}(\Bw)$ as
\begin{equation}\label{eqn:rho:L}
\rho_{L}(\Bw):= \max_w \rho(\Bw - w \1) = \max_w \frac{1}{p} \sum_{t \in \F_p, t \neq 0} \exp( -\al \sum_{i=1}^N\|\frac{t (w_i-w)}{p} \|_{\R/\Z}^2).
\end{equation}
For later use we also remark the following homogenized inequality  
\begin{align*}
\sum_{i=1}^N\|\frac{t (w_i-w)}{p} \|_{\R/\Z}^2 &\ge \sum_{k=1}^{\lfloor (N-1)/2 \rfloor}(\|\frac{t (w_{2k}-w)}{p} \|_{\R/\Z}^2 +\frac{t(w_{2k+1}-w)}{p} \|_{\R/\Z}^2)\\
& \ge \frac{1}{2} \sum_{k=1}^{\lfloor (N-1)/2 \rfloor}(\|\frac{t (w_{2k}-w_{2k+1})}{p} \|_{\R/\Z}^2).
\end{align*}
Hence with $\Bw'=(w_1-w_2, \dots, w_{2 \lfloor (N-1)/2 \rfloor} -  w_{2 \lfloor (N-1)/2 \rfloor +1})$ we have
\begin{equation}\label{eqn:w-w'}
\rho_{L,\al}(\Bw) \le \rho_{\al/2}(\Bw').
\end{equation}

An Erd\H{o}s-Littlewood-Offord type result for finite fields 
is as follows.
\begin{theorem}[{\cite[Theorem A.15]{NgP}}]\label{theorem:LO} Let $p$ be a prime and $N$ a positive integer.  Let $\Bw\in \F_p^N$. Let $0<c_{nsp}<1$ be a number that might depend on $N$
and assume that 
\begin{equation}\label{eqn:sparse}
|\supp(\Bw)| \ge c_{nsp} N.
\end{equation} 
Then for $X=(x_1,\dots, x_N)$ with $x_i$ being i.i.d. copies of an $\alpha$-balanced random integer $\xi$,
$$\sup_{r \in \F_p} |\P(X \cdot \Bw =r) -\frac{1}{p}| \le \rho(\Bw)=O(\frac{1}{\sqrt{c_{nsp} N}}).$$

Furthermore, if we assume that for all $w\in \F_p$ 
\begin{equation}\label{eqn:sparse:lap}
|\supp(\Bw-w\1)| \ge c_{nsp} N.
\end{equation} 
Then for $X=(x_1,\dots, x_N)$ with $x_i, $ for $1\le i\le N-1$, being i.i.d. copies of $\xi$ and with $x_N = -\sum_{1\le i\le N-1} x_i$ we have the following affine analog
$$\sup_{r \in \F_p} |\P(X \cdot \Bw =r) -\frac{1}{p}| \le \rho_L(\Bw)=O(\frac{1}{\sqrt{c_{nsp} N}}).$$
Here the implied constants depend on $\al$, but not on $p,N,c_{nsp}$.  
\end{theorem}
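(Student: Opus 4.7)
The first inequality $\sup_r |\P(X\cdot\Bw = r) - 1/p| \le \rho(\Bw)$ is essentially already proven by \eqref{eqn:Fourier} in the preceding paragraphs: expand $\P(X\cdot\Bw = r)$ via additive characters of $\F_p$, bound the non-trivial Fourier coefficients $\prod_{i=1}^N |\E e_p(t x_i w_i)|$ by passing to the symmetrization $\psi$ and its lazy variant $\psi'$, use the elementary estimate $|\cos \pi z| \le \exp(-2\|z\|_{\R/\Z}^2)$, and finally apply Jensen's inequality to pull the convex combination over the atoms $\pm t_j$ of $\psi'$ past the exponential. No new work is needed here beyond invoking the derivation the paper already carried out in Section~\ref{SS:ConcDisc}.

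The substantive content is the bound $\rho(\Bw) = O(1/\sqrt{c_{nsp} N})$. Set $m := |\supp(\Bw)| \ge c_{nsp} N$ and $S(t) := \sum_{i : w_i \ne 0} \|tw_i/p\|_{\R/\Z}^2$, and for each integer $k\ge 0$ let $T_k := \{t \in \F_p^* : S(t) \in [k,k+1)\}$. The plan is a layer-cake/level-set analysis. A moment computation gives $\frac{1}{p-1}\sum_{t \ne 0} S(t) = \Theta(m)$, so $S(t)$ is typically of order $m$. For any $t \in T_k$, pigeonhole forces at least $m/2$ indices $i \in \supp(\Bw)$ to satisfy $\|tw_i/p\|_{\R/\Z} \le \sqrt{2(k+1)/m}$, i.e.\ $t w_i$ lies in a short arc of length $O(\sqrt{(k+1)/m}\,p)$ around $0$ in $\F_p$. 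A Vinogradov/Littlewood-type counting argument (the standard arc-counting step in finite-field Erd\H{o}s--Littlewood--Offord) then yields $|T_k|/p = O(\sqrt{(k+1)/m})$, and summing
\[
\rho(\Bw) \;\le\; \sum_{k\ge 0} \frac{|T_k|}{p}\, e^{-\alpha k} \;=\; O\!\left(\sum_{k\ge 0} \sqrt{(k+1)/m}\, e^{-\alpha k}\right) \;=\; O\!\left(\frac{1}{\sqrt{m}}\right)
\]
completes the first bound. The main technical obstacle is the arc-counting estimate $|T_k|/p \ll \sqrt{(k+1)/m}$; this is precisely the core ingredient of the cited [Theorem A.15]{NgP}, and the only place the proof is more than bookkeeping.

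For the affine version, the key observation is that the constraint $x_N = -\sum_{i<N} x_i$ forces $X \cdot \1 = 0$, so $X\cdot\Bw = X\cdot(\Bw - w\1)$ for every $w \in \F_p$. Choosing $w = w_N$ and simplifying gives
\[
X \cdot \Bw \;=\; \sum_{i=1}^{N-1} x_i(w_i - w_N),
\]
which is a genuinely i.i.d.\ sum in the $x_i$. The hypothesis \eqref{eqn:sparse:lap} applied with $w = w_N$ guarantees that the coefficient vector $(w_i - w_N)_{i=1}^{N-1} \in \F_p^{N-1}$ has support of size at least $c_{nsp} N$ (the $N$-th coordinate, $w_N - w_N = 0$, is dropped for free). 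Applying the i.i.d.\ bound of the first part yields
\[
\sup_{r\in\F_p} |\P(X\cdot\Bw = r) - 1/p| \;\le\; \rho(\Bw - w_N \1) \;\le\; \rho_L(\Bw) \;=\; O(1/\sqrt{c_{nsp}N}),
\]
as desired, where the middle inequality is immediate from $\rho_L(\Bw) = \max_w \rho(\Bw - w\1)$.
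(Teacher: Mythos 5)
Your proposal is correct in outline and follows the same basic route as the paper: the paper also obtains the deviation bound $\sup_r|\P(X\cdot\Bw=r)-1/p|\le \rho(\Bw)$ from the character-sum computation \eqref{eqn:Fourier}, and for the quantitative statement $\rho(\Bw)=O(1/\sqrt{c_{nsp}N})$ it simply cites \cite[Theorem A.15]{NgP} rather than reproving it, which is also in effect what you do. Two remarks on where you differ. First, the one step you defer to the citation, namely $|T_k|/p=O(\sqrt{(k+1)/m})$, is not really the hard core of anything: if $S(t)\le k+1$ then at least $m/2$ indices $i\in\supp(\Bw)$ satisfy $\|tw_i/p\|_{\R/\Z}\le\sqrt{2(k+1)/m}$, and for each fixed $i$ the number of $t\ne 0$ with $\|tw_i/p\|_{\R/\Z}\le\eps$ is at most $2\eps p$; double counting the pairs $(t,i)$ gives $|T_k|\cdot m/2\le m\cdot 2\eps p$, i.e.\ $|T_k|\le 4\eps p$ with $\eps=\sqrt{2(k+1)/m}$. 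So your layer-cake argument closes with no input beyond pigeonhole (alternatively, a one-line H\"older/AM--GM bound on $\frac1p\sum_{t\ne0}\prod_i\exp(-\al\|tw_i/p\|^2)$ gives the same $O_\al(1/\sqrt{m})$); you could have made the proof self-contained. Second, your treatment of the affine case, substituting $w=w_N$ so that $X\cdot\Bw=\sum_{i<N}x_i(w_i-w_N)$ becomes a genuine i.i.d.\ sum on $N-1$ coordinates with support at least $c_{nsp}N$ by \eqref{eqn:sparse:lap}, is clean and correct (note $\rho(\Bw-w_N\1)$ equals the corresponding $(N-1)$-coordinate quantity since the $N$-th term vanishes, so the chain $\le\rho(\Bw-w_N\1)\le\rho_L(\Bw)$ is exact); it differs from the pairing inequality \eqref{eqn:w-w'} the paper records, but that inequality is stated for later use and is not the paper's proof of this theorem, so nothing is lost. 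The only caveat worth flagging is that the derivation \eqref{eqn:Fourier} you invoke is carried out in the paper under the standing assumption $p\ge 3$ (because of the change of variable $t\mapsto t/2$), so strictly speaking the case $p=2$ needs a separate one-line check, as it does in the paper's own setup.
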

Hence, for instance in the case that $X=(x_1,\dots, x_N)$ with $x_i$ being i.i.d.  copies of $\xi$,  as long as the $w_i$ are non-zero, the random sum $X \cdot \Bw$ spreads out quickly in $\F_p$ in such a way that the discrepancy with respect to the uniform distribution is $O(1/\sqrt{N})$. 
This rate of decay is best possible (in terms of $N$) if one does not have extra information on $\Bw$. 

We next give  simple but useful bounds on $\rho(\Bw),\rho_L(\Bw)$ of the sort used in the proof of Lemma~\ref{L:FullFcode}.  

    \begin{lemma}\label{lem:deg:1} Let $N$ be a positive integer, and let $p$ a prime. Let $\Bw\in \F_p^N$.
    \begin{itemize}
    \item If $|\supp(\Bw)| \ge c_{nsp} N$
    for some $c_{nsp}>0$ then  
    $$\rho(\Bw) \le \exp\big(- \al \frac{c_{nsp}N}{p^2} \big).$$ 
    \item If %
$|\supp(\Bw-w\1)| \ge c_{nsp} N$    
    for some $c_{nsp}>0$ and for all $w\in \F_p$ then  
    $$\rho_L(\Bw) \le \exp\big(-\al \frac{c_{nsp}N}{p^2} \big).$$ 
    \end{itemize}
    \end{lemma}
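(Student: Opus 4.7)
The plan is to bound each summand in the definition of $\rho(\Bw)$ uniformly in $t \neq 0$, using the fact that nonzero residues modulo $p$ have $\|\cdot/p\|_{\R/\Z}$ at least $1/p$.

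For the first statement, I would proceed as follows. Fix any $t \in \F_p$ with $t \neq 0$. For each index $i \in \supp(\Bw)$, the product $tw_i$ is nonzero in $\F_p$, so its least absolute residue is an integer of absolute value at least $1$. Hence $\|tw_i/p\|_{\R/\Z} \geq 1/p$, and in particular $\|tw_i/p\|_{\R/\Z}^2 \geq 1/p^2$. Summing over $i$ gives
$$\sum_{i=1}^N \Bigl\|\frac{tw_i}{p}\Bigr\|_{\R/\Z}^2 \;\geq\; \frac{|\supp(\Bw)|}{p^2} \;\geq\; \frac{c_{nsp}N}{p^2}.$$
Plugging into the definition \eqref{eqn:rho},
$$\rho(\Bw) \;\leq\; \frac{1}{p} \sum_{t \in \F_p, t\neq 0} \exp\!\Bigl(-\al \cdot \frac{c_{nsp} N}{p^2}\Bigr) \;\leq\; \exp\!\Bigl(-\al \cdot \frac{c_{nsp} N}{p^2}\Bigr),$$
using $(p-1)/p \le 1$ in the last step.

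For the second statement, the hypothesis says exactly that for every $w \in \F_p$ the shifted vector $\Bw - w\1$ satisfies the support condition from the first part. Applying the first part to each such $\Bw - w\1$ gives $\rho(\Bw - w\1) \le \exp(-\al c_{nsp} N/p^2)$. Taking the maximum over $w$ then yields the stated bound on $\rho_L(\Bw)$ by the definition \eqref{eqn:rho:L}.

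There is no real obstacle here; the entire content is the trivial inequality $\|r/p\|_{\R/\Z} \geq 1/p$ for a nonzero residue $r$ together with the uniform nature of the bound in $t$. Both inequalities are tight up to constants when $|\supp(\Bw)|$ (or its shifted analogue) is of order $N$ and $p$ is not too small compared to $N$.
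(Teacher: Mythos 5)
Your proof is correct and is essentially identical to the paper's argument: both rest on the observation that $\|tw_i/p\|_{\R/\Z}^2 \ge 1/p^2$ for $t,w_i \ne 0$, summing over the support, and then bounding the sum over $t \ne 0$ by $(p-1)/p \le 1$, with the Laplacian case following directly from the definition of $\rho_L$.
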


    \begin{proof} 
    For any $t, w_i \neq 0$ in $\F_p$ we have
    $$\|t w_i/p\|_{\R/\Z}^2 \ge (\frac{1}{p})^2.$$
    Hence if $\Bw$ has at least $c_{nsp} N$ non-zero coordinates and $t\ne0$, then
    $$\sum_{i=1}^N\|\frac{t w_i}{p} \|_{\R/\Z}^2 \ge \frac{c_{nsp}N}{p^2}.$$
The lemma then follows from the definitions of  $\rho(\Bw),\rho_L(\Bw)$.
    \end{proof}

  \subsection{A simple structure result} Another elementary observation is that one can obtain some useful structure on the $w_i$ when $\rho(\Bw)$  (or $\rho_L(\Bw)$) are sub-exponentially small.
      	
    	\begin{lemma}\label{lem:deg:2}
    		Let $\delta  < 1$ be a positive constant.  Let $p$ be a prime number and $N$ a positive integer. Let $\Bw = (w_1, \cdots, w_N) \in \F_p^N$ such that
    		$$\rho(\Bw) \ge \exp(-N^\delta).$$ 
    		Then for any $1 \leq N' \leq N$, there is a set $W'$ of $N-N'$ components $w_i$ and an arithmetic progression $Q$ (i.e. a GAP of rank one, cf. Definition \ref{def:gap}) in $\F_p$ that contains $W'$, where 
    			\begin{equation}\label{eqn:Qsize}
    			|Q| \leq 2 \al^{-1} p\sqrt{\lceil N^\delta +1 \rceil /N'}.
    			\end{equation} 
    			Similarly, assume that 
    		$$\rho_L(\Bw) \ge \exp(-N^\delta).$$ 
    		Then there is an arithmetic progression $Q$ satisfying \eqref{eqn:Qsize} that contains $N-N'$ components $w_i$.
    				\end{lemma}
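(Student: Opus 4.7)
The plan is a straightforward pigeonhole-plus-Markov argument on the two-scale sum defining $\rho$. Since
\[\rho(\Bw)=\frac{1}{p}\sum_{t\in\F_p,\, t\neq 0}\exp\!\Big(-\al\sum_{i=1}^N\|tw_i/p\|_{\R/\Z}^2\Big)\ge \exp(-N^\delta),\]
the maximum summand over $t\neq 0$ is at least the average $\rho(\Bw)$. So there exists $t^\ast\in\F_p^\times$ with
\[\sum_{i=1}^N\|t^\ast w_i/p\|_{\R/\Z}^2 \le N^\delta/\al.\]

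Next I apply a Markov-type count to this sum. The number of indices $i$ with $\|t^\ast w_i/p\|_{\R/\Z}^2\ge (N^\delta+1)/(\al N')$ is at most $(N^\delta/\al)\big/((N^\delta+1)/(\al N'))=N^\delta N'/(N^\delta+1)<N'$. Hence at least $N-N'+1\ge N-N'$ indices $i$, forming a set $I$, satisfy $\|t^\ast w_i/p\|_{\R/\Z}<\sqrt{\lceil N^\delta+1\rceil/(\al N')}$. Writing $t^\ast w_i\equiv k_i\pmod p$ with $k_i$ the integer representative of least absolute value, this bound on the distance to the nearest integer translates to $|k_i|\le p\sqrt{\lceil N^\delta+1\rceil/(\al N')}$, so $w_i=(t^\ast)^{-1}k_i$ lies in the arithmetic progression
\[Q:=\Big\{(t^\ast)^{-1}j \bmod p \;:\; j\in\Z,\ |j|\le p\sqrt{\lceil N^\delta+1\rceil/(\al N')}\Big\}\subset\F_p\]
with common difference $(t^\ast)^{-1}$. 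Its size is at most $2p\sqrt{\lceil N^\delta+1\rceil/(\al N')}+1\le 2\al^{-1}p\sqrt{\lceil N^\delta+1\rceil/N'}$ (using $\al<1$), matching the claimed bound. Taking $W'=\{w_i:i\in I\}$ gives the first statement.

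For the Laplacian variant, by definition $\rho_L(\Bw)=\max_{w}\rho(\Bw-w\1)$, so $\rho_L(\Bw)\ge\exp(-N^\delta)$ yields some $w^\ast\in\F_p$ with $\rho(\Bw-w^\ast\1)\ge\exp(-N^\delta)$. Applying the first part to $\Bw-w^\ast\1$ produces an arithmetic progression $Q_0$ of the stated size containing $N-N'$ coordinates of $\Bw-w^\ast\1$; the translate $Q:=Q_0+w^\ast$ is an arithmetic progression in $\F_p$ with the same common difference and the same size that contains the corresponding $N-N'$ coordinates of $\Bw$.

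There is essentially no main obstacle here: all the analytic work lies in the Fourier bound \eqref{eqn:Fourier} and the definitions of $\rho,\rho_L$ that precede the lemma. What makes the statement useful is not its depth but the passage from a quantitative lower bound on $\rho$ to an honest combinatorial structure (a short one-dimensional GAP) on most coordinates of $\Bw$; the only point requiring minor care is tuning the Markov threshold so that the rounded bound $\lceil N^\delta+1\rceil$ appears exactly as written, which is why I use $(N^\delta+1)/(\al N')$ rather than $N^\delta/(\al N')$ in the threshold.
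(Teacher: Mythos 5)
Your proof is correct and follows essentially the same route as the paper's: you extract a single nonzero frequency $t^\ast$ with $\alpha\sum_i\|t^\ast w_i/p\|_{\R/\Z}^2\le N^\delta$ (the paper does this via a level-set decomposition rather than max-versus-average, a cosmetic difference), then a Markov count places all but fewer than $N'$ of the dilated coordinates in a short centered interval, i.e.\ an arithmetic progression with common difference $(t^\ast)^{-1}$, and the Laplacian case follows by translating, exactly as in the paper. The only delicate point — absorbing the ``$+1$'' from counting lattice points into the stated bound via the slack between $\alpha^{-1/2}$ and $\alpha^{-1}$ — is the same implicit rounding the paper's own proof makes, so it is not a gap relative to the paper's argument.
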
    	
    	\begin{proof} It suffices to consider the first case because translation of an arithmetic progression  is also an arithmetic progression.    
    	Consider the level sets $S_m:=\{x\in \F_p \,|\, x \neq 0 \mbox{ and }   \al \sum_{i=1}^N\|\frac{x w_i}{p} \|_{\R/\Z}^2  \le m  \} $.  We have
    		$$\exp(-N^\delta) \le \rho(\Bw)  =   \frac{1}{p} \sum_{x \in \F_p, x \neq 0} \exp( -   \al \sum_{i=1}^N\|\frac{x w_i}{p} \|_{\R/\Z}^2  )  \le \frac{1}{p} \sum_{m \ge 1} \exp(-(m-1)) |S_m| .$$
		
		Since $\rho(\Bw)  \ge \exp(-N^\delta)$ and $\sum_{m \ge N^\delta +2} \exp(-(m-1)) <\exp(-N^\delta)$,  there must be a  level set $S_m$ in the range $1\le m \le \lceil N^\delta +1 \rceil $ such that $S_m$ is non-empty, and so there exists $x_0 \neq 0$ so that 
    		$$ \al \sum_{i=1}^N\|\frac{x_0 w_i}{p} \|_{\R/\Z}^2    \le m.$$
    		So, with $W'$ being the set of $w_i$ such that $ \al \|\frac{x_0 w_i}{p} \|_{\R/\Z}^2  \le \frac{m }{N'}$, we have that $W'$ has at least $N-N'$ elements. 
    	          By definition, for $w_i\in W'$ we have $ 
    		\al \|\frac{x_0  w_i}{p} \|^2 \le \frac{m }{N'},  
    		$ and this implies that after a dilation by $x_0$ the set $W'$ belongs to the arithmetic progression $P$ where 		
    		$$P:=\Big\{x \in \F_p\,|\, \|\frac{x}{p}\|_{\R/\Z} \le \al^{-1} \sqrt{\frac{m}{N'}}\Big\}.$$
        		Notice that the size of $P$ is bounded by $ 2\al^{-1} p \sqrt{m/ N'} \le 2\al^{-1} p \sqrt{\lceil N^\delta +1 \rceil/ N'}$ as desired.
    	\end{proof}

\subsection{Generalized normal vectors} Here and later we will need the concept of generalized normal vectors (that was mentioned in Lemma \ref{lemma:sparse:1}).

\begin{definition} Given a vector $Y_0$ and an index set $I\subset [N]$ (usually of size $(1-o(1))N$),  we say that a vector $\Bv$ is a {\it generalized normal vector} of $G_N$ (with respect to $Y_0$ and $I$) if 
$$(G_N \Bv)_{I} = (Y_0)_I.$$
\end{definition}

Now we mention a key result of the section, which says that as long as $p$ is not too small and not too large ($p\le \exp(n^{c})$ for some sufficiently small constant $c$), with very high probability the random walks formed by generalized normal vectors of $M_N,A_N$ and $L_{N\times N}$ spread out in $\F_p$ in such a way that the discrepancy from the uniform distribution is sub-exponentially small.

    \begin{prop}[Non-local structure of the normal vectors: symmetric and skew-symmetric cases]\label{prop:structure:subexp:sym} 
    Let $N$ be a positive integer and $G_N$ be either $M_N$ or $A_N$ from Theorem \ref{theorem:cyclic:sym} and Theorem \ref{theorem:cyclic:alt}. Let $0<\lambda\leq 1$ be a given constant. Then there exists a  %
    positive constants $c$  (given $\la$ and $\al$ from \eqref{eqn:alpha}) 
    such that the following holds for $p$ sufficiently large (depending on $\la, \al$)
    but $p \le \exp(N^c)$.  Let $I_0\subset [N]$ be an index set of size $\lfloor \la N \rfloor$. For any non-zero fixed vector $Y_0 \in \F_p^N$, with probability at least $1 - \exp(-n^{c})$ (with respect to $G_N$), for any vector $\Bv \in \F_p^N$ such that $G_N \Bv=Y_0$ in at least $N -N^{c}$ coordinates, we have (with $\rho(-)$ from \eqref{eqn:rho})
$$\rho(\Bv_{I_0}) \le \exp(-N^c).$$
\end{prop}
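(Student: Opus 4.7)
The plan is to argue by contradiction via a net argument on ``structured'' generalized normal vectors combined with a Littlewood--Offord style row-probability bound. Suppose that, with probability exceeding $\exp(-N^c)$, there exist $\Bv \in \F_p^N$ and $I \sub [N]$ with $|I| \ge N - N^c$ satisfying $(G_N \Bv)_I = (Y_0)_I$ and $\rho(\Bv_{I_0}) > \exp(-N^c)$; the goal is to show this event has probability much smaller than $\exp(-N^c)$ for $c$ small enough, yielding a contradiction.

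First I would invoke Lemma \ref{lem:deg:2} with $\delta = c$ and parameter $N' = \lceil N^{c'} \rceil$ for some $c < c' < 1$ to deduce that for any such $\Bv$, all but $N'$ entries of $\Bv_{I_0}$ lie in an arithmetic progression $Q \sub \F_p$ with
\[
|Q| \le 2\al^{-1} p \sqrt{(N^c+1)/N'} \le p \cdot N^{-(c'-c)/3}.
\]
Simultaneously, Lemma \ref{lemma:sparse:1}(2) allows us to restrict to an event of probability at least $1 - (1-\al)^{N/2}$ on which every such $\Bv$ has at least $\delta_0 N$ nonzero coordinates. The number of $\Bv$ admitting this structure is then bounded by
\[
\CN \le p^2 \binom{\lfloor \la N \rfloor}{N'} p^{N'} |Q|^{\la N - N'} p^{(1-\la) N},
\]
with the factors enumerating the AP, the $N'$ exceptional positions in $I_0$, their free values, the $Q$-valued main part, and the free tail $\Bv_{I_0^c}$.

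Next, for each fixed structured $\Bv$, I condition on the two principal blocks $G_{I_0 \times I_0}$ and $G_{I_0^c \times I_0^c}$; the cross block $G_{I_0^c \times I_0}$ then consists of mutually independent $\al$-balanced entries (identically in the skew-symmetric case, up to sign). For each row $i \in I_0^c$, the equation $(G_N\Bv)_i = (Y_0)_i$ reduces to a linear equation in the fresh row $(G_{ij})_{j \in I_0}$ with coefficient vector $\Bv_{I_0}$; by Theorem \ref{theorem:LO} each such event has probability at most $1/p + \rho(\Bv_{I_0})$, and these events are independent across $i \in I_0^c$. Since the constraint $|I| \ge N - N^c$ forces at least $(1-\la)N - N^c$ of them to hold, the per-vector probability is at most $\binom{(1-\la)N}{N^c}\bigl(1/p + \rho(\Bv_{I_0})\bigr)^{(1-\la)N - N^c}$.

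Finally, I would dyadically partition the bad $\Bv$'s by the value $\rho_\ell$ of $\rho(\Bv_{I_0})$ in $[\exp(-N^c),1]$, producing $O(N^c \log N)$ shells; in each shell one obtains a total contribution of the form $\CN_\ell \cdot \rho_\ell^{(1-\la)N - N^c}$ with $|Q_\ell|$ from Lemma \ref{lem:deg:2} calibrated to the shell. The $p^{\la N}$ portion of $\CN_\ell$ is canceled by $|Q_\ell|^{\la N}$ via $p \cdot (|Q_\ell|/p) = |Q_\ell|$, leaving the dominant factor $(|Q_\ell|/p)^{\la N} \cdot \rho_\ell^{(1-\la)N} \cdot p^{N' + O(N^c)}$, which is $\le \exp(-c'' N)$ for some absolute $c'' > 0$ once $c$ is small compared to $\la,\al,c'$ and $p \le \exp(N^c)$. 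Summing over the $O(N^c \log N)$ levels and over the $\binom{N}{N^c}$ choices of $I$ still yields a total probability much smaller than $\exp(-N^c)$, giving the required contradiction. The main obstacle is this final balancing: the net incurs a ruinous overhead of $p^{(1-\la)N + N'}$ (up to $\exp(N^{1+c})$ when $p$ is near $\exp(N^c)$), and only the polynomial-in-$N$ shrinkage $(|Q|/p)^{\la N}$ forced by Lemma \ref{lem:deg:2}, combined with the row-by-row gain $\rho_\ell^{(1-\la)N}$ for small $\rho_\ell$, is strong enough to beat it; this is precisely what fixes the cutoff $p \le \exp(N^c)$ with $c$ depending on $\la$ and $\al$.
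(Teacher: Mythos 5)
There is a genuine gap, and it is in the final balancing step that you yourself flag as the main obstacle: the union bound does not close. Your per-vector probability uses the rows $i\in I_0^c$ with coefficient vector $\Bv_{I_0}$, giving a per-row factor $1/p+\rho(\Bv_{I_0})$. But on the bad event you are trying to exclude, $\rho(\Bv_{I_0})$ is \emph{large} by hypothesis (it can be as big as $\Theta(N^{-1/2})$ even for completely non-sparse vectors), so the total row-by-row gain is at best $\rho_\ell^{(1-\la)N}\le e^{-O(N\log N)}$. Against this you must pay for the net: the coordinates $\Bv_{I_0^c}$ are unconstrained, contributing $p^{(1-\la)N}$ choices, and Lemma \ref{lem:deg:2} only places the structured part of $\Bv_{I_0}$ in an AP of size about $p/\mathrm{poly}(N)$, contributing roughly $(p/\mathrm{poly}(N))^{\la N}$ more. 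When $p$ is sub-exponential (say $p=e^{N^{c/2}}$), the net has size at least $e^{\Theta(N^{1+c/2})}$, which overwhelms $e^{-O(N\log N)}$. Concretely, $\CN_\ell\cdot(1/p+\rho_\ell)^{(1-\la)N-N^c}\gtrsim (1+p\rho_\ell)^{(1-\la)N}\cdot(p/\mathrm{poly}(N))^{\la N}$, which is enormous rather than small; your claimed cancellation ``$p\cdot(|Q_\ell|/p)=|Q_\ell|$'' silently drops a factor of order $p^{N}$. The inverse statement of Lemma \ref{lem:deg:2} yields only a polynomial-in-$N$ saving per coordinate, which can never compete with sub-exponential $p$ on its own.

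The paper's proof avoids this by running a dichotomy over the \emph{whole} vector rather than over $\Bv_{I_0}$: either some moderate-size subvector $\Bv_\Lambda$ (with $s_1\le|\Lambda|\le s_2$) has $\rho(\Bv_\Lambda)\lesssim t\sqrt{k}/(p\sqrt{s_1})$, i.e.\ nearly $1/p$ up to polynomial factors, in which case each row costs about $\mathrm{poly}(N)/p$ and the $p^{N}$-sized enumeration is genuinely cancelled (this is Lemma \ref{lemma:l2} and, for the vectors that are unstructured at the base level but have $\rho(\Bv_{I_0})$ large, the ``structure propagation'' argument of Proposition \ref{prop:structure:subexp:lap'} with the segments $J_i$ and the $\CE_{drop}$/full-rank-submatrix step); or else \emph{every} moderate subvector has large $\rho$, in which case the Hal\'asz/FJLS counting machinery ($R_k^\delta$, Theorem \ref{theorem:Halasz}, Propositions \ref{prop:counting:moderate:sym}--\ref{prop:counting:moderate:lap}) shows there are only about $p^{d+s_2}\,t^{-\Theta(N)}$ such vectors, and the dyadic decomposition is over this structure level $t$, not over the value of $\rho(\Bv_{I_0})$. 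Non-sparseness also has to come from Lemma \ref{lemma:sparse:2} (valid for $\la_N$ as small as $N^{-\delta}$ in the stated range of $p$), not from Lemma \ref{lemma:sparse:1}(2), whose constants are too weak for this purpose. Without some substitute for this two-sided mechanism—near-$1/p$ per-row costs on one side, a $t^{-\Theta(N)}$ count of globally structured vectors on the other—your scheme cannot reach $p$ up to $\exp(N^c)$.
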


We also have similar result for Laplacian matrices.

    \begin{prop}[Non-local structure of the normal vectors: Laplacian case]\label{prop:structure:subexp:lap} 
Let $0<\lambda\leq 1$ be a given constant. Then there exists a positive constant $c$ (given $\la$)  such that the following holds for $p$ prime and sufficiently large (depending on $\la$).
Let $n,N$ be positive integers and let $L_n$ be as in Theorem \ref{theorem:cyclic:lap}, and let $L_N$ be the $[N]\times[N]$ submatrix of $L_n$. 
     We assume that $p \le \exp(N^c)$. Let $I_0\subset [N]$ be an index set of size $\lfloor \la N \rfloor$ that might depend on the randomness of $x_{kl},$ for $N+1 \le k \text{ or } N+1 \le l$, but is independent of the randomness of $x_{kl},$ for  $1\le k<l \le N$. 
     For any non-zero fixed vector $Y_0 \in \F_p^N$, with probability at least $1 - \exp(-N^{c})$, 
     for any vector $\Bv \in \F_p^N$ such that $L_N \Bv=Y_0$  in at least $N -N^{c}$ coordinates we have (with $\rho_L(-)$ from \eqref{eqn:rho:L})
$$\rho_L(\Bv_{I_0}) \le \exp(-N^c).$$
\end{prop}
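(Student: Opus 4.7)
The plan is a standard net-plus-anticoncentration argument, adapted so that the Laplacian's structural dependencies are absorbed by the affine form of $\rho_L$ and by a careful block partition of $[N]$. We cover the ``structured'' candidate normal vectors $\Bv$ (those with $\rho_L(\Bv_{I_0})\ge \exp(-N^c)$) by a small net, and for each fixed $\Bv$ we bound the probability that $L_N\Bv$ agrees with $Y_0$ on at least $N-N^c$ coordinates; the product should be $\exp(-N^c)$ for $c$ chosen small.

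First, Lemma~\ref{lemma:sparse:1}(2) applied to $Y_0$ restricts attention to non-sparse $\Bv$: with probability at least $1-(1/2)^{N/2}$, every generalized normal vector satisfies $|\supp(\Bv-a\1)|\ge \delta N$ for all $a\in \F_p$. Then decompose the structured set dyadically into level sets $\mathcal{L}_m=\{\Bv:\rho_L(\Bv_{I_0})\in[e^{-m-1},e^{-m}]\}$ for $N^c\le m\le N\log p$. For $\Bv\in\mathcal{L}_m$, Lemma~\ref{lem:deg:2} (with the choice $N'=\lfloor N^{2c}\rfloor$) produces a shift $w\in\F_p$ and an arithmetic progression of size $O(p\sqrt{m/N'})$ covering all but $N'$ coordinates of $\Bv_{I_0}-w\1$. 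Counting the shift, progression, exceptional positions and values, and the free coordinates outside $I_0$ yields a covering $|\mathcal{L}_m|\le \exp(CN^{1+c})$, uniformly in $m$ when $c$ is small, since $\log p\le N^c$.

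For the probabilistic side, partition $[N]=A\sqcup B$ with $B\supset I_0$ and $|A|=\Theta(N)$ (feasible when $\lambda$ is bounded away from $1$; for $\lambda=1$, first replace $I_0$ by a subset $I_0'\subset I_0$ of size $N/2$ and use $\rho_L(\Bv_{I_0})\le \rho_L(\Bv_{I_0'})$). Condition on all $x_{k\ell}$ with $\{k,\ell\}\not\subset B$ and all $x_{k\ell}$ with $\max(k,\ell)>N$; this freezes $I_0$ and the diagonal contribution $v_i\sum_{k>N}x_{ki}$, so the row identity
\[
(L_N\Bv)_i=\sum_{j\in B} x_{ij}(v_j-v_i)+\textrm{const}, \qquad i\in A,
\]
is a linear form in the independent family $\{x_{ij}:i\in A,\,j\in B\}$. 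By Theorem~\ref{theorem:LO} and $I_0\subset B$, the conditional per-row probability is at most $\tfrac{1}{p}+\rho(\Bv_B-v_i\1)\le \tfrac{1}{p}+\rho_L(\Bv_B)\le \tfrac{1}{p}+\rho_L(\Bv_{I_0})=O(e^{-m})$ on $\mathcal{L}_m$. Conditional independence of the $|A|=\Theta(N)$ rows then gives joint probability $\binom{|A|}{N^c}(Ce^{-m})^{|A|-N^c}$ for agreement on at least $|A|-N^c$ of them, and summing $|\mathcal{L}_m|$ times this over $m\ge N^c$ yields total probability $\exp(-N^c)$ once $c$ is small enough that $|A|\cdot N^c$ dominates $CN^{1+c}$.

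The main obstacle is the Laplacian's diagonal dependence: each $x_{ij}$ sits in two rows and two diagonals, so naive row exposure fails. The resolution has two ingredients. First, the functional $\rho_L$ is defined as a maximum over affine shifts $\Bv-w\1$, which is exactly what absorbs the common shift by $v_i$ that appears when the diagonal $L_{ii}=-\sum_{k\ne i}x_{ki}$ is folded into $(L_N\Bv)_i=\sum_{j\ne i,\,j\in[N]}x_{ij}(v_j-v_i)-v_i\sum_{k>N}x_{ki}$. Second, the block partition $[N]=A\sqcup B$ with $B\supset I_0$ exposes a rectangular submatrix $L_{A\times B}$ whose entries are independent after the conditioning, delivering genuine per-row anticoncentration. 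The additional subtlety that $I_0$ may depend on $x_{k\ell}$ with $\max(k,\ell)>N$ is handled by conditioning on those entries at the outset, which fixes $I_0$ deterministically before the rest of the argument runs.
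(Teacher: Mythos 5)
Your union bound does not close, and the gap is quantitative, not cosmetic. The hypothesis $\rho_L(\Bv_{I_0})\ge e^{-m}$ together with Lemma~\ref{lem:deg:2} only confines $\Bv_{I_0}$ (up to $N'$ exceptions) to an arithmetic progression of size $O(p\sqrt{m/N'})$ --- a saving of only a $\mathrm{poly}(N)$ factor per coordinate on $I_0$ --- and it gives no saving at all on the $(1-\lambda)N$ coordinates outside $I_0$; so $|\mathcal{L}_m|$ is of order $p^{N}\cdot N^{-\Theta(c)N}$. On the probability side, your conditioning forces you to use only the rows indexed by $A$ (rows inside $B$ are not conditionally independent after you freeze everything outside $L_{A\times B}$), so you have at most $|A|\le(1-\lambda)N$ independent row events, each with probability at least $1/p$; the per-vector bound is therefore no better than $p^{-(1-\lambda)N}$, and the product count$\times$probability is at least of order $\bigl(p/\mathrm{poly}(N)\bigr)^{\lambda N}$, which already diverges for $p$ a large power of $N$, let alone $p\le \exp(N^{c})$. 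Your closing condition that ``$|A|\cdot N^{c}$ dominates $CN^{1+c}$'' cannot hold since $|A|\le N$. Two further slips point the same way: the level sets you must exclude are those with \emph{large} $\rho_L$, i.e.\ $m\le N^{c}$ (for which $e^{-m}$ can be as large as a constant unless you know $\Bv_{I_0}$ itself is non-sparse, and Lemma~\ref{lemma:sparse:1}(2) only controls the full vector $\Bv$, not its restriction to $I_0$), and for such $m$ the per-row factor $1/p+e^{-m}$ gives essentially no gain per row.

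This mismatch is exactly why the paper's argument (Proposition~\ref{prop:structure:subexp:lap'}) has the shape it does. To beat the $p^{N}$ entropy one must (i) use almost all $N$ rows, with the per-row anticoncentration driven by a \emph{small} window $\Lambda$, $s_1\le|\Lambda|\le s_2=o(N)$, via the decomposition \eqref{eqn:i.i.d.decomposition:L} and Lemma~\ref{lemma:l2}, and (ii) count the candidate vectors with a saving of a factor of order $t$ on essentially every support coordinate, which is what the counting results Propositions~\ref{prop:counting:moderate:sym} and \ref{prop:counting:moderate:lap} (built on $R^{\delta}_k$ and Theorem~\ref{theorem:Halasz}) provide --- but these require $\rho_L$ to be large on \emph{all} moderate windows, not just on $I_0$. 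Vectors that are structured on $I_0$ but not on all windows are then handled by the structure-propagation argument (Case~1 there): partition $[\lambda N+1,N]$ into blocks, count the structured blocks via Claim~\ref{claim:v_0:subexp} (a per-coordinate saving of $N^{c_4}$ against $p$), and run the row exposure against an unstructured block $J_*$ so that nearly all rows contribute a factor close to $1/p$. The refined non-sparsity input is Lemma~\ref{lemma:sparse:2} (whose hypothesis forces $p$ polynomially large, which is why the regime of small $p$ is treated separately through Lemma~\ref{lem:deg:1}). A single application of Lemma~\ref{lem:deg:2} on the window $I_0$, combined with a rectangular block $L_{A\times B}$, cannot substitute for these two ingredients.
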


We will present the proofs of these results in the next section and deduce Proposition \ref{prop:moderate:lap} and Proposition \ref{prop:moderate:alt} in Section \ref{section:rankevolving}. To complete this section, we remark that in fact for the symmetric and skew-symmetric case, to prove Proposition \ref{prop:moderate:lap} (or Propositions \ref{prop:rankevolution:sym} and \ref{prop:rankevolution:alt}) 
one only needs Proposition \ref{prop:structure:subexp:sym} for $\la$ close to 1. On the other hand, for the Laplacian case, the extra randomness created by neighbor reshuffling will be limited, and hence we will need the full strength of Proposition \ref{prop:structure:subexp:lap} for any given $\la$.

\section{Treatment for moderate primes: proof of Propositions \ref{prop:structure:subexp:sym}  and \ref{prop:structure:subexp:lap}}\label{section:normalvector}

To prepare for the proofs we first introduce a decomposition trick (originating from \cite{Ver}) which will be useful. Let $\Ba,\BY_0$ be fixed vectors in $\F_p^N$, and let $G_N$ be either $M_N, A_N$ or $L_{N}$. Assume that we would like to bound the probability of the event $G_N\Ba=Y_0$. For this, for any $I \subset [N]$ we can write  
$$G_N =\Big(\begin{array}{cc} G_{([N]\bs I) \times ([N]\bs I) }& G_{([N]\bs I) \times I} \\ G_{I \times ([N]\bs I)} & G_{I \times I} \end{array}\Big).$$ 
The equation $(G_N \Ba)_{[N]\bs I} = (Y_0)_{[N]\bs I}$ 
can be written as
\begin{equation}\label{eqn:i.i.d.decomposition:0}
G_{([N]\bs I) \times ([N]\bs I) } \Ba_{[N]\bs I} +  G_{([N]\bs I) \times I} \Ba_I =(Y_0)_{[N]\bs I}.
\end{equation}

\begin{figure}

\centering

\begin{tikzpicture}
\draw (0,0) -- (4,0) -- (4,4) -- (0,4) -- (0,0);
\draw (0,1) -- (4,1);
\draw (3,0) -- (3,4);
\draw (0,4) -- (4,0);
\draw (0,4) -- (1.5,4) node[above]{$[N]\bs I$};
\draw (3,4) -- (3.5,4) node[above]{$I$};
\draw [fill=black] (3,1) -- (4,1) -- (4,4) -- (3,4);
\draw (4,1)--(4,2.5) node[right=0.1in]{$G_{([N]\bs I) \times I}$}; 
\end{tikzpicture}
\caption{Decomposition into i.i.d. parts.}
\label{figure:rec}
\end{figure}
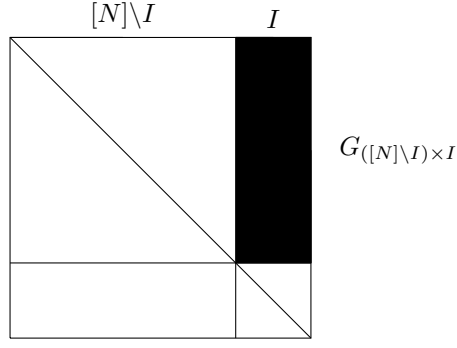

In the non-Laplacian case, we condition on the entries of $G_{([N]\bs I) \times ([N]\bs I)}$ (see Figure \ref{figure:rec}). %
Then we can view \eqref{eqn:i.i.d.decomposition:0} as $ G_{([N]\bs I) \times I} \Ba_I  = D$, where $D$ is deterministic and the entries of $ G_{([N]\bs I) \times I}$ are independent. 
In the Laplacian case, we condition on all  $x_{kl},$ for  $1\le k<l\le n$ where $(k,l), (l,k) \notin ([N]\bs I) \times I $,
 and we let $X_j$ be the $j$th row of
$ G_{([N]\bs I) \times I}$.  
After conditioning,
\eqref{eqn:i.i.d.decomposition:0} is equivalent to
\begin{align}\label{E:NI}
\sum_{i\in I}  {x_{ji}a_i}&=d_j \quad \textrm{ for all $j\in [N]\setminus I$  (non-Laplacian case), or} \\
-x_{j(n+1)}a_j+\sum_{i\in I} {x_{ji}(a_i-a_j)}&=d_j \quad \textrm{ for all $j\in [N]\setminus I$  (Laplacian case)}, \notag
\end{align}
for some deterministic $d_i$.  Also, the $x_{ji}$ above 
are all independent.
In the non-Laplacian case we have
\begin{equation}\label{eqn:i.i.d.decomposition:nL}
\P(G_N\Ba=Y_0)\le  \sup_D\P(G_{([N]\bs I) \times I} \Ba_I  = D | G_{([N]\bs I) \times ([N]\bs I)} ) \le%
 (\frac{1}{p}+\rho(\Ba_I))^{N-|I|} ,
\end{equation}
and in the Laplacian case we have
\begin{equation}\label{eqn:i.i.d.decomposition:L}
\P(G_N\Ba=Y_0)\le  \sup_{\{d_j\}_j  }\P(X_j \cdot (\Ba_I-a_j\1)=d_j  \textrm{ for all $j\in [N]\setminus I$} ) \le
 (\frac{1}{p}+\rho_L(\Ba_I))^{N-|I|}.
\end{equation}

\begin{lemma} \label{lemma:sparse:2}%
 Let $G_N$ be either the random matrix $M_N, A_N$ from Theorem \ref{theorem:cyclic:sym} or Theorem \ref{theorem:cyclic:alt}, or the random matrix $L_N$
 that is the $[N] \times [N]$ submatrix of the $L_n$ from Theorem \ref{theorem:cyclic:lap}. For every sufficiently small
  constant $\delta>0$ (where how small might depend on $\al$ in the symmetric and skew-symmetric case), for $N$ sufficiently large depending on $\al$ and $\delta$ the following holds. %
  Let $Y_0\in \F_p^N$ be a fixed vector. Let $N^{-\delta}\leq \la_N\leq 1$ be a parameter that might depend on $N$, and let $p$ be a prime where 
$$\max\{\frac{1}{\delta^3}, \frac{1}{\la_N^3}\} \leq p\leq \exp(N^{\delta}).$$
Then with probability at least $1 - \exp(-N^{1-\delta})$, for any 
 non-zero  $\Bv\in\F_p^N$ such that $G_N \Bv=Y_0$ in at least $N -N^{\delta}$ coordinates, 
\begin{enumerate}
\item $|\supp(\Bv)| \geq N-\la_N N$ in the $M_N,A_N$ case;
\vskip .05in 
\item $|\supp(\Bv-a\1)| \geq N-\la_N N$ for any $a\in \F_p$ in the $L_N$ case. In other words the highest multiplicity among the components of $\Bv$ is at most $\la_N N$.
\end{enumerate} 
\end{lemma}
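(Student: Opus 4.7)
The plan is to bound, via a union bound over candidate counterexample vectors $\Bv$, the probability that some $\Bv$ with $G_N\Bv = Y_0$ on at least $N-N^\delta$ coordinates has either $|\supp(\Bv)|<N-\lambda_N N$ (in the symmetric/skew-symmetric case) or $|\supp(\Bv-a\1)|<N-\lambda_N N$ for some $a\in\F_p$ (in the Laplacian case). Fix such a $\Bv$, let $S$ be the relevant support (of $\Bv$, or of $\Bv-a\1$ for the $a$ of maximum multiplicity), and let $J=[N]\setminus S$, so $|J|>\lambda_N N\geq N^{1-\delta}$. Applying the decomposition \eqref{eqn:i.i.d.decomposition:nL} (resp.~\eqref{eqn:i.i.d.decomposition:L}) with $I=S$, after conditioning on the complementary entries the condition that $(G_N\Bv)_J$ matches $(Y_0)_J$ in all but at most $N^\delta$ coordinates reduces to at least $|J|-N^\delta$ linear equations in the \emph{independent} random variables $\{x_{ji}\}_{j\in J,\,i\in S}$ (plus the $x_{j(n+1)}$'s in the Laplacian case). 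The Fourier bound \eqref{eqn:Fourier} gives per-equation probability at most $\tfrac{1}{p}+\rho(\Bv_S)$ (resp.~$\tfrac{1}{p}+\rho_L(\Bv_S)$), and independence across $j\in J$ yields
$$\P(\Bv\text{ is a counterexample})\;\leq\;\binom{|J|}{N^\delta}\bigl(\tfrac{1}{p}+\rho(\Bv_S)\bigr)^{|J|-N^\delta}.$$

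To close the union bound I would split the vectors $\Bv$ by support size $k$ and by how structured $\Bv_S$ is. For small $k$, the count $\binom{N}{k}(p-1)^k$ is negligible and Theorem~\ref{theorem:LO} supplies $\rho(\Bv_S)=O(k^{-1/2})$ (since $\Bv_S$ is fully supported in $\F_p^S$), so the per-$\Bv$ bound $(1/p+O(k^{-1/2}))^{|J|-N^\delta}$ easily dominates the count. For larger $k$ I would employ the dichotomy: if $\rho(\Bv_S)\leq\exp(-N^{c\delta})$ for a suitable small constant $c$ (the ``unstructured'' case), the per-$\Bv$ bound is $\exp(-\Theta(\lambda_N N))$, which absorbs even the trivial total count $p^k\leq\exp(kN^\delta)$ thanks to the hypothesis $p\geq\max(\delta^{-3},\lambda_N^{-3})$; if $\rho(\Bv_S)>\exp(-N^{c\delta})$ (the ``structured'' case), Lemma~\ref{lem:deg:2} (tuned with an appropriate $N'$) forces a $(1-o(1))$-fraction of the entries of $\Bv_S$, after dilation, into an arithmetic progression of length $O(pN^{c\delta/2}/\sqrt{k})$, and the resulting count of structured $\Bv_S$ combines with the still-available bound $\rho(\Bv_S)=O(k^{-1/2})$ from Theorem~\ref{theorem:LO} to close the inequality.

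The Laplacian case is essentially parallel, with $\rho$ replaced by $\rho_L$ throughout, Lemma~\ref{lem:deg:2} providing the AP structure for $\Bv_S-w\1$, and the extra random term $-x_{j(n+1)}a$ arising from \eqref{eqn:i.i.d.decomposition:L} only strengthening the concentration by adding an independent random contribution.

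The main obstacle will be the medium-$k$ regime, where $k$ is a small power of $N$ and $\Bv_S$ is only \emph{partially} structured (a bulk in a short GAP, a small number of wild coordinates). Here neither a purely structured nor a purely unstructured bound suffices on its own, and I expect to invoke the ``structure propagation'' technique announced in the introduction: peel off the wild coordinates one by one through a sequence of conditionings, propagating the AP structure and matching concentration bounds as we go, so that the final count of admissible $\Bv_S$ is controlled while retaining a usable per-vector bound. Balancing the thresholds so the final estimate drops below $\exp(-N^{1-\delta})$ across all $k\in[1,N-\lambda_N N)$ and all primes $p$ in the stated range is the central bookkeeping difficulty, and the precise conditions $p\geq\max(\delta^{-3},\lambda_N^{-3})$ and $p\leq\exp(N^\delta)$ are what let the two sides of the union bound just meet.
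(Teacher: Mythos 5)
There is a genuine gap, and it is in exactly the regime your plan leans on: large support. Your union bound enumerates $\Bv$ by the values on its support $S$, $|S|=k$, and pays for this entropy with anti-concentration over the $|J|=N-k\geq\lambda_N N$ equations indexed by the zero set. But a counterexample may have $k$ as large as $N-\lambda_N N$, which exceeds $N/2$ whenever $\lambda_N<1/2$; then the entropy is at least of order $p^{k}$ while the probability gain per equation is at best $\max(1/p,\rho)\geq 1/p$, so the product is at least $p^{k-(N-k)}=p^{2k-N}\gg 1$. Your specific claim that in the unstructured case the per-vector bound $\exp(-\Theta(\lambda_N N))$ ``absorbs the trivial total count $p^k\le\exp(kN^\delta)$'' is false: with $k\sim N$ and $p$ near $\exp(N^\delta)$ the count is $\exp(N^{1+\delta})$, which no bound of the form $\exp(-\Theta(\lambda_N N))$, nor even $\exp(-N^{c\delta}\cdot\lambda_N N)$, can beat. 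The structured branch does not rescue this either: the progressions produced by Lemma~\ref{lem:deg:2} have length at least $p/\mathrm{poly}(N)$, so the saving over the trivial count is only $\mathrm{poly}(N)^{k}=e^{O(N\log N)}$, negligible against $p^{2k-N}$ when $p$ is super-polynomial. Even harvesting the equations indexed inside $S$ does not help, because in the structured branch you only know $\rho\gtrsim\exp(-N^{c\delta})$, and Theorem~\ref{theorem:LO} caps the per-equation gain at $O(k^{-1/2})+1/p$, again polynomial where you would need a factor close to $1/p$. So the difficulty is not just the ``medium-$k$, partially structured'' bookkeeping you flag; the approach of counting admissible $\Bv_S$ and multiplying by an anti-concentration bound cannot close for $k\gtrsim N/2$ at any structural threshold.

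The paper's proof avoids this by never enumerating the high-entropy part of $\Bv$ at all, and it uses no GAP structure here. It partitions the coordinates into blocks $J_i$, calls a block ``sparse'' if $\Bv_{J_i}$ has a dominant repeated value, and observes (Claim~\ref{C:vspossibilities}) that the sparse part has only $e^{N^{1-2\delta}}$ possible values—this is the only part that is union-bounded. Outside the low-probability event $\CE_{drop}$, controlled by the quadratic repulsion Lemma~\ref{lemma:quadratic}, there is a full-rank submatrix $G_{I'_s\times I'_m}$ such that, after conditioning on the rows in $f(I_s)$, the entire ``mixed'' part of $\Bv$ is \emph{determined} via \eqref{eqn:cond:fix}; the probability decay is then extracted from the rows avoiding the conditioned set, applying Theorem~\ref{theorem:LO} to a single high-diversity block $J_*$ of the mixed part, giving $(O(1)/\sqrt{N^{1-5\delta}}+1/p)^{N-|M|-\lambda_N N/2}$. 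In that final bookkeeping the hypothesis $p\geq\max\{\delta^{-3},\lambda_N^{-3}\}$ is used to beat the positional entropy $\binom{N}{\lceil\lambda_N N\rceil}$ of where the repeated value sits (via $(2/p)^{\lambda_N N/2}\le(2\lambda_N^3)^{\lambda_N N/2}$), not the value entropy $p^{k}$ that your plan has to confront. If you want to repair your write-up, the missing idea is precisely this determinization/``structure propagation'' step: condition on a full-rank minor to eliminate the unstructured coordinates from the union bound rather than counting them.
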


\begin{remark}\label{remark:sparse:constant}
Note that we can take $\la_N$ to be a constant in the above result, which yields an analog of (2)  of Lemma \ref{lemma:sparse:1} in which $\delta$ (of that result) can be close to 1 (unlike in Lemma \ref{lemma:sparse:1} itself),   assuming that $p$ is sufficiently large but not too large.
\end{remark}

\begin{proof}[Proof of Lemma \ref{lemma:sparse:2}] It suffices to assume $\la_N \le \delta$ because the statements are weaker for $\la_N>\delta$.
Let $\tilde{\CE}$ be the event that there is a 
 non-zero  $\Bv\in\F_p^N$ such that $G_N \Bv=Y_0$ in at least $N -N^{\delta}$ coordinates and
 $\Bv$ has at least $\lambda_N N$ coordinates of the same value.
  We will show $\P(\tilde{\CE})\leq e^{-N^{1-\delta}}$, which implies  the lemma since if the event of the lemma fails, then $\tilde{\CE}$ occurs.
We take $\delta <1/4$,  and will indeed take it smaller later, depending on at most $\alpha$.  Throughout the proof, we always assume that $N$ is sufficiently large given $\alpha$ and our choice of $\delta$.

First,  we will bound the probability of the following event $\mathcal{E}$:
there exists a $\Bv$ whose first $\lceil \lambda_N N \rceil$ coordinates are the same and $G_N\Bv=Y_0$.
Let $k:=\lfloor \lambda_N N/4\rfloor$. 
We can partition $[\lceil \lambda_N N \rceil+1,N]$ into subsequences $J_1,\dots,J_\ell$ of consecutive numbers so that $|J_i|=k$ for $i<\ell$ and $k\leq |J_\ell| <2k$.
We have
$$k\geq \lambda_N N/8 \quad \textrm{and} \quad \ell<8N^{\delta}.$$

Given $\Bv$,  we partition $[N]$ into two subsets, $I_m$ (mixed) and $I_s$ (sparse).  
We let $I_s$ be the union of the $J_i$ such that $\Bv_{J_i}$ has highest entry multiplicity  larger than $|J_i|-N^{1-5\delta}$,  along with $[\lceil \lambda_N N \rceil]$. 
 We let $I_m=[N]\setminus I_s$.
We write $\Bv_m:=\Bv_{I_m}$ and $\Bv_s:=\Bv_{I_s}$.

\begin{claim}\label{C:vspossibilities}
Given $I_s$, there are at most $e^{N^{1-2\delta}}$
 possibilities for $\Bv_s\in \F_p^N$ for $\Bv$ giving that $I_s$ and whose first $ [\lceil \lambda_N N \rceil]$ coefficients are the same.
\end{claim}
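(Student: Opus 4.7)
The plan is to bound the number of such $\Bv_s$ by a direct counting argument that exploits the defining property of $I_s$: on each $J_i\subseteq I_s$ the vector $\Bv_{J_i}$ is very close to a constant vector (at most $N^{1-5\delta}$ exceptional coordinates), and on the block $[\lceil \lambda_N N\rceil]$ all coordinates are equal. The number of constituent pieces $J_i$ is small (at most $\ell<8N^\delta$), and each piece has a cheap description, so the total count stays well below $e^{N^{1-2\delta}}$.

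More concretely, first I would choose the common value of the first $\lceil \lambda_N N\rceil$ coordinates, giving at most $p$ choices (and $p\le e^{N^\delta}$ is harmless). Next, for each of the at most $\ell$ blocks $J_i\subseteq I_s$, I would describe $\Bv_{J_i}$ by the triple: the majority value ($\le p$ choices), the set of at most $N^{1-5\delta}$ exceptional indices in $J_i$ (at most $\binom{|J_i|}{\le N^{1-5\delta}}$ choices), and the values of $\Bv$ on those exceptional indices ($\le p^{N^{1-5\delta}}$ choices). Multiplying,
\[
\#\{\Bv_s\}\ \le\ p\cdot\Bigl(p\cdot \tbinom{2k}{\le N^{1-5\delta}}\cdot p^{N^{1-5\delta}}\Bigr)^{\ell}.
\]

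The main estimation is then routine. Using $\binom{2k}{\le N^{1-5\delta}}\le (O(N^{5\delta}))^{N^{1-5\delta}}=e^{O(N^{1-5\delta}\log N)}$ and $p^{N^{1-5\delta}}\le e^{N^{1-4\delta}}$, each block contributes at most $e^{O(N^{1-4\delta})}$. Raising to the power $\ell<8N^\delta$ gives $e^{O(N^{1-3\delta})}$, and the initial $p$ factor is absorbed. For $\delta$ sufficiently small this is bounded by $e^{N^{1-2\delta}}$, which is the required bound.

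I do not expect serious obstacles in this claim; the only thing to watch is that the exceptional-count $N^{1-5\delta}$ is chosen small enough relative to $|J_i|\ge k\ge \lambda_N N/8\ge N^{1-\delta}/8$ so that the sum $\binom{|J_i|}{\le N^{1-5\delta}}$ behaves like its largest term (which holds once $N$ is large given $\delta$, since then $N^{1-5\delta}\ll |J_i|/2$), and that the exponents $N^{1-4\delta},N^{1-5\delta}\log N,N^{2\delta}$ all stay below $N^{1-2\delta}$, which forces a mild quantitative restriction on $\delta$ (any $\delta<1/6$ works, say, which is consistent with the ``$\delta$ sufficiently small'' hypothesis of the lemma).
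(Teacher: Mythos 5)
Your argument is correct and is essentially the paper's own proof: the paper bounds the count by $\binom{2k}{\lfloor N^{1-5\delta}\rfloor}^{\ell}p^{\ell+1}(p^{N^{1-5\delta}})^{\ell}$, i.e. exactly your decomposition into majority value per block, exceptional index set, and exceptional values, followed by the same routine estimates using $p\le e^{N^{\delta}}$, $\ell<8N^{\delta}$, and $|J_i|\ge k\ge \lambda_N N/8$. The only cosmetic difference is your use of a cumulative binomial coefficient in place of the paper's single binomial, which does not affect the bound.
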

\begin{proof}
The number of $\Bv_s$ is at most  %
$$\binom{2k}{\lfloor N^{1-5\delta}\rfloor}^{\ell}p^{\ell+1} (p^{N^{1-5\delta}})^{\ell}  \leq
N^{8N^{1-4\delta}} e^{16N^{2\delta}+8N^{1-3\delta} }.$$
On the left above,  the first factor is for the index choices of the coordinates that are not contributing to the highest entry multiplicity in $|J_i|$, the second
factor is  for the choices of the entry value with  highest entry multiplicity in each $J_i$ and in $ [\lceil \lambda_N N \rceil]$,  the third factor is for the choices of the other entry values in the $J_i$. 
The claim follows.
\end{proof}

{\bf Case 1.} 
Let $\mathcal{E}_1$ be the event that there exists a non-zero $\Bv$ whose first $\lceil \lambda_N N \rceil$ coordinates are the same,   $G_N\Bv=Y_0$, and the resulting $I_m$ (from $\Bv$) is empty.
By Claim~\ref{C:vspossibilities},  there are at most $e^{N^{1-2\delta}}$ choices of $\Bv$ that can arise with empty $I_m$.
For each of these choices of $\Bv$, we apply \eqref{E:NI} to bound $\P(G_N \Bv=Y_0)$, letting $I$ be the coordinate of a single non-zero coefficient in $\Bv$.
In the Laplacian case, note that in \eqref{E:NI} since $a_i\ne 0$, if $a_i-a_j=0$, then $a_j\ne 0$.
Then we have
$$
\P(G_N \Bv=Y_0)\leq (1-\alpha)^{N-1}.
$$
Summing over the possible $\Bv$, we conclude
$$
\P(\CE_1)\leq e^{N^{1-2\delta}} (1-\alpha)^{N-1}\leq e^{-c_1N}
$$
for some $c_1>0$ depending on $\alpha$.

{\bf Case 2.} 
Let $\mathcal{E}_2$ be the event that there exists a $\Bv$ whose first $\lceil \lambda_N N \rceil$ coordinates are the same,   $G_N\Bv=Y_0$, and the resulting $I_m$ is not empty.
 We have $|I_m| \geq \lambda_N N/8  \geq N^{1-\delta}/8$
 and $|I_s|\geq\lambda_N N$.%

We now describe a function $f$ from subsets of $[N]$ (that can occur as $I_s$) to subsets of $[N]$.  We describe $f(I_s)$, 
and write $I_m$ for $[N]\setminus I_s$, but note that $f$ does not depend
on $\Bv$.  If $|I_s|>|I_m|$, we let $f(I_s)$ be the first $|I_m|$ elements of $I_s$.
Otherwise, we let $J_*$ be the first $J_i$ in $I_m$ and we let $f(I_s)$ be the union of $I_s$ and the first $|I_m|-|I_s|$ elements of $I_m$ that are not in $J_*$.
This is possible because $|J_*|\leq \lambda_N N/2\leq  |I_s|.$ In either case, we have 
$|f(I_s)|=|I_m|$ and if $I_m$ is non-empty, it contains some $J_i$ that does not intersect $f(I_s).$

Let $\CE_{drop}$ be the event that  there is a square submatrix $G_{A\times B}$ of $G_N$
of dimension $\geq \lfloor N^{1/2+\delta} \rfloor$
 with rank less than $|A| - \lfloor N^{1/2+\delta} \rfloor$.
Lemma \ref{lemma:quadratic} tells us that
$
\P(\CE_{drop})\leq N e^{-c''(\lfloor N^{1/2+\delta} \rfloor)^2+c'N }
$ (for some $c'',c' >0$ depending on $\alpha$),
and thus 
$
\P(\CE_{drop})\leq  e^{-c_{drop} N^{1+2\delta} },
$
for some $c_{drop}>0$ depending on $\alpha$.%

We wish to bound the probability of $\mathcal{E}_2\setminus \CE_{drop}$.
Since for $\Bv$ causing $\mathcal{E}_2$, we have  $|I_m|> N^{1-\delta}/8$, %
we have $|I_m|\geq  \lfloor N^{1/2+\delta} \rfloor$.
So outside of $\CE_{drop}$,  the matrix $G_{f(I_s) \times I_m}$ has rank $\geq |I_m|-  \lfloor N^{1/2+\delta} \rfloor$, 
and hence it has a square submatrix $G_{{{{{I'_s}}}} \times {{I'_m}}}$ of dimension $|I_m| -  \lfloor N^{1/2+\delta} \rfloor$ which has full rank.

Given subsets $M$, ${{I'_m}}$, and ${{{{I'_s}}}}$ of $[N]$
such that $|{{I'_m}}|=|{{{{I'_s}}}}|$ and ${{I'_m}}\sub M$ and ${{{{I'_s}}}}\sub f([N]\setminus M)$, and such that
$M$ is non-empty and can occur as $I_m$,  let $\mathcal{E}_{2.1}(M,{{I'_m}},{{{{I'_s}}}})$ be the event that there is a $\Bv\in \F_p^N$ such that
the first $\lceil \lambda_N N \rceil$ entries of $\Bv$ are the same, $G_N\Bv=Y_0$, and $I_m=M$,  and $G_{{{{{I'_s}}}}\times {{I'_m}}}$ is full rank. 
In particular, above we just saw that $\mathcal{E}_2\setminus \CE_{drop}$ implies 
$\mathcal{E}_{2.1}(M,{{I'_m}},{{{{I'_s}}}})$ for some $M$, ${{I'_m}}$, and ${{{{I'_s}}}}$ with $|M|-|{{I'_m}}|=  \lfloor N^{1/2+\delta} \rfloor$
and $N-|M|\geq \lambda_N N$.

\begin{claim}\label{claim:cond:fix}
For all $M,{{I'_m}},{{{{I'_s}}}}$ such that $\mathcal{E}_{2.1}(M,{{I'_m}},{{{{I'_s}}}})$ is defined, we have
$$
\P(\mathcal{E}_{2.1}(M,{{I'_m}},{{{{I'_s}}}})) \leq e^{N^{1-2\delta}} p^{|M|-|{{I'_m}}|}\left( O(1)/\sqrt{N^{1-5\delta}}+1/p \right)^{N-|M|-\lambda_N N/2}
$$
\end{claim}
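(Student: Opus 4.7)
My plan is to proceed via iterative union bounds followed by a Littlewood-Offord concentration estimate on the remaining rows.

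I would start by enumerating the at most $e^{N^{1-2\delta}}$ possibilities for $\Bv_s$ given by Claim~\ref{C:vspossibilities}, and separately the $p^{|M|-|I'_m|}$ possible values of $\Bv_{M\setminus I'_m}$; this explains the two prefactors in the stated bound. To avoid the naive factor $p^{|I'_m|}$ for $\Bv_{I'_m}$, I would exploit the full-rank assumption on $G_{I'_s\times I'_m}$: after conditioning on the entries of $G_N$ in the rows (and, in the symmetric/skew-symmetric cases, the symmetric columns) indexed by $I'_s$, the equations $(G_N\Bv)_{I'_s}=Y_0|_{I'_s}$ together with this full-rank property determine $\Bv_{I'_m}$ in terms of $\Bv_s,\Bv_{M\setminus I'_m}$ and the conditioned matrix data. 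Allowing up to $O(N^\delta)$ violations within $I'_s$ enlarges this to $p^{O(N^\delta)}$ possibilities, easily absorbed into the $e^{N^{1-2\delta}}$ prefactor.

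With $\Bv$ now fixed, the plan is to apply the decomposition \eqref{eqn:i.i.d.decomposition:nL} (in the $M_N,A_N$ cases) or \eqref{eqn:i.i.d.decomposition:L} (in the $L_N$ case) with $I=I_m$. Conditioning on $G_{I_s\times I_s}$ (and, in the Laplacian case, on the exterior edges $x_{kj}$ with $k>N$ needed to isolate per-row randomness) renders the entries of $G_{I_s\times I_m}$ independent across rows. Then each usable row $j\in I_s$ contributes, by Theorem~\ref{theorem:LO}, a factor at most $\tfrac{1}{p}+\rho(\Bv_m)$, or $\tfrac{1}{p}+\rho_L(\Bv_m)$ in the Laplacian case. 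The construction of $I_m$ is exactly what supplies the required non-sparseness of $\Bv_m$: every $J_i\subset I_m$ has maximum entry multiplicity at most $|J_i|-N^{1-5\delta}$ in $\Bv_{J_i}$, so $|\supp(\Bv_m-a\1)|\ge N^{1-5\delta}$ for every $a\in\F_p$, and Theorem~\ref{theorem:LO} then yields $\rho(\Bv_m),\rho_L(\Bv_m)=O(1/\sqrt{N^{1-5\delta}})$. Taking the product of per-row bounds over the $\ge N-|M|-\lambda_N N/2$ usable rows of $I_s$ produces the last factor in the stated bound.

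The main obstacle I expect is the careful bookkeeping of which rows remain usable after all the conditioning, most delicate in the Laplacian case where the diagonal entries couple every entry in a row and we must read the non-sparseness through the affine $\rho_L$ rather than $\rho$. The extra randomness from the exterior edges $x_{kj}$ with $k>N$ is what provides room to maintain row-independence across most of $I_s$, at the cost of excluding the at most $\lambda_N N/2$ rows where $\Bv[j]$ coincides with the modal value of $\Bv_m$ and the relevant coefficients $\Bv[i]-\Bv[j]$ vanish on too many positions; this is precisely the deficit $\lambda_N N/2$ appearing in the exponent. The symmetric and skew-symmetric cases will follow the same template with strictly simpler independence structure, since there the off-diagonal symmetry is the only coupling to manage.
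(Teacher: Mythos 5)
Your first two steps (enumerating $\Bv_s$ and $\Bv_{I_m\setminus I'_m}$, then conditioning on the rows containing the full-rank block $G_{I'_s\times I'_m}$ to pin down $\Bv_{I'_m}$) match the paper. The gap is in the anti-concentration step. You propose to apply the decomposition with $I=I_m$ and take a per-row factor $\tfrac1p+\rho_L(\Bv_{I_m})$ over the rows of $I_s$. But the rows you have already conditioned on in order to determine $\Bv_{I'_m}$ --- the roughly $|M|-\lfloor N^{1/2+\delta}\rfloor$ rows of $I'_s\subseteq F=f([N]\setminus M)$ --- lie in (or overlap) $I_s$, and their entries in the columns $I_m$ are no longer random. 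When $|I_s|>|I_m|$ one has $F\subseteq I_s$, so at most about $|I_s|-|I'_s|\approx N-2|M|$ rows of $I_s$ remain usable; when $|I_s|\le|I_m|$ (i.e.\ $|M|\gtrsim N/2$) the set $I'_s$ can contain all of $I_s$ and there may be no usable rows at all. Since all that is known is $N-|M|\ge\lambda_N N$ and $|M|\ge N^{1-\delta}/8$, the quantity $|M|$ can be of order $N$, and your exponent then falls far short of the claimed $N-|M|-\lambda_N N/2$; the resulting bound is too weak for the downstream computation in the lemma, which needs the full exponent (at least $\lambda_N N/2$) to beat the $e^{N^{1-\delta}}$-type entropy factors.

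The paper avoids this by using rows \emph{inside} $M$ as well: it conditions on all of $G_{F\times[N]}$, picks a single block $J_*\subseteq I_m$ with $J_*\cap F=\emptyset$ (the function $f$ is designed exactly so that such a block exists), and for every row $i\in([N]\setminus F)\setminus J_*$ uses only the randomness of the entries $x_{ij}$, $j\in J_*$, in equation \eqref{E:toget}; these give $N-|F|-|J_*|\ge N-|M|-\lambda_N N/2$ independent equations, each holding with probability at most $\rho_L(\Bv_{J_*})\le O(1)/\sqrt{N^{1-5\delta}}+1/p$. Note also that your explanation of the $\lambda_N N/2$ deficit is off: it is simply the bound $|J_*|\le 2k\le\lambda_N N/2$ on the excluded block, not an exclusion of rows whose value $v_i$ matches a modal value of $\Bv_m$ --- the affine quantity $\rho_L(\Bv_{J_*})=\max_w\rho(\Bv_{J_*}-w\1)$ already absorbs the shift by $v_i$ uniformly, precisely because every block $J_i\subseteq I_m$ has highest multiplicity below $|J_i|-N^{1-5\delta}$.
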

\begin{proof}
For $\Bv$ with the first $\lceil \lambda_N N \rceil$ coordinates the same and $I_m=M$, there are at most $e^{N^{1-2\delta}}$
possible values of $\Bv_s$  by Claim~\ref{C:vspossibilities}
 and there are at most $p^{|M|-|{{I'_m}}|}$ possible values for $\Bv_{I_m\setminus {{I'_m}}}.$
We fix a choice of $\Bv_s$ and $\Bv_{I_m\setminus {{I'_m}}},$ and we will bound the desired probability just for $\Bv$ with these values.
We let $F=f([N]\setminus M)$.
We condition on $G_{F\times[N]}$ and compute the conditional probability.    
If $G_{{{{{I'_s}}}}\times {{I'_m}}}$ is not full rank, then the desired (conditional) probability is $0$.  
Otherwise, the equation $(G_{{{{{I'_s}}}} \times {{I'_m}}},G_{{{{{I'_s}}}} \times ([N]\bs {{I'_m}})})\Bv=(Y_0)_{{{{{I'_s}}}}}$ implies
\begin{equation}\label{eqn:cond:fix}
\Bv_{{{I'_m}}}=(G_{{{{{I'_s}}}} \times {{I'_m}}}^{-1})\left(  (Y_0)_{{{{{I'_s}}}}} - G_{{{{{I'_s}}}} \times ([N]\bs {{I'_m}})} \Bv_{[N]\setminus {{I'_m}}} \right).
\end{equation}
Everything on the right above is determined, and thus so is $\Bv_{{{I'_m}}}$, and we are considering a single fixed value of $\Bv$.
We let $J_*$ be one of the $J_i$ that is in $M$ but has no intersection with $F$.

For each $i\in ([N]\setminus F) \setminus J_*$, we let $X_i$ be the $i$th row of $G_N$.  The equation $X_i\Bv=(Y_0)_i$ implies
\begin{equation}\label{E:toget}
\sum_{j\in J_*} x_{ij}(v_j-v_i)=(Y_0)_i -\sum_{j\in [N]\setminus (J_* \cup \{ i\})} x_{ij}(v_j-v_i)+\sum_{j\in [n+1]\setminus [N]} x_{ij}v_i,
\end{equation}
where $v_i$ are the entries of $\Bv$.

The $x_{ij}$ for $i\in  ([N]\setminus F) \setminus J_*$ and $j\in J_*$ are all independent.  
We further condition on $x_{ij}$ for $i\in  ([N]\setminus F) \setminus J_*$ and $j\not \in J_*$.
Since $J_*\cap F=\emptyset$, none of the $x_{ij}$ for $i\in ( [N]\setminus F) \setminus J_*$ and $j\in J_*$ have been conditioned on.
Thus after our conditioning, the probability of \eqref{E:toget} holding is at most $\rho_L(\Bv_{J_*}).$
We have $N-|F|-|J_*|$ independent such equations that are implied by $G_N\Bv=Y_0$.
Thus the probability that, for a given $\Bv_s$ and $\Bv_{I_m\setminus {{I'_m}}},$ we have the event in the claim, is  at most
$$
\rho_L(\Bv_{J_*})^{N-|F|-|J_*|}.
$$
Since $J_*$ is one of the $J_i$ that is a subset of $I_m$, by definition of $I_m$ and
the highest multiplicity of coordinates of $\Bv_{J_*}$ is $<|J_*|-N^{1-5\delta}$, by Theorem~\ref{theorem:LO} we have that
$$
\rho_L(\Bv_{J_*})\leq O(1)/\sqrt{N^{1-5\delta}}+1/p.
$$
We have $|J_*|\leq \lambda_N N/2$ and $|F|=|M|$.  Thus the probability that, for a given $\Bv_s$ and $\Bv_{I_m\setminus {{I'_m}}},$ we have the event in the claim, is at most
$$
\left( O(1)/\sqrt{N^{1-5\delta}}+1/p \right)^{N-|M|-\lambda_N N/2}.
$$
Summing over the possible values for $\Bv_s$ and $\Bv_{I_m\setminus {{I'_m}}}$ (which are bounded in number above), we obtain the claim.
\end{proof}

The event $\CE_2$ is the union of $\CE_{drop}$ with $\mathcal{E}_{2.1}(M,{{I'_m}},{{{{I'_s}}}})$ over $M,{{I'_m}},{{{{I'_s}}}}$ with $|M|-|{{I'_m}}|= \lfloor N^{1/2+\delta} \rfloor$
and $|N|-|M|\geq \lambda_N N$ and $M$ a possible value of $I_m$.  There are at most $2^\ell \leq 2^{8N^{\delta/2}}$ possible values of $I_m$ (and hence $M$).
Given $M$, there are at most $\binom{|M|}{\lfloor N^{1/2+\delta} \rfloor}$ choices of ${{I'_m}}$, and $\binom{|f([N]\setminus M)|}{\lfloor N^{1/2+\delta} \rfloor}=\binom{|M|}{\lfloor N^{1/2+\delta} \rfloor}$ choices of ${{{{I'_s}}}}$.  Hence,
we have
\begin{align*}
\P(\CE_2)&\leq   e^{-c_{drop} N^{1+2\delta} } + 2^{8N^{\delta/2}}  N^{2N^{1/2+\delta}}
e^{N^{1-2\delta}} e^{N^{1/2+2\delta}}\left( O(1)/\sqrt{N^{1-5\delta}}+1/p \right)^{\lambda_N N/2}\\
&\leq e^{-c_{drop} N^{1+2\delta} } + e^{N^{1-\delta}}\left( O(1)/\sqrt{N^{1-5\delta}}+1/p \right)^{\lambda_N N/2},
\end{align*}
for $\delta$ sufficiently small. %
Thus, there is some $c_0>0$ (depending only on $\alpha$) such that for $\delta$ sufficiently small and $N$ sufficiently large given $\delta$,
$$
\P(\CE)\leq  \P(\CE_1)+ \P(\CE_2) \leq e^{-c_0N} +e^{N^{1-\delta}}\left( O(1)/\sqrt{N^{1-5\delta}}+1/p \right)^{\lambda_N N/2}.
$$

To bound the probability of $\tilde{\CE}$ and prove the lemma, we need to account also for the analog of $\CE$ where the first $\lceil \lambda_N N \rceil$ entries are replaced with any $\lceil \lambda_N N \rceil$ entries, and $Y_0$ is replaced by any vector that shares at least $N-N^{\delta}$ coordinates with it.  
This gives
$$
\binom{N}{\lceil \lambda_N N \rceil} \binom{N}{\lfloor N^{\delta} \rfloor} p^{\lfloor N^{\delta} \rfloor}
$$
total events with the same probability bound as we showed above for $\CE$, whose union is $\tilde{\mathcal{E}}$.
 We then have
\begin{align}
 \P(\tilde{\CE}) \notag
\leq &\left(\binom{N}{\lceil \lambda_N N \rceil} \binom{N}{\lfloor N^{\delta} \rfloor} p^{\lfloor N^{\delta} \rfloor}\right)\left(e^{-c_0N} +e^{N^{1-\delta}}\left( O(1)/\sqrt{N^{1-5\delta}}+1/p \right)^{\lambda_N N/2} \right)\\
\leq & \left( \frac{Ne}{ \lambda_N N }   \right)^{\lambda_N N+1} N^{N^{\delta}}e^{N^{2\delta}}\notag
\left(e^{-c_0N} +e^{N^{1-\delta}}\left( O(1)/\sqrt{N^{1-5\delta}}+1/p \right)^{\lambda_N N/2} \right)\\
\leq &  e^{\lambda_N N+1 +\lambda_N N\log\lambda_N^{-1} +\log\lambda_N^{-1}  +N^{\delta}\log N
+N^{2\delta}} \label{E:tildeE}
\left(e^{-c_0N} +e^{N^{1-\delta}}\left( O(1)/\sqrt{N^{1-5\delta}}+1/p \right)^{\lambda_N N/2} \right).
\end{align}

The $e^{-c_0N}$ factor (and, e.g., its 20th root) is much smaller than all the terms it is multiplied by (in \eqref{E:tildeE}) except perhaps 
$e^{\lambda_N N+\lambda_N N\log\lambda_N^{-1}}$, but if we take $\delta$ small enough we can guarantee  $\lambda_N N+\lambda_N N\log\lambda_N^{-1}\leq c_0N/20$.  We have that
$( O(1)/\sqrt{N^{1-5\delta}}+1/p)$ is bounded by either $O(1)/\sqrt{N^{1-5\delta}}$ or $2/p$,
and so we consider these cases separately.  
We have
$$
\left(\frac{O(1)}{\sqrt{N^{1-5\delta}}}\right)^{\lambda_N N/2}\leq e^{O(\lambda_N N)- \lambda_N N(1-5\delta) \log N /4  }.
$$ 
The $e^{- \lambda_N N(1-5\delta) \log N /4 }$ factor 
(and its 20th root)
is much smaller than all the terms it is multiplied by (from \eqref{E:tildeE})  except perhaps $e^{\lambda_N N\log\lambda_N^{-1}}$.  Since $\lambda_N^{-1}\leq N^{\delta}$,
for all $\delta>0$ small enough we have $\lambda_N N\log\lambda_N^{-1}\leq \lambda_N N(1-5\delta) \log N /80$. 

We have
$$
\left(\frac{2}{p}\right)^{\lambda_N N/2} \leq (2\lambda_N^{-3})^{\lambda_N N/2}=e^{(\log 2)\lambda_N N/2 -1.5 \lambda_N N\log \lambda_N^{-1}}.
$$ 
The $e^{-1.5 \lambda_N N\log \lambda_N^{-1}}$ factor cancels partially with 
$e^{\lambda_N N\log \lambda_N^{-1}}$ (from \eqref{E:tildeE}), giving a remaining term of 
$e^{-.5 \lambda_N N\log \lambda_N^{-1}}$, whose 20th root 
is smaller than all the terms it is multiplied by (above and in \eqref{E:tildeE}).
Thus we conclude 
$$
 \P(\tilde{\CE})  \leq e^{-c_0N/20}+e^{-\lambda_N N\log \lambda_N^{-1}/100}.
$$
Since $x\log x^{-1}$ is decreasing for $x\geq e$ and $\lambda_N\geq N^{-\delta}$,  the lemma follows.
\end{proof}

We remark that in the proof of Lemma \ref{lemma:sparse:2} above the upper bound assumption on $p$ is important. One cannot  expect a similar conclusion for extremely large $p$. 

\subsection{Proof of  Proposition \ref{prop:structure:subexp:sym} and Proposition \ref{prop:structure:subexp:lap}}\label{section:structure:sym}

    Generally speaking, some part of our treatment here is motivated by \cite{FJ} where the authors used the machinery of \cite{FJLS} to give an explicit singularity bound for random symmetric matrices by passing to matrices modulo a very large prime (of order $e^{N^c}$). However, there are significant differences in our results: (1) We study the discrepancy $\rho$, which controls $\sup_a |\P(X=a)-1/p|$ instead of the concentration probability $\sup_a \P(X=a)$. This is the correct concept to measure how a probability distribution in a finite group deviates from the uniform distribution. (2) Our result works as long as $p$ is sufficiently large (instead of focusing only on $p$ of order $e^{N^c}$; in fact if we only focus on the skew-symmetric or symmetric case,  one may be able to take $p\ge 3$, but we will not elaborate on this). As we have seen in Section \ref{section:method}, this wide  range of $p$ is in fact important to our main theorems. (3) Above all, perhaps the most innovative part of this section is to carry out the study for the Laplacian model. Here we have to study the {\it segmentwise} structures of the generalized normal vectors, that is $\rho(Z_{I_0})$, over any fixed set $I_0$ of size $\la n$. With these goals in mind, we will need to develop additional tools and borrow some more ideas from \cite{LMNg}. The most innovative part of this subsection is the proof of Proposition \ref{prop:structure:subexp:lap'} where we exploit structures using the ``propagation method" of Lemma \ref{lemma:sparse:2}.

Let $\Bz=(z_1,\dots,z_N)$ be any vector in $\F_p^N$. We first record an elementary relation (recalling $\rho(.)$ from \eqref{eqn:rho}).
    		\begin{fact}\label{fact:comparison:rho} For any $I \subset [n]$ we have 
		$$\rho(\Bz_I) \ge \rho(\Bz)$$
		and
		$$\rho_L(\Bz_I) \ge \rho_L(\Bz)$$
 		\end{fact}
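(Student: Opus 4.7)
The plan is to unwind the definitions and observe that restricting a vector to a subset $I$ simply removes some non-negative terms from the inner sum that appears inside the exponential. Since each $\|tw_i/p\|_{\R/\Z}^2 \ge 0$, deleting coordinates can only decrease the argument of the exponent (in absolute value with the minus sign), hence can only increase each summand, and therefore can only increase $\rho$.

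More precisely, I would first treat $\rho$. Fix any $t \in \F_p\setminus\{0\}$. Since $I \subset [N]$ and each term $\|tz_i/p\|_{\R/\Z}^2$ is non-negative,
$$\sum_{i \in I} \left\|\frac{t z_i}{p}\right\|_{\R/\Z}^2 \le \sum_{i=1}^{N} \left\|\frac{t z_i}{p}\right\|_{\R/\Z}^2.$$
Multiplying by $-\alpha < 0$ reverses the inequality, and then exponentiating (monotone) gives
$$\exp\left(-\alpha \sum_{i \in I} \left\|\frac{t z_i}{p}\right\|_{\R/\Z}^2\right) \ge \exp\left(-\alpha \sum_{i=1}^N \left\|\frac{t z_i}{p}\right\|_{\R/\Z}^2\right).$$
Summing over $t \in \F_p\setminus\{0\}$ and dividing by $p$ yields $\rho(\Bz_I) \ge \rho(\Bz)$, which is the first inequality.

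For the affine version $\rho_L$, the key point is that for every $w \in \F_p$ we have $(\Bz - w\1)_I = \Bz_I - w \1_I$, so applying the first inequality to the vector $\Bz - w\1$ gives $\rho((\Bz - w\1)_I) \ge \rho(\Bz - w\1)$ for each $w$. Taking the maximum over $w$ on both sides produces $\rho_L(\Bz_I) \ge \rho_L(\Bz)$. There is no real obstacle here: the statement is essentially a monotonicity of a sum of non-negative quantities after it is composed with the decreasing exponential, and both halves follow at once from that observation.
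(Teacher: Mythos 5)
Your proof is correct, and it is exactly the elementary monotonicity argument the paper has in mind (the paper states this Fact without proof, treating it as immediate from the definitions \eqref{eqn:rho} and \eqref{eqn:rho:L}). Dropping non-negative terms $\|t z_i/p\|_{\R/\Z}^2$ inside the exponential increases each summand, and the affine case follows by applying this to $\Bz - w\1$ and taking the maximum over $w$, just as you wrote.
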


	We next need the following definitions and results from \cite{FJLS}.

    		\begin{definition} For an $\Bw=(w_1,\dots, w_N) \in \F_p^N$ and $k \in \mathbb{N}$, let $R_k(\Bw)$ be the number of solutions of the form $(s,(i_1,\dots,i_{2k}))\in \{+,-\}^{2k}\times [N]^{2k}$ 
    		to $ \pm w_{i_1} \pm \dots \pm w_{i_{2k}} =0 \pmod{p}$. More generally, for given $\delta \in [0,1]$ we define $R_k^\delta(\Bw)$ to be the number of solutions to 
    			$$
    			\pm w_{i_1} \pm w_{i_2} \cdots \pm w_{i_{2k}} = 0 \, \text{mod} \, p
    			$$
    			that satisfy $|\{i_1, \dots, i_{2k}\}| \geq (1+ \delta)k$.
    		\end{definition}
		It is easy to show that $R_k(\Bw)$ is never much larger than $R_k^{\delta}(\Bw)$. 
    		\begin{lemma}\cite[Lemma 1.6]{FJLS}\label{Lemma:FJLS:delta}
    			For all integers $k,N$ with $k \leq N/2$ and any prime $p$, $\Bw \in \F_p^N$ and $\delta \in [0,1]$, %
    			$$
    			R_k(\Bw) \leq R_k^{\delta}(\Bw) + (40 k^{1-\delta} N^{1+\delta})^k.
    			$$
    		\end{lemma}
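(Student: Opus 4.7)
The plan is to directly estimate the contribution of the ``low-support'' solutions, i.e. those tuples $(s,(i_1,\dots,i_{2k}))$ that are counted by $R_k(\Bw)$ but not by $R_k^\delta(\Bw)$. By definition these are exactly the solutions with $|\{i_1,\dots,i_{2k}\}|<(1+\delta)k$, so
\[
R_k(\Bw) - R_k^\delta(\Bw) \;\le\; 2^{2k}\cdot \#\bigl\{(i_1,\dots,i_{2k})\in[N]^{2k}\ :\ |\{i_1,\dots,i_{2k}\}|<(1+\delta)k\bigr\},
\]
where the $2^{2k}$ factor absorbs the sign choices. Note that we simply throw away the arithmetic constraint $\pm w_{i_1}\pm\cdots\pm w_{i_{2k}}\equiv 0\pmod p$ here; keeping it would only improve the bound but is not needed.

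Next I would bound the combinatorial count on the right hand side. Setting $m=\lceil(1+\delta)k\rceil-1$, a tuple $(i_1,\dots,i_{2k})$ whose support has size at most $m$ can be specified by first choosing the set of (at most) $m$ distinct values taken, in $\binom{N}{m}$ ways, and then choosing the $2k$ coordinates from this set, in at most $m^{2k}$ ways. Summing over the possible support size (or simply using the crude bound above) gives at most $\binom{N}{m}m^{2k}$ such tuples. Using the standard estimate $\binom{N}{m}\le(eN/m)^m$, this is
\[
\binom{N}{m}m^{2k}\;\le\;\Bigl(\tfrac{eN}{m}\Bigr)^{m}m^{2k}\;=\;e^{m}N^{m}m^{2k-m}.
\]
With $m\le(1+\delta)k$ and $2k-m\ge(1-\delta)k$ (and $m\le 2k$ absorbing the roundings), the right hand side is at most
$e^{(1+\delta)k}N^{(1+\delta)k}\bigl((1+\delta)k\bigr)^{(1-\delta)k}$.

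Combining with the $2^{2k}=4^k$ factor from the sign choices yields
\[
R_k(\Bw)-R_k^\delta(\Bw)\;\le\;\bigl(4\cdot e^{1+\delta}(1+\delta)^{1-\delta}\bigr)^{k}\,N^{(1+\delta)k}k^{(1-\delta)k}\;\le\;(40\,k^{1-\delta}N^{1+\delta})^{k},
\]
since for $\delta\in[0,1]$ the constant $4e^{1+\delta}(1+\delta)^{1-\delta}$ is bounded by $40$ (with room to spare once one accounts for the polynomial factor from the roundings of $m$). This gives the claimed inequality.

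The only technical nuisance is matching the explicit constant $40$; this is purely a matter of carefully tracking the ceiling in the definition of $m$ and verifying the numerical inequality $4e^{1+\delta}(1+\delta)^{1-\delta}\le 40$ on $\delta\in[0,1]$, which is routine. No randomness from $\Bw$ enters the argument — the bound is purely combinatorial, which is precisely what makes it useful later when $R_k^\delta(\Bw)$ (rather than $R_k(\Bw)$) can be analyzed via the Fourier/concentration methods of Section~\ref{section:lemmas}.
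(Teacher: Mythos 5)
The paper does not actually prove this lemma --- it is imported verbatim from \cite{FJLS} --- so there is no in-paper argument to compare against; your counting strategy (drop the congruence, bound the number of index tuples with fewer than $(1+\delta)k$ distinct entries by choosing an $m$-element value set and then a map of the $2k$ coordinates into it) is the standard route and is sound in structure. In particular the reduction $R_k(\Bw)-R_k^\delta(\Bw)\le 4^k\binom{N}{m}m^{2k}$ with $m=\lceil(1+\delta)k\rceil-1$ is correct.

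The one step that does not go through as written is the final absorption into the constant $40$. After $\binom{N}{m}\le (eN/m)^m$ you are left with $e^mN^mm^{2k-m}$, and bounding $m\le(1+\delta)k$ while $2k-m$ can be as large as $(1-\delta)k+1$ produces an extra factor of order $(1+\delta)k$ beyond $\bigl(e^{1+\delta}N^{1+\delta}((1+\delta)k)^{1-\delta}\bigr)^k$. Your only slack is $\max_{\delta\in[0,1]}4e^{1+\delta}(1+\delta)^{1-\delta}=4e^2\approx 29.6<40$, i.e. a factor $(40/4e^2)^k\approx 1.35^k$, and this does not dominate $2k$ for small $k$ (e.g. $\delta$ close to $1$ and $k\le 9$), so the ``routine'' verification as you set it up fails in that range. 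The repair is to use the hypothesis $k\le N/2$, which your argument never invokes: write $m=(1+\delta)k-\epsilon$ with $\epsilon\in(0,1]$ and estimate
\[
e^mN^mm^{2k-m}\;\le\;\Bigl[e^{1+\delta}N^{1+\delta}\bigl((1+\delta)k\bigr)^{1-\delta}\Bigr]^k\cdot\Bigl(\tfrac{(1+\delta)k}{N}\Bigr)^{\epsilon},
\]
where the last factor is at most $1$ precisely because $(1+\delta)k\le 2k\le N$ (the case $m=0$ being trivial, as then there are no low-support tuples). With that factor gone, the numerical inequality $4e^{1+\delta}(1+\delta)^{1-\delta}\le 4e^2\le 40$ closes the proof for all $k\le N/2$ and $\delta\in[0,1]$.
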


The following Hal\'asz-type result connects the above combinatorial structure to $\rho(.)$ (see \cite[Theorem 5.1]{LMNg} \footnote{Although in  \cite[Theorem 5.1]{LMNg} the statement is for $\rho=\sup_{a \in \Z/p\Z} |\P(\mu_1w_1 + \cdots + \mu_Nw_N = a)-\frac{1}{p}|$, the proof works identically for $\rho$ as in \eqref{eqn:rho}. The constant $C$ can be taken to be $\al^{-1}C_0$ where $C_0$ is absolute.}).

    		\begin{theorem}\label{theorem:Halasz} Let $k\ge 1$ be an integer. Let $f:\Z^+ \to \R_{\ge 1}$ be any function such that $f(x) \le x/100$.
    			For any non-zero vector $\Bw= (w_1, \cdots, w_N) \in \F_p^N$ we have 
    			$$\rho(\Bw) \le C \frac{R_k(\Bw)}{ 2^{2k} N^{2k} \sqrt{f(|\supp(\Bw)|)}} + e^{-f(|\supp(\Bw)|)/2}$$ 
    			for $k\le N/f(|\supp(\Bw)|)$, where $C$ is a constant depending on $\al$.
        		\end{theorem}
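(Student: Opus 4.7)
The plan is to follow the classical Halász argument essentially as in \cite[Theorem~5.1]{LMNg}, modified only to accommodate the discrepancy $\rho(\Bw)$ of \eqref{eqn:rho} in place of the pointwise concentration probability. The starting point is already in hand: the Fourier computation of Subsection~\ref{SS:ConcDisc} gives
$$\rho(\Bw) \le \frac{1}{p}\sum_{t \in \F_p\setminus\{0\}} \exp\!\bigl(-\alpha S(t)\bigr), \qquad S(t) := \sum_{i=1}^N \|tw_i/p\|_{\R/\Z}^2.$$
Write $s := |\supp(\Bw)|$ and $m := f(s)$.

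First, I would split the sum over $t$ according to the level set $T := \{t \neq 0 : S(t) \le m/(2\alpha)\}$ and its complement. On the complement the integrand is at most $e^{-m/2}$, producing the additive term $e^{-m/2}$ in the conclusion. The remaining task is to bound $|T|/p$ in terms of $R_k(\Bw)$.

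Second, to handle $T$, I would replace the Gaussian weight by a cosine weight via the elementary inequality $\exp(-\alpha y^2) \le |\cos(\pi y)|^{c_\alpha}$ (valid for $y \in [0,1/2]$ with $c_\alpha = \Theta(\alpha)$, then extended by periodicity). This gives, uniformly for $t \in T$,
$$\prod_{i=1}^N |\cos(\pi t w_i/p)|^{2k} \;\gtrsim_\alpha\; \exp(-C_\alpha m k/s),$$
which is $\Theta_\alpha(1)$ as soon as $k \le s/m$; the restriction $f(x) \le x/100$ together with $k \le N/f(s)$ ensures this comfortably. On the other hand, expanding each $\cos^{2k}(\pi t w_i/p)$ via $\cos\theta = (e^{i\theta}+e^{-i\theta})/2$, multiplying over $i$, and applying Plancherel in $t \in \F_p$ collapses the diagonal of sign/index configurations to the combinatorial count $R_k(\Bw)$, yielding
$$\sum_{t \in \F_p} \prod_{i=1}^N \cos^{2k}(\pi tw_i/p) \;\le\; \frac{p \cdot R_k(\Bw)}{2^{2kN}}.$$
Combining the two bounds, $|T|/p \ll_\alpha R_k(\Bw)/(2^{2k} N^{2k})$.

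The main obstacle will be extracting the sharp $1/\sqrt{f(s)}$ factor on the denominator, since the level-set split above only gives a cruder bound without the square-root saving. To recover it, I would replace the hard cutoff at $S(t) \le m/(2\alpha)$ by a smoother Chebyshev-type envelope weight $\exp(-\alpha S(t))$ (equivalently, an $L^2$ computation in the variable $S(t)$), and exploit that the Halász moment estimate from the second step extends to this weighted version of $T$; the remaining level sets at scales $2^j/\sqrt{m}$ contribute geometrically, producing the $1/\sqrt{m}$ gain. I expect everything to go through verbatim as in \cite[Theorem~5.1]{LMNg} since the inequalities used there (Fourier bound, cosine expansion, orthogonality over $\F_p$) are insensitive to whether we are bounding $\sup_a \P(\cdot = a)$ or $\sup_a |\P(\cdot = a) - 1/p|$, the latter simply corresponding to omitting the $t=0$ term in the Fourier sum.
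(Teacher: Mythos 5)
You should know first that the paper does not actually prove this statement: it is imported verbatim from \cite[Theorem 5.1]{LMNg}, with only a footnote making exactly the observation you close with, namely that bounding $\sup_a|\P(\cdot=a)-1/p|$ rather than $\sup_a\P(\cdot=a)$ just amounts to dropping the $t=0$ term in the Esseen/Fourier bound. So your fallback position coincides with the paper's. However, taken as a proof sketch, your reconstruction has concrete problems, and the step you yourself flag as ``the main obstacle'' --- the factor $1/\sqrt{f(|\supp(\Bw)|)}$ --- is the entire content of the theorem and is left unproved. Two technical points first: (i) the inequality $\exp(-\alpha y^2)\le|\cos(\pi y)|^{c_\alpha}$ is false for $y$ near $1/2$, where the cosine vanishes; the correct way to use $S(t)\le m/(2\alpha)$ on the level set is through the \emph{sum}, via $\cos(2\pi y)\ge 1-2\pi^2\|y\|_{\R/\Z}^2$, which gives $\sum_i 2\cos(2\pi tw_i/p)\ge 2N-4\pi^2S(t)$. (ii) The moment identity is misstated: what orthogonality of characters collapses to $R_k$ is the $2k$-th power of the sum over $i$,
$$\sum_{t\in\F_p}\Big(\sum_{i=1}^N\big(e_p(tw_i)+e_p(-tw_i)\big)\Big)^{2k}=p\,R_k(\Bw),$$
not $\sum_t\prod_i\cos^{2k}(\pi tw_i/p)$, which involves $2kN$ indices and has nothing to do with $R_k$. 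With the corrected identity and the lower bound from (i), valid precisely when $k\,f(|\supp(\Bw)|)/N=O(1)$ (this is where the hypothesis $k\le N/f(|\supp(\Bw)|)$ enters), one gets $|T|\ll_\alpha pR_k(\Bw)/(2^{2k}N^{2k})$ for the single level set at height $\approx f(|\supp(\Bw)|)$ --- i.e.\ the bound \emph{without} the square-root saving, as you note.

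The saving is not recovered by smoothing the cutoff or by ``an $L^2$ computation in $S(t)$'': if every nonzero $t$ had $S(t)\le 1$, the weighted sum $\frac1p\sum_t e^{-\alpha S(t)}$ would simply be $\approx|T_1|/p$ and no gain in $m=f(|\supp(\Bw)|)$ could appear; so some genuinely new input is needed, and your proposal does not supply it. The mechanism in Hal\'asz's argument (and in \cite{LMNg}) is the additive growth of the level sets $T_j=\{t\ne0:\ S(t)\le j\}$: by the triangle and Cauchy--Schwarz inequalities for $\|\cdot\|_{\R/\Z}$, an $\ell$-fold sum of elements of $T_j$ lies in $T_{\ell^2 j}$, and a representation-counting argument upgrades this to $|T_j|\ll\sqrt{j/m}\,|T_m|$ for $j\le m$; summing the dyadic levels against the exponential weight then produces exactly the extra $1/\sqrt{m}$, after which the counting bound $|T_m|\ll_\alpha pR_k/(2^{2k}N^{2k})$ finishes the proof. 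Without this level-set growth lemma your argument only yields the weaker estimate lacking $\sqrt{f}$, which would not suffice for the way the theorem is applied (with $f(x)=x/k$) in the proof of Proposition~\ref{prop:counting:moderate:sym}.
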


	 As we will have to deal with subvectors many times in this section, for convenience by $\Bb\subset \Ba$ we mean that $\Bb$ is a truncation of $\Ba$. We use $|\Ba|$ to mean the dimension of the vector $\Ba$, that is 
	 $$|\Ba| := |\suppi(\Ba)|.$$

	\begin{definition}[Choices of parameters]\label{def:parameters} 
	Let $k,s_1,s_2,d \in [n]$ and such that $s_1\le s_2$. The parameters can be chosen in a flexible manner but for instance our arguments will work when 
	\begin{itemize} 
	\item $\delta>0$ is sufficiently small given $\al$; $N$ is sufficiently large given $\alpha,\delta$; 
	\vskip .01in
	\item and also	 \begin{align}\label{eqn:parameter}
	k = \Theta(N^{1/4}), s_1=\Theta(N^{1-4\delta}), s_2=\Theta(N^{1-2\delta}),  d \geq N^{2/3} \text{ and } N^{12\delta} \le p\le \exp(N^{\delta/2}).
	 \end{align}
	 \end{itemize}
	 \end{definition}
	 Note that we choose $p\ge N^{12\delta}$ here to exploit Lemma \ref{lemma:sparse:2} in the setting that $\la_N=N^{-4\delta}$. 
	 In our proof of  Propositions \ref{prop:structure:subexp:sym} and  \ref{prop:structure:subexp:lap} later we will extend this range to cover all sufficiently large $p$.
	  
	 In what follows $C$ is a sufficiently large constant that is allowed to depend on $\al$ and can be different in each statement. For each $t>0$, 
	 given $s_1\le s_2 \le d$ we define
\begin{align*}
\BG_{d,k,s_1,s_2,\ge t}(N):=& \Big\{\Ba \in \F_p^N, |\supp(\Ba)|=d: \forall \Lambda \subset \supp(\Ba) \textrm{ such that } s_1 \le |\Lambda| \le s_2,\\
&\text{ with $\Bb= \Ba|_\Lambda$, we have } \rho(\Bb) \ge 3C(t+1)\sqrt{k}/(p\sqrt{s_1})\Big \}.
\end{align*}
Note that $t$ is a varying integral parameter and we take $t\le p$. In a way, the conditions say that there are lots of structures among the non-zero entries of $\Ba$. 
The next result shows that this set has small size.

\begin{proposition}\label{prop:counting:moderate:sym} For $C$ sufficiently large given $\alpha$, the following holds for any sufficiently small $\delta$.
 Let $p$ 
be a prime and let $t\ge 1$ and $1\le s_1\le s_2 \le d \le N, $ and $1\le k \le s_1/2$ 
such that 
\begin{equation}\label{eqn:pt}
\frac{p}{t} \sqrt{\frac{s_1}{k}} \le \min\{e^{s_1/2k}, (s_1/Ck)^{(1-\delta)k}\}.
\end{equation}
Then we have that (when $d=s_2$)
\begin{equation}\label{eqn:d=s_2}
|\BG_{d,k,s_1,d,\ge t}(N)| \le \binom{N}{d} p^{d} (\delta t)^{-d+s_1} 
\end{equation}
and in general
\begin{equation}\label{eqn:d>s_2}
|\BG_{d,k,s_1,s_2,\ge t}(N)| \le \binom{N}{d} p^{d+s_2} (\delta t)^{-d(1-s_1/s_2)}.
\end{equation}
\end{proposition}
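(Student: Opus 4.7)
The two engines of the proof are (i) the Hal\'asz-type inequality of Theorem~\ref{theorem:Halasz}, which converts the assumed lower bound on $\rho(\Bb)$ into a lower bound on the additive energy $R_k(\Bb)$, followed by Lemma~\ref{Lemma:FJLS:delta} which passes to the ``genuinely $\delta$-rich'' count $R_k^\delta(\Bb)$; and (ii) a combinatorial counting estimate showing that vectors $\Bb$ with large $R_k^\delta(\Bb)$ are sparse in $\F_p^{\ell}$. First I would apply Theorem~\ref{theorem:Halasz} to every window $\Bb=\Ba|_\Lambda$ with $\ell:=|\Lambda|\in[s_1,s_2]$, choosing $f(\ell)=c\log(p\sqrt{s_1}/(t\sqrt{k}))$ for a small absolute constant $c>0$. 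The first inequality in \eqref{eqn:pt} ensures $f(\ell)\le \ell/k$ (as required to apply the theorem) while the second inequality and the assumed lower bound $\rho(\Bb)\ge 3C(t+1)\sqrt{k}/(p\sqrt{s_1})$ ensure that the tail term $e^{-f(\ell)/2}$ is dominated by $\tfrac12\rho(\Bb)$. Rearranging produces a lower bound of the form $R_k(\Bb)\gtrsim (t/p)\sqrt{s_1/k}\cdot(2\ell)^{2k}$. Lemma~\ref{Lemma:FJLS:delta} together with the second estimate in \eqref{eqn:pt} (which dominates the degenerate contribution $(40k^{1-\delta}\ell^{1+\delta})^k$) then yields $R_k^\delta(\Bb)\ge c_0 (t/p)(4\ell)^{2k}$ for an absolute constant $c_0$ (depending only on $\alpha$ via $C$).

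The heart of the argument is the following counting lemma which I would prove separately: for every integer $\ell\in[s_1,s_2]$,
\[
\#\bigl\{\Bb\in(\F_p^*)^\ell:\ R_k^\delta(\Bb)\ge c_0(t/p)(4\ell)^{2k}\bigr\}\ \le\ p^{s_1}\bigl(p/(\delta t)\bigr)^{\ell-s_1}.
\]
The plan is to mimic the swapping/dependent random choice strategy of \cite{FJLS}: any solution $\pm b_{i_1}\pm\cdots\pm b_{i_{2k}}=0$ counted by $R_k^\delta$ has at least $(1+\delta)k$ distinct indices, so it admits at least $\delta k$ ``free'' indices $i_{j_0}$ whose removal still leaves a linear relation determining $b_{i_{j_0}}$ from the other coordinates. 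A double counting argument, summing solutions against the set of vectors and then averaging over the free index, shows that each free index forces $b_{i_{j_0}}$ to lie in a set of size at most $p/(\delta t)$ (this is where the factor $\delta t$ arises; essentially the $\delta$ comes from the distinctness budget of $R_k^\delta$ while the $t$ comes from the strength of the energy lower bound). Iterating over disjoint free indices until only $s_1$ ``base'' coordinates remain unconstrained gives the advertised bound.

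With the counting lemma in hand, estimate \eqref{eqn:d=s_2} follows immediately: choose the size-$d$ support of $\Ba$ in $\binom{N}{d}$ ways and apply the lemma with $\Lambda=\supp(\Ba)$ and $\ell=d$. For the general estimate \eqref{eqn:d>s_2}, I would partition $\supp(\Ba)$ into $\lfloor d/s_2\rfloor$ disjoint windows of size $s_2$ plus a leftover window of size at most $s_2$, apply the counting lemma inside each size-$s_2$ window (obtaining a factor $p^{s_1}(p/(\delta t))^{s_2-s_1}$ per window), and multiply. The per-window savings compounds to $(\delta t)^{-\lfloor d/s_2\rfloor(s_2-s_1)}\ge (\delta t)^{-d(1-s_1/s_2)}$, while the product of the $p^{s_2}$ factors rebuilds $p^d$ and the leftover window contributes the extra $p^{s_2}$ visible in the statement.

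The step I expect to be the main obstacle is the counting lemma in the second paragraph: the swapping must handle the fact that freezing a free index leaves a valid but ``shorter'' $R_k^\delta$-type configuration on the remaining coordinates, so a careful induction on the number of remaining free indices is needed to keep the distinctness budget alive. Making the dependence on $\delta, t, p$ precisely match the target factor $(\delta t)^{-(\ell-s_1)}$, while also verifying that the parameters in Definition~\ref{def:parameters} (in particular $k=\Theta(N^{1/4})$ and $p\ge N^{12\delta}$) validate the hypothesis \eqref{eqn:pt} with room to spare, is the most delicate part of the write-up.
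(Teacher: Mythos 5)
Your first reduction is the same as the paper's: apply Theorem~\ref{theorem:Halasz} together with Lemma~\ref{Lemma:FJLS:delta} on each window $\Bb=\Ba|_\Lambda$ to turn the assumed lower bound $\rho(\Bb)\ge 3C(t+1)\sqrt{k}/(p\sqrt{s_1})$ into a lower bound on $R_k^\delta(\Bb)$, with \eqref{eqn:pt} killing the error terms. But your bookkeeping in this step is off in a way that matters. With your choice $f(\ell)=c\log(p\sqrt{s_1}/(t\sqrt{k}))$ the gain $\sqrt{f(\ell)}$ is only logarithmic, so what actually comes out is $R_k(\Bb)\gtrsim (t/p)\sqrt{k/s_1}\,2^{2k}\ell^{2k}$ up to logs, not your claimed $(t/p)\sqrt{s_1/k}\,(2\ell)^{2k}$ (that bound would need $f\approx (s_1/k)^2$, which violates the constraint $f\le \ell/k$ in the regime $k=\Theta(N^{1/4})$, $s_1=\Theta(N^{1-4\delta})$); also a ``small'' $c$ makes the tail $e^{-f/2}$ dominate $\rho(\Bb)$ rather than be dominated by it. The paper instead takes $f(x)=x/k$, and it is exactly the resulting factor $\sqrt{\ell/k}$ together with $\ell\ge s_1$ that yields the clean threshold $R_k^\delta(\Bb)\ge t\,2^{2k}\ell^{2k}/p$ defining $\BG'$. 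Your threshold falls short of this by roughly $\sqrt{s_1/k}$ (and your own counting lemma is stated with the even larger threshold $c_0(t/p)(4\ell)^{2k}=c_0(t/p)2^{2k}\cdot 2^{2k}\ell^{2k}$, which your Hal\'asz step does not supply), and since the strength of any counting bound scales with the energy threshold, this shortfall would not reproduce \eqref{eqn:d=s_2}--\eqref{eqn:d>s_2} as stated. This part is fixable by adopting $f(x)=x/k$.

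The genuine gap is the counting lemma you yourself flag as the heart: the bound $p^{s_1}(p/(\delta t))^{\ell-s_1}$ on the number of $\Bb\in(\F_p^*)^\ell$ with large $R_k^\delta$ is precisely the content of the result the paper invokes as a black box, namely \cite[Theorem 1.7]{FJLS} (with \cite[Corollary 3.11]{FJ} for the windowed/hereditary version), and your sketch does not prove it. The assertion that ``each free index forces $b_{i_{j_0}}$ to lie in a set of size at most $p/(\delta t)$'' is not substantiated: a single $\pm 1$ relation determines a coordinate exactly from the others, and the real difficulty is a global double counting over the entire solution set, followed by the selection of a small index set whose values (essentially) determine the rest --- it is from that global count, not from coordinate-by-coordinate confinement, that the $p/(\delta t)$ cost per determined coordinate and the number $s_1$ of undetermined ``base'' coordinates emerge, and this is also where hypothesis \eqref{eqn:pt} (carrying $s_1$, $k$, $C$) must enter; none of this is supplied. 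On the positive side, your replacement of \cite[Corollary 3.11]{FJ} by tensorization over $\lfloor d/s_2\rfloor$ disjoint size-$s_2$ windows is sound once the per-window lemma is granted: it gives $\binom{N}{d}p^{d}(\delta t)^{-\lfloor d/s_2\rfloor(s_2-s_1)}$, which is at most the right-hand side of \eqref{eqn:d>s_2} provided $\delta t\le p$ (true in the paper's applications, where $1\le t\le p$). That is a nice simplification of the second citation, but it does not remove the need for the per-window counting theorem, which remains unproved in your proposal.
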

We will apply this result for the choice of parameters from Eq.~\eqref{eqn:parameter} and with $t$ satisfying $1\le t\le p$. 

\begin{proof} As suggested by Theorem \ref{theorem:Halasz}, it is natural to pass to the following analog of $\BG_{d.k,s_1,s_2,\ge t}(N)$,
\begin{align*}{\BG'}_{d,k,s_1,s_2,\ge t}(N):=& \Big\{\Ba \in \F_p^N, |\supp(\Ba)|=d, \forall \Lambda \subset \supp(\Ba) \textrm{ such that } s_1 \le |\Lambda| \le s_2,\\
&\text{ with $\Bb= \Ba|_\Lambda$, we have } R_k^{\delta}(\Bb) \ge t \frac{2^{2k} |\Bb|^{2k}}{p}\Big\}.
\end{align*}
We observe that 
$$\BG_{d,k,s_1,s_2,\ge t}(N) \subset {\BG'}_{d,k,s_1,s_2,\ge t}(N).$$
Indeed, assume that $\Ba \notin {\BG'}_{d,k,s_1,s_2,\ge t}(N)$ and $|\supp(\Ba)|=d$. Then there exists $\Bb\subset \Ba_{\supp(\Ba)} \textrm{ such that } s_1 \le |\Bb| \le s_2$ and $R_k^{\delta}(\Bb) < t \frac{2^{2k} |\Bb|^{2k}}{p}$. Note $|\Bb|=|\supp(\Bb)|$.
By  Lemma \ref{Lemma:FJLS:delta} and Theorem \ref{theorem:Halasz}  (with $f(x)=x/k$) for  $C$ sufficiently large depending on $\alpha$, we have

		\begin{align*}
		\rho(\Bb) &\leq  \frac{C(R_k^\delta(\Bb) + (C k^{1-\delta}|\supp(\Bb)|^{1+\delta})^k }{2^{2k} |\Bb|^{2k} \sqrt{|\supp(\Bb)| /k}} + e^{-|\supp(\Bb)|/2k}\\
		& \leq  \frac{Ct \sqrt{k}}{p \sqrt{|\supp(\Bb)|}} + \frac{(C k^{1-\delta})^k }{ |\supp(\Bb)|^{(1-\delta)k}} + e^{-|\supp(\Bb)|/2k}\\
		& \leq  \frac{Ct\sqrt{k}}{p\sqrt{s_1}} + (\frac{Ck}{s_1})^{(1-\delta)k} + e^{-|\supp(\Bb)|/2k} \\
		& \le \frac{3Ct \sqrt{k}}{p\sqrt{s_1}}.
		\end{align*}   
Hence it suffices to bound the cardinality of $ {\BG'}_{d,k,s_1,s_2,\ge t}(N)$. To this end we just need to apply \cite[Theorem 1.7]{FJLS} to obtain Eq. \eqref{eqn:d=s_2} and \cite[Corollary 3.11]{FJ} to obtain Eq. \eqref{eqn:d>s_2}.
\end{proof}
Here we remark that the arguments of \cite[Theorem 1.7]{FJLS} and \cite[Corollary 3.11]{FJ} to bound $\BG'$ use a nice double counting trick to exploit the largeness of $R_k^\delta(\Bb)$. Very roughly speaking, because $R_k^\delta(\Bb)$ is large for all $\Bb\subset \Ba_{\supp(\Ba)}$, there is a small index set $I$ over which if we fix the values of $w_i$ for $i\in I$, the values of other $w_j$ will be determined. 

While Theorem \ref{prop:counting:moderate:sym} above will suffice to study the symmetric and skew-symmetric case, for the Laplacian case we will need to modify a bit. In what follows  $L$ stands for Laplacian, we define
\begin{align*}
\BG_{L,d,k,s_1,s_2,\ge t}(N):=& \Big\{\Ba =(a_1,\dots, a_N)\in \F_p^N, |\supp(\Ba)|=d, \forall \Bb\subset \Ba_{\supp(\Ba)} \textrm{ such that } s_1 \le |\Bb| \le s_2 \\
& \text{ we have }\rho_L(\Bb) \ge 3C(t+1)\sqrt{k}/(p\sqrt{s_1})\Big\}.
\end{align*}
and 
\begin{align*}{\BG'}_{L,d,k,s_1,s_2,\ge t}(N):=& \Big\{\Ba \in \F_p^N, |\supp(\Ba)|=d, \forall \Lambda \subset \supp(\Ba) \textrm{ such that } s_1 \le |\Lambda| \le s_2,\\
&\text{ with $\Bb= \Ba|_\Lambda$, we have } R_k^{\delta}(\Bb-a\1) \ge t \frac{2^{2k} |\Bb|^{2k}}{p} \text{ for some $a$}\Big\}.
\end{align*}
Arguing as in the non-Laplacian case, by Theorem \ref{theorem:Halasz} we have 
\begin{equation}\label{eqn:GG':L}
\BG_{L,d,k,s_1,s_2,\ge t}(N) \subset {\BG}'_{L,d,k,s_1,s_2,\ge t}(N).
\end{equation}

We have the following

\begin{proposition}\label{prop:counting:moderate:lap} For $C$ sufficiently large given $\alpha$, the following holds for any sufficiently small $\delta$.
 Let $p$ 
be a prime and let $1\le t\le p$ and $1\le s_1\le s_2 \le d \le N, $ and $1\le k \le s_1/2$ 
such that %
$$\frac{p}{t} \sqrt{\frac{s_1}{k}} \le \min\{e^{s_1/2k}, (s_1/Ck)^{(1-\delta)k/2}\}.$$
We have that 
$$|\BG_{L,d, k,s_1,s_2,\ge t}(N)| \le p \binom{N}{d } 2^{d+4} p^{d+s_2} (\delta t)^{-d(1-s_1/s_2)}.$$
\end{proposition}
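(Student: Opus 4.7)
The plan is to reduce the Laplacian bound to the non-Laplacian bound in Proposition~\ref{prop:counting:moderate:sym}, paying a factor of $p$ to union bound over the affine shift inherent in $\rho_L$, together with a combinatorial factor to account for the resulting change of support.

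First, by the inclusion \eqref{eqn:GG':L} it suffices to bound $|{\BG'}_{L,d,k,s_1,s_2,\ge t}(N)|$. For each $\Ba$ in this set and each admissible $\Lambda\subset\supp(\Ba)$, the definition of $\rho_L$ provides some shift $a_\Lambda\in\F_p$ such that $R_k^\delta(\Ba|_\Lambda-a_\Lambda\1)\ge t\cdot 2^{2k}|\Lambda|^{2k}/p$. A priori this shift depends on $\Lambda$, and the central task is to extract a single global $a^*\in\F_p$ for which $\Bc:=\Ba-a^*\1$ lies in the non-affine structured set handled by Proposition~\ref{prop:counting:moderate:sym}.

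Second, I would fix a reference admissible subset $\Lambda_0\subset\supp(\Ba)$ of size $\min(d,s_2)$ and set $a^*:=a_{\Lambda_0}$, so that $R_k^\delta(\Bc|_{\Lambda_0})\ge t\cdot 2^{2k}|\Lambda_0|^{2k}/p$. Monotonicity of $\rho$ under restriction (Fact~\ref{fact:comparison:rho}) together with Theorem~\ref{theorem:Halasz} and Lemma~\ref{Lemma:FJLS:delta} propagates this $R_k^\delta$-largeness down to every subvector $\Bc|_\Lambda$ with $\Lambda\subset\Lambda_0\cap\supp(\Bc)$ and $s_1\le|\Lambda|\le s_2$. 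Consequently, setting $d':=|\supp(\Bc)|$, the shifted vector $\Bc$ lies in ${\BG'}_{d',k,s_1,s_2,\ge t}(N)$, possibly after a slight adjustment to the admissible range of subvector sizes that is absorbed into constants.

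Third, I would invoke Proposition~\ref{prop:counting:moderate:sym} (Eq.~\eqref{eqn:d>s_2}) with parameter $d'$ in place of $d$ and sum over the possibilities for $d'$. The crucial observation is that $\supp(\Ba)$ and $\supp(\Bc)$ differ only on the indices where $\Ba$ takes the value $a^*$ (if $a^*\ne 0$ these lie in $\supp(\Ba)$, giving at most $d$ such indices) together with those where $\Ba$ vanishes (contributing entries equal to $-a^*$ in $\Bc$). The binomial factors then collapse to $\binom{N}{d}$ multiplied by a combinatorial overhead bounded by $2^{d+4}$, and the union bound over $a^*\in\F_p$ contributes the extra factor of $p$. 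Combining these with the estimate in Eq.~\eqref{eqn:d>s_2} yields the claimed bound.

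The hard part is the structure transfer in the second paragraph: the shift $a^*$ was optimized only for the single subvector $\Lambda_0$, yet we need $\Bc$ to retain $R_k^\delta$-largeness on sufficiently many sub-subvectors to fall within the scope of Proposition~\ref{prop:counting:moderate:sym}. This is the ``structure propagation'' idea highlighted in the introduction and executed in Lemma~\ref{lemma:sparse:2}; in the present context it amounts to showing that a single good shift for a maximal admissible subvector suffices to control all relevant smaller ones, modulo acceptable combinatorial losses captured by the $2^{d+4}$ factor.
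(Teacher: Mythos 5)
Your reduction to Proposition~\ref{prop:counting:moderate:sym} has a genuine gap at exactly the step you flag as the hard part, and the proposed fix does not close it. Membership in $\BG_{L,d,k,s_1,s_2,\ge t}(N)$ (equivalently ${\BG'}_{L,\cdot}$) only guarantees, for each admissible $\Lambda$ with $s_1\le|\Lambda|\le s_2$, a shift $a_\Lambda$ that may depend on $\Lambda$. Fixing $a^*:=a_{\Lambda_0}$ for one reference set $\Lambda_0$ of size $\min(d,s_2)$ and invoking Fact~\ref{fact:comparison:rho} only propagates largeness of $\rho(\Ba|_{\Lambda}-a^*\1)$ to subsets $\Lambda\subset\Lambda_0$; Theorem~\ref{theorem:Halasz} and Lemma~\ref{Lemma:FJLS:delta} relate $\rho$ and $R_k^\delta$ for the \emph{same} vector and give no mechanism for transferring the shift to subvectors $\Lambda\not\subset\Lambda_0$ of $\supp(\Ba)$. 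Since in the relevant regime $d$ can be much larger than $s_2$, most admissible $\Lambda$ are not contained in $\Lambda_0$, so you cannot conclude $\Bc=\Ba-a^*\1\in\BG_{d',k,s_1,s_2,\ge t}(N)$. Worse, the natural salvage (only constrain the coordinates in $\Lambda_0$ and pay $p$ for each coordinate outside) gives at best $p^{d}(\delta t)^{-(s_2-s_1)}$, which misses the point of the proposition: the saving must be $(\delta t)^{-d(1-s_1/s_2)}$, i.e.\ proportional to $d$, and no choice of a single reference window of size $s_2$ can produce that.

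The paper's proof sidesteps the shift problem altogether rather than union-bounding over it. Given $\Ba$ with $\supp(\Ba)=[d]$, it forms the two consecutive-difference vectors $\Ba'=(a_1-a_2,a_3-a_4,\dots)$ and $\Ba''=(a_2-a_3,a_4-a_5,\dots)$; by the homogenization inequality \eqref{eqn:w-w'}, $\rho_{\al/2}$ of any subvector of $\Ba'$ (or $\Ba''$) dominates $\rho_{L,\al}$ of the corresponding subvector of $\Ba$, with the shift cancelling identically for \emph{every} subvector. Hence $\Ba'$ and $\Ba''$ land in the non-Laplacian structured sets with parameters $(s_1/2,s_2/2,d',d/2,\lfloor k/2\rfloor)$, and Proposition~\ref{prop:counting:moderate:sym} (both \eqref{eqn:d=s_2} and \eqref{eqn:d>s_2}, split by the size of $d'=|\supp(\Ba')|$) counts each of them by $O(2^{d/2}p^{d/2+s_2/2}(\delta t)^{-(d/2)(1-s_1/s_2)})$, preserving the $d$-proportional exponent. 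Finally, $\Ba$ is reconstructed from $(\Ba',\Ba'')$ together with the single value $a_1$; the extra factor of $p$ in the statement counts the choices of $a_1$, not a union over shifts, and the $2^{d+4}$ comes from summing the binomials $\binom{d/2}{d'}$ over $d'$ for each difference vector. If you want to repair your write-up, this difference construction (or some other device that removes the shift uniformly over all subvectors) is the missing idea.
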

\begin{proof} 
Consider $\Ba\in \BG_{L,d,k,s_1,s_2,\ge t}(N)$ with $\supp(\Ba) = [d]$.
Without loss of generality we assume that $d$ is even (as the reader will see, if this is not the case we just need to freeze another coordinate of $\Ba$). %
Define $\Ba'=(a_1-a_2,a_3-a_4,\dots,a_{d-1}-a_d)$ and $\Ba''=(a_2-a_3,a_4-a_5,\dots, a_{d}-a_1)$. 

Let $\Bb'$ be any subvector of $\Ba'$ with $s_1/2 \le |\Bb'| \le s_2/2$, that is $\Bb'$ has the form $(a_{i_1}-a_{i_1+1},\dots, a_{i_{|\Bb'|}} - a_{i_{|\Bb'|}+1})$. Then by \eqref{eqn:w-w'}  
\begin{equation}\label{eqn:b':L}
\rho_{\al/2}(\Bb') \ge \rho_{L,\al} (a_{i_1},a_{i_1+1},\dots, a_{i_{|\Bb'|}},a_{i_{|\Bb'|}+1}) \ge 3C(t+1)\sqrt{k}/(p\sqrt{s_1}),
\end{equation}
where in the last estimate we used the fact that $\Ba \in \BG_{L,d, k,s_1,s_2,\ge t}(N)$.

Given $d'$, we will consider the $\Ba'$ with $|\supp(\Ba')|=d'$.   We always have $d' \le d/2$.  First we consider the dominating case when $s_2/2 \le d'$. We apply Theorem \ref{prop:counting:moderate:sym} with the $(s_1,s_2,d,N,k)$ in that theorem replaced by $(s_1/2,s_2/2,d', d/2,\lfloor k/2 \rfloor)$.
By \eqref{eqn:b':L} and by Theorem \ref{prop:counting:moderate:sym}, the number of $\Ba'$ with $d'=|\supp(\Ba')|$ that can come from $\Ba \in \BG_{L,d, k,s_1,s_2,\ge t}(N)$
is  bounded by
$$  \binom{d/2}{d'} p^{d'+s_2/2} (\delta t)^{-d'(1-s_1/s_2)} \le   \binom{d/2}{d'} p^{d/2+s_2/2} (\delta t)^{-(d/2)(1-s_1/s_2)}=:\CN,$$
as long as $C$ is large enough given $\alpha$.

Secondly, if $ s_1/2 \le d' \le s_2/2$, by \eqref{eqn:d=s_2} the number of such  $\Ba'$ is bounded by
$$  \binom{d/2}{d'} p^{d'} (\delta t)^{-d'+s_1/2} \le  \binom{d/2}{d'} t^{s_1/2} (p/t)^{d'} \delta^{-d'+s_1/2} \le    \binom{d/2}{d'} t^{(d/2)s_1/s_2}(p/t)^{d/2} \delta^{-(d/2)(1-s_1/s_2)}\le \CN.$$
Thirdly, if $d' \le s_1/2$ then the number of such vectors is simply bounded by $ \binom{d/2}{d'} p^{s_1/2} \le \CN$ because as $t\le p$, 
$$p^{d/2+s_2/2} (\delta t)^{-(d/2)(1-s_1/s_2)} \ge  p^{d/2+s_2/2} p^{-d/2} \ge  p^{s_2/2}.$$

Hence in total we see that the number of $\Ba'$ with $d'=|\supp(\Ba')|$ that can come from $\Ba \in \BG_{L,d, k,s_1,s_2,\ge t}(N)$ is at most $3\CN$. Summing over $d'$ we see that the number of $\Ba'$ is at most
$$3 \times 2^{d/2} p^{d/2+s_2/2} (\delta t)^{-(d/2)(1-s_1/s_2)}.$$
We bound similarly for the number of $\Ba''$. Therefore the number of $(\Ba',\Ba'')$ is at most
$$2^{d+4} p^{d+s_2} (\delta t)^{-d(1-s_1/s_2)}.$$
Finally, we remark that when $\Ba'$ and $\Ba''$ are given, then any assignment of $a_1$ would then determine the $\Ba$ they came from. 
Thus the number of $\Ba\in \BG_{L,d,k,s_1,s_2,\ge t}(N)$ with $\supp(\Ba) = [d]$ can be bounded by $ p 2^{d+4} p^{d+s_2} (\delta t)^{-d(1-s_1/s_2)}$.  Multiplying by a factor of $\binom{N}{d}$ to include $\Ba$ with any support of size $d$ gives the result.
\end{proof}

In our application, $\Ba$ will be a generalized normal vector (with respect to some given $Y_0$). In our next steps we will be working with overwhelming events defined below.

\begin{remark}\label{rmk:nsp}
Let $N$ be a positive integer, $\delta>0$, $p$ be a prime such that  $8N^{12\delta} \le p\le \exp(N^{\delta/2})$,   $Y_0\in\F_p^N$. 
\begin{itemize}
\item  Let $G_N$ be either $M_N$ or $A_N$. Then let $\CE_{non-sparse}$ (implicitly depending on  our choice of random matrix model and $N,p,Y_0,\delta$) be the event that 
for every nonzero $\Ba\in\F_p^N$ such that $L_N\Ba=Y_0$ in at least $N-N^\delta$ coordinates,
for any $\Lambda \subset \supp(\Ba)$ with $s_1 \le |\Lambda|$ (for $s_1$ as in Definition \ref{def:parameters})) 
we have 
$$ N-s_1/2 \le |\supp(\Ba)|.$$ 
\vskip .1in
\item Let $\CE_{L, non-sparse}$ (implicitly depending on $N,p,Y_0,\delta$) be the event that 
for every nonzero $\Ba\in\F_p^N$ such that $L_N\Bv=Y_0$ in at least $N-N^\delta$ coordinates,
for any $\Lambda \subset \supp(\Ba)$ with $s_1 \le |\Lambda|$ (for $s_1$ as in Definition \ref{def:parameters})) 
we have 
$$|\Lambda| - s_1/2 \le |\supp(\Ba -w\1)_\Lambda|.$$ 
\end{itemize}
By Lemma \ref{lemma:sparse:2} (applied to $\la_N=n^{-4\delta}/2$) we have
$$\P(\CE_{non-sparse}), \P(\CE_{L, non-sparse}) \ge 1 - \exp(-(N^{1-\delta})).$$
\end{remark}

Now for the set of non-sparse structured vectors, we decompose it into disjoint union of sets of type 
$$\BG_{d,k, s_1,s_2, \ge t_i}(N)\bs \BG_{d, k,s_1,s_2, \ge 2t_i}(N) \mbox{ or } \BG_{L,d, k, s_1,s_2, \ge t_i}(N)\bs \BG_{L,d, k,s_1,s_2, \ge 2t_i}(N)$$ 
from which we will take advantage of the counting results from Theorems \ref{prop:counting:moderate:sym} and \ref{prop:counting:moderate:lap} to show that it is unlikely for $G_{N}$ to have structured generalized normal vectors.

\begin{lemma}\label{lemma:l2} Given $\alpha$, for any sufficiently small $\delta>0$ and any sufficiently large $C$, there exists $C'$  depending on $\delta, \al$ and $C$ such that the following holds.
 Let $N,s_1,k,t,d$ be positive integers and $p$ a prime 
  such that $N^{12\delta} \le p\le \exp(N^{\delta/2})$, 
  and  $\frac{p}{t} \sqrt{\frac{s_1}{k}} \le \min\{e^{s_1/2k}, (s_1/Ck)^{(1-\delta)k}\}, $ and $1\le s_1\le d\le N,$ and $C \le k\le s_1 $.   
 We have the following %

\begin{itemize}
\item (symmetric and skew-symmetric) The event $\CE_{non-sparse}$ and the event that there exists $\Ba \in \BG_{d, k,s_1,s_2, \ge t} \bs \BG_{d, k,s_1,s_2,\ge 2t}$ such that $G_N \Ba = Y_0$ in at least $N-N^{\delta/2}$ coordinates have probability at most 
$$(C')^N  p^{N^{\delta/2}} (\frac{1}{p} + \frac{3C(t+1) \sqrt{k}}{p \sqrt{s_1}})^{N-s_2} \binom{N}{d}   p^{d+s_2} (\delta t)^{-d(1-s_1/s_2)}.$$ 

\item (Laplacian)  The event $\CE_{L, non-sparse}$ and the event that there exists $\Ba \in \BG_{L,d, k,s_1,s_2, \ge t} \bs \BG_{L,d, k,s_1,s_2,\ge 2t}$ such that $G_N \Ba = Y_0$ in at least $N-N^{\delta/2}$ coordinates have probability at most 
$$(C')^N  p^{N^{\delta/2}} (\frac{1}{p} + \frac{3C(t+1) \sqrt{k}}{p \sqrt{s_1}})^{N-s_2} \times  p \times\binom{N}{d}  2^{d+4}  p^{d+s_2} (\delta t)^{-d(1-s_1/s_2)}.$$
\end{itemize}
\end{lemma}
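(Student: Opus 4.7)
The plan is to combine the counting bounds from Propositions \ref{prop:counting:moderate:sym} and \ref{prop:counting:moderate:lap} with the row-by-row decomposition trick from \eqref{eqn:i.i.d.decomposition:nL}--\eqref{eqn:i.i.d.decomposition:L}, carrying everything out inside the high probability non-sparse event from Remark \ref{rmk:nsp}. On $\CE_{non-sparse}$ (resp.\ $\CE_{L,non-sparse}$) any candidate generalized normal vector $\Ba$ satisfies the bounds $|\supp(\Ba)|\geq N-s_1/2$ (resp.\ no value is repeated in $\Ba$ more than $s_1/2$ times inside any large subset), so the structural hypothesis $\Ba\in \BG_{d,k,s_1,s_2,\ge t}(N)\setminus \BG_{d,k,s_1,s_2,\ge 2t}(N)$ can actually be exploited.

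For a fixed such $\Ba$, the set-difference gives a $\Lambda\subset\supp(\Ba)$ with $s_1\le|\Lambda|\le s_2$ and $\rho(\Ba|_\Lambda)<3C(2t+1)\sqrt{k}/(p\sqrt{s_1})\le 6C(t+1)\sqrt{k}/(p\sqrt{s_1})$. I extend $\Lambda$ arbitrarily to some $I$ with $|I|=s_2$; by Fact \ref{fact:comparison:rho} the discrepancy only decreases, so $\rho(\Ba_I)\le \rho(\Ba|_\Lambda)$. Applying \eqref{eqn:i.i.d.decomposition:nL} with this $I$, the constraints $(G_N\Ba)_j=(Y_0)_j$ for $j\in[N]\setminus I$ become independent linear equations in the independent entries of $G_{([N]\setminus I)\times I}$, each holding with probability at most $\frac{1}{p}+\rho(\Ba_I)$. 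Allowing the $N^{\delta/2}$-row slack, I union bound over which subset $J'\subset [N]\setminus I$ of at most $N^{\delta/2}$ rows is omitted, giving
\[
\P\big(G_N\Ba=Y_0 \text{ in at least $N-N^{\delta/2}$ coords}\big)\le \binom{N-s_2}{N^{\delta/2}}\Big(\tfrac{1}{p}+\rho(\Ba_I)\Big)^{N-s_2-N^{\delta/2}}.
\]
Using $\tfrac{1}{p}+\rho(\Ba_I)\ge \tfrac{1}{p}$, I rewrite the last factor as $\big(\tfrac{1}{p}+\rho(\Ba_I)\big)^{N-s_2}\cdot p^{N^{\delta/2}}$, which produces the targeted exponent $N-s_2$ together with the explicit $p^{N^{\delta/2}}$ factor.

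Summing over the at most $\binom{N}{s_2}$ choices of $I$, over the $\Ba\in\BG_{d,k,s_1,s_2,\ge t}(N)$ (whose number is controlled by Proposition \ref{prop:counting:moderate:sym} as $\binom{N}{d}p^{d+s_2}(\delta t)^{-d(1-s_1/s_2)}$), and collecting the combinatorial factors $\binom{N}{s_2}\binom{N-s_2}{N^{\delta/2}}$ into a term of the form $(C')^N$ (allowed under our parameter regime \eqref{eqn:parameter}) yields exactly the bound stated in the symmetric/skew-symmetric case. The Laplacian case runs in parallel using the affine decomposition \eqref{eqn:i.i.d.decomposition:L} with $\rho_L$ in place of $\rho$, and Proposition \ref{prop:counting:moderate:lap} in place of Proposition \ref{prop:counting:moderate:sym}; this is where the additional $p\cdot 2^{d+4}$ factor in the Laplacian bound enters.

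The main bookkeeping obstacle is aligning the $N^{\delta/2}$-row slack with the stated exponent $N-s_2$: the decomposition naturally produces $(1/p+\rho)^{N-s_2-N^{\delta/2}}$, which for $1/p+\rho<1$ is strictly weaker than $(1/p+\rho)^{N-s_2}$, and the absorption $(1/p+\rho)^{-N^{\delta/2}}\le p^{N^{\delta/2}}$ is the key algebraic move that produces the clean form in the lemma. Once this is done, everything else is routine union bounds and the counting estimates of Section \ref{section:normalvector}, with the parameter hypothesis $p\le \exp(N^{\delta/2})$ ensuring that the counting bound for $\BG_{d,k,s_1,s_2,\ge t}$ remains valid under the hypothesis on $\tfrac{p}{t}\sqrt{s_1/k}$.
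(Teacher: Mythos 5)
Your proposal is correct and follows essentially the same route as the paper: a per-vector probability bound via the conditioning decomposition \eqref{eqn:i.i.d.decomposition:nL}--\eqref{eqn:i.i.d.decomposition:L}, with the small $\rho$ (resp.\ $\rho_L$) value supplied by the set difference $\BG_{\ge t}\setminus\BG_{\ge 2t}$, the $N^{\delta/2}$-coordinate slack converted into a $p^{N^{\delta/2}}$ factor, and then a union bound over $\Ba$ using Propositions \ref{prop:counting:moderate:sym} and \ref{prop:counting:moderate:lap}. Your minor variations (extending $\Lambda$ to a set of size exactly $s_2$ via Fact \ref{fact:comparison:rho}, unioning over omitted rows rather than over modified target vectors $Y_0$, and the factor $2$ coming from $3C(2t+1)\le 6C(t+1)$) are all harmlessly absorbed into $(C')^N$, exactly as in the paper's argument.
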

Again, we will apply this result for the choice of parameters from Eq.~\eqref{eqn:parameter} and with $t$ satisfying  $\frac{p}{t} \sqrt{\frac{s_1}{k}} \le \min\{e^{s_1/2k}, (s_1/Ck)^{(1-\delta)k}\}$.

\begin{proof}[Proof of Lemma \ref{lemma:l2}] We will only explain the details in the Laplacian case, the cases of symmetric and skew-symmetric matrices are similar. For a fixed $\Ba$ in $\BG_{L,d, k, s_1,s_2, \ge t}(N)\bs \BG_{L,d, k,s_1,s_2, \ge 2t}(N)$ we show that the event that $G_N \Ba = Y_0$ in at least $N-N^{\delta/2}$ coordinates has probability at most 
\begin{equation}\label{eqn:ab}
2^{N} p^{N^{\delta/2}} (\frac{1}{p} + \frac{3C(t+1) \sqrt{k}}{p \sqrt{s_1}})^{N-s_2}.
\end{equation}
Note that on $\CE_{L, non-sparse}$ of Lemma \ref{lemma:sparse:2}, $|\supp(\Ba-w\1)|$ has order $N$ for all $w$. Let $\Bb$ be the a subvector $\Ba_{\Lambda}$ such that $\rho_L(\Bb) \le  \frac{3C(t+1) \sqrt{k}}{p \sqrt{s_1}}$. Recall our treatment from \eqref{eqn:i.i.d.decomposition:nL} and \eqref{eqn:i.i.d.decomposition:L}
 that we can write  
$$G_N =\Big(\begin{array}{cc} G_{([N]\bs \Lambda) \times ([N]\bs \Lambda) }& G_{([N]\bs \Lambda) \times \Lambda} \\ G_{\Lambda \times ([N]\bs \Lambda)} & G_{\Lambda \times \Lambda} \end{array}\Big).$$ 
Assume that we have to work with an event of type $G_N \Ba = Y_0$ in all coordinates (after adding $p^{N^{\delta/2}}$ other possibilities for the missing equalities). We will decompose our matrix as above and then consider a system of $N-|\Lambda|$ equations
$$G_{([N]\bs \Lambda) \times ([N]\bs \Lambda) } \Ba_{[N]\bs \Lambda} +  G_{([N]\bs \Lambda) \times \Lambda} \Ba_\Lambda =(Y_0)_{[N]\bs \Lambda}.$$
If we condition on the entries of $G_{([N]\bs \Lambda) \times ([N]\bs \Lambda) }$, then we can view the above as $ G_{([N]\bs \Lambda) \times I} \Bb  = D$, where $D$ is deterministic (after expanding out the diagonals in $L_N$, $D$ will depend on the other entries not belonging to the above block), and we notice that now the entries of $ G_{([N]\bs \Lambda) \times \Lambda}$ are i.i.d.. So we can estimate the above probability by
\begin{equation}\P(G_{([N]\bs \Lambda) \times \Lambda} \Bb  = D) \le \prod_{i\in [N]\bs \Lambda}(\frac{1}{p}+\rho(\Bb-a_i \1)) \le (\frac{1}{p} +\frac{3C(t+1) \sqrt{k}}{p\sqrt{s_1}})^{N-s_2},
\end{equation}
Summing over the support choice of $\binom{N}{N^{\delta/2}}$ coordinates, and over other $p^{N^{\delta/2}}$ assignments of the unknown entries (toward $G_N \Ba = Y_0$) we complete the proof of \eqref{eqn:ab}.

To complete the proof of Lemma \ref{lemma:l2}, we use Theorem \ref{prop:counting:moderate:lap} and \eqref{eqn:ab} to estimate the probability under consideration by
$$(C')^N  p^{N^{\delta/2}} (\frac{1}{p} + \frac{3C(t+1) \sqrt{k}}{p \sqrt{s_1}})^{n-s_2} \times  p \times\binom{N}{d}  2^{d+4}  p^{d+s_2} (\delta t)^{-d(1-s_1/s_2)}.$$
\end{proof}

Now we prove Proposition \ref{prop:structure:subexp:lap} (Proposition \ref{prop:structure:subexp:sym} can be shown similarly). We will restate in the following form for the reader's convenience.

\begin{proposition}\label{prop:structure:subexp:lap'}
 Let $0<\la \le 1$ be a given constant. There exist 
  positive constants $c_1,c_2,c_3,c_4$ such that the following holds. 
Let $n,N$ be sufficiently large positive integers and let $L_n$ be as in Theorem \ref{theorem:cyclic:lap}, and let $L_N$ be the $[N]\times[N]$ submatrix of $L_n$.  
 Let $I_0\subset [N]$ be a (random) index set of size $\lfloor \la N \rfloor$ that might be depend on the randomness of $x_{kl}, $ for $N+1 \le k \text{ or } N+1 \le l$, but is independent of the randomness of $x_{kl},$ for $1\le k<l \le N$. 
 Let $p$ be a sufficiently large prime given $\lambda$  but $p \le \exp(N^{c_1})$.
Then for any non-zero fixed vector $Y_0 \in \F_p^N$, with probability at least $1 - \exp(-N^{c_2})$  
 for any vector $\Bv$ such that $L_N \Bv=Y_0$  in at least $N -N^{c_3}$ coordinates we have
$$\rho_L(\Bv_{I_0}) \le \exp(-N^{c_4}).$$ 
\end{proposition}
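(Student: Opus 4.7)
The plan is to adapt Lemma \ref{lemma:l2} and its associated counting estimate Proposition \ref{prop:counting:moderate:lap}, with the combinatorial structure condition now imposed on sub-vectors of $\Bv_{I_0}\in\F_p^{|I_0|}$ rather than on sub-vectors of the full $\Bv\in\F_p^N$.  First I condition on all $x_{kl}$ with $k>N$ or $l>N$; this makes $I_0$ deterministic and reduces $L_N$ to a matrix whose off-diagonals $x_{ij}$ ($1\leq i<j\leq N$) are still random and whose diagonals are $-\sum_{j\neq i,\,j\leq N} x_{ij}-c_i$ for known constants $c_i$.  By Fact \ref{fact:comparison:rho} the hypothetical inequality $\rho_L(\Bv_{I_0})\geq e^{-N^{c_4}}$ automatically propagates to every sub-vector $\Bw\subset\Bv_{I_0}$, and after a dyadic decomposition of the value of $\rho_L$ I may place $\Bv_{I_0}$ in some layer $\BG_{L,d,k,s_1,s_2,\geq t}(|I_0|)\setminus\BG_{L,d,k,s_1,s_2,\geq 2t}(|I_0|)$ with $(k,s_1,s_2,d)$ as in Definition \ref{def:parameters} and $t$ in the dyadic range $[e^{-N^{c_4}}\cdot p\sqrt{s_1}/(3C\sqrt{k}),\,p]$.

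For each fixed $\Bv$ in such a layer, the anti-concentration estimate is obtained exactly as in the proof of Lemma \ref{lemma:l2}: choose $\Lambda\subset\supp(\Bv_{I_0})$ of size $s_2$ on which $\rho_L(\Bv_\Lambda)\leq 2ct$, where $c=3C\sqrt{k}/(p\sqrt{s_1})$ (its existence follows from a restricted version of Lemma \ref{lemma:sparse:2} applied inside $I_0$).  Condition on all random entries outside the $([N]\setminus\Lambda)\times\Lambda$ block, so that by the Laplacian decomposition \eqref{eqn:i.i.d.decomposition:L} each equation $(L_N\Bv)_i=Y_{0,i}$ for $i\in[N]\setminus\Lambda$ reduces to $\sum_{j\in\Lambda}x_{ij}(v_j-v_i)=D_i$ with deterministic $D_i$.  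Independence across rows and Theorem \ref{theorem:LO} then give
$$\P\bigl(L_N\Bv=Y_0\text{ on at least }N-N^{c_3}\text{ coords}\bigr)\;\leq\;\binom{N}{N^{c_3}}p^{N^{c_3}}\bigl(1/p+2ct\bigr)^{N-s_2-N^{c_3}}.$$

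The main obstacle is the combinatorial count of vectors $\Bv$ whose restriction $\Bv_{I_0}$ is structured: Proposition \ref{prop:counting:moderate:lap} bounds the number of admissible $\Bv_{I_0}$, but each admits up to $p^{(1-\lambda)N}$ extensions $\Bv_{[N]\setminus I_0}$, which would destroy the estimate whenever $\lambda\leq 1/2$.  My strategy is the structure propagation technique also employed in Lemma \ref{lemma:sparse:2}.  Starting from the fact (Lemma \ref{lem:deg:2} applied to $\Bv_{I_0}$) that any $\Bv_{I_0}$ with $\rho_L(\Bv_{I_0})\geq e^{-N^{c_4}}$ has most of its entries inside an arithmetic progression $Q\subset\F_p$ of size $|Q|\ll p\sqrt{N^{c_4}/N'}$, I inductively expose the rows of $L_N$ indexed by $I_0$ one at a time.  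Each such row's equation $(L_N\Bv)_i=Y_{0,i}$ contains fresh independent randomness in the entries $L_{ij}$ with $j\in[N]\setminus I_0$, so another application of Theorem \ref{theorem:LO} forces the unknowns $v_j$ ($j\in[N]\setminus I_0$) into a bounded-index coset of $Q$, except on a sparse set of at most $N^{c_3}$ exceptions per row.  Iterating through $|I_0|=\lfloor\lambda N\rfloor$ rows collapses the effective number of extensions from $p^{(1-\lambda)N}$ down to roughly $|Q|^{(1-\lambda)N}\cdot p^{O(N^{c_3})}$, which combined with the count of $\Bv_{I_0}$ and the per-vector bound above can be controlled uniformly.

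Summing these contributions over all admissible supports $d\in[s_1,|I_0|]$, over the $O(\log p)=O(N^{c_1})$ dyadic scales of $t$, and over the $\binom{N}{N^{c_3}}$ choices of missing coordinates yields a total probability of at most $\exp(-N^{c_2})$ for any small enough choice of $c_1,c_2,c_3,c_4\ll\delta$, with $\delta$ chosen sufficiently small in terms of $\lambda$ and $\alpha=1/2$.  The lower bound ``$p$ sufficiently large given $\lambda$'' is dictated by the hypothesis $p\geq N^{12\delta}$ required by Proposition \ref{prop:counting:moderate:lap} and Lemma \ref{lemma:sparse:2}, and combining with the upper constraint $p\leq\exp(N^{\delta/2})$ gives $c_1=\delta/2$.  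By far the most delicate step is the structure propagation of the third paragraph: every other part of the argument is essentially a bookkeeping adaptation of the full-vector machinery, but the $p^{(1-\lambda)N}$ loss from extending $\Bv_{I_0}$ to the full $\Bv$ would overwhelm the anti-concentration gain whenever $\lambda<1/2$ without it.
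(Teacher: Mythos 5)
Your outer framework (conditioning on the rows/columns beyond $N$ to fix $I_0$, dyadic layering, counting via Proposition \ref{prop:counting:moderate:lap}, and the per-row anti-concentration via the decomposition \eqref{eqn:i.i.d.decomposition:L}) matches the paper, and you correctly identify the crux: the $p^{(1-\lambda)N}$ cost of extending a structured $\Bv_{I_0}$ to the full vector. But the step you rely on to remove that cost --- the ``structure propagation'' of your third paragraph --- is not a valid argument. Theorem \ref{theorem:LO} is a forward anti-concentration estimate; it cannot ``force the unknowns $v_j$ into a bounded-index coset of $Q$.'' A statement of that type would require an inverse theorem, and the inverse results available (Theorem \ref{theorem:ILO}) only apply when the relevant concentration probability is at least $N^{-O(1)}$, far above the sub-exponential threshold $e^{-N^{c_4}}$ operating here; this is precisely why the paper uses only the weak level-set containment of Lemma \ref{lem:deg:2} for counting, never an inverse theorem, in this regime. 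Moreover, a single row equation $\sum_j x_{ij}(v_j-v_i)=D_i$ in $\approx(1-\lambda)N$ unknowns cannot localize the individual $v_j$; at best it constrains the whole coefficient vector for that row to lie mostly in some GAP depending on $i$, with no compatibility across rows and no relation to $Q$. Most importantly, even granting some structure conclusion, you never carry out the count-versus-probability balance for the \emph{unstructured} extensions, which is the actual difficulty: unstructured extensions have small per-vector probability but enormous multiplicity, and dismissing them requires a quantitative dichotomy, not a per-row ``forcing'' claim.

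The paper resolves this differently, and the difference is essential. The dyadic layering is performed on the structure of the \emph{full} vector $\Bv\in\F_p^N$ (not of $\Bv_{I_0}$): when $\Bv$ itself lies in a layer $\BG_{L,d,k,s_1,s_2,\ge 2^{j-1}\tau}$, Lemma \ref{lemma:l2} together with Proposition \ref{prop:counting:moderate:lap} already beats the count with no propagation needed. The hypothesis $\rho_L(\Bv_{I_0})\ge e^{-N^{c_4}}$ is used only in the complementary case $\Bv\in\BG^c_{L,d,k,s_1,s_2,\tau}$, where the complement $[N]\setminus I_0$ is partitioned into blocks $J_i$ of length $\approx N^{1-\delta}$ and each block is classified as structured ($\rho_L(\Bv_{J_i})\ge e^{-N^{c_4}}$, hence countable via Lemma \ref{lem:deg:2}, as in Claim \ref{claim:v_0:subexp}, with a gain $N^{-c_4|J_i|}$ over the trivial $p^{|J_i|}$) or unstructured (supplying anti-concentration $e^{-N^{c_4}}+1/p$ per remaining row). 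The residual coordinates that are neither counted nor used for anti-concentration are not enumerated at all: after conditioning on a suitable set of rows $F=f(I_s)$ disjoint from an unstructured block $J_*$, and excluding the small event $\CE_{drop}$ of Lemma \ref{lemma:quadratic}, a full-rank submatrix $G_{I'_s\times I'_m}$ determines $\Bv_{I'_m}$ outright (the same device as in Lemma \ref{lemma:sparse:2}), leaving only $p^{O(N^{1/2+\delta})}$ choices to pay for. Without this blockwise dichotomy and the conditioning/full-rank trick, your extension count is not controlled, so as written the proof has a genuine gap at its most delicate point.
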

It is clear that Proposition \ref{prop:structure:subexp:lap} would follow by taking $c=\min\{c_1,c_2,c_3,c_4\}$.

\begin{proof}[Proof of Proposition \ref{prop:structure:subexp:lap'}] For the proof, we choose some small $\delta>0$.  In several steps, we will require $\delta$ sufficiently small (in an absolute sense) and we can work with any fixed $\delta$ that satisfies all the requirements.   Throughout the proof, all statements we make are assuming that $N$ is sufficiently large with respect to $\lambda$ and $\delta$ (and since we take $\delta$ an absolute constant in the end, this really only means $N$ is sufficiently large with respect to $\lambda$). It suffices to prove the result assuming 
\begin{equation}\label{eqn:bound:la}
\la \le \delta 
\end{equation}
for some $0<\delta<1$ because the statement for $\delta <\la \le 1$ will follow from the monotonicity of Fact \ref{fact:comparison:rho}. 

The choices for the parameters $c_i$ is somewhat flexible,  but for instance we can take
$$c_1=c_3=c_4=\delta/2, c_2=1-2\delta.$$ 

We let $\CV$ denote the vectors $\Bv$ in $\F_p^N$ for which $\Bv-a \1$ has at least $N-\la N/4$ non-zero coordinates for every $a$, %
and $\CW_{I_0}$ the set of non-zero vectors $\Bv \in \F_p^N$ that such that we have 
$$\rho_{L}(\Bv_{I_0}) > e^{- N^{c_4}}.$$  
Observe that %
\begin{align*}
& \P(\exists \Bv \in \CW_{I_0}, \mbox{$(L_N \Bv)_i = (Y_0)_i$ in at least $N-N^{c_3}$ coordinates })\\
 &= \P(\exists \Bv \in \CW_{I_0} \cap \CV,  \mbox{ $(L_N \Bv)_i = (Y_0)_i$  in at least $N-N^{c_3}$ coordinates }) \\
&+ \P(\exists \Bv \in \CW_{I_0} \cap \CV^c,  \mbox{ $(L_N \Bv)_i = (Y_0)_i$  in at least $N-N^{c_3}$ coordinates }).
\end{align*}
By Lemma \ref{lemma:sparse:2} (with $\lambda_N=\lambda/4$), 
\begin{align}\label{eqn:sparse:rho}
\P\Big(\exists \Bv \in \CV^c, \mbox{ $(G_N \Bv)_i = (Y_0)_i$  in at least $N-N^{c_3}$ coordinates}\Big) \leq \exp(-N^{1-\delta}).
\end{align}

Therefore, it suffices to focus on the event $\CF$  that there exists $\Bv \in \CV \cap \CW_{I_0},$ such that $G_N \Bv = Y_0$ in at least $N-N^{c_3}$ coordinates. We first bound the event $\tilde \CF$ that there exists $\Bv \in \CV \cap \CW_{I_0},$ such that $(G_N \Bv)_i = (Y_0)_i$ for all $i$.

If $p$ is sufficiently large  but $p \le N^{12\delta}$, then as $\Bv\in \CV$, we have that $\Bv_{I_0}$ has highest multiplicity less than $|I_0|/4$ (because in $\CV$, for any $a$ the vector $\Bv_{I_0}-a \1$ has at least $|I_0| -\la N/4$ non-zero coordinates). 
Hence we can apply Lemma \ref{lem:deg:1}, provided that $\delta$ is small,  
to obtain that 
$$\rho_L(\Bv_{I_0}) \le \exp(-N^{1/16}).$$ 

We now assume $p>N^{12\delta}$. In what follows set 
$$\tau:=N^{1/16} \mbox{ and $k :=\lfloor N^{1/4} \rfloor, s_1:=\lfloor N^{1-4\delta} \rfloor , s_2:=\lfloor N^{1-2\delta}/2 \rfloor$}.$$

We let $d\le N$ be fixed with $d\geq s_1$, and let $C$ be a constant which will have to be sufficiently large to an extent that will be specified later.

Note that any non-zero $\Bv$ resides in $\BG^c_{L,d, k,s_1,s_2, >p}$ (because as $R^\delta_k(\Bw)\leq R_k(\Bw) \le 2^{2k} |\Bb|^{2k}$, for $t>p$ we have that $\BG'_{L,d, k,s_1,s_2,\geq t}$ is empty, and hence
$\BG_{L,d, k,s_1,s_2,\geq t}$ is empty by \eqref{eqn:GG':L}). \HC{added this eqn.}

So with $j_0$ being the smallest integer such that $2^{j_0} \tau> p$, we have an increasing nested sequence $(\BG^c_{L, d, k,s_1,s_2, 2^j \tau })_{j=0}^{j_0}$ 
and we can partition $\CV $ into union of disjoint sets 
$$
\CV =(\CV \cap \BG^c_{L, d, k,s_1,s_2, \ge \tau }) \cup \bigcup_{j=1}^{j_0} (\CV \cap \BG^c_{L, d, k,s_1,s_2, \ge 2^j \tau} \cap \BG_{L, d, k,s_1,s_2, \ge 2^{j-1} \tau}). 
$$

Let $X_i$ be the $i$th row of $L_N$. We then have that $\P(\tilde \CF)$ is 
		\begin{align*}
		& \P\Big(\exists \Bv \in \CW_{I_0} \cap \CV,  \Bv \cdot X_i  = (Y_0)_i, 1\le i\le N\Big) = \P\Big(\exists \Bv \in \CW_{I_0}  \cap \CV \cap \BG^c_{L,d,k,s_1,s_2, \tau}, \Bv \cdot X_i = (Y_0)_i, 1\le i\le N \Big) \\
		& + \sum_{j=1}^{j_0} \P\Big(\exists \Bv \in \CW_{I_0} \cap (\CV \cap \BG^c_{L,d,k,s_1,s_2, 2^j \tau} \cap  \BG_{L,d,k,s_1,s_2, 2^{j-1} \tau}),  \Bv \cdot X_i = (Y_0)_i, 1\le i\le n\Big). 
		\end{align*} 
In what follows, without loss of generality we assume that $\la N$ is an integer (otherwise one just replaces $\la N$ by $\lfloor \la N \rfloor$ in all estimates.)

{\bf Case 1.} We begin with vectors in $\CV \cap \CW_{I_0} \cap \BG^c_{L, d, k,s_1,s_2,\tau}$. Let $\tilde \CF_1$ be the subevent of $\tilde \CF$ that there exists a vector $\Bv$ in $\CV \cap \CW_{I_0} \cap \BG^c_{L, d, k,s_1,s_2,\tau}$ such that $L_N \Bv = Y_0$. 
Note that as $\Bv\in \BG^c_{L, d, k,s_1,s_2,\tau}$, we cannot hope for structures of $\Bv$. %
However, because of the lower bound $e^{-N^{c_4}}\le \rho_L(\Bv_{I_0})$, by Lemma \ref{lem:deg:2} there exists a generalized arithmetic progression $P$ of rank one in $\F_p$ and of size $O(p/N^{c_4})$ that contains at least all but $N^{4c_4}$ entries of $\Bv_{I_0}$. 
Note that the number of ways to choose such a $P$ is bounded by $p^{O(1)}$.  For a fixed $P$, the number of vectors $\Bv_{I_0}$ with at least $\la N - N^{4c_4}$ 
components in $P$ is at most
\begin{equation}\label{eqn:a2}
\binom{\la N}{\lfloor N^{4c_4}\rfloor} |P|^{\la N - N^{4c_4}} p^{N^{4c_4}} \leq \binom{\la N}{\lfloor N^{4c_4}\rfloor} O(p/N^{c_4})^{\la N- N^{4c_4}} p^{N^{4c_4}} \leq O(1)^{\la N} (p/N^{c_4})^{ \la N},
\end{equation}
where we used the fact that $p\le \exp(N^{c_1})$ with small $c_1,c_4$. 

Together with $p^{O(1)}$ (which is trivially bounded by $O(1)^N$ as $p\le \exp(N^{c_1})$) number of ways to choose $P$, we thus have the following.
\begin{claim}\label{claim:v_0:subexp} 
The total number of $\Bv_{I_0}$ (where $|I_0|=\la N$) with  $e^{-N^{c_4}}\le \rho_L(\Bv_{I_0})$ is at most $O(1)^{\la N} (p/N^{c_4})^{ \la N}$.
\end{claim}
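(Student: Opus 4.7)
The plan is to combine Lemma~\ref{lem:deg:2} with the counting bookkeeping already sketched in \eqref{eqn:a2}. First, because $\rho_L(\Bv_{I_0})\ge e^{-N^{c_4}}$, the definition $\rho_L(\Bw)=\max_w\rho(\Bw-w\1)$ produces a translate $w^\ast\in\F_p$ such that $\rho(\Bv_{I_0}-w^\ast \1)\ge e^{-N^{c_4}}$. Applying Lemma~\ref{lem:deg:2} to the $\lambda N$-dimensional vector $\Bv_{I_0}-w^\ast\1$ with auxiliary parameter $N'=\lfloor N^{4c_4}\rfloor$ gives a rank-one arithmetic progression $Q\subset\F_p$ of size at most $2\alpha^{-1}p\sqrt{\lceil N^{c_4}+1\rceil/N^{4c_4}}=O(p/N^{3c_4/2})\le O(p/N^{c_4})$ containing at least $\lambda N-N^{4c_4}$ of the entries of $\Bv_{I_0}-w^\ast\1$. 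Equivalently, the translate $P:=Q+w^\ast$ contains all but at most $N^{4c_4}$ of the entries of $\Bv_{I_0}$, and $|P|=O(p/N^{c_4})$.

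Next I would count by fixing this data in three stages: (i) the arithmetic progression $P$ is determined by its starting point, its common difference, and its length, contributing at most $p^3$ choices; (ii) the set of at most $N^{4c_4}$ exceptional coordinates of $\Bv_{I_0}$ not lying in $P$ is chosen in at most $\binom{\lambda N}{\lfloor N^{4c_4}\rfloor}$ ways and the values on those exceptional coordinates contribute a factor of at most $p^{N^{4c_4}}$; (iii) the remaining coordinates are picked from $P$, giving a factor of at most $|P|^{\lambda N}=O(p/N^{c_4})^{\lambda N}$. Multiplying these estimates reproduces the computation of \eqref{eqn:a2}.

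Finally I would absorb the auxiliary factors into the $O(1)^{\lambda N}$. Using the standing hypothesis $p\le\exp(N^{c_1})$ with $c_1=\delta/2$ chosen much smaller than $\lambda$, we have $p^3\le e^{3N^{c_1}}$, $p^{N^{4c_4}}\le e^{N^{4c_4+c_1}}$, and $\binom{\lambda N}{\lfloor N^{4c_4}\rfloor}\le e^{O(N^{4c_4}\log N)}$, each of which is $e^{o(\lambda N)}$ and therefore absorbable into a constant raised to the $\lambda N$. Combining all factors yields the advertised bound $O(1)^{\lambda N}(p/N^{c_4})^{\lambda N}$.

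The only subtle point is the choice of the inner exception count $N^{4c_4}$: it must be small enough that step (iii) dominates the counting (so that $p^{N^{4c_4}}$ does not spoil the bound), and simultaneously large enough that Lemma~\ref{lem:deg:2} yields $|P|=O(p/N^{c_4})$ rather than something weaker. With $c_4=\delta/2$ and $c_1=\delta/2$ as fixed at the start of the proof of Proposition~\ref{prop:structure:subexp:lap'}, the exponent $4c_4=2\delta$ easily satisfies both constraints, so no new obstacle arises; the claim is essentially an immediate corollary of the inverse Littlewood--Offord style Lemma~\ref{lem:deg:2}.
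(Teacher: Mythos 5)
Your proof is correct and follows essentially the same route as the paper: invoke Lemma~\ref{lem:deg:2} (in the $\rho_L$ form, which your translation-by-$w^\ast$ step reproduces) with $N'=\lfloor N^{4c_4}\rfloor$ to place all but $N^{4c_4}$ entries of $\Bv_{I_0}$ in a rank-one progression of size $O(p/N^{c_4})$, and then count progressions, exceptional coordinates, and in-progression values exactly as in \eqref{eqn:a2}, absorbing the subexponential factors via $p\le\exp(N^{c_1})$. The only deviations (using $|P|^{\la N}$ instead of $|P|^{\la N-N^{4c_4}}$, and $p^3$ instead of $p^{O(1)}$ for the choice of $P$) are harmless coarsenings.
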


Our next step is similar to the proof of Lemma \ref{lemma:sparse:2} (where there we started with a subvector with high entry multiplicities) that we will try to saturate $\Bv$ until there is almost no (GAP-type) structure left. 

We will conditioned on the $(x_{kl})_{N+1 \le k \text{ or } N+1 \le l}$ and treat $I_0$ as fixed. Without loss of generality we assume that $I_0$ is the set $[\la N]$ of the first $\la N$ indices. %

Let $k_0 := \lfloor N^{1-\delta} \rfloor$. 
We then divide the index set $[\la N+1,\dots, N]$ into subsequences $J_1,\dots, J_\ell$ of consecutive numbers so that $|J_i| = k_0$ for $i < \ell$ and $k_0 \le |J_\ell| < 2k_0$.

Given $\Bv$, we partition $[N]$ into two subsets, $I_m$ and $I_s$. We let $I_s$ be the union of the $J_i$ such that  $\rho_L(\Bv_{J_i})\ge e^{-N^{c_4}}$, along with $[\la N]$. We let $I_m = [N] \bs I_s$. We write $\Bv_m := \Bv_{I_m}$ and $\Bv_s := \Bv_{I_s}$.

Notice that as the index set of the components of $\Bv_s$ has the form $I_0 \cup_l J_{i_l}$, there are at most $2^\ell \le 2^{N^\delta}$ such subsets (where we used the fact that $N$ sufficiently large given $\delta, \lambda$).

{\it Subcase 1.1.} Let $\CF_1$ be the event that there exists a $\Bv$ as in the definition of $\tilde{\CF}_1$ such that $\Bv_m$ is empty. 
Then by repeatedly applying Claim \ref{claim:v_0:subexp} we obtain that the number of such vectors $\Bv=\Bv_s$ is bounded by
\begin{equation}\label{eqn:sc1}
O(1)^{\la N} (p/N^{c_4})^{ \la N}\left( O(1)^{\lfloor N^{1-\delta} \rfloor} (p/N^{c_4})^{\lfloor N^{1-\delta} \rfloor}\right)^{N^\delta -\la N^\delta} O(1)^{2 \lfloor N^{1-\delta} \rfloor }(p/N^{c_4})^{2\lfloor N^{1-\delta} \rfloor} = O(p^{2N} N^{-c_4 N/8}),
\end{equation}
provided that $N$ is sufficiently large, where the first factor comes from the number of $\Bv_0$, the second factor comes from the number of $\Bv_{J_i}$ for all $i<\ell$, and the third factor comes from the number of $\Bv_{J_\ell}$.

Then
the probability that any 	$\Bv \in \CV \cap \CW_{I_0} \cap \BG^c_{L, d,k,s_1,s_2, \tau}$ of the above type and satisfies $\Bv \cdot X_i  =(Y_0)_i, 1\le i\le N$ is bounded by (via \eqref{eqn:ab}, where we note that in the proof there we exploited the randomness of $(x_{kl})_{k,l\in [N], k\neq l}$ only)
$$
\P(\CF_1) \le p^{2N} N^{-c_4 N/8} \Big(\frac{1}{p} + \frac{3C(N^{1/16}+1) \sqrt{N^{1/4}}}{p \sqrt{N^{1/2}}}\Big)^{N-s_2}   \le N^{-c_4N/9}, 
$$
where we used that $N^{12\delta} < p\le \exp(N^{\delta/2}), s_2=\lfloor N^{1-2\delta} /2\rfloor$ and $\tau=N^{1/16}$ and $N$ is sufficiently large and $\delta$ is sufficiently small.

{\it Subcase 1.2.} Let $\CF_2$ be the event that there exists a $\Bv$ as in the definition of $\tilde{\CF}_1$ such that $\Bv_m$ is not empty. 
We have that $k = \lfloor N^{1-\delta} \rfloor \le |I_m|$ and $|I_s| \ge \la N$. 

Similarly to the proof of  Lemma \ref{lemma:sparse:2}, we now describe a function $f$ from subsets of $[N]$ (that can occur as $I_s$) to subsets of $[N]$.  We describe $f(I_s)$, 
and write $I_m$ for $[N]\setminus I_s$, but note that $f$ does not depend
on $\Bv$.  If $|I_s|>|I_m|$, we let $f(I_s)$ be the first $|I_m|$ elements of $I_s$.
Otherwise, we choose $J_{i_1},\dots, J_{i_l}$ arbitrarily from $I_m$ 
so that $|I_s| -N^{1-\delta} \le |J_{i_1}|+\dots+ |J_{i_l}|<|I_s|$ 
and let $f(I_s):= [N]\bs (J_{i_1}\cup \dots \cup J_{i_l})$. (Hence, to relate to our proof of  Lemma \ref{lemma:sparse:2}, the set $J^\ast$ there plays the role of $J_{i_1}\cup \dots \cup J_{i_l}$, and $f(I_s)$ is the union of $I_s$ and the complement of $J^\ast$ in $I_m$.)

In either case, we have 
$$|f(I_s)| \ge |I_m| \mbox{ and } |[N]\bs f(I_s)|\ge |I_s| - N^{1-\delta}.$$

Recall from the proof of  Lemma \ref{lemma:sparse:2} that $\CE_{drop}$ is the event that  there is a square submatrix $G_{A\times B}$ of $G_N$
of dimension $\geq \lfloor N^{1/2+\delta} \rfloor$
 with rank less than $|A| - \lfloor N^{1/2+\delta} \rfloor$ and we have $\P(\CE_{drop})\leq  e^{-c_{drop} N^{1+2\delta} },
$
for some $c_{drop}>0$ depending on $\alpha$.

We wish to bound the probability of $\mathcal{F}_2\setminus \CE_{drop}$.
Since for $\Bv$ causing $\mathcal{F}_2$, we have  $|I_m|\ge N^{1-\delta}$, 
and hence $|I_m|\geq  \lfloor N^{1/2+\delta} \rfloor$ (by assuming $\delta<1/4$).  
So outside of $\CE_{drop}$,  the matrix $G_{f(I_s) \times I_m}$ has rank $\geq |I_m|-  \lfloor N^{1/2+\delta} \rfloor$, 
and hence it has a square submatrix $G_{{{{{I'_s}}}} \times {{I'_m}}}$ of dimension $|I_m| -  \lfloor N^{1/2+\delta} \rfloor$ which has full rank.

Given subsets $M$, ${{I'_m}}$, and ${{{{I'_s}}}}$ of $[N]$
such that $|{{I'_m}}|=|{{{{I'_s}}}}|$ and ${{I'_m}}\sub M$ and ${{{{I'_s}}}}\sub f([N]\setminus M)$, and such that
$M$ is non-empty and can occur as $I_m$,  let $\mathcal{F}_{2.1}(M,{{I'_m}},{{{{I'_s}}}})$ be the event that there is a $\Bv\in \F_p^N$ such that $\rho_{L}(\Bv_{I_0}) \geq e^{- N^{c_4}}$, %
 $G_N\Bv=Y_0$, and $I_m=M$,  and $G_{{{{{I'_s}}}}\times {{I'_m}}}$ is full rank. 
In particular, above we just saw that $\mathcal{F}_2\setminus \CE_{drop}$
implies 
$\mathcal{F}_{2.1}(M,{{I'_m}},{{{{I'_s}}}})$ for some $M$, ${{I'_m}}$, and ${{{{I'_s}}}}$ with $|M|-|{{I'_m}}|=  \lfloor N^{1/2+\delta} \rfloor$
and $N-|M|\geq \lambda N$.

\begin{claim}
For all $M,{{I'_m}},{{{{I'_s}}}}$ such that $\mathcal{F}_{2.1}(M,{{I'_m}},{{{{I'_s}}}})$ is defined, we have
$$
\P(\mathcal{F}_{2.1}(M,{{I'_m}},{{{{I'_s}}}}))  \le N^{- c_4 \la N/9}.
$$
\end{claim}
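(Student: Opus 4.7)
The approach mirrors that of Claim \ref{claim:cond:fix} in the proof of Lemma \ref{lemma:sparse:2}, with two essential modifications tailored to the Laplacian setting. First, the enumeration of $\Bv_s$ uses the AP-based bound of Claim \ref{claim:v_0:subexp} in place of entry-multiplicity counting. Second, the per-equation probability bound exploits the strong estimate $\rho_L(\Bv_{J_{i_0}})<e^{-N^{c_4}}$ for any $J_{i_0}\subset I_m$ (true by the very definition of $I_m$), in place of a Theorem~\ref{theorem:LO}-type estimate.

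The first step is to count the possible values of $\Bv_s$. Since $I_s=I_0\cup\bigcup_{J_i\subset I_s}J_i$ and both $\rho_L(\Bv_{I_0})$ and each $\rho_L(\Bv_{J_i})$ exceed $e^{-N^{c_4}}$ (by the definition of $I_s$), applying Claim \ref{claim:v_0:subexp} block by block — the Lemma~\ref{lem:deg:2}-based argument carries over at scale $k_0$ — yields at most $O(1)^{|I_s|}(p/N^{c_4})^{|I_s|}$ possibilities. Separately, there are $p^{|M|-|I'_m|}=p^{\lfloor N^{1/2+\delta}\rfloor}$ choices for $\Bv_{I_m\setminus I'_m}$. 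For each such pair I condition on $G_{F\times[N]}$ with $F=f(I_s)$; the full-rank hypothesis on $G_{I'_s\times I'_m}$ then forces $\Bv_{I'_m}$ via the exact analog of \eqref{eqn:cond:fix}, so $\Bv$ is now pinned.

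The second step is to exploit the remaining randomness. Fix a single $J_{i_0}\subset I_m$ of size $k_0$ with $J_{i_0}\cap F=\emptyset$: in the case $|I_s|>|I_m|$ this is automatic since $F\subset I_s$, while in the case $|I_s|\le|I_m|$ one picks $J_{i_0}$ among the blocks $J_{i_k}$ whose union is $J^*=[N]\setminus F$. For each $j\in([N]\setminus F)\setminus J_{i_0}$ the entries $\{x_{ji}\}_{i\in J_{i_0}}$ remain unconditioned, and for distinct such $j$ these variables — together with their symmetric counterparts $x_{ij}$ — are pairwise disjoint, hence independent. Writing out the $j$-th Laplacian equation as in \eqref{E:NI} and further conditioning on the $x_{ji}$ with $i\notin J_{i_0}$, one collapses it to $\sum_{i\in J_{i_0}}x_{ji}(v_i-v_j)=\text{const.}$, which by \eqref{eqn:i.i.d.decomposition:L} and $\rho_L(\Bv_{J_{i_0}})<e^{-N^{c_4}}$ has conditional probability at most $1/p+e^{-N^{c_4}}$. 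A short check shows $|([N]\setminus F)\setminus J_{i_0}|\ge|I_s|-2k_0$ in both cases.

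Assembling, and using $p\le e^{N^{c_1}}=e^{N^{c_4}}$ so that $1/p+e^{-N^{c_4}}\le 2/p$, we obtain
$$\P(\mathcal{F}_{2.1}(M,I'_m,I'_s))\le O(1)^{|I_s|}(p/N^{c_4})^{|I_s|}\,p^{\lfloor N^{1/2+\delta}\rfloor}\,(2/p)^{|I_s|-2k_0}\le O(1)^N\,p^{N^{1/2+\delta}+2k_0}\,N^{-c_4|I_s|},$$
which, since $p^{k_0}\le e^{N^{1-\delta/2}}$ and $|I_s|\ge\lambda N$, is at most $e^{O(N^{1-\delta/2})}N^{-c_4\lambda N}\le N^{-c_4\lambda N/9}$ for $N$ large, because $c_4\lambda N\log N\gg N^{1-\delta/2}$. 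The chief obstacle is tracking which $x_{ij}$ stay free after the two rounds of conditioning while respecting the symmetry $x_{ij}=x_{ji}$ of $L_N$, and exhibiting enough of them to absorb the AP-induced factor $(p/N^{c_4})^{|I_s|}$ in the count of $\Bv_s$.
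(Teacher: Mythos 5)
Your proof is correct and follows essentially the same route as the paper's: block-by-block AP counting of $\Bv_s$ via Claim~\ref{claim:v_0:subexp}, conditioning on $G_{F\times[N]}$ and using the full-rank submatrix $G_{I'_s\times I'_m}$ to pin down $\Bv_{I'_m}$, then exploiting a block $J_*\subset I_m$ disjoint from $F$ (so that $\rho_L(\Bv_{J_*})\le e^{-N^{c_4}}+1/p$) across roughly $|I_s|-O(k_0)$ rows whose relevant entries are independent after the conditioning. The only harmless slips are that in the case $|I_s|\le |I_m|$ the number of usable rows is $\ge |I_s|-3k_0$ rather than $|I_s|-2k_0$ (since $|[N]\setminus F|$ may already be as small as $|I_s|-k_0$), and the paper keeps the factor $(e^{-N^{c_4}}+1/p)$ instead of replacing it by $2/p$; neither affects the final estimate.
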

\begin{proof} By using Claim \ref{claim:v_0:subexp} over the $J_i$, as $|I_s| = N-|M|$, for $ \Bv$ such that $\rho_L(\Bv_{[\lambda N]})\geq e^{-N^{c_4}}$ and $I_m=M$ there are at most 
$$O(1)^{\la N} (p/N^{c_4})^{ \la N}\left( O(1)^{\lfloor N^{1-\delta} \rfloor} (p/N^{c_4})^{\lfloor N^{1-\delta} \rfloor}\right)^{(N-M-\la N)/ N^{1-\delta}} = O(1)^{N-M} p^{N-|M|} N^{-c_4 (N-|M|)/8}$$ 

choices of $\Bv_{I_s}$, and $\binom{N}{N^{1/2+\delta}}p^{N^{1/2+\delta}}$ choices for $(\Bv)_{I_m\bs I_m'}$. Let $F= f([N]\bs M)$.  We condition on $L_{F \times[N]}$ and compute the conditional probability of $\mathcal{F}_{2.1}(M,{{I'_m}},{{{{I'_s}}}})$.  
If $G_{{{{{I'_s}}}}\times {{I'_m}}}$ is not full rank, then the desired (conditional) probability is $0$.   Otherwise, as in the argument of 
Claim \ref{claim:cond:fix}, we have $\Bv_{I'_m}$ is determined  (and hence the entire $\Bv$ is determined).

We let $J_*$ be one of the $J_i$ that is in $M$ but has no intersection with $F$.
For each $i\in ([N]\setminus F) \setminus J_*$, we let $X_i$ be the $i$th row of $G_N$.  The equation $X_i\Bv=(Y_0)_i$ implies
\begin{equation}\label{E:toget2}
\sum_{j\in J_*} x_{ij}(v_j-v_i)=(Y_0)_i -\sum_{j\in [N]\setminus (J_* \cup \{ i\})} x_{ij}(v_j-v_i)+\sum_{j\in [n+1]\setminus [N]} x_{ij} v_i.
\end{equation}

The $x_{ij}$ for $i\in  ([N]\setminus F) \setminus J_*$ and $j\in J_*$ are all independent.  
We further condition on $x_{ij}$ for $i\in  ([N]\setminus F) \setminus J_*$ and $j\not \in J_*$.
Since $J_*\cap F=\emptyset$, none of the $x_{ij}$ for $i\in  ([N]\setminus F) \setminus J_*$ and $j\in J_*$ have been conditioned on.
Thus after our conditioning, the probability of \eqref{E:toget2} holding is at most $\rho_L(\Bv_{J_*}).$
We have $N-|F|-|J_*|$ independent such equations that are implied by $G_N\Bv=Y_0$.
Thus the probability that, for a given $\Bv_s$ and $\Bv_{I_m\setminus {{I'_m}}},$ we have the event in the claim, is  at most
$$
\rho_L(\Bv_{J_*})^{N-|F|-|J_*|}.
$$
Since $J_*$ is one of the $J_i$ that is a subset of $I_m$, by definition of $I_m$ we have that
$$
\rho_L(\Bv_{J_*})\leq e^{-N^{c_4}}+1/p.
$$
We have $|J_*| \le 2 \lfloor N^{1-\delta} \rfloor$ and $|M| \le |F| \le |M| +N^{1-\delta}$. Thus the probability that, for a given $\Bv_s$ and $\Bv_{I_m\setminus {{I'_m}}},$ we have the event in the claim, is at most
$$
\left( e^{-N^{c_4}}+1/p \right)^{N-|M|- 3 \lfloor N^{1-\delta} \rfloor}.
$$
Summing over the possible values for $\Bv_s$ and $\Bv_{I_m\setminus {{I'_m}}}$ (which are bounded in number above), we obtain 
 we thus obtain a probability bound
$$O(1)^{N-M} p^{N-|M|} N^{-c_4 (N-|M|)/8} \binom{N}{N^{1/2+\delta}}p^{N^{1/2+\delta}}   (e^{-N^{c_4}}+1/p)^{N-|M| - 3 \lfloor N^{1-\delta} \rfloor}  \le N^{-c_4 \la N/9},$$
where we used the fact that $\delta$ is sufficiently small (and $c_1=c_4=\delta/2$) and $p\le e^{N^{\delta/2}}$ and $|M| \le N-  \lambda N$.
\end{proof}

The event $\CF_2$ is the union of $\CE_{drop}$ with $\mathcal{F}_{2.1}(M,{{I'_m}},{{{{I'_s}}}})$ over $M,{{I'_m}},{{{{I'_s}}}}$ with $|M|-|{{I'_m}}|= \lfloor N^{1/2+\delta} \rfloor$
and $|N|-|M|\geq \lambda_N N$ and $M$ a possible value of $I_m$.  There are at most $2^\ell \leq 2^{2N^{\delta}}$ possible values of $I_m$ (and hence $M$).
Given $M$, there are at most $\binom{|M|}{\lfloor N^{1/2+\delta} \rfloor}$ choices of ${{I'_m}}$, and $\binom{|f([N]\setminus M)|}{\lfloor N^{1/2+\delta} \rfloor} \le \binom{|M|+N^{1-\delta}}{\lfloor N^{1/2+\delta} \rfloor}$ choices of ${{{{I'_s}}}}$.  Hence,
we have
\begin{align*}
\P(\CF_2)&\leq   e^{-c_{drop} N^{1+2\delta} } + 2^{2N^{\delta}}  N^{2N^{1/2+\delta}} N^{-c_4 \la N/9}.
\end{align*}
Thus, for $\delta$ sufficiently small and $N$ sufficiently large given $\delta$ and $\lambda$,
$$
\P(\tilde \CF_1)\leq \P(\CF_1)+ \P(\CF_2) \leq N^{-c_4 \la N/10}.
$$

For the next case, we use an argument similar to that in \cite{FJLS}, but with a modification for the Laplacian case from
Theorem \ref{prop:counting:moderate:lap}.

		{\bf Case 2.} We next consider the subevent $\tilde \CF_2$ of $\tilde \CF$ that there exists a vector $\Bv \in  \CW_{I_0} \cap \cup_{j=1}^{j_0}(\CV \cap \BG_{L, d,k,s_1,s_2, 2^j \tau}^c  \wedge \BG_{L, d,k,s_1,s_2, 2^{j-1} \tau})$
	such that $L_N \Bv = Y_0$.		By Lemma \ref{lemma:l2} (where we can check that all of the conditions are met
we have for some sufficiently large $C'>0$ depending on $\delta$
		\begin{align*}
		\P(\tilde \CF_2)\leq &\sum_{j=1}^{j_0} \P\Big(\exists \Bv \in \CW_{I_0} \cap (\CV \cap \BG^c_{L,d,k,s_1,s_2, 2^j \tau}\cap \BG_{L,d,k,s_1,s_2, 2^{j-1} \tau}),  \Bv \cdot X_i  = (Y_0)_i, 1\le i\le n\Big) \\
		&\leq\sum_{j=1}^{j_0} \P\Big(\exists \Bv \in  (\CV \cap \BG^c_{L,d,k,s_1,s_2, 2^j \tau} \cap \BG_{L,d,k,s_1,s_2, 2^{j-1} \tau}),  \Bv \cdot X_i  = (Y_0)_i, 1\le i\le n\Big) \\
		&\leq  \binom{N}{d} \sum_{j=1}^{j_0} (C')^N  p^{N^{\delta/2}} \left(\frac{1}{p} + \frac{3C(2^{j-1} \tau+1) \sqrt{k}}{p \sqrt{s_1}}\right)^{N-s_2}   2^{d+4}   p^{d+s_2+1} (2^{j-1} \tau)^{-d(1-s_1/s_2)} \\
		&\leq \binom{N}{d} 2^{d+4} \sum_{j=1}^{j_0}(C')^N  p^{N^{\delta/2}} 3^{N-s_2-1}\left[ (\frac{1}{p})^{N-s_2} + ( \frac{3C \sqrt{k}}{p \sqrt{s_1}})^{N-s_2}   + ( \frac{3C 2^{j-1} \tau \sqrt{k}}{p \sqrt{s_1}})^{N-s_2}\right]    p^{d+s_2+1} (2^{j-1} \tau)^{-d(1-s_1/s_2)}\\
		& \leq  \sum_{j=1}^{j_0} (12C')^N     p^{N^{\delta/2} +d +s_2 +1} (\frac{1}{p})^{N-s_2}  (2^{j-1} \tau)^{-d(1-s_1/s_2)} \\
		& + \sum_{j=1}^{j_0} (36 C C')^N     p^{N^{\delta/2} +d +s_2 +1}  ( \frac{\sqrt{k}}{p \sqrt{s_1}})^{N-s_2}   (2^{j-1} \tau)^{-d(1-s_1/s_2)} \\
		& + \sum_{j=1}^{j_0} (36 CC')^N   p^{N^{\delta/2} +d +s_2 +1}    ( \frac{2^{j-1} \tau \sqrt{k}}{p \sqrt{s_1}})^{N-s_2}(2^{j-1} \tau)^{-d(1-s_1/s_2)}.
              \end{align*} 
              Recall that $\tau= N^{1/16}, k =\lfloor N^{1/4} \rfloor, s_1=\lfloor N^{1-4\delta}\rfloor, s_2=\lfloor N^{1-2\delta}/2 \rfloor$. It is clear that the first two sums are bounded by $N^{-10 \delta N}$ as long as $\delta$ is sufficiently small and $p \ge N^{12 \delta}$ and $N$ are sufficiently large given $\delta$. 

 For the third sum, for $j=1$ we have 
              $$(36CC')^N ( \frac{\tau \sqrt{k}}{p \sqrt{s_1}})^{N-s_2}   \tau^{-d(1-s_1/s_2)} \le  (1/p)^{N/2}.$$

The sum of other terms can be rewritten as 
\begin{align*}
&\sum_{j=2}^{j_0} (36 CC')^N   p^{N^{\delta/2} +d +s_2 +1}   ( \frac{2^{j-1} \tau \sqrt{k}}{p \sqrt{s_1}})^{N-s_2}(2^{j-1} \tau)^{-d(1-s_1/s_2)}  \\
&=  (36 CC')^N   p^{N^{\delta/2} +d +s_2 +1}  \sum_{j=2}^{j_0}( \frac{\sqrt{k}}{p \sqrt{s_1}})^{N-s_2} \tau^{N-s_2-d(1-s_1/s_2)}  2^{(j-1)(N-s_2-d(1-s_1/s_2))}.
\end{align*}
Note that $N-s_2-d(1-s_1/s_2) \ge N-s_2-N(1-s_1/s_2)=Ns_1/s_2 -s_2>0$ as $N$ is sufficiently large, the above sum is clearly bounded by $(j_0-1)$ times the last summand when $j=j_0$,
and that last term is bounded by (where we recall that $p< 2^{j_0} \tau \le 2p$)
\begin{align*}
 &(36 CC')^N   p^{N^{\delta/2} +d +s_2 +1}    ( \frac{2^{j_0-1} \tau \sqrt{k}}{p \sqrt{s_1}})^{N-s_2}(2^{j_0-1} \tau)^{-d(1-s_1/s_2)} \\
 & \le  (36 CC')^N 2^d  p^{N^{\delta/2} +d +s_2 +1}  (\frac{\sqrt{k}}{\sqrt{s_1}})^{N-s_2} p^{-d(1-s_1/s_2)} \\
 & \le (72 CC')^N p^{N^{\delta/2} + s_2+ d s_1/s_2}  (\frac{\sqrt{k}}{\sqrt{s_1}})^{N-s_2} \le N^{-N/4}
 \end{align*}
because $d\le N$ and $p\le \exp(N^{c_1})$ where  $c_1=\delta/2$ and we take $\delta$ sufficiently small.

Combining the above estimates, by summing over all $d \le N$, we can bound $\P(\tilde \CF_2)$ of Case 2 by 
$$\P(\tilde \CF_2) \leq N^{-10 \delta N} + N^{-N/4}.$$
Putting the two cases together we conclude that
		$$
		\P(\tilde \CF) = \P\Big(\exists \Bv \in \CV\cap \CW_{I_0}, \mbox{$(G_N \Bv)_i = (Y_0)_i$ for all $i$}\Big) \le  N^{-c_4 \la N/10}  + N^{-10\delta N}+ N^{-N/4} \le 3 N^{-c_4 \la N/10}
		$$ 
as $c_4= \delta/2$ and $\delta$ is sufficiently small.		

		To bound the probability of $\CF$, we need to account also for the analog of $\tilde \CF$ where $Y_0$ is replaced by any vector that shares at least $N-N^{c_3}$ 
		coordinates with it.  
This gives
$$
\binom{N}{N^{c_3}} p^{N^{c_3}}
$$
total events with the same probability bound as we showed above for $\tilde \CF$, whose union is $\CF$.
 We then have
\begin{align}
 \P(\CF) \notag
\leq &\left (\binom{N}{N^{c_3}} p^{N^{c_3}}
 \right) 3  N^{-c_4 \la N/10}  \le N^{-c_4 \la N/11},
\end{align}

provided that $\delta$ is chosen sufficiently small. This, together with \eqref{eqn:sparse:rho},
 complete the proof of Proposition \ref{prop:structure:subexp:lap'}.
					\end{proof}

\section{Treatment for moderate primes: proof of Propositions \ref{prop:moderate:lap} and \ref{prop:moderate:alt}}\label{section:rankevolving}

We will be focusing mostly on the Laplacian case, the skew-symmetric and symmetric cases will be discussed later (by a simpler argument). The plan for the Laplacian matrix works as follows.

\begin{enumerate}
\item We first sample $L_n$ using Phase 1 from Definition \ref{def:lap}. For each $N_0= \lfloor cn \rfloor \le N \le n$ (where $c$ is a sufficiently small positive constant), we record the index set $I_{N+1}\subset [N]$ of $1\le j\le N$ where $v_j$ has exactly one neighbor in $\{v_{N+1}, v_{N+2}\}$. Recall \eqref{eqn:concentration}, by Chernoff's bound with probability at least $1-\exp(-\Theta(n))$ (with respect to $L_n$) we have for all $N_0 \le N \le n$
\begin{equation}\label{eqn:I_N+1}
(1/2-c)N \le  |I_{N+1}| \le (1/2+c)N.
\end{equation}
\vskip .1in
\item We notice that the random set $I_{N+1}$ is independent of the entries $x_{ij}, 1\le i <j\le N$ of $L_N$. Hence for each $N_0 \le N \le n$ we can apply Proposition \ref{prop:structure:subexp:lap} to $L_N$ with $I_0=I_N$. To start with, when $N=N_0$ let us call the event under consideration $\CE_{N_0}^\ast$. This event belongs to the $\sigma$-algebra generated by the random entries of $L_n$, and 
\begin{equation}\label{eqn:N_0}
\P(\CE_{N_0}^\ast ) \ge 1 -O(\exp(-N^c)) = 1 -O(\exp(-\Theta(n^c))).
\end{equation}
\vskip .1in
\item We next reshuffle the neighbors of $v_{N_0+1}$ and $v_{N_0+2}$ to create new randomness in the column corresponding to $v_{N_0+1}$. After adding this column to $L_{N_0}$ we obtain $L_{N_0+1}^\ast$, where we use the asterisk to distinguish with $L_{N_0+1}$ of $L_n$. 
\vskip .05in
\begin{itemize} 
\item On the one hand, conditioning on $\CE_{N_0}^\ast$, we can use the randomness of the column corresponding to $v_{N_0+1}$ to study the rank relation of $L_{N_0+1}^\ast$ with that of $L_{N_0}$. 
\vskip .05in
\item By Fact \ref{fact:EoM}, $L_{N_0+1}^\ast$ has the same distribution as $L_{N_0+1}$ (of $L_n$), hence we can apply Proposition \ref{prop:structure:subexp:lap} to $L_{N_0+1}^\ast$ with respect to $I_0=I_{N_0+1}$. Let $\CE_{N_0+1}^\ast$ denote the overwhelming event obtained by this result (with probability similarly to \eqref{eqn:N_0}), which now belongs to the sigma-algebra generated by $L_n$ and the new random variables of the column corresponding to $v_{N_0+1}$. \end{itemize}
We will repeat the process until $v_n$.
\end{enumerate}

\begin{figure}
	\centering
	\begin{tikzpicture}

		\draw (1,1) -- (4,1) -- (4,4) -- (1,4) -- (1,1);
		\node at (2.5,2.5) {$L_N^\ast$};
		\draw (1,0.5)  -- (4,0.5);
		\node at (4.5,0.5)[circle,fill,inner sep=1pt]{};
		\draw (3.5,-0.2) node[right=0.1in]{$x_{N+1}$}; 
		\draw (4.5,1) --(4.5,4);
                  \draw [line width=2pt] (4.5,2)--(4.5,3);
                  \node at (5,2.5) {$I_{N+1}$}  ;

		\draw (5,4) node[above=0.1in]{$X_{N+1}$}; 
		\draw [dashed] (1,-0.5)--(5.5,-0.5);
                \draw [dashed] (5.5,-0.5)--(5.5,4);

               \draw [dashed] (1,-1)--(6,-1);
               \draw [dashed] (6,-1)--(6,4);

	\end{tikzpicture}
	\caption{Row and column exposure process during reshuffling.}
	\label{CornerEP}
\end{figure}
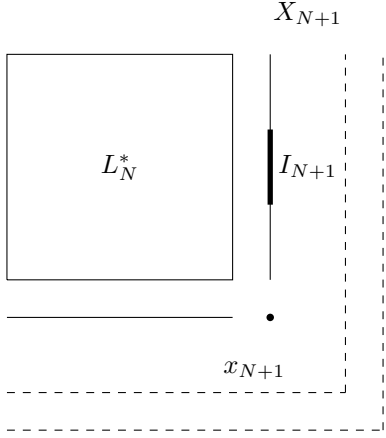

With the outline in mind, we will study the ranking evolution when moving from $L_N^\ast$ to $L_{N+1}^\ast$.

\begin{definition}\label{def:X} Given $I_0\subset [N]$. A random vector $X=(x_1,\dots, x_n)$ is called $I_0$-adapted if the $x_i$ are i.i.d. copies of $\xi$ satisfying Eq.~\eqref{eqn:alpha} (for a given prime $p$) for all $ i\in I_0$, while $x_i, i\notin I_0$ are deterministic (i.e. held fixed). 
\end{definition}
We first prove an elementary decoupling result to motivate our later treatment (see also \cite{CTV,Msym,KNg}). 
\begin{lemma}\label{lemma:decoupling1} Assume that $a_{ij} \in \Z/p\Z$ and $a_{ij}=a_{ji}$ and $b_i \in \Z/p\Z$. Assume $X$ is as in \ref{def:X} for a given $I_0$. Then for any $I \subset [n]$ we have
$$\sup_{r }|\P(\sum_{ij} a_{ij} x_i x_j + \sum_i b_i x_i = r) - 1/p|^4 \le |\P(\sum_{i\in I, j \in I^c} a_{ij} y_i y_j=0) -1/p|,$$
where $y_i, i\in I_0$ are i.i.d. copies of $\xi-\xi'$.
\end{lemma}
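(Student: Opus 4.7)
The plan is to proceed via a standard Fourier expansion followed by two applications of Cauchy--Schwarz, a decoupling scheme for degree-two polynomials in the spirit of Costello--Tao--Vu.

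Write $Q(X) := \sum_{ij} a_{ij} x_i x_j + \sum_i b_i x_i$ and split $Q(X) = Q_{II}(X_I) + Q_{I^cI^c}(X_{I^c}) + B(X_I,X_{I^c})$, where $Q_{II}(X_I) := \sum_{i,j\in I} a_{ij} x_i x_j + \sum_{i\in I} b_i x_i$, $Q_{I^cI^c}$ is defined analogously, and $B(X_I,X_{I^c}) := 2\sum_{i\in I,\,j\in I^c} a_{ij} x_i x_j$ (using $a_{ij}=a_{ji}$). The Fourier identity yields
\[
\sup_r |\P(Q(X)=r) - 1/p| \le \frac{1}{p}\sum_{t\ne 0} |\E e_p(tQ(X))|,
\]
and H\"older's inequality on the uniform measure on $\F_p^*$ (using $(p-1)^3/p^4 \le 1/p$) moves the fourth power inside:
\[
\sup_r |\P(Q(X)=r) - 1/p|^4 \le \frac{1}{p}\sum_{t\ne 0} |\E e_p(tQ(X))|^4.
\]

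I would then decouple each factor $|\E e_p(tQ(X))|^4$ by two Cauchy--Schwarz steps. Set $G(X_{I^c}) := \E_{X_I} e_p(tQ_{II}(X_I) + tB(X_I,X_{I^c}))$, so that $\E e_p(tQ(X)) = \E_{X_{I^c}}[e_p(tQ_{I^cI^c}(X_{I^c}))\,G(X_{I^c})]$; since $|e_p(tQ_{I^cI^c})|=1$, a first Cauchy--Schwarz over $X_{I^c}$ gives $|\E e_p(tQ(X))|^2 \le \E_{X_{I^c}}|G(X_{I^c})|^2$. Expanding $|G|^2$ introduces an independent copy $X'_I$ and the symmetrization $Y_I := X_I - X'_I$, producing
\[
\E_{X_{I^c}}|G(X_{I^c})|^2 = \E_{X_I,X'_I}\!\Bigl[e_p(t(Q_{II}(X_I)-Q_{II}(X'_I)))\cdot \E_{X_{I^c}} e_p(tB(Y_I,X_{I^c}))\Bigr].
\]
Squaring and applying Cauchy--Schwarz a second time, now over $(X_I,X'_I)$, discards the unimodular factor $e_p(t(Q_{II}(X_I)-Q_{II}(X'_I)))$---this is the step that eliminates both the $Q_{II}$ self-interaction and the linear piece $\sum_{i\in I} b_i x_i$ hidden inside it. Expanding the resulting $|\E_{X_{I^c}} e_p(tB(Y_I,X_{I^c}))|^2$ introduces an independent $X'_{I^c}$ and the symmetrization $Y_{I^c} := X_{I^c}-X'_{I^c}$, leaving only the cross-bilinear form:
\[
|\E e_p(tQ(X))|^4 \le \E e_p\!\Bigl(2t\sum_{i\in I,\,j\in I^c} a_{ij}\, y_i y_j\Bigr),
\]
where $y_i$ is distributed as $\xi-\xi'$ for $i\in I_0$ and is identically zero for $i\notin I_0$.

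To conclude, assuming $p$ odd, I would substitute $s=2t$ (a bijection on $\F_p^*$), sum over $t\ne 0$, and invert the Fourier identity:
\[
\frac{1}{p}\sum_{s\ne 0}\E e_p\!\Bigl(s\sum_{i\in I,\,j\in I^c} a_{ij} y_i y_j\Bigr) = \P\!\Bigl(\sum_{i\in I,\,j\in I^c} a_{ij} y_i y_j = 0\Bigr) - \frac{1}{p},
\]
which is dominated by its absolute value, giving the claim. The edge case $p=2$ is trivial, since then $|\P(Q=r)-1/2|^4 \le 1/16$ uniformly. The main bookkeeping point, rather than a genuine obstacle, is verifying that the diagonal pieces $Q_{II},Q_{I^cI^c}$ and the linear part $\sum b_i x_i$ are systematically absorbed into unimodular factors that are discarded by the two modulus-taking steps, so that only the genuine bilinear cross-term $\sum_{i\in I, j\in I^c} a_{ij} x_i x_j$ survives after the double symmetrization.
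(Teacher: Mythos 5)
Your proposal is correct and follows essentially the same route as the paper: a Fourier expansion of $\sup_r|\P(Q(X)=r)-1/p|$ followed by two Cauchy--Schwarz/symmetrization steps that discard the unimodular factors carrying $Q_{II}$, $Q_{I^cI^c}$ and the linear part, leaving only the cross bilinear form in the symmetrized variables $y_i=\xi-\xi'$ (zero off $I_0$). The only cosmetic differences are that you move the fourth power inside the $t$-sum by H\"older at the outset rather than interleaving Cauchy--Schwarz with the sum over $t$, and that you handle the factor $2$ and the case $p=2$ explicitly, which the paper glosses over.
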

\begin{proof}[Proof of Lemma \ref{lemma:decoupling1}] For short we write $f(X)=\sum_{ij} a_{ij} x_i x_j + \sum_i b_i x_i$. We write
$$|\P(f(X)= r) -\frac{1}{p}| \le \frac{1}{p}  \sum_{t\neq 0} | \E e_p(-tf(X))|$$
where $e_p(x) = e^{2\pi i x/p}$.
 
We then use Cauchy-Schwarz to complete squares, 
\begin{align*}
LHS^2 \le  \frac{p-1}{p^2} \sum_{t\neq 0} |\E  e_p(-tf(X))|^2 &\le \frac{p-1}{p^2} \sum_{t\neq 0} \E_{X_I} |\E_{X_{I^c}}  e_p(-tf(X_I,X_{I^c})) |^2 \\
&= \frac{p-1}{p^2} \sum_{t\neq 0} \E_{X_I} \E_{X_{I^c}, X'_{I^c}}  e_p(-t[f(X_I,X_{I^c})-f(X_I,X_{I^c}')])\\
&= \frac{p-1}{p^2} \sum_{t\neq 0}  \E_{X_{I^c}, X'_{I^c}} \E_{X_I} e_p(-t[f(X_I,X_{I^c})-f(X_I,X_{I^c}')]).
\end{align*}
Using Cauchy-Schwarz once more,
\begin{align*}
LHS^4 & \le (\frac{p-1}{p^2})^2 (p-1)\sum_{t\neq 0} \E_{X_{I^c}, X'_{I^c}} \E_{X_I, X_I'} e_p(-t[f(X_I,X_{I^c})-f(X_I,X_{I^c}') -f(X_I',X_{I^c})+f(X_I',X_{I^c}')  ]) \\
&= (\frac{p-1}{p^2})^2 p(p-1) \frac{1}{p} \sum_{t \neq 0} \E_{Y_I,Y_{I^c}} e_p(-t \sum_{i \in I, j\in I^c} a_{ij} y_i y_j)= (\frac{p-1}{p})^3 (\P(\sum_{i\in I, j \in I^c} a_{ij} y_i y_j =0) -1/p).
\end{align*}

\end{proof}
 By Lemma \ref{lemma:quadratic}, with probability at least $1-\exp(-\Theta(n))$ we can assume that $L_N^\ast$ has rank at least $N -cN/2$ for each $N+0 \le N \le n$. 
 Let us consider the event that $L_N^\ast$ has rank exactly $N-k$ (where $k\le c N/2 $).
\begin{claim}\label{claim:principleminor} Assume that $L_N^\ast $ has rank $N-k$, then there is a set $I \subset [N], |I| =N-k$ such that the principle minor matrix $L_{I \times I}^\ast$ has full rank $N-k$.
\end{claim}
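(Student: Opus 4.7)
The plan is to exploit the symmetry of $L_N^\ast$, which is inherited from the Laplacian (every step of the reshuffling process updates the matrix in a symmetric way, so $L_N^\ast$ remains symmetric). The claim then reduces to the standard linear-algebra fact: a symmetric matrix of rank $r$ over $\F_p$ admits an invertible principal $r\times r$ submatrix.

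The argument I would give is as follows. Since $\rk(L_N^\ast)=N-k$, its row space has dimension $N-k$, so we can choose $I\subset [N]$ with $|I|=N-k$ such that the rows $\{(L_N^\ast)_{i,\cdot}\}_{i\in I}$ form a basis for the row space of $L_N^\ast$. Equivalently, the submatrix $L_{I\times[N]}^\ast$ has rank $N-k$. Because $L_N^\ast$ is symmetric, transposing gives that $L_{[N]\times I}^\ast=(L_{I\times[N]}^\ast)^{T}$ also has rank $N-k$.

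Now any row of $L_N^\ast$ is a linear combination of the basis rows indexed by $I$; restricting this linear combination to the columns in $I$, we see that every row of $L_{[N]\times I}^\ast$ is a linear combination of the $N-k$ rows $\{(L_N^\ast)_{i,I}\}_{i\in I}$ of $L_{I\times I}^\ast$. Since $L_{[N]\times I}^\ast$ itself has rank $N-k$, the spanning set $\{(L_N^\ast)_{i,I}\}_{i\in I}$ must have rank $N-k=|I|$, so the principal submatrix $L_{I\times I}^\ast$ is non-singular, as desired.

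There is essentially no obstacle here; the only thing to verify is that $L_N^\ast$ is symmetric at every stage of the reshuffling, which follows because the reshuffling swaps pairs $(v_j',v_{N+1})$ and $(v_j',v_{N+2})$ in the underlying graph and the corresponding row/column update to the reduced Laplacian preserves symmetry. Once symmetry is in place, the row-space/column-space coincidence forces the simultaneous choice of row and column indices, which is what distinguishes the claim from the trivial existence of an invertible (non-principal) $(N-k)\times(N-k)$ submatrix.
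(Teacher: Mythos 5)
Your proof is correct and follows essentially the same route as the paper: pick row indices $I$ spanning the row space, use the symmetry of $L_N^\ast$ to identify the rank of $L_{[N]\times I}^\ast$ with that of $L_{I\times [N]}^\ast$, and conclude that the principal submatrix $L_{I\times I}^\ast$ must already carry the full rank $N-k$. The only addition is your explicit check that the reshuffled reduced Laplacian stays symmetric, which the paper takes for granted.
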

\begin{proof}[Proof of Claim \ref{claim:principleminor}] Assume without loss of generality that $\row_1(L_N^\ast), \dots, \row_{n-k}(L_N^\ast)$ span the row vectors of $L_N^\ast$, then in particularly $\row_i|_{[N-k]}, i\ge N-k+1$ belong to the span of  $\row_i|_{[N-k]}, 1\le i\le n-k$. This implies that the matrix spanned by the first $N-k$ columns $X_1,\dots, X_{N-k}$ has rank at most $N-k$. On the other hand, as the matrix is symmetric, this column matrix has the same rank as that of the matrix generated by $\row_1(L_N^\ast), \dots, \row_{N-k}(L_N^\ast)$, which is $N-k$. So the matrix $L_{[N-k] \times [N-k]}$ generated by $\row_i|_{[N-k]}, 1\le i\le N-k$ has rank $N-k$. 
\end{proof}
In what follows, assume again without loss of generality that $\row_1(L_N^\ast), \dots, \row_{N-k}(L_N^\ast)$ span the row vectors of $L_N^\ast$. When we add a new column $X_{N+1} = (x_{1(N+1)},\dots, x_{(N+1) (N+1)}):=(x_1,\dots, x_{N+1})$ and its transpose to create $L_{N+1}^\ast$, if $\rk(L_{N+1}^\ast)< \rk(L_N^\ast)+2$ then the extended row vector $\row_1(L_{N+1}^\ast),\dots, \row_{N-k}(L_{N+1}^\ast)$ still generate the space of the vectors $\row_1(L_{N+1}^\ast),\dots, \row_N(L_{N+1}^\ast)$. In particularly, this implies that 
\begin{equation}\label{eqn:x,a}
x_{i} = \sum_{j=1}^{N-k}  a_{ij} x_{j}, \text{for $N-k+1\le i\le N$,}
\end{equation}
where $a_{ij}$ are determined from $L_{N}^\ast$ via $\row_i(L_{N}^\ast) = \sum_{j=1}^{N-k} a_{ij} \row_j(L_{N}^\ast)$. In other words, Eq.~\eqref{eqn:x,a} says that the vector $(x_{1}, \dots, x_{N})$ is orthogonal to the vectors $(a_{i1},\dots, a_{i(N-k)}, 0,\dots,-1,0,\dots,0)$, or equivalently it belongs to the hyperplane $H_N$ generated by the column vectors of $L_{N}^\ast$. 

Now, by using our result on the normal vectors in the previous section, we will obtain a bound $p^{-k} + O(e^{-n^{c}})$ for this event.

\begin{lemma}\label{lemma:linear:lap} Assume that $1\le k\le cN$.  There exists a positive constant $c$ so that for any sufficiently large prime $p$ at most $\exp(n^{c})$ there exists an event $\CE_{N}^\ast$ with $\P(\CE_{N}^\ast) \ge 1- \exp(-n^{c})$ such that the following holds
$$\P\big(X_{N+1}|_{[N]} \in H_N \big | \rank(L_{N}^\ast)=N-k \wedge \CE_{N}^\ast \big) = \frac{1}{p^k} + O(\exp(-n^{c})).$$
\end{lemma}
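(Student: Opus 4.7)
The plan is to describe the event $\{X_{N+1}|_{[N]}\in H_N\}$ via the $k$-dimensional kernel of $L_N^\ast$, and then evaluate the probability by Fourier inversion, controlling the error through Proposition~\ref{prop:structure:subexp:lap} applied to the normal vectors of $L_N^\ast$.

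Since $L_N^\ast$ is symmetric of rank $N-k$, one has $H_N^\perp=\ker(L_N^\ast)$, a subspace of $\F_p^N$ of dimension $k$, and the event in question is equivalent to $\Bw\cdot X_{N+1}|_{[N]}=0$ for every $\Bw\in\ker(L_N^\ast)$. I would let $\CE_N^\ast$ be the event provided by Proposition~\ref{prop:structure:subexp:lap}, applied to $L_N^\ast$ (which by Fact~\ref{fact:EoM} has the same distribution as $L_N$) with $I_0=I_{N+1}$, $\lambda=1/4$, and $Y_0=0$. By \eqref{eqn:I_N+1}, $|I_{N+1}|\ge N/4$ outside an event of exponentially small probability, and $I_{N+1}$ is independent of $(x_{ij})_{1\le i<j\le N}$ as required by the Proposition. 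The conclusion is that outside an event of probability $\exp(-N^c)$, every non-zero $\Bw\in\ker(L_N^\ast)$ satisfies $\rho_L(\Bw|_{I_{N+1}})\le \exp(-N^c)$. (The proof of Proposition~\ref{prop:structure:subexp:lap'} uses $Y_0$ only through phase factors $e_p(t\cdot Y_0)$ that vanish under absolute values, so its conclusion carries over to $Y_0=0$ when restricted to non-zero $\Bv$, which is precisely our situation.)

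Next, by Phase~2 of Definition~\ref{def:lap}, condition on everything except the reshuffling coin flips used at step $N$; then $X_{N+1}|_{[N]}=Y+Z$ with $Y$ deterministic and $Z$ supported on $I_{N+1}$ with i.i.d.\ $\mathrm{Bernoulli}(1/2)$ entries, which are $1/2$-balanced. Fixing a basis $\Bv_1,\dots,\Bv_k$ of $\ker(L_N^\ast)$ and letting $V$ be the $k\times N$ matrix with rows $\Bv_j$, Fourier inversion yields
\begin{align*}
\P\bigl(X_{N+1}|_{[N]}\in H_N\bigr)=\frac{1}{p^k}\sum_{t\in \F_p^k} e_p(t\cdot VY)\,\E\,e_p\bigl((V^\top t)\cdot Z\bigr),
\end{align*}
in which the $t=0$ contribution equals $p^{-k}$ exactly. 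For $t\ne 0$ the vector $\Bw:=V^\top t$ ranges over $\ker(L_N^\ast)\setminus\{0\}$, and the computation leading to \eqref{eqn:Fourier} gives $|\E e_p(\Bw\cdot Z)|\le \exp\bigl(-c_\al\sum_{i\in I_{N+1}}\|\Bw_i/p\|_{\R/\Z}^2\bigr)$. Summing over a scaling line $\{s\Bw:s\in\F_p^*\}$ produces at most $p\,\rho(\Bw|_{I_{N+1}})\le p\,\rho_L(\Bw|_{I_{N+1}})\le p\exp(-N^c)$ on $\CE_N^\ast$; partitioning $\ker(L_N^\ast)\setminus\{0\}$ into $(p^k-1)/(p-1)$ such lines and dividing by $p^k$ yields
\begin{align*}
\left|\P\bigl(X_{N+1}|_{[N]}\in H_N\bigr)-\frac{1}{p^k}\right|\le \frac{p(p^k-1)}{p^k(p-1)}\exp(-N^c)\le 2\exp(-N^c),
\end{align*}
which is $O(\exp(-n^c))$ since $N\ge cn$. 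After conditioning on $\{\rank(L_N^\ast)=N-k\}\cap \CE_N^\ast$ (using that $\CE_N^\ast$ itself has probability $1-\exp(-N^c)$), this gives the lemma.

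The chief obstacle is Step~2: checking that Proposition~\ref{prop:structure:subexp:lap} can be invoked for normal vectors (i.e.\ with $Y_0=0$), since its stated form explicitly assumes $Y_0\ne 0$. This requires a small inspection of the proof of Proposition~\ref{prop:structure:subexp:lap'} to confirm that the nonvanishing of $Y_0$ is not used in any essential way beyond ensuring the approximate solutions $\Bv$ one considers are non-zero — which for us is automatic, since we only need the conclusion for non-zero elements of $\ker(L_N^\ast)$. The remaining steps are a direct Fourier computation, and the interleaving of the Phase~1/Phase~2 randomness with the row-exposure ordering fits exactly the framework in which Proposition~\ref{prop:structure:subexp:lap} was designed.
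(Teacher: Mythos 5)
Your proposal is correct and takes essentially the same route as the paper: the paper likewise defines $\CE_{N}^\ast$ via Proposition \ref{prop:structure:subexp:lap} applied with $I_0=I_{N+1}$ to the normal (kernel) vectors of $L_N^\ast$, i.e. with $Y_0=0$, and then concludes by Lemma \ref{lemma:highdim}, whose proof is precisely your Fourier-inversion-over-projective-lines computation for an $I_{N+1}$-adapted column. The only differences are that you re-derive Lemma \ref{lemma:highdim} inline and explicitly justify the $Y_0=0$ case (which the paper uses tacitly); both are consistent with the paper's argument.
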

We also refer the reader to \cite[Theorem 1.1]{M1}, \cite[Theorem A.1, A.4]{NgP}, and \cite[Theorem 8.2]{NgW} for similar results for random i.i.d. matrices as well as its i.i.d. Laplacian.

To prove Lemma \ref{lemma:linear:lap}, we first show the following analog of \cite[Lemma 7.1]{LMNg} and \cite[Lemma 2.11]{KNg} for adapted vectors.

\begin{lemma}\label{lemma:highdim} Assume that $H$ is a subspace in $\F_p^N$ of codimension $k$, and such that for any $\Bw \in H^\perp$ we have 
$$\rho(\Bw_{I_0}) \le \delta_0.$$ Then with $X$ being adapted to $I_0$ as in \ref{def:X} we have 
		$$|\P(X \in H) - 1/p^k|\le 2 \delta_0.$$
	\end{lemma}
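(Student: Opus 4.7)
The plan is a standard Fourier/character argument on $\F_p^N$, where the key input is that $\rho(\Bw_{I_0})$ was designed precisely to control exponential sums of $X\cdot \Bw$ when $X$ has iid $\alpha$-balanced coordinates on $I_0$.

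First, since $H$ has codimension $k$, we have $|H^\perp|=p^k$, and the standard identity
\[
\mathbf{1}_{X\in H}=\frac{1}{p^k}\sum_{\Bw\in H^\perp}e_p(X\cdot \Bw)
\]
together with taking expectations and isolating the $\Bw=0$ contribution (which gives $1/p^k$) yields
\[
\P(X\in H)-\frac{1}{p^k}=\frac{1}{p^k}\sum_{\Bw\in H^\perp\setminus\{0\}}\E\, e_p(X\cdot \Bw).
\]
Since the coordinates of $X$ outside $I_0$ are deterministic, they contribute only a modulus-one phase, so $|\E\,e_p(X\cdot \Bw)|=|\E\,e_p(X_{I_0}\cdot \Bw_{I_0})|$.

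Second, I would organize the sum by the orbits of the scalar action of $\F_p^\ast$ on $H^\perp\setminus\{0\}$. Each orbit has size $p-1$, and there are exactly $(p^k-1)/(p-1)$ of them. For an orbit representative $\Bw_0$, summing over the orbit gives
\[
\sum_{t\in \F_p^\ast}\bigl|\E\,e_p\bigl(t\,(X_{I_0}\cdot \Bw_{0,I_0})\bigr)\bigr|.
\]
At this point I would invoke exactly the chain of inequalities already carried out in Subsection \ref{SS:ConcDisc} (symmetrization, Cauchy-Schwarz, and the bound $|\cos(\pi x/p)|\le 1-2\|x/p\|_{\R/\Z}^2$), which shows that for any random sum $S=X_{I_0}\cdot \Bw_{0,I_0}$,
\[
\frac{1}{p}\sum_{t\in \F_p^\ast}|\E\,e_p(tS)|\le \rho(\Bw_{0,I_0}).
\]
So each orbit contributes at most $p\,\rho(\Bw_{0,I_0})\le p\delta_0$ by the hypothesis on $H^\perp$.

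Putting everything together,
\[
\Bigl|\P(X\in H)-\frac{1}{p^k}\Bigr|\le \frac{1}{p^k}\cdot \frac{p^k-1}{p-1}\cdot p\,\delta_0=\frac{p(p^k-1)}{p^k(p-1)}\,\delta_0\le \frac{p}{p-1}\,\delta_0\le 2\delta_0,
\]
using $p\geq 2$ in the last step. (If $\delta_0\ge (p-1)/p$, some $\Bw\in H^\perp\setminus\{0\}$ could in principle have $\Bw_{I_0}=0$, making $\rho(\Bw_{I_0})=(p-1)/p$, but in that regime the bound $|\P(X\in H)-p^{-k}|\le 1\le 2\delta_0$ is automatic, so the hypothesis is only informative when the orbit-by-orbit argument above applies cleanly.)

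There is no real obstacle; the whole argument is essentially a repackaging of the discrete Fourier inversion formula combined with the $\rho$-estimate of Subsection \ref{SS:ConcDisc}. The only minor point requiring care is the constant: naively bounding $|\E\,e_p(X\cdot \Bw)|$ individually by $\rho(\Bw_{I_0})$ would cost a factor of $p$, so one must take advantage of the cancellation across the $\F_p^\ast$-orbit, which is exactly what the definition of $\rho$ delivers for free.
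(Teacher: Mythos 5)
Your proof is correct and follows essentially the same route as the paper: Fourier inversion over $H^\perp$, isolating $\Bw=0$, grouping the nonzero characters into $\F_p^\ast$-orbits (the paper's projective equivalence classes), bounding each orbit's contribution by $p\,\rho(\Bw_{I_0})\le p\,\delta_0$, and finishing with $\frac{p(p^k-1)}{p^k(p-1)}\le 2$. The only cosmetic difference is that you bound each orbit sum directly by the exponential-sum chain of Subsection \ref{SS:ConcDisc}, whereas the paper first rewrites the orbit sum as $p\bigl(\P(X\cdot \Bw=0)-1/p\bigr)$ and then applies the hypothesis.
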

		\begin{proof}[Proof of Lemma \ref{lemma:highdim}] 			
			We have the following identity,
	\begin{align*}
		1_{a_1 = 0 \wedge \dots \wedge a_k = 0} = \frac{1}{p^k}\sum_{t_1,\dots,t_k\in\F_p} e_p(a_1t_1 + \dots + a_kt_k).
	\end{align*}
	Therefore, letting $\Bv_1,\dots, \Bv_k$ be a basis for $H^\perp$,
	\begin{align*}
		\P(X\in H) &= \P(X\cdot \Bv_1 \wedge \dots \wedge X\cdot \Bv_k) \\
		&= \E\Big[ \frac{1}{p^k}\sum_{t_1,\dots,t_k\in\F_p} e_p(t_1[X\cdot \Bv_1] + \dots + t_k[X\cdot \Bv_d])\Big] \\
		&= \E\Big[ \frac{1}{p^k} +  \frac{1}{p^k}\sum_{\substack{t_1,\dots,t_k\in\F_p\\ \text{ not all $t_i$ are zero}}} e_p(t_1[X\cdot \Bv_1] + \dots + t_k[X\cdot \Bv_k])\Big]\\
		&=  \frac{1}{p^k} +  \frac{1}{p^k}\sum_{\substack{t_1,\dots,t_k\in\F_p\\ \text{ not all $t_i$ are zero}}} \E e_p(t_1[X\cdot \Bv_1] + \dots + t_k[X\cdot \Bv_k]).
	\end{align*}
	We now split the sum into projective equivalence classes. Let $\sim$ be the equivalence relation given by $(t_1,\dots,t_k)\sim(t_1',\dots,t_k')$ if there exists $t\neq 0$ such that $(t_1,\dots,t_k) = (t\cdot t_1',\dots,t\cdot t_k')$. Not worrying about our choice of representative on account of our inner sum,
	\begin{align*}
		&= \frac{1}{p^k} + \frac{1}{p^k}\sum_{(t_1,\dots,t_k)\in (\F_p^N)^\times/\sim}\bigg[\sum_{t\in\F_p^\times} e_p(t (t_1[X\cdot \Bv_1] + \dots + t_k[X\cdot \Bv_k]))\bigg] \\
		&= \frac{1}{p^k} + \frac{1}{p^k}\sum_{(t_1,\dots,t_k)\in (\F_p^N)^\times/\sim}p\bigg[\frac{1}{p}\sum_{t\in\F_p^\times} e_p(t(X\cdot \sum_{i=1}^n t_i \Bv_i))\bigg].
	\end{align*}
	Now observe,
	\begin{align*}
		\frac{1}{p}\sum_{t\in\F_p^\times} e_p(t(X\cdot \sum_{i=1}^k t_i \Bv_i))= \P(X\cdot \sum_{i=1}^k t_i \Bv_i = 0) - 1/p.
	\end{align*}
	The $\Bv_i$ are a basis and the $t_i$ are not all zero so $\sum_{i=1}^k t_i\Bv_i$ is a nonzero element of $H^\perp$. By assumption then this is bounded by $\delta_0$. There are $\frac{p^k-1}{p-1}$ elements in $(t_1,\dots,t_k)\in (\F_p^n)^\times/\sim$. We have $\frac{p(p^k-1)}{p^k(p-1)} \leq 2$. So by the triangle inequality
	\begin{align*}
		|\P(X\in H) - 1/p^k| \leq 2 \delta_0.
	\end{align*}

\end{proof}
			
		\begin{proof}[Proof of Lemma \ref{lemma:linear:lap}] Let $\CE_{N}^\ast$ be the event from Proposition \ref{prop:structure:subexp:lap} applied to $L_N^\ast$ for $I_0=I_{N+1}$.	Under $\CE_{N}^\ast$ we have learned that for any $\Bw \in H_N$ then $\rho_L(\Bw_{I_{N+1}}) = O(\exp(-n^c))$, where $I_{N+1}$ is the given index set where $X_{N+1}$ is adapted. 
		\end{proof}
For symmetric matrices $\rank(L_{N+1}^\ast) \le \rk(L_{N}^\ast)+1$ does not automatically implies that $\rank(L_{N+1}^\ast) =\rk(L_{N}^\ast)$, so we have to consider the events $L_{N+1}^\ast) =\rk(L_{N}^\ast)$ and $\rank(L_{N+1}^\ast) =\rk(L_{N}^\ast)+1$ {\it separately}. Let's consider the first event that 
$$\rank(L_{N+1}^\ast) =\rk(L_{N}^\ast).$$ 
Here beside the event $X_{N+1}|_{[N]} \in H_N$ considered in Lemma \ref{lemma:linear:lap}, $X_{N+1}=\row_{N+1}(L_{N+1}^\ast)$ also belongs to the subspace generated by $\row_1(L_{N+1}^\ast),\dots, \row_N(L_{N+1}^\ast)$. This latter condition can be interpreted in quadratic forms as follows. Conditioning on $\rank(L_{N}^\ast)=N-k \wedge \CE_{N}^\ast$, by Claim \ref{claim:principleminor} we can assume that $L_{I \times I}^\ast$ is a submatrix of full rank $N-k$ in $L_{N}^\ast$, for some $I\subset [N]$ and $|I|=N-k$. Let $B=(b_{ij})$ be the inverse of $L_{I \times I}^\ast$ in $\F_p$. The exposure of $(x_1,\dots, x_{N-k}, x_{N+1})$ would then increase the rank of $L_{N+1}^\ast$ except when
\begin{equation}\label{eqn:b_{ij}}
\sum_{ij} b_{ij} x_i x_j +x_{N+1} = 0.
\end{equation}
This leads us to consider the following for adapted vectors.

\begin{lemma}\label{lemma:quad:lap} Let $k\ge 0$. Let $H_N$  and $\CE_N^\ast$ be as in the proof of Lemma \ref{lemma:linear:lap}. Let $X$ be as in \ref{def:X} for a given index set $I_0$ satisfying \eqref{eqn:I_N+1}. For any sufficiently large prime $p$ at most $\exp(n^{c})$, we have 
$$\Big|\P\big(X \in H_N \wedge f(X) =0 | \rank(L_{N}^\ast)=N-k \wedge \CE_{N}^\ast \big) - p^{-k-1}\Big| = O(\exp(-\Theta(n^{c}))),$$
where $f(X) = \sum_{1\le i,j\le N-k} b_{ij} x_i x_j + x_{N+1}$. 
\end{lemma}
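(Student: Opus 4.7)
The plan is to mimic the Fourier-analytic approach of Lemma \ref{lemma:highdim} but with one extra character for the quadratic condition $f(X)=0$. Let $v_1,\dots,v_k$ be a basis of $H_N^\perp=\ker(L_N^\ast)$ (extended by $0$ on the $(N{+}1)$-st coordinate so that $X\cdot v_i$ makes sense on the full $(N{+}1)$-vector). Writing
$$\mathbf{1}_{X\in H_N}\,\mathbf{1}_{f(X)=0}=\frac{1}{p^{k+1}}\sum_{t_1,\dots,t_k,s\in\F_p}e_p\Big(\sum_{i=1}^k t_i(X\cdot v_i)+sf(X)\Big)$$
and taking expectations, the summand with $(t_1,\dots,t_k,s)=(0,\dots,0,0)$ contributes exactly $p^{-(k+1)}$. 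The remaining $p^{k+1}-1$ terms are what must be bounded by $\exp(-\Theta(n^c))$.

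For the summands with $s=0$ and some $t_i\ne0$, the character sum collapses to $\E[e_p(X\cdot w)]$ with $w=\sum_i t_iv_i$ a nonzero element of $H_N^\perp$. Since $X$ is $I_0$-adapted, the standard symmetrization bound (as in \eqref{eqn:Fourier}) shows that the $\ell^1$-mean of $|\E[e_p(tX\cdot w)]|$ over $t\ne0$ is at most $\rho(w_{I_0})\le\rho_L(w_{I_0})$. Applying Proposition \ref{prop:structure:subexp:lap} to $L_N^\ast$ (which has the correct distribution by Fact \ref{fact:EoM} and whose index set $I_{N+1}$ is determined by entries outside the $N\times N$ block, as required by the Proposition) on the event $\CE_N^\ast$ gives $\rho_L(w_{I_0})\le\exp(-n^c)$. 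Summing over the $\le p^k$ choices of $(t_i)$ yields total contribution $\exp(-\Theta(n^c))$ as long as $p\le\exp(n^{c/2})$.

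The main obstacle is the family of summands with $s\ne0$, where the exponential carries the genuine quadratic $s\sum_{i,j\le N-k}b_{ij}x_ix_j$ (the term $sx_{N+1}$ is a deterministic phase that is absorbed into the linear part). For each such $(t_i,s)$ I would invoke the decoupling identity of Lemma \ref{lemma:decoupling1} with the random set $I_0\cap[N-k]$ partitioned into two halves $J\sqcup J'$ of comparable linear size (possible since $|I_0|=\Theta(N)$ by \eqref{eqn:I_N+1} and $k=o(N)$), reducing the problem to bounding $\sup_r\big|\P\big(s\sum_{i\in J,\,j\in J'}b_{ij}y_iy_j=r\big)-\tfrac{1}{p}\big|$ with $y_i$ i.i.d.\ symmetrized copies of $\xi-\xi'$. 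Conditional on $y_{J'}$, this becomes a linear sum $\sum_{i\in J}c_iy_i$ with $c_i=s(B_{J\times J'}y_{J'})_i$, so by Theorem \ref{theorem:LO} it is controlled by $O(|\supp(c)|^{-1/2})$; hence it suffices to show that for all but an $\exp(-\Theta(n^c))$-fraction of $y_{J'}$ the vector $c$ has $\Omega(|J|)$ non-zero entries in $\F_p$.

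The required non-sparsity of $c$ I propose to deduce from structural information on the columns of $B=(L_{I\times I}^\ast)^{-1}$. Each column of $B$, extended by zeros outside $I$, gives a vector $\tilde c_j\in\F_p^N$ satisfying $L_N^\ast\tilde c_j=e_j$ in all coordinates of $I$, i.e.\ agreeing with $e_j\in\F_p^N$ in at least $N-k\ge N-N^c$ coordinates. Hence Proposition \ref{prop:structure:subexp:lap} applies to each $\tilde c_j$, and Lemma \ref{lemma:sparse:2} (applied with a small constant $\lambda_N$) forces every column of $B$ to have no value repeated more than $\lambda_N N$ times on $I_0\cap I$, so in particular its $\F_p$-support there has size $\Theta(N)$. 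A standard second-moment / union-bound calculation then lifts this column-by-column non-sparsity to the random combinations $B_{J\times J'}y_{J'}$, showing $c$ is non-sparse with probability $1-\exp(-\Theta(n^c))$. Collecting the estimates and summing over the $\le p^{k+1}$ non-trivial characters yields the desired bound $p^{-(k+1)}+O(\exp(-\Theta(n^c)))$.
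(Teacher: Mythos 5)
Your setup (Fourier expansion with one extra character for $f(X)=0$, handling the $s=0$ terms via $\CE_N^\ast$ as in Lemma \ref{lemma:highdim}, and decoupling the $s\neq 0$ terms à la Lemma \ref{lemma:decoupling1} into a bilinear form $\sum_{i\in J,\,j\in J'}b_{ij}y_iy_j$) is essentially the paper's route. The proof breaks down, however, at the final anti-concentration step. After conditioning on $y_{J'}$ you bound the linear form via Theorem \ref{theorem:LO} using only non-sparsity of the coefficient vector $c=sB_{J\times J'}y_{J'}$; this gives $\sup_r|\P(\sum_{i\in J}c_iy_i=r)-1/p|=O(|J|^{-1/2})$, i.e.\ a \emph{polynomial} bound $O(N^{-1/2})$ (and only $O(N^{-1/8})$ for the original character sum after undoing the fourth-power decoupling). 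This term does not vanish when you "collect the estimates'': the lemma demands an error of size $O(\exp(-\Theta(n^{c})))$, and no amount of non-sparsity plus Erd\H{o}s--Littlewood--Offord can produce sub-exponential decay. The missing idea is the paper's observation that the conditional coefficient vector is itself a \emph{generalized normal vector} of $L_N^\ast$: writing $B'$ and $Y''$ for the zero-extensions of $B$ and $y_{I_2}$, one has $L_N^\ast(B'Y'')=Y''$, so on the event $\CE_N^\ast$ Proposition \ref{prop:structure:subexp:lap} applies directly to $B'Y''$ and yields $\rho_L\big((B'Y'')_{I_0\cap I_1}\big)\le \exp(-\Theta(n^{c}))$; feeding this into \eqref{eqn:Fourier} gives the sub-exponential bound on the decoupled bilinear probability, which is what the proof actually needs.

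A secondary problem is your route to non-sparsity itself. Each column $\tilde c_j$ of $B$ satisfies $L_N^\ast\tilde c_j=e_j$ only on the $N-k$ coordinates indexed by $I$, and in the rank-evolution setting $k$ may be as large as $cN/2$; Proposition \ref{prop:structure:subexp:lap} and Lemma \ref{lemma:sparse:2} require agreement in at least $N-N^{\delta}$ coordinates, so they do not apply to the individual columns when $k\gg N^{\delta}$. Moreover, non-sparsity of each column does not transfer to the random combination $B_{J\times J'}y_{J'}$ without an argument ruling out cancellation, and the claimed "standard second-moment / union-bound'' step is not supplied. Both difficulties disappear once you work with $B'Y''$ as a whole via the identity $L_N^\ast(B'Y'')=Y''$, as in the paper.
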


\begin{proof}[Proof of Lemma \ref{lemma:quad:lap}] We write 
\begin{align*}
\P(X \in H_N \wedge f(X) =0) &= p^{-k-1} \sum_{\xi \in H_N^\perp, t \in \Z/p\Z} \E e_p(X \cdot \xi + f(X)t)\\
&=p^{-k-1} + p^{-k-1}\sum_{\xi \in H_N^\perp, t \in \Z/p\Z, t\neq 0 }  \E e_p(X \cdot \xi + f(X)t) +  p^{-k-1} \sum_{\xi \neq 0, \xi \in H_N^\perp} \E e_p(X \cdot \xi).
\end{align*}
Note that the third sum is 
$$p^{-k-1} \sum_{\xi \neq 0, \xi \in H_N^\perp} \E e_p(X \cdot \xi) = p^{-k-1} (\sum_{\xi \in H_N^\perp} \E e_p(X \cdot \xi) - 1) =\frac{1}{p}\P(X \in H_N) - p^{-k-1}.$$
Hence the third sum can be bounded by $O(\exp(-n^c))$ in absolute value using the condition on $G_n$ from Lemma \ref{lemma:linear:lap}. 

For the second sum, we have 
$$|p^{-k-1} \sum_{\xi \in H_N^\perp, t \in \F_p, t\neq 0} \E e_p(X \cdot \xi + f(X)t| \le \Big(p^{-k-1} \sum_{\xi \in H_N^\perp, t \in \Z/p\Z, t\neq 0} |\E e_p(X \cdot \xi + f(X)t|^4 \Big)^{1/4}.$$
Recall that $|I_0|$ satisfies \eqref{eqn:I_N+1}. For now we let $I_2\subset I_0$ of size $\lfloor cN \rfloor$, and let $I_1=I\backslash I_2$. 
By using Cauchy-Schwarz as in the proof of Lemma \ref{lemma:decoupling1}, we then bound this by
\begin{align*}
& \  \  \ p^{-k-1} \sum_{\xi \in H_N^\perp, t \in \Z/p\Z, t\neq 0} \big|\E e_p(X \cdot \xi + f(X)t\big|^4\\
&\le p^{-k-1} \sum_{\xi \in H_N^\perp, t \in \Z/p\Z, t\neq 0} \Big|\E_{X_{I_1}} \E_{X_{I_2}, X_{I_2}'}e_p\big((X_{I_2} -X_{I_2}') \cdot \xi + (f(X_{I_1},X_{I_2}) -f(X_{I_1}, X_{I_2}'))t\big)\Big|^2\\
&\le p^{-k-1} \sum_{\xi \in H_N^\perp, t \in \Z/p\Z, t\neq 0}  \E_{X_{I_2}, X_{I_2}'} \E_{X_{I_1}, X_{I_2}}e_p\Big((f(X_{I_1},X_{I_2}) -f(X_{I_1}, X_{I_2}') - f(X_{I_1}', X_{I_2})+f(X_{I_1}', X_{I_2}'))t\Big)\\
&= p^{-k-1} \sum_{\xi \in H_N^\perp, t \in \Z/p\Z, t\neq 0}  \E_{Y}e_p(\sum_{i\in I_1, j\in I_2} b_{ij}y_i y_j t)=\frac{1}{p} \sum_{ t \in\Z/p\Z, t\neq 0}  \E_{Y}e_p(\sum_{i\in I_1, j\in I_2} b_{ij}y_i y_j t)\\
&=\P(Y_{I_1}' \cdot B Y_{I_2}' = 0) - \frac{1}{p},
\end{align*}
where $y_i, i\in I_0$ are i.i.d. copies of the the random variable $\xi-\xi'$, and where $Y_{I_1}'$ and $Y_{I_2}'$ are the vectors in $\F_p^{I}$ obtained from $Y_{I_1}$ (i.e. $Y_I|_{I_1}$) and $Y_{I_2}$ (i.e. $Y_I|_{I_2}$) resp. by appending zero entries. 

Let $J = [N]\bs I$, so $|J|=k$ and that $[N] = I_1 \cup I_2 \cup J$. We let $B'$ be the symmetric matrix of size $n$ obtained from $B$ by simply adding zero entries. Similarly, let $Y_{I_1}''$ and $Y_{I_2}''$ be the vectors in $\F_p^N$ obtained from $Y_{I_1}'$ and $Y_{I_2}'$ by appending zero components. Then $Y_{I_1}' \cdot B Y_{I_2}' = 0$ is equivalent to
$$Y_{I_1}'' \cdot B' Y_{I_2}'' =0.$$
Furthermore, notice that the vector $Y_{I_2}''$ is non-zero (in $\F_p^N$) with probability at least $1-\exp(-\Theta(n))$ (because $Y_{I_2}''$ has $\lfloor c N\rfloor$ i.i.d. entries) and that
$$L_{N}^\ast(B' Y_{I_2}'') = (L_{N}^\ast B') Y_{I_2}''= Y_{I_2}'',$$
where we used the fact that $B=(b_{ij})$ is the inverse of $L_{I \times I}^\ast$ in $\F_p$.

It follows by Proposition \ref{prop:structure:subexp:lap} that on $\CE_{N}^\ast$
$$\rho_L(B' Y_{I_2}''|_{I_0 \cap I_1}) \le \exp(-\Theta(n^c)).$$
Conditioned on such $Y_{I_2}''$, by \eqref{eqn:Fourier}  
$$|\P_{Y_{I_1}'}(Y_{I_1}' \cdot B Y_{I_2}' = 0) - \frac{1}{p}| = |\P_{Y_{I_1}'}(Y_{I_1}'' \cdot B' Y_{I_2}'' = 0) - \frac{1}{p}|\le \exp(-\Theta(n^{c})).$$
 \end{proof}

Having obtained the necessary concentration bounds, we will next put things together to obtain the rank statistics for the three matrix models, our method is similar to that of \cite{Msym,KNg}.

\begin{prop}[Rank relations for Laplacian matrices]\label{prop:rankevolution:lap} There exists a positive constant $c$ such that the following holds. Assume that $p$ is a sufficiently large prime and $p \le \exp(n^c)$. For $N_0 \le N\le n$ there exists an event $\CE_{N}^\ast$ on the $\sigma$-algebra generated by $L_n$ and by the neighbor reshuffling process up to step $N$ such that $\P(\CE_{N}^\ast) \ge 1 - \exp(-n^c)$ and for $k\le cN$ 
\begin{itemize}
\item assume that $k\ge 1$ then
$$\Big|\P\big(\rk(L_{N+1}^\ast) \le \rk(L_{N}^\ast)+1 | \CE_{N}^\ast \wedge \rk(L_{N}^\ast)=N-k)\big) -\frac{1}{p^{k}}\Big| \le  \exp(-\Theta(n^{c}));$$
\item assume that $k\ge 0$ then
$$\Big|\P\big(\rk(L_{N+1}^\ast) = \rk(L_{N}^\ast) |\CE_{N}^\ast \wedge \rk(L_{N}^\ast)=N-k)\big) -\frac{1}{p^{k+1}}\Big| \le \exp(-\Theta(n^{c})).$$
\end{itemize}
\end{prop}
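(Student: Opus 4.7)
The plan is to derive this proposition essentially as a packaging of Lemmas \ref{lemma:linear:lap} and \ref{lemma:quad:lap}, after identifying the two rank-increment events with the deterministic linear/quadratic conditions those lemmas analyze. Write $L_{N+1}^\ast = \begin{pmatrix} L_N^\ast & \Bu \\ \Bu^T & d \end{pmatrix}$, where $\Bu = X_{N+1}|_{[N]}$ and $d$ is the (Laplacian-adjusted) diagonal entry. By the symmetry of $L_{N+1}^\ast$, the rank can jump by $0,1,$ or $2$, and I would first carry out the standard linear-algebra trichotomy: the rank jumps by $2$ iff $\Bu\notin H_N = \col(L_N^\ast)$; if $\Bu = L_N^\ast \Bv \in H_N$, then by symmetry the new row is in the row span of $L_N^\ast$ extended, and the rank stays the same iff additionally $\Bv^T L_N^\ast \Bv = d$. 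Choosing the representative $\Bv$ via the principal minor $L^\ast_{I\times I}$ provided by Claim \ref{claim:principleminor} with $B = (L^\ast_{I\times I})^{-1}$, this last equation is precisely $f(X) = \sum_{i,j\in I}b_{ij}x_i x_j + x_{N+1} = 0$, matching the function appearing in Lemma \ref{lemma:quad:lap}.

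Next I would take $\CE_N^\ast$ to be the intersection of the good event supplied by Proposition \ref{prop:structure:subexp:lap} applied to $L_N^\ast$ with index set $I_0 = I_{N+1}$, the Chernoff event \eqref{eqn:I_N+1} for the size of $I_{N+1}$, and the quadratic-repulsion event from Lemma \ref{lemma:quadratic} guaranteeing $\rk(L_N^\ast) \geq N - cN/2$ (so that the range $k\leq cN$ is meaningful). Each of these events has probability $1-\exp(-\Theta(n^c))$, so $\P(\CE_N^\ast)\geq 1 - \exp(-n^c)$ after possibly shrinking $c$. Crucially, on $\CE_N^\ast$, every nonzero $\Bw\in H_N^\perp$ satisfies $\rho_L(\Bw_{I_{N+1}}) \leq \exp(-n^c)$, which is exactly the hypothesis required by Lemma \ref{lemma:highdim} (and hence by Lemmas \ref{lemma:linear:lap}, \ref{lemma:quad:lap}).

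With the trichotomy and the good event in hand, the two assertions reduce immediately to the two cited lemmas. For the first bullet, the event $\{\rk(L_{N+1}^\ast) \leq \rk(L_N^\ast) + 1\}$ coincides with $\{X_{N+1}|_{[N]} \in H_N\}$, whose conditional probability given $\CE_N^\ast \wedge \{\rk(L_N^\ast) = N-k\}$ is $p^{-k} + O(\exp(-n^c))$ by Lemma \ref{lemma:linear:lap}. For the second bullet, $\{\rk(L_{N+1}^\ast) = \rk(L_N^\ast)\}$ coincides with $\{X_{N+1}|_{[N]}\in H_N\}\cap\{f(X) = 0\}$, whose conditional probability is $p^{-(k+1)} + O(\exp(-n^c))$ by Lemma \ref{lemma:quad:lap}; note the $k=0$ case is uniform since $H_N = \F_p^N$ is automatic and only $\P(f(X) = 0) \approx 1/p$ remains.

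The main obstacle is not in this assembly but in ensuring that the randomness used by Lemmas \ref{lemma:linear:lap} and \ref{lemma:quad:lap} is genuinely fresh with respect to the conditioning defining $\CE_N^\ast$ and $\{\rk(L_N^\ast) = N-k\}$. This is precisely why the column $X_{N+1}$ is produced through the reshuffling in Phase 2 of Definition \ref{def:lap}: the adapted coordinates indexed by $I_{N+1}$ are, conditional on the history up to step $N$, genuinely i.i.d.\ $\alpha$-balanced, and Chernoff guarantees $|I_{N+1}| \geq (1/2 - c)N$. Once this independence is cleanly articulated, the Fourier-analytic bounds behind Lemmas \ref{lemma:linear:lap}--\ref{lemma:quad:lap} apply verbatim, and the proposition follows.
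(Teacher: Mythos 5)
Your proposal is correct and follows essentially the same route as the paper: the rank trichotomy for the bordered symmetric matrix reduces the two events to $\{X_{N+1}|_{[N]}\in H_N\}$ and $\{X_{N+1}|_{[N]}\in H_N\}\cap\{f(X)=0\}$, which are exactly what Lemmas \ref{lemma:linear:lap} and \ref{lemma:quad:lap} estimate, with $\CE_N^\ast$ built from Proposition \ref{prop:structure:subexp:lap} applied with $I_0=I_{N+1}$ together with the Chernoff bound \eqref{eqn:I_N+1} and the repulsion bound of Lemma \ref{lemma:quadratic}. The paper proves the proposition by precisely this assembly (using Claim \ref{claim:principleminor} and the fresh randomness of the reshuffled column), so no further comment is needed.
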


By a similar much much simpler method (where we don't have to do the reshuffling process, and where $I_0= [N]$ at each step, but we still need variants of Lemma \ref{lemma:linear:lap} and Lemma \ref{lemma:quad:lap} with obvious modifications) we also obtain the following for symmetric matrices.

\begin{prop}[Rank relations for symmetric matrices]\label{prop:rankevolution:sym} There exists a positive constant $c$ such that the following holds. Assume that $p$ is a sufficiently large prime and $p \le \exp(n^c)$.  For $N_0 \le N\le n$ there exists an event $\CE_{N}$ on the $\sigma$-algebra generated by the first $N$ rows (and columns) of the matrix $M_N= M_{N\times N}$ (as a principle minor of $M_n$ from Theorem \ref{theorem:cyclic:sym}) such that $\P(\CE_N) \ge 1 - \exp(-n^c)$ and for $k\le cN$ 
\begin{itemize}
\item assume that $k\ge 1$ then
$$\Big|\P\big(\rk(M_{N+1}) \le \rk(M_N)+1 | \CE_N \wedge \rk(M_N)=N-k)\big) -\frac{1}{p^{k}}\Big| \le  \exp(-\Theta(n^{c}));$$
\item assume that $k\ge 0$ then
$$\Big|\P\big(\rk(M_{N+1}) = \rk(M_N) | \CE_N \wedge \rk(M_N)=N-k)\big) -\frac{1}{p^{k+1}}\Big| \le \exp(-\Theta(n^{c})).$$
\end{itemize}
\end{prop}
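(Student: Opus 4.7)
The plan is to mirror the strategy developed for the Laplacian case (Proposition~\ref{prop:rankevolution:lap}), stripped of the complications introduced by the dependence of the diagonal on the off-diagonal entries and by the need for the $AB$-shuffle. Since the upper-triangular entries of $M_n$ are already independent, one can expose the principal minors $M_N$ in the natural order $N=N_0, N_0+1,\dots, n$ and take $I_0=[N]$ (the full index set) in Proposition~\ref{prop:structure:subexp:sym}, with $\lambda=1$.

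First, I would define the good event $\CE_N$ as the intersection of two events: (a) the event furnished by Proposition~\ref{prop:structure:subexp:sym} applied to $M_N$ with $I_0=[N]$, which guarantees that for every generalized normal vector $\Bw$ associated to the column space $H_N$ of $M_N$ one has $\rho(\Bw)\le \exp(-N^c)$; and (b) the quadratic-repulsion event from Lemma~\ref{lemma:quadratic}, which ensures that whenever $\rk(M_N)=N-k$ with $k\le cN$ one can locate a principal submatrix $M_{I\times I}$ with $|I|=N-k$ of full rank (the symmetric analog of Claim~\ref{claim:principleminor}). A union bound gives $\P(\CE_N)\ge 1-\exp(-n^c)$ for suitable $c$.

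Next, conditioned on $\rk(M_N)=N-k$ and $\CE_N$, I would analyze what happens when we expose the new row/column $(x_{1,N+1},\dots,x_{N,N+1},x_{N+1,N+1})$, consisting of $N+1$ independent $\alpha$-balanced random variables. The rank can increase by $0$, $1$, or $2$. It fails to increase by $2$ precisely when $X_{N+1}|_{[N]}\in H_N$, and it fails to increase at all when additionally a quadratic form $f(X)=\sum_{i,j\in I} b_{ij}x_{i,N+1}x_{j,N+1}+x_{N+1,N+1}=0$, where $B=(b_{ij})$ is the inverse of $M_{I\times I}$ in $\F_p$. For the linear event, I would invoke the symmetric analog of Lemma~\ref{lemma:linear:lap}: on $\CE_N$ every normal vector $\Bw$ of $H_N$ satisfies $\rho(\Bw)\le \exp(-n^c)$, so applying Lemma~\ref{lemma:highdim} with $I_0=[N]$ (and the fully i.i.d.\ $\alpha$-balanced vector $X_{N+1}|_{[N]}$) yields $|\P(X_{N+1}|_{[N]}\in H_N)-p^{-k}|\le \exp(-\Theta(n^c))$.

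For the quadratic event, I would adapt Lemma~\ref{lemma:quad:lap}. The crucial step is to bound
\[
\Big|\P(X_{N+1}|_{[N]}\in H_N \,\wedge\, f(X)=0)-p^{-k-1}\Big|.
\]
Expand the indicator via characters, isolating the ``trivial'' contribution $p^{-k-1}$; the ``linear'' tail is controlled by the previous paragraph, while the ``quadratic'' tail (with non-trivial character weight $t\ne 0$ on $f$) is estimated by two applications of Cauchy--Schwarz, as in the proof of Lemma~\ref{lemma:decoupling1} or Lemma~\ref{lemma:quad:lap}. The decoupling splits $[N]=I_1\cup I_2\cup J$ (with $|I_2|=\lfloor cN\rfloor$, $J=[N]\setminus I$, and $I_1=I\setminus I_2$) and reduces the tail to $\P(Y'_{I_1}\cdot B Y'_{I_2}=0)-1/p$, where $Y=\xi-\xi'$ symmetrizes the noise. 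The key observation is that $M_N(B'Y''_{I_2})=Y''_{I_2}$, so $B'Y''_{I_2}$ is itself a generalized normal vector (after a harmless translation) and hence on $\CE_N$ has $\rho(B'Y''_{I_2}|_{I_1})\le \exp(-\Theta(n^c))$, giving the sought decay via \eqref{eqn:Fourier}.

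Assembling these two estimates, the probability that the rank stays exactly the same equals $p^{-k-1}+O(\exp(-\Theta(n^c)))$ and the probability that the rank increases by at most one equals $p^{-k}+O(\exp(-\Theta(n^c)))$. The main obstacle, as in the Laplacian case, is the decoupling step: making sure that after Cauchy--Schwarz the resulting random vector is genuinely a normal vector of $M_N$ (up to a translation which here is trivially $0$ because there is no analog of $-a\1$ in the symmetric setting), so that Proposition~\ref{prop:structure:subexp:sym} applies. Everything else is strictly simpler than the Laplacian case since all entries are independent and no row/column shuffling is needed.
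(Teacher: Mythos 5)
Your proposal is correct and follows essentially the same route as the paper: the paper obtains Proposition~\ref{prop:rankevolution:sym} precisely by rerunning the Laplacian argument (the analogues of Lemma~\ref{lemma:linear:lap} and Lemma~\ref{lemma:quad:lap}, built on Proposition~\ref{prop:structure:subexp:sym} together with Lemma~\ref{lemma:highdim} and the Cauchy--Schwarz decoupling) with $I_0=[N]$ and without any reshuffling. The only cosmetic difference is that you invoke Lemma~\ref{lemma:quadratic} to locate a full-rank principal submatrix, whereas for symmetric matrices this is deterministic by the proof of Claim~\ref{claim:principleminor}, so that part of your event $\CE_N$ is superfluous but harmless.
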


Next we comment on random skew-symmetric matrices. Here the rank evolution is slightly simpler and different. We note that, similarly to random symmetric matricers, Lemma \ref{lemma:linear:lap} also works for random skew-symmetric matrices. Let us call $\CE_N$ the event under consideration. The key difference here is that the event that for skew-symmetric matrices, $\rank(A_{N+1}) \le \rank(A_N)+1$ implies that $\rank(A_{N+1}) =\rk(A_N)$. This is because if $X_{N+1}|_{[N]} = A_N Y_0$ for some $Y_0$, then $Y_0^T \cdot X_{N+1}|_{[N]} =0$, and so $Y_0^T A_{[N] \times [N+1]} = X_{N+1}^T$, that is the $(N+1)$-th row also belongs the the linear space spanned by the first $n$ rows. Thus Lemma \ref{lemma:linear:lap} applied to skew-symmetric matrices imply

\begin{prop}[Rank relations for skew-symmetric matrices]\label{prop:rankevolution:alt}  There exists a positive constant $c$ such that the following holds. 
Assume that $p$ is a sufficiently large prime and $p \le \exp(n^c)$.  For $N_0 \le N\le n$ there exists an event $\CE_{N}$ on the $\sigma$-algebra generated by the first $N$ rows (and columns) of the matrix $A_N= A_{N\times N}$ (as a principle minor of $A_N$ from Theorem \ref{theorem:cyclic:alt}) such that $\P(\CE_N) \ge 1 - \exp(-n^c)$ and for $k\le cN$ \footnote{Strictly speaking, Lemma \ref{lemma:linear:lap} just gave $k\ge 1$, but for $k=0$ the bound automatically holds with probability one.} then
$$\Big|\P\big(\rk(A_{N+1}) = \rk(A_N)  | \CE_N \wedge \rk(A_N)=N-k\big) -\frac{1}{p^{k}}\Big| =  O(\exp(-n^{c})).$$
\end{prop}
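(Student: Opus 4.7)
The plan is to apply the skew-symmetric analog of Lemma~\ref{lemma:linear:lap}, exploiting the fact that for skew-symmetric matrices the rank can only change by $0$ or $2$ when a single row-column is appended. First I would take $\CE_N$ to be the event provided by Proposition~\ref{prop:structure:subexp:sym} applied to $G_N = A_N$ with $\lambda = 1$ (so $I_0 = [N]$) and $Y_0 = 0$: on $\CE_N$, every nonzero $\Bv \in \ker(A_N)$ satisfies $\rho(\Bv) \le \exp(-N^c)$, and $\P(\CE_N) \ge 1 - \exp(-N^c)$. Allowing $Y_0 = 0$ requires only a minor inspection of the proof of Proposition~\ref{prop:structure:subexp:sym}, since $Y_0$ enters only through the deterministic shift in the decomposition~\eqref{eqn:i.i.d.decomposition:nL} and in the statement of Lemma~\ref{lemma:sparse:2}, and all the relevant bounds carry through.

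The key structural observation is that $A_{N+1}$ is skew-symmetric and therefore has even rank, while $\rk(A_N)$ also has fixed parity; consequently $\rk(A_{N+1}) - \rk(A_N) \in \{0, 2\}$. Hence, conditioned on $\rk(A_N) = N - k$, the event $\rk(A_{N+1}) = \rk(A_N)$ is equivalent to $X_{N+1}|_{[N]} \in H_N$, where $H_N$ denotes the column span of $A_N$. For the nontrivial implication, if $X_{N+1}|_{[N]} = A_N Y_0$ then skew-symmetry gives $Y_0^\top X_{N+1}|_{[N]} = Y_0^\top A_N Y_0 = 0$, which forces the $(N+1)$-th row of $A_{N+1}$ to also lie in the span of the first $N$ rows, so no rank is gained when the new row-column is added.

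Since $A_N$ is skew-symmetric, $H_N^\perp = \ker(A_N^\top) = \ker(A_N)$ has dimension $k$, and under $\CE_N$ every nonzero vector in this orthogonal complement has $\rho$-value at most $\exp(-N^c)$. The entries of $X_{N+1}|_{[N]} = (x_{1,N+1}, \ldots, x_{N,N+1})$ are i.i.d.\ copies of the $\alpha$-balanced $\xi$ and are independent of $A_N$, so Lemma~\ref{lemma:highdim} applied with $I_0 = [N]$ and $\delta_0 = \exp(-N^c)$ gives
\[
\bigl|\P\bigl(X_{N+1}|_{[N]} \in H_N \,\big|\, \CE_N \wedge \rk(A_N) = N-k\bigr) - p^{-k}\bigr| \le 2\exp(-N^c),
\]
which combined with the equivalence above yields the proposition (the case $k = 0$, possible only when $N$ is even, is trivial since then $H_N = \F_p^N$). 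The only real hurdle worth mentioning is the extension of Proposition~\ref{prop:structure:subexp:sym} to the $Y_0 = 0$ case, but as noted this amounts to a cosmetic adaptation of the existing argument rather than a substantive obstacle.
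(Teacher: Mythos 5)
Your argument is correct and follows essentially the same route as the paper: the paper deduces this proposition from the skew-symmetric version of Lemma~\ref{lemma:linear:lap} (whose proof is exactly your combination of Proposition~\ref{prop:structure:subexp:sym} applied to kernel vectors together with Lemma~\ref{lemma:highdim}) plus the same observation that $X_{N+1}|_{[N]}=A_N Y_0$ forces $Y_0^T A_N Y_0=0$, so the new row/column adds no rank, with parity giving the $\{0,2\}$ dichotomy. The $Y_0=0$ extension you flag is likewise needed (and used implicitly) in the paper's own application of the structure propositions to normal vectors, so it is not a point of divergence.
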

One can then deduce the asymptotic probability of the event $\rk(A_{N+1}) = \rk(A_N)+2$ under $\CE_N \wedge \rk(A_N)=N-k$. We note that the transition probability $\P(\rk(A_{N+1}) = \rk(A_N))$ of the skew-symmetric case above is quite similar to that of the i.i.d. models considered  in \cite{M1, NgW, NgP} (where there we expose the first $N$ column vectors of the matrix of size $n$). However the difference here is that $\rk(A_{N+1})$ jumps by 2 (rather than 1) in the complement event.

To complete the subsection, we remark that Propositions~\ref{prop:rankevolution:sym} and~\ref{prop:rankevolution:alt} automatically extend to the uniform models where $M_N, A_N$ are uniformly chosen from the set of all symmetric matrices and skew-symmetric matrices in $\F_p$ because the uniformly chosen entries are clearly $\al$-balanced.

\subsection{The rank evolving process: completing the proof of Proposition \ref{prop:moderate:lap} and Proposition \ref{prop:moderate:alt}} Let $G_N$ be either $A_N, M_N$ or $L_N^\ast$. We will start from $N=N_0 = \lfloor cn \rfloor$, which is assumed to possess $\CE_{N_0}$ and have rank at least $N_0 -N_0^{1/2+c}$, where we have learned that the latter event has probability at least $1- \exp(\Theta(n))$ by Lemma \ref{lemma:quadratic}. We will apply Propositions \ref{prop:rankevolution:alt} and \ref{prop:rankevolution:lap}  for $G_{N_0+1}, \dots, G_n$. To compare with the rank evolution of the uniform model, for convenience we will use the following result from \cite[Theorem 5.3]{NgW}.
\begin{theorem}\label{theorem:comparison}
Let $x_{N_0},\dots,x_n,g_{N_0-1},\dots,g_{n-1}$ be a sequence of random variables.
  Let $y_{N_0},\dots,y_n$ be a sequence of  random variables where $y_{N_0}=x_{N_0}$.   We assume each $x_N,y_N$ takes on at most countably many values, and $g_N\in \{0,1\}$.
  Suppose that  for $0\leq N \leq n-1$, 
  \begin{align*}
&\P(y_{N+1}=s|y_N=r)=\P(x_{N+1}=s |x_N=r\textrm{ and }g_N=1) +\delta(N,r,s) \\
&\textrm{ for all $r$ and $s$
s.t. $\P(y_N=r)\P(x_N=r\textrm{ and }g_N=1)\ne0$}. 
\end{align*}
Then for any set $A$ of values taken by $x_n$ and $y_n$, we have
\begin{align*}
&|\P(x_n\in A)-\P(y_n\in A)|\\
&\leq \frac{1}{2}\sum_{N=N_0}^{n-1} \sum_{r} \sum_s |\delta(N,r,s)| \P(x_N =r)  +
\sum_{N=N_0}^{n-1} \Pr(g_N\ne 1),
\end{align*}
where $r$ is summed over $\{ r\ |\ \P(x_{N}=r)\ne 0 \textrm{ and } \P(y_{N}=r)\ne 0)\}$ and
$s$ is summed over $\{ s\ |\ \P(x_{N+1}=s)\ne 0 \textrm{ or } \P(y_{N+1}=s)\ne 0)\}.$
\end{theorem}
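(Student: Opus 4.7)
\textbf{Proof proposal for Theorem~\ref{theorem:comparison}.} The plan is to set up an induction on the time index $N$ that controls the total variation distance
$$\Delta_N := \frac{1}{2}\sum_r |\P(x_N=r)-\P(y_N=r)|,$$
starting from $\Delta_{N_0}=0$ (since $y_{N_0}=x_{N_0}$). The conclusion for an arbitrary set $A$ follows immediately from $|\P(x_n\in A)-\P(y_n\in A)|\le \Delta_n$, so the whole job is to produce a one-step recursion of the shape
$$\Delta_{N+1}\le \Delta_N+\tfrac{1}{2}\sum_{r}\sum_s|\delta(N,r,s)|\,\P(x_N=r)+\P(g_N\ne 1)$$
and then telescope from $N=N_0$ up to $N=n-1$.

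To obtain the one-step inequality I would decompose $\P(x_{N+1}=s)$ by conditioning on whether $g_N=1$:
$$\P(x_{N+1}=s)=\sum_r \P(x_N=r,g_N=1)\,\P(x_{N+1}=s\mid x_N=r,g_N=1)+\P(x_{N+1}=s,g_N\ne 1),$$
and expand $\P(y_{N+1}=s)=\sum_r \P(y_N=r)\bigl[\P(x_{N+1}=s\mid x_N=r,g_N=1)+\delta(N,r,s)\bigr]$ using the hypothesis. Writing
$$\P(y_N=r)-\P(x_N=r,g_N=1)=[\P(y_N=r)-\P(x_N=r)]+\P(x_N=r,g_N\ne 1),$$
the difference $\P(y_{N+1}=s)-\P(x_{N+1}=s)$ splits into four pieces: a ``transport'' piece driven by $\P(y_N=r)-\P(x_N=r)$, two pieces involving the event $\{g_N\ne 1\}$, and one piece involving $\delta$. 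Summing $|\cdot|$ over $s$ and applying the triangle inequality, the transport piece is bounded by $2\Delta_N$ (because the kernel $\P(x_{N+1}=s\mid x_N=r,g_N=1)$ sums to $1$ in $s$), the two $\{g_N\ne 1\}$ pieces together contribute at most $2\P(g_N\ne 1)$, and the $\delta$ piece contributes $\sum_r\P(y_N=r)\sum_s|\delta(N,r,s)|$.

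The last piece is weighted by $\P(y_N=r)$, whereas the theorem asks for $\P(x_N=r)$. The main (but minor) technical obstacle is converting this weight. I would handle it by writing $\P(y_N=r)=\P(x_N=r)+[\P(y_N=r)-\P(x_N=r)]$ and absorbing the discrepancy either (i) into the transport term via a careful re-grouping, or (ii) by observing that the resulting correction $\sum_r|\P(y_N=r)-\P(x_N=r)|\cdot \sum_s|\delta(N,r,s)|$ is already controlled because $\sum_s|\delta(N,r,s)|\le 2$ (both kernels are sub-probability), so the extra term is $\le 2\Delta_N$, which at the cost of a factor $1/2$ is harmless in the telescoping. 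A cleaner alternative is to instead track the signed discrepancy $\P(y_N=r)-\P(x_N=r)$ directly and use the maximal coupling of $x_N$ and $y_N$ at each time, in which case the weighting automatically comes out symmetric and one can choose the $\P(x_N=r)$ side.

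Finally, iterating the one-step recursion from $N_0$ up to $n-1$ and using $\Delta_{N_0}=0$ yields
$$\Delta_n\le \tfrac{1}{2}\sum_{N=N_0}^{n-1}\sum_r\sum_s|\delta(N,r,s)|\,\P(x_N=r)+\sum_{N=N_0}^{n-1}\P(g_N\ne 1),$$
which gives the theorem. The only real subtlety, as noted, is the $\P(x_N=r)$ vs.\ $\P(y_N=r)$ bookkeeping in the $\delta$-term; everything else is routine probability manipulation.
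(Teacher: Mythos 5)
A preliminary remark: the paper does not prove this statement at all — it is imported verbatim from \cite[Theorem 5.3]{NgW} — so your argument can only be measured against the standard marginal-recursion proof, which is indeed the route you take. Your setup is correct: control $\Delta_N=\frac12\sum_r|\P(x_N=r)-\P(y_N=r)|$, prove a one-step recursion, telescope, and finish with $|\P(x_n\in A)-\P(y_n\in A)|\le\Delta_n$; the decomposition of $\P(x_{N+1}=s)$ over the event $\{g_N=1\}$ is also fine. The genuine gap is exactly the point you wave off as minor bookkeeping: converting the weight $\P(y_N=r)$ on the $\delta$-term into $\P(x_N=r)$. Your fix (ii) does not work: the correction $\frac12\sum_r|\P(y_N=r)-\P(x_N=r)|\sum_s|\delta(N,r,s)|$ is only bounded by $2\Delta_N$, so the recursion becomes $\Delta_{N+1}\le 3\Delta_N+c_N$ (with $c_N$ the desired per-step increment), which telescopes to $\Delta_n\le\sum_N 3^{\,n-1-N}c_N$ — an exponentially weaker bound, not the theorem; this multiplicative loss is not "harmless". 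The maximal-coupling remark is too vague to substitute (note also that $x,y$ are not assumed Markov, so a process-level coupling is not free). A second, smaller gap: your expansion of $\P(y_{N+1}=s)$ invokes the hypothesis for every $r$ in the support of $y_N$, but $\delta(N,r,s)$ is only defined when $\P(x_N=r,g_N=1)\ne0$; the remaining $r$ must be treated separately.

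Both issues are repaired by one regrouping, which is the real content of the proof. Write $a^1(r)=\P(x_N=r,g_N=1)$, $K(r,s)=\P(x_{N+1}=s\mid x_N=r,g_N=1)$, $Q(r,s)=\P(y_{N+1}=s\mid y_N=r)$, and pair the discrepancy with $Q$ rather than with $K$: for $r$ with $\P(y_N=r)\,a^1(r)\ne0$,
\begin{align*}
\P(y_N=r)Q(r,s)-a^1(r)K(r,s)=\big(\P(y_N=r)-a^1(r)\big)Q(r,s)+a^1(r)\,\delta(N,r,s),
\end{align*}
while the exceptional $r$ (one of the two masses zero) contribute a single mass times a kernel, with no appeal to the hypothesis. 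Since $Q(r,\cdot)$ and $K(r,\cdot)$ each sum to $1$ in $s$, summing absolute values over $s$ gives
\begin{align*}
2\Delta_{N+1}\le \sum_r\big|\P(y_N=r)-a^1(r)\big|+\sum_{r}a^1(r)\sum_s|\delta(N,r,s)|+\P(g_N\ne1),
\end{align*}
where the last term comes from $\P(x_{N+1}=s,\,g_N\ne1)$ and the middle sum runs over $r$ with $\P(x_N=r)\P(y_N=r)\ne0$, as in the statement. Now $\sum_r|\P(y_N=r)-a^1(r)|\le 2\Delta_N+\P(g_N\ne1)$ and $a^1(r)\le\P(x_N=r)$, so $\Delta_{N+1}\le\Delta_N+\P(g_N\ne1)+\frac12\sum_r\P(x_N=r)\sum_s|\delta(N,r,s)|$, and telescoping from $\Delta_{N_0}=0$ yields the theorem. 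The idea your proposal is missing is precisely that $Q(r,\cdot)=K(r,\cdot)+\delta(N,r,\cdot)$ is itself a probability kernel, so the transport term can be taken with respect to $Q$, leaving the $\delta$-term weighted by $a^1(r)\le\P(x_N=r)$ at no cost.
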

For the detailed statistics of skew-symmetric matrices, let $\mu_{alt,n}$ be the rank distribution of the uniform skew-symmetric model in $\F_p$ (i.e. each non-diagonal entry is independent uniform in $\Z/p\Z$). By \cite{Mac} we have 
$$\mu_{alt,n}(k)= \P(\rank(A_{n, uniform}=n-k)= \frac{N_0(n,n-k)}{p^{\binom{n}{2}}}$$
where 
$$N_0(n,2h) = \prod_{i=1}^{h} \frac{p^{2i-2}}{p^{2i}-1} \prod_{i=0}^{2h-1} (p^{n-i} -1) \mbox{ and } N_0(2h+1)=0.$$
However we cannot apply Theorem \ref{theorem:comparison} directly to $x_N = \rank(A_N)$ and $y_N=\rank(A_{N, uniform})$ and $g_N=1_{\CE_N}$ because the above uniform statistics is grown from a zero-dimension matrix. To amend this, we can start from any realization of $A_{N_0}'= A_{N_0}$ and add rows and columns according to the uniform model until $A_n'$. We call this model uniform with initial matrix $A_{N_0}$. Then \cite[Proposition 3.13]{KNg} applied to this $A_n'$ shows that its rank statistics is extremely close to the uniform model, that 
 $$d_{TV}(n-\rank(A_n'/p),\mu_{alt, n}) \le p^{-\Theta(n)}.$$
Now we can apply  Theorem \ref{theorem:comparison} and use the triangle inequality to conclude that 
$$d_{TV}(n-\rank(A_n/p),\mu_{alt, n}) \le e^{-n^c}.$$
On the other hand, by \cite[Section 5]{FG} 
$$d_{TV}(\mu_{alt, n}, \mu_{alt,e}) = O(\frac{1}{p^{n+1}}) \mbox{ if $n$ is even}$$
and
$$d_{TV}(\mu_{alt, n}, \mu_{alt,o}) = O(\frac{1}{p^{n+1}}) \mbox{ if $n$ is odd}$$
where the limiting distributions are given by 
\[
	\P(\mu_{alt,e} = k):= \begin{cases}
\prod_{i=0}^\infty (1 -p^{-2i-1}) \frac{p^k}{\prod_{i=1}^k (p^i-1)}, & \mbox{$k$ is even}\\
 0, \mbox{ $k$ is odd}
\end{cases}
\]
and
\[
	\P(\mu_{alt,o}=k):= \begin{cases}
\prod_{i=0}^\infty (1 -p^{-2i-1}) \frac{p^k}{\prod_{i=1}^k (p^i-1)}, & \mbox{$k$ is odd}\\
 0, \mbox{ $k$ is even}
\end{cases}
\]
Putting together 
\begin{theorem}[Ranks statistics of random skew-symmetric matrices]\label{theorem:rankstatistics:alt} Assume that $p$ is a sufficiently large prime and $p \le \exp(n^c)$. Assume that $A_n$ is as in Theorem \ref{theorem:cyclic:alt}, then 
$$d_{TV}(n-\rank(A_n/p),\mu_{alt, e}) =O(e^{-n^c}), \text{ $n$ is even}$$
and
$$d_{TV}(n-\rank(A_n/p),\mu_{alt, o})=O(e^{-n^c}), \text{ $n$ is odd.}$$
\end{theorem}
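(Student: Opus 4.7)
\textbf{Proof proposal for Theorem~\ref{theorem:rankstatistics:alt}.} The plan is to treat the rank $\rank(A_N/p)$, as $N$ increases from $N_0=\lfloor cn\rfloor$ to $n$, as a Markov-like process whose one-step transitions are pinned down by Proposition~\ref{prop:rankevolution:alt}, and then to compare this process to the rank process of a reference model whose stationary behavior is known exactly.

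First I would initialize at $N=N_0$. By Lemma~\ref{lemma:quadratic}, with probability $1-\exp(-\Theta(n))$ the principal minor $A_{N_0}$ already has corank at most $N_0^{1/2+c}$, so the starting rank is essentially determined. Next, to introduce a reference, I would define the auxiliary matrix $A_n'$ that agrees with $A_{N_0}$ on its upper-left block and whose subsequent rows/columns are drawn independently and uniformly from $\F_p$ (a skew-symmetric Haar extension of the initial block). For this uniform-extension model \cite[Proposition 3.13]{KNg} gives $d_{TV}(n-\rank(A_n'/p),\mu_{alt,n}) \le p^{-\Theta(n)}$; in particular the dependence on the initial seed $A_{N_0}$ washes out after $\Theta(n)$ uniform updates.

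The main step is then a coupling argument via Theorem~\ref{theorem:comparison}. I would set $y_N=\rank(A_N/p)$ (our target sequence), $x_N=\rank(A_N'/p)$ (the reference), and $g_N=\mathbf{1}_{\CE_N}$ where $\CE_N$ is the good event from Proposition~\ref{prop:rankevolution:alt}, which holds with probability $1-e^{-n^c}$. Proposition~\ref{prop:rankevolution:alt} tells us
\[
\P(\rk(A_{N+1})=\rk(A_N)\mid \CE_N\wedge \rk(A_N)=N-k)=\frac{1}{p^k}+O(e^{-n^c}),
\]
and since skew-symmetric rank advances in increments of $0$ or $2$, the complementary transition $\rk(A_{N+1})=\rk(A_N)+2$ has probability $1-p^{-k}+O(e^{-n^c})$. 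The exact same formulas hold for $x_N$ directly from the structure of the uniform skew-symmetric model (this is classical, cf.\ \cite{Mac}). Thus in the notation of Theorem~\ref{theorem:comparison} we have $\delta(N,r,s)=O(e^{-n^c})$, and summing the error bound over $N_0\le N<n$ gives $|\P(y_n\in A)-\P(x_n\in A)|\le n\cdot O(e^{-n^c})+n\cdot e^{-n^c}=O(e^{-n^{c'}})$ for any $c'<c$. Combining with the previous step via the triangle inequality yields $d_{TV}(n-\rank(A_n/p),\mu_{alt,n})=O(e^{-n^c})$.

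Finally, to pass from $\mu_{alt,n}$ to the parity-dependent limits $\mu_{alt,e}$ and $\mu_{alt,o}$, I would just invoke the direct estimate $d_{TV}(\mu_{alt,n},\mu_{alt,e/o})=O(p^{-(n+1)})$ from \cite[Section~5]{FG} according to the parity of $n$, and apply the triangle inequality one more time. The hard part here is really nothing new: everything rests on Proposition~\ref{prop:rankevolution:alt}, which in turn relies on the non-structureness of normal vectors in Proposition~\ref{prop:structure:subexp:sym}; the comparison machinery is routine once that ingredient is in hand. The only mild subtlety is that we cannot directly compare $A_n$ to a fully uniform skew-symmetric matrix (because its seed $A_{N_0}$ is not uniform), which is precisely why the auxiliary model $A_n'$ with matching seed is introduced before invoking the mixing estimate from \cite{KNg}.
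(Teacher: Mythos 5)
Your proposal is correct and follows essentially the same route as the paper: initialize at $N_0$ via Lemma~\ref{lemma:quadratic}, compare the rank process of $A_N$ with the uniform model $A_n'$ seeded at $A_{N_0}$ using Theorem~\ref{theorem:comparison} together with \cite[Proposition 3.13]{KNg}, feed in the transition probabilities from Proposition~\ref{prop:rankevolution:alt}, and finish with the parity-dependent limits from \cite[Section 5]{FG} and the triangle inequality. One small relabeling: in Theorem~\ref{theorem:comparison} the good event $g_N$ conditions the $x$-sequence, and since $\CE_N$ is an event about the actual matrix you should take $x_N=\rank(A_N/p)$ and $y_N=\rank(A_N'/p)$, the opposite of your assignment --- a cosmetic fix that does not change the argument.
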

The claim of Proposition \ref{prop:moderate:alt} then follows easily because 
$$\sum_{k\ge 3} \P(\mu_{alt,e}=k)=\sum_{k\ge 4} \P(\mu_{alt,e}=k) = O(\frac{1}{p^6})$$
and
$$\sum_{k\ge 3} \P(\mu_{alt,o}=k)=O(\frac{1}{p^3}).$$

Now for symmetric and Laplacian matrices, let with $\mu_{sym,n}$ be the rank distribution of the random uniform symmetric matrix of size $n$ where each $x_{ij}, 1\le i\le j\le n$ are independent uniform over $\Z/p\Z$. By \cite{C,Mac} we have 
$$\mu_{sym,n}(k)=\P(\rank(M_{uniform}=n-k)) = \frac{N(n,n-k)}{p^{\binom{n+1}{2}}}$$
where 
$$N(n,2h) = \prod_{i=1}^h \frac{p^{2i}}{p^{2i}-1} \prod_{i=0}^{2h-1} (p^{n-i} -1), 2h\le n$$
and
$$N(n,2h+1) = \prod_{i=1}^h \frac{p^{2i}}{p^{2i}-1} \prod_{i=0}^{2h} (p^{n-i} -1), 2h+1\le n.$$
Arguing as in the skew-symmetric case, we can grow the uniform model from  $G_{N_0}'= G_{N_0}$, and add rows and columns according to the uniform model to form $G_{N_0+1}'$ and so on until $G_n'$. Then \cite[Proposition 3.13]{KNg} applied to this uniform model $G_n'$ with initial $G_{N_0}$ shows that its rank statistics is extremely close to the uniform model, and hence by  Theorem \ref{theorem:comparison} and by the triangle inequality
$$d_{TV}(n-\rank(G_n/p),\mu_{sym, n}) \le e^{-n^c}.$$
On the other hand, by \cite[Section 4]{FG} we have
$$d_{TV}(\mu_{sym, n}, \mu_{sym}) = O(\frac{1}{p^{n+1}})$$
where $$\P(\mu_{sym}=k)= \frac{\prod_{i=0}^\infty (1-p^{-2i-1})}{\prod_{i=1}^k (p^i-1)}.$$
Putting together we have thus obtained
\begin{theorem}[Ranks statistics of random symmetric matrices]\label{theorem:rankstatistics:sym} Assume that $p$ is a sufficiently large prime and $p \le \exp(n^c)$. Assume that $M_n$ is as in Theorem \ref{theorem:cyclic:sym}, then $$d_{TV}(n-\rank(M_n/p),\mu_{sym}) =O(e^{-n^c}).$$
\end{theorem}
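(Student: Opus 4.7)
The plan is to mirror the strategy just executed for the skew-symmetric case (Theorem~\ref{theorem:rankstatistics:alt}), replacing Proposition~\ref{prop:rankevolution:alt} with the symmetric analog Proposition~\ref{prop:rankevolution:sym} and using the known rank distribution for uniform random symmetric matrices from \cite{C,Mac}. The three pillars are: (i) a high-probability event on which the rank evolution of $M_n$ modulo $p$ is very close to that of the uniform model, (ii) the comparison lemma (Theorem~\ref{theorem:comparison}) converting pointwise closeness of transition probabilities into closeness of distributions, and (iii) the already-known closeness of the finite-dimensional uniform rank distribution to its limit $\mu_{sym}$ from \cite[Section 4]{FG}.

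First I would initialize at $N_0=\lfloor cn\rfloor$. By Lemma~\ref{lemma:quadratic} (applied with $k=N_0^{1/2+c}$) we have $\rank(M_{N_0}/p)\ge N_0-N_0^{1/2+c}$ except on an event of probability $\exp(-\Theta(n))$. Then, following the skew-symmetric argument verbatim, I would introduce an auxiliary ``uniform model with initial matrix $M_{N_0}$'': fix any realization $M_{N_0}'=M_{N_0}$, and extend it to an $n\times n$ symmetric matrix $M_n'$ by sampling each newly exposed upper-triangular entry independently and uniformly from $\F_p$. By \cite[Proposition 3.13]{KNg} (which is exactly the analogue used for the skew-symmetric case),
\[
d_{TV}\bigl(n-\rank(M_n'/p),\,\mu_{sym,n}\bigr)\le p^{-\Theta(n)}.
\]

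Next I would apply Theorem~\ref{theorem:comparison} with $x_N=\rank(M_N'/p)$, $y_N=\rank(M_N/p)$ for $N_0\le N\le n$, and $g_N=\mathbf{1}_{\CE_N}$, where $\CE_N$ is the event supplied by Proposition~\ref{prop:rankevolution:sym}. Proposition~\ref{prop:rankevolution:sym} gives precisely the two transition probabilities needed---for $k\ge 1$,
\[
\Bigl|\P(\rk(M_{N+1})\le \rk(M_N)+1\mid \CE_N\wedge \rk(M_N)=N-k)-p^{-k}\Bigr|\le \exp(-\Theta(n^c)),
\]
and the analogous bound for $\rk(M_{N+1})=\rk(M_N)$---and these coincide (up to the same error) with the transitions of the uniform symmetric walk $M_N'\to M_{N+1}'$, which can be computed directly by exposing one new row/column and using the Fourier / uniform distribution calculation. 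Summing the per-step error $\exp(-\Theta(n^c))$ over $N_0\le N<n$, together with $\sum_N \P(\bar{\CE}_N)\le n\exp(-n^c)$, yields
\[
d_{TV}\bigl(n-\rank(M_n/p),\,n-\rank(M_n'/p)\bigr)\le \exp(-n^c),
\]
after slightly shrinking $c$.

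Finally I would combine the three bounds by the triangle inequality: the step just above, the $p^{-\Theta(n)}$ bound from \cite[Proposition 3.13]{KNg}, and the bound $d_{TV}(\mu_{sym,n},\mu_{sym})=O(p^{-(n+1)})$ from \cite[Section 4]{FG}, where $\mu_{sym}(k)=\prod_{i=0}^\infty(1-p^{-2i-1})/\prod_{i=1}^k(p^i-1)$. All three terms are dominated by $\exp(-n^c)$ (once $c$ is taken small enough that $p\le e^{n^c}$ still leaves $p^{-n}$ much smaller), giving the claimed bound. There is no genuine obstacle beyond what was overcome for the skew-symmetric case---the only mild subtlety is bookkeeping the two transition types ($\rk$ stays or goes up by $1$) rather than a single one, but Proposition~\ref{prop:rankevolution:sym} already packages both, so the argument runs in parallel. $\hfill\square$
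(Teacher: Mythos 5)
Your proposal follows the paper's proof essentially verbatim: initialize at $N_0=\lfloor cn\rfloor$ via Lemma~\ref{lemma:quadratic}, grow a uniform symmetric model from the initial matrix $M_{N_0}$, invoke \cite[Proposition 3.13]{KNg} to relate it to $\mu_{sym,n}$, compare the two chains with Theorem~\ref{theorem:comparison} using the transition bounds of Proposition~\ref{prop:rankevolution:sym}, and conclude with the bound $d_{TV}(\mu_{sym,n},\mu_{sym})=O(p^{-n-1})$ from \cite[Section 4]{FG} and the triangle inequality. The only slip is the assignment of roles in Theorem~\ref{theorem:comparison}: its hypothesis conditions the $x$-chain on $g_N=1$, and $\CE_N$ lives on the $\sigma$-algebra of the actual matrix (whose transitions are only controlled conditionally), so you should take $x_N=\rank(M_N/p)$ for the actual model with $g_N=\mathbf{1}_{\CE_N}$ and let $y_N$ be the uniform-grown chain (whose transitions are known unconditionally), as the paper does, rather than the reverse.
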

\begin{theorem}[Ranks statistics of random Laplacian matrices]\label{theorem:rankstatistics:lap} Assume that $p$ is a sufficiently large prime and $p \le \exp(n^c)$. Assume that $L_n$ is as in Theorem \ref{theorem:cyclic:lap}, then 
$$d_{TV}(n-\rank(L_n/p),\mu_{sym}) =O(e^{-n^c}).$$
\end{theorem}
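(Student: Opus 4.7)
The proof plan is essentially identical in structure to the one just sketched for the symmetric case, and in fact the entire machinery of the paper has been arranged so that the Laplacian can be slotted into the same framework. My strategy is the following. First, sample $L_n$ via Phase 1 of Definition \ref{def:lap}, and then run the reshuffling process of Phase 2 for an arbitrary ordering of the last $n-N_0$ vertices, obtaining the sequence of reshuffled matrices $L_{N_0}^{\ast},L_{N_0+1}^{\ast},\dots,L_n^{\ast}$. By Fact \ref{fact:EoM}, $L_n^{\ast}$ has the same distribution as $L_n$, so it suffices to prove the total variation bound for $\rank(L_n^{\ast}/p)$.

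Next, let $x_N=\rank(L_N^{\ast}/p)$ for $N_0\le N\le n$, and let $g_N=\mathbf{1}_{\mathcal{E}_N^{\ast}}$ where $\mathcal{E}_N^{\ast}$ is the event given by Proposition \ref{prop:rankevolution:lap}. By the quadratic rank lower bound of Lemma \ref{lemma:quadratic}, we also have $\rank(L_{N_0}^{\ast}/p)\ge N_0-N_0^{1/2+c}$ with probability $1-\exp(-\Theta(n))$, so we may restrict to the regime $k\le c N$ covered by Proposition \ref{prop:rankevolution:lap}. Build a comparison process $y_N$ as follows: start with $y_{N_0}=x_{N_0}=\rank(L_{N_0}^{\ast}/p)$ and at each step append a random row/column from the uniform symmetric distribution over $\F_p$, i.e.\ grow a ``uniform symmetric matrix with initial block $L_{N_0}^{\ast}$.''  For this uniform growth the transition probabilities $\P(y_{N+1}=N-k+1\mid y_N=N-k)$ and $\P(y_{N+1}=N-k\mid y_N=N-k)$ are classical and equal to $p^{-k}+O(p^{-2k})$ and $p^{-k-1}+O(p^{-2k-1})$ respectively. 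Proposition \ref{prop:rankevolution:lap} shows that, on $\mathcal{E}_N^{\ast}$, the transition probabilities for $x_N$ match these to within an additive $\exp(-\Theta(n^c))$ error.

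Now apply Theorem \ref{theorem:comparison} with these $x_N,y_N,g_N$. The accumulated error is at most
\[
\sum_{N=N_0}^{n-1}\Big(\tfrac{1}{2}\sum_{r,s}|\delta(N,r,s)|\,\P(x_N=r)+\P(g_N\ne 1)\Big)\le n\cdot O(e^{-n^c})=O(e^{-n^{c'}})
\]
for some $c'>0$, which gives $d_{TV}(n-\rank(L_n^{\ast}/p),\,n-\rank(L_n'/p))=O(e^{-n^{c'}})$, where $L_n'$ denotes the uniform symmetric matrix with initial block $L_{N_0}^{\ast}$. By \cite[Proposition 3.13]{KNg} applied to this uniform-with-initial-block model, the rank distribution of $L_n'$ is within $p^{-\Theta(n)}$ of $\mu_{sym,n}$, and by \cite[Section 4]{FG} we have $d_{TV}(\mu_{sym,n},\mu_{sym})=O(p^{-n-1})$. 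Combining these three bounds by the triangle inequality yields the claimed $d_{TV}(n-\rank(L_n/p),\mu_{sym})=O(e^{-n^c})$.

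The main obstacle is hidden in the verification that Proposition \ref{prop:rankevolution:lap} actually provides both transition probabilities (the $p^{-k}$ and $p^{-k-1}$ events) with the required exponential accuracy when $k=0$ and when $k\ge 1$ separately; but this is exactly the content of the two bullets in that proposition, which was proved via Lemmas \ref{lemma:linear:lap} and \ref{lemma:quad:lap}, whose proofs relied in turn on the non-local structure result Proposition \ref{prop:structure:subexp:lap} for arbitrary $\lambda$ (this was crucial because, unlike in the symmetric case, in the Laplacian case the useful adapted randomness at step $N+1$ is concentrated only on the index set $I_{N+1}$ of size roughly $N/2$). Thus the technical difficulty has already been absorbed into Proposition \ref{prop:rankevolution:lap}, and what remains here is the routine assembly described above.
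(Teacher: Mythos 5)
Your proposal is correct and follows essentially the same route as the paper: reshuffle via Phase 2 and Fact \ref{fact:EoM}, use Lemma \ref{lemma:quadratic} to keep the corank in the regime $k\le cN$, feed the transition estimates of Proposition \ref{prop:rankevolution:lap} into Theorem \ref{theorem:comparison} against the uniform-symmetric model grown from the initial block $L_{N_0}^{\ast}$, then invoke \cite[Proposition 3.13]{KNg}, \cite[Section 4]{FG}, and the triangle inequality. One cosmetic point: your stated second-order terms for the uniform transitions (e.g.\ the probability of rank increasing by exactly one is $p^{-k}(1-p^{-1})$ rather than $p^{-k}+O(p^{-2k})$) are slightly off, but this is immaterial since, as the paper observes, Proposition \ref{prop:rankevolution:sym}/\ref{prop:rankevolution:lap} applies verbatim to the uniform model (its entries are $\alpha$-balanced), so both processes share the same main terms up to $\exp(-\Theta(n^{c}))$.
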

Proposition \ref{prop:moderate:lap} then follows because
$$\sum_{k\ge 2} \P(\mu_{sym}=k)=O(\frac{1}{p^3}).$$

To conclude the section, as $G_n=A_n,M_n,L_n$ are integral, if $\det(G_n)=0$ then for any prime $p$ we have $G_n/p$ is singular, i.e. $\rank(G_n/p)\le n-1$. Using our result of corank comparison, by choosing $p$ sub-exponentially large we obtain the following bound for singularity. 

\begin{corollary}[Singularity of skew-symmetric, symmetric and Laplacian matrices]\label{cor:singularity} There exists a positive constant $c>0$ so that
$$\P(\det(A_{2n}) = 0)= O(\exp(-n^c)); \P(\det(M_n)=0)= O(\exp(-n^c));$$
and 
$$\P(\det(L_n)=0) = O(\exp(-n^c)).$$
\end{corollary}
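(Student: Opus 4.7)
\textbf{Proof plan for Corollary \ref{cor:singularity}.} The idea is to reduce the integral singularity question to a mod $p$ rank question for a single, carefully chosen prime $p$, and then to apply the already-proven rank statistics of Theorems \ref{theorem:rankstatistics:sym}, \ref{theorem:rankstatistics:alt}, and \ref{theorem:rankstatistics:lap}. The basic observation is that the matrices $A_{2n}, M_n, L_n$ have integral entries, so if their determinant vanishes over $\Z$, then the reduction mod $p$ is singular for every prime $p$. In particular, for every prime $p$,
\[
\P(\det(G_n)=0) \le \P(\rank(G_n/p)\le n-1),
\]
and we are free to choose $p$ to our advantage.

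First, I would fix the constant $c>0$ from Theorems \ref{theorem:rankstatistics:sym}, \ref{theorem:rankstatistics:alt}, \ref{theorem:rankstatistics:lap} (taking the minimum among the three models if necessary, and shrinking as needed so the rank-statistics conclusions are valid), and then choose a prime $p=p(n)$ satisfying
\[
n^C \le p \le \exp(n^{c/2})
\]
for some large absolute constant $C$; such a prime exists by Bertrand's postulate for $n$ sufficiently large. This $p$ is simultaneously large enough that the limiting mass $\P(\mu=0)$ is close to $1$, and small enough that the total variation bounds of Theorems \ref{theorem:rankstatistics:sym}--\ref{theorem:rankstatistics:lap} apply.

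Next, for each of the three models I would quote the appropriate rank statistics theorem. For $M_n$ and $L_n$, Theorems \ref{theorem:rankstatistics:sym} and \ref{theorem:rankstatistics:lap} give
\[
\bigl|\P(\rank(G_n/p)=n)-\P(\mu_{sym}=0)\bigr|=O(e^{-n^c}),
\]
and since $\P(\mu_{sym}=0)=\prod_{i=0}^\infty(1-p^{-2i-1}) \ge 1-2/p$ for $p$ large, we get
\[
\P(\rank(G_n/p)\le n-1)\le \frac{2}{p}+O(e^{-n^c})=O(e^{-n^{c/2}})+O(e^{-n^c}).
\]
For $A_{2n}$, Theorem \ref{theorem:rankstatistics:alt} in the even case gives the same bound with $\mu_{alt,e}$ in place of $\mu_{sym}$, and $\P(\mu_{alt,e}=0)$ has exactly the same product form, so the argument is identical.

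Combining the two displays and taking the minimum of the two exponents gives the corollary. There is no real obstacle here: the corollary is essentially a corollary in the literal sense, once one realizes that subexponentially large primes are admissible in the rank statistics. The only point to be careful about is ensuring that the chosen $p$ simultaneously satisfies the lower bound required by the inputs of Theorems \ref{theorem:rankstatistics:sym}--\ref{theorem:rankstatistics:lap} (i.e.\ ``$p$ sufficiently large'') and the upper bound $p\le\exp(n^c)$, which is immediate for the range $n^C\le p\le\exp(n^{c/2})$ and $n$ large.
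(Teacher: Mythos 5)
Your proposal is correct and follows essentially the same route as the paper: reduce integral singularity to a rank drop mod a single subexponentially large prime $p$ (within the admissible range of Theorems \ref{theorem:rankstatistics:sym}, \ref{theorem:rankstatistics:alt}, \ref{theorem:rankstatistics:lap}), and bound the corank probability by the limiting mass $O(1/p)$ plus the $O(e^{-n^c})$ total-variation error. This is exactly the paper's argument, so no further comment is needed.
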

For the random symmetric model we also refer the reader to \cite{CMMM, FJ, Ver} for analogous sub-exponential bounds (with explicit constants), and to a more recent work \cite{CJMS} for exponential bounds. Our results for the skew-symmetric and Laplacian cases are new.
\vskip .2in
\begin{remark} While the current paper was under preparation, a recent paper by Ferber et. al. \cite{FJSS} has obtained a similar result to Theorem \ref{theorem:rankstatistics:sym} for Bernoulli matrices with explicit $c$. 
Here in the symmetric case we established this result for general random matrices of balanced entries. We also obtained similar results for the skew-symmetric model, which has different and interesting rank statistics. However, as the reader can see, our main difficulty for the moderate primes lies in Laplacian matrices, where we had to look at the local structures of the generalized normal vectors of $L_N$ over a random index set $I_{N+1}$. In fact, prior to this work, not much has been known about this complicated model from the viewpoint of rank statistics and singularity. The only results that we are aware of for the Laplacian are from \cite{Wood2017}, where it justifies Theorem \ref{theorem:rankstatistics:lap} for relatively small $p$, and that $\P(\det(L_n)=0)=1-o(1)$ for a somewhat implicit rate of convergence. 
\end{remark}

\section{Treatment for large primes: control of blowing up and quadratic inverse theorems}\label{section:bilinear} 
While in the previous sections we are able to control all primes up to $\exp(n^c)$, our treatment for larger primes cannot follow the same way because taking union bound over all large primes is extremely costly. We cannot take union bound even with the conjectural forms of Theorems \ref{theorem:rankstatistics:alt}, \ref{theorem:rankstatistics:sym} and \ref{theorem:rankstatistics:lap} that the error bounds are of form $\exp(-cn)$ in place of $\exp(-n^c)$ because there are $n^{cn}$ primes to handle.
To avoid this obstacle, we will find a common structure that is passable to {\it all} large primes simultaneously. In general, this idea was also applied in \cite{NgW} to treat with random non-symmetric matrices, but the extension to symmetric and Laplacian matrices does require significantly new ideas.

Our main results of this section are Theorem \ref{theorem:ILO:bilinear} and Theorem \ref{theorem:ILO:quadratic}. To start with, we first recall from \cite{TVbook} a notion of additive structures in abelian groups. Let $G$ be an (additive) abelian group. 

\begin{definition}\label{def:gap}
A set $Q$ is a \emph{generalized arithmetic progression} (GAP) of
rank $r$ if it can be expressed as in the form
$$Q= \{a_0+ x_1a_1 + \dots +x_r a_r| M_i \le x_i \le M_i' \hbox{ and $x_i\in\Z$ for all } 1 \leq i \leq r\}$$
for some elements $a_0,\ldots,a_r$ of $G$, and for some integers $M_1,\ldots,M_r$ and $M'_1,\ldots,M'_r$.

It is convenient to think of $Q$ as
the image of an integer box $B:= \{(x_1, \dots, x_r) \in \Z^r| M_i \le x_i
\le M_i' \} $ under the linear map
$$\Phi: (x_1,\dots, x_r) \mapsto a_0+ x_1a_1 + \dots + x_r a_r. $$
Given $Q$ with a representation as above
\begin{itemize}
\item the numbers $a_i$ are  \emph{generators} of $Q$, the numbers $M_i$ and $M_i'$ are  \emph{dimensions} of $Q$, and $\Vol(Q) := |B|$ is the \emph{volume} of $Q$ associated to this presentation (i.e. this choice of $a_i,M_i,M_i'$);
\vskip .05in
\item we say that $Q$ is \emph{proper} for this presentation if the above linear map is one to one, or equivalently if $|Q| =|B|$;
\vskip .05in
\item If $-M_i=M_i'$ for all $i\ge 1$ and $a_0=0$, we say that $Q$ is {\it symmetric} for this presentation.
\end{itemize}
\end{definition}

We note again that unlike in other applications of additive structures in random matrix theory that researchers used structures to enumerate vectors satisfying certain properties, here we use the above structure as it is to pass to all large primes; so the structure itself is important.

The following inverse-type result established by the current authors from \cite{NgW} (which was in turn motivated by the inverse-type idea from \cite{TVinverse}) will allow us to prove bounds sharper than Theorem \ref{theorem:LO}. 

\begin{theorem}\label{theorem:ILO} Let $\eps<1$ and $C$ be positive constants. Assume that $p \ge C' N^C$ is a prime where $C'$ is sufficiently large.  Let $\xi$ be a random variable taking values in $\Z/p\Z$ which is $\alpha$-balanced.
 Assume $\Bw=(w_1,\dots, w_N)\in(\Z/p\Z)^N$ such that  
 $$\rho_l (\Bw) := \sup_{a\in \Z/p\Z} \P(\xi_1 w_1 +\dots + \xi_N w_N=a) \ge  N^{-C},$$
 where $\xi_1,\dots, \xi_N$ are i.i.d. copies of  $\xi$.  Then for any $N^{\ep/2} \a^{-1} \le N' \le N$ there exists a proper symmetric GAP $Q$ of rank $r=O_{C,\ep}(1)$ which contains all but $N'$ elements of $w$ (counting multiplicity), where 
$$|Q|\le \max\left \{1, O_{C,\ep}(\rho_l^{-1}/(\al N')^{r/2})\right \} .$$ 
\end{theorem}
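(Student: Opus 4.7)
The plan is to prove this inverse Littlewood--Offord statement by Fourier analyzing the concentration probability, extracting a large set of ``large'' Fourier modes, and then invoking a Freiman--Ruzsa-type structure theorem on that set followed by a duality argument. First I would Fourier expand $\rho_l(\Bw)$ as in Section~\ref{SS:ConcDisc}: since $\xi$ is $\alpha$-balanced,
\[
\rho_l(\Bw) \le \frac{1}{p} + \frac{1}{p}\sum_{t\ne 0}\exp\Bigl(-\alpha\sum_{i=1}^N \|tw_i/p\|_{\R/\Z}^2\Bigr).
\]
The hypothesis $\rho_l(\Bw) \ge N^{-C}$ together with $p\ge C'N^C$ forces the exponential sum to be $\gg pN^{-C}$. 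Pigeonholing over level sets $S_m := \{t\ne 0 : \alpha\sum_i\|tw_i/p\|_{\R/\Z}^2 \le m\}$, we obtain some $m=O_{C,\epsilon}(1)$ (after dyadic decomposition) with $|S_m|\gg pN^{-C}e^{-m}$.

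Next, I would exploit the fact that $S_m$ is approximately an additive set: from the triangle inequality $\|(t+s)w_i/p\|_{\R/\Z}\le \|tw_i/p\|_{\R/\Z}+\|sw_i/p\|_{\R/\Z}$ one gets $S_m+S_m\subset S_{4m}$. Iterating and comparing $|S_{jm}|$ to $|S_m|$ via a standard covering argument (or invoking Pl\"{u}nnecke--Ruzsa), one sees that $S_m$ has small doubling. Applying Freiman's theorem in $\Z/p\Z$ (in the sharp quantitative form, e.g. Sanders--Schoen) produces a proper symmetric GAP $P\subset \Z/p\Z$ of rank $r=O_{C,\epsilon}(1)$ with $S_m\subset P$ and $|P|=O_{C,\epsilon}(|S_m|)$. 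This is the main structural input.

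Then I would dualize. Write $P$ as the image of a box under $t\mapsto a_1 t_1+\cdots+a_r t_r$, and consider the dual GAP
\[
Q^* := \bigl\{w\in \Z/p\Z: \|a_j w/p\|_{\R/\Z} \le \kappa_j,\ 1\le j\le r\bigr\}
\]
for appropriate $\kappa_j$. The containment $S_m\subset P$, combined with the level-set definition of $S_m$, forces most of the $w_i$ (specifically, all but at most $N'$ of them) to lie in a translate of $Q^*$; this is the classical Bogolyubov/Chang-type duality. The size $|Q^*|$ is controlled via the volume formula $|Q^*|\asymp p^{r}/\Vol(P)$, together with the explicit bound $|P|=O_{C,\epsilon}(\rho_l^{-1}/p)$ coming from step one. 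Optimizing gives $|Q^*|\le O_{C,\epsilon}(\rho_l^{-1}/(\alpha N')^{r/2})$, as desired.

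The hard step will be the ``all but $N'$'' refinement and its interplay with the $(\alpha N')^{r/2}$ factor: one must choose the level parameter $m$ (equivalently, the thickness of $S_m$) in terms of $N'$ so that the exceptional $w_i$ not in the dual GAP number at most $N'$. This requires a second-moment / variance computation on the set of ``bad'' indices, exploiting the $\alpha$-balanced hypothesis on $\xi$ to show that exceptional indices contribute too much to $\sum_i\|tw_i/p\|^2$ unless there are few of them. Once $m$ is calibrated (taking $m\asymp \log(\rho_l^{-1})$ and relating $N'$ to the ``radius'' of the dual Bohr set), the properness of $Q^*$ follows by passing to a sub-GAP via the Chang--Ruzsa progression theorem, and the rank remains $O_{C,\epsilon}(1)$. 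Throughout, the assumption $p\ge C'N^C$ ensures no wrap-around issues occur when transferring structure between $S_m\subset \Z/p\Z$ and its dual, which lets the argument proceed uniformly in $p$.
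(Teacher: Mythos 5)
You are not reproducing an argument from this paper: Theorem~\ref{theorem:ILO} is imported verbatim from \cite{NgW}, where it is proved by adapting the Nguyen--Vu ``optimal inverse Littlewood--Offord'' machinery to $\Z/p\Z$ and $\alpha$-balanced entries. Your outline (Ess\'een-type Fourier bound, level sets $S_m$, additive structure of the spectrum, then a structure theorem and a duality step) is in the same lineage, but as written it has two genuine gaps. First, the small-doubling claim for $S_m$ does not follow. The inclusion $S_m+S_m\subset S_{4m}$ is correct, but it is vacuous unless you can bound $|S_{4m}|$ (and, for the rank bound, all iterated level sets) from above; the hypothesis $\rho_l(\Bw)\ge N^{-C}$ only gives a \emph{lower} bound on $\sum_{t\ne 0}\exp(-\alpha\sum_i\|tw_i/p\|_{\R/\Z}^2)$, so Pl\"unnecke--Ruzsa and Freiman have no hypothesis to act on. The missing ingredient is the reverse Ess\'een inequality $\rho_l\gtrsim \frac1p\sum_t\exp(-c\sum_i\|tw_i/p\|_{\R/\Z}^2)$, which requires symmetrizing $\xi$ and using $\alpha$-balancedness (and is where the passage from $\|t t_j w_i/p\|$ to $\|t w_i/p\|$ must be handled); with it one controls $|kS_m|\le |S_{k^2m}|$ for the relevant $k$ and then applies the long-range (polynomial sumset growth) structure theorem rather than Freiman for a single doubling.

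Second, the quantitative endgame does not produce the stated bound. After pigeonholing, $|S_m|\asymp \rho_l\,p$ up to $O_{C,\eps}(1)$ factors; your claim $|P|=O_{C,\eps}(\rho_l^{-1}/p)$ is not even dimensionally sensible (it is $<1$ once $p\ge C'N^C$), and the generic dual-Bohr volume count $|Q^*|\approx p^r/\Vol(P)$ then gives roughly $p^{r-1}\rho_l^{-1}$, which is far larger than the required $\rho_l^{-1}/(\alpha N')^{r/2}$ for any $r\ge 2$. The factor $(\alpha N')^{r/2}$ --- one factor of $\sqrt{\alpha N'}$ saved per unit of rank, in exchange for discarding $N'$ of the $w_i$ --- is precisely the content of the \emph{optimal} inverse theorem; it comes from calibrating the dual radii to be of order $\sqrt{m/(\alpha N')}$ (compare the rank-one computation in Lemma~\ref{lem:deg:2}) and from a rank-by-rank refinement, not from Bogolyubov/Chang duality applied to a Freiman GAP of the spectrum. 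Your closing paragraph correctly identifies this as the hard step, but the proposed ``second-moment computation on bad indices'' is a placeholder rather than an argument, so the proof as proposed does not close.
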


We remark that $\rho_l(.)$ (where the subscript stands for linear) is slightly different from $\rho(.)$ defined in \eqref{eqn:rho}. We removed $1/p$ in $\rho_l(.)$ as it has little effect when $p$ is large. Note that this result continues to hold for $\a$ could be as small as $n^{-1+o(1)}$, but we just assume $\a$ to be a constant here as usual. We also refer the reader to \cite{KNgP} for various versions in general Abelian groups where GAP is replaced by coset-progressions.

We now introduce an inverse result for bilinear  forms, which is another contribution of the current paper.

\begin{theorem}\label{theorem:ILO:bilinear}
Let $C>0$ and $\eps>0$. There exists a constant $A$ such that the following holds for any prime $p\ge N^A$. Assume that $B=(b_{ij})_{1\le i,j\le N}$ is an array of elements of $\Z/p\Z$ so that 
$$\rho_b:=\sup_{a \in \Z/p\Z} \P_{X,Y}(\sum_{i,j\le n}b_{ij}x_{i}y_{j} =a) \ge N^{-C},$$
where $X=(x_{1},\dots,x_{N}), Y=(y_{1},\dots,y_{N})$, and $x_{i}$ and $y_{i}$ are i.i.d. copies of $\xi$ as in Theorem \ref{theorem:ILO}.
Then, there exist an integer $k\neq 0, |k|=N^{O_{C,\eps}(1)}$, a set of $r=O(1)$ rows $\row_{i_1}, \dots, \row_{i_r}$ of $B=(b_{ij})_{1\le i, j\le n}$, and set $I$ of size at least $N-N^\ep$ such that for each $i\in I$, there exist integers $k_{ii_1},\dots, k_{ii_r}$, all bounded by $N^{O_{C,\ep}(1)}$, with the following property
\begin{equation}\label{eqn:special}
\P_Z\Big( Z \cdot (k\row_i(B) + \sum_{j=1}^r k_{ii_j} \row_{i_j}(B)) = 0 \mod p \Big)\ge N^{-O_{C,\ep}(1)},
\end{equation}
where $Z=(z_1,\dots,z_N)$ and $z_i$ are i.i.d. copies of $\xi$.
\end{theorem}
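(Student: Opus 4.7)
\textbf{Proof proposal for Theorem \ref{theorem:ILO:bilinear}.}

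The plan is to reduce the bilinear problem to the linear inverse theorem (Theorem \ref{theorem:ILO}) by freezing one coordinate, then pigeonhole twice to extract coefficients independent of the frozen coordinate. Concretely, write
$$\sum_{i,j} b_{ij} x_i y_j = X \cdot (BY),$$
so after conditioning on $Y$, the bilinear form becomes a linear form in $X$ with coefficient vector $BY$. By Fubini/averaging, there is a set $\CY \sub (\Z/p\Z)^N$ with $\P(Y\in\CY) \geq \rho_b/2 \geq N^{-C}/2$ such that for every $Y\in\CY$,
$$\rho_l(BY) \ \geq \ \rho_b/2 \ \geq \ N^{-C}/2.$$

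For each $Y\in\CY$, apply Theorem \ref{theorem:ILO} with the parameter $N' = \lceil N^{\eps/2}\rceil$ to the vector $BY$. This produces a proper symmetric GAP $Q(Y)$ of rank $r=O_{C,\eps}(1)$ and size $N^{O_{C,\eps}(1)}$, together with an index set $J(Y)\sub [N]$ of size at least $N-N^{\eps/2}$, such that $(BY)_i\in Q(Y)$ for all $i\in J(Y)$. Since the generators of $Q(Y)$ lie in $\Z/p\Z$, the rank is bounded, and the box dimensions are polynomially bounded, there are only $N^{O_{C,\eps}(1)} \cdot p^{r+1}$ possible GAPs (and only polynomially many possible presentations modulo the exponent growth). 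The first pigeonhole: there is a single GAP $Q$ with generators $a_0,a_1,\dots,a_r$ and a single index set $J$ (we may also fix which $N^{\eps/2}$-size exceptional set is discarded, losing only $\binom{N}{N^{\eps/2}} \leq N^{N^{\eps/2}}$) such that a subset $\CY_0\sub\CY$ with $|\CY_0|/p^N \geq N^{-O_{C,\eps}(1)}$ has $(BY)_i \in Q$ for all $i\in J$ and $Y\in\CY_0$. By restricting to $|I| \ge N - N^\eps$ rows (say $I = J$) and dropping constants, each $(BY)_i$ with $i\in I, Y\in \CY_0$ admits an expression
$(BY)_i = \sum_{l=1}^r m_{il}(Y) a_l$
 with integer coefficients $m_{il}(Y)$ of magnitude $N^{O_{C,\eps}(1)}$.

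Next, choose reference rows $i_1,\dots,i_r\in I$ so that the $r$ linear forms $\row_{i_s}(B)\cdot Y$ are $\Q$-linearly ``generic'' on $\CY_0$; concretely, we can select them so that, for a further positive-density subset $\CY_1\sub\CY_0$, the $r\times r$ integer matrix $M(Y):=(m_{i_s l}(Y))_{s,l}$ is invertible over $\Q$. (If no such choice exists, the rows of $B\cdot Y$ on $\CY_0$ lie in a fixed lower-dimensional $\Q$-subspace, which would already force $\row_i(B)$ to satisfy strong rational relations and we win immediately with $r$ replaced by a smaller number.) Solving for $a_1,\dots,a_r$ in terms of the reference inner products $\row_{i_s}(B)\cdot Y$ via Cramer's rule introduces a denominator $k(Y):=\det M(Y)$, which is a nonzero integer of magnitude $N^{O_{C,\eps}(1)}$; substituting back yields, for each $i\in I$ and $Y\in\CY_1$, an integer relation
$$k(Y)\, \row_i(B)\cdot Y \ = \ \sum_{s=1}^r \widetilde{k}_{ii_s}(Y)\, \row_{i_s}(B)\cdot Y \pmod p,$$
with $|k(Y)|, |\widetilde{k}_{ii_s}(Y)| \le N^{O_{C,\eps}(1)}$.

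The second pigeonhole removes the $Y$-dependence of the coefficients: since the tuple $(k(Y),\widetilde{k}_{ii_1}(Y),\dots,\widetilde{k}_{ii_r}(Y))$ takes values in a set of size $N^{O_{C,\eps}(1)}$, there is a choice $(k, k_{ii_1},\dots, k_{ii_r})$ (with $k\neq 0$) and a further subset $\CY_2^{(i)} \sub \CY_1$ of density at least $N^{-O_{C,\eps}(1)}$ on which these coefficients are attained. Then for every $Y\in\CY_2^{(i)}$,
$$\Big(k\,\row_i(B) \ - \ \sum_{s=1}^r k_{ii_s}\,\row_{i_s}(B)\Big)\cdot Y \ \equiv \ 0 \pmod p,$$
which, upon sign change, gives \eqref{eqn:special} for $Z$ uniform over $\CY_2^{(i)}$. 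To pass from uniform $Z$ on $\CY_2^{(i)}$ to $Z$ with $\alpha$-balanced entries, note that the lower bound $|\CY_2^{(i)}|/p^N \ge N^{-O_{C,\eps}(1)}$ together with $\alpha$-balancedness of the coordinates of $Z$ (and the choice $p\ge N^A$ with $A$ large enough) forces $\P_Z(Z\in\CY_2^{(i)})\geq N^{-O_{C,\eps}(1)}$ by a standard ratio argument (the uniform and $\alpha$-balanced measures differ by at most an $\alpha^{-N}$ factor per coordinate, which is absorbed because $\CY_2^{(i)}$ is defined by a single linear equation, giving the same concentration bound up to polynomial loss).

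The main obstacle will be the second step: producing a common GAP and a common discarded set $J^c$ across a density-$N^{-O(1)}$ family of $Y$'s while keeping all pigeonholing losses polynomial. Care is needed because the GAP $Q(Y)$ produced by Theorem \ref{theorem:ILO} is only essentially unique; one must count equivalence classes of GAP \emph{presentations} (generators, box dimensions) up to the precision needed, which is exactly what keeps the total count polynomial in $N$ (using $p\le N^A$-style bounds on generators, after replacing generators by minimal representatives in $\Z/p\Z$). The translation to $\alpha$-balanced $Z$ in the last step is also delicate and is where the lower bound $p\geq N^A$ is used most heavily; it essentially requires that the event defining $\CY_2^{(i)}$ be sufficiently ``spread out'' in $\F_p^N$, which is guaranteed because it is the solution set of a single nontrivial linear equation of size $N^{-O(1)}\cdot p^N$.
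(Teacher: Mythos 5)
Your overall strategy (freeze $Y$, apply Theorem \ref{theorem:ILO} to the linear form $X\cdot(BY)$, then pigeonhole to extract a fixed integer relation among the rows of $B$) is the same as the paper's, but your first pigeonhole step has a genuine gap that breaks the polynomial bookkeeping. You pigeonhole on the GAP $Q(Y)$ itself and on the exceptional index set $J(Y)$. The generators of $Q(Y)$ live in $\Z/p\Z$, and the theorem only assumes $p\ge N^A$ with no upper bound (in the application $p$ can be as large as $e^{n^c}$ or $n^{\Theta(n)}$), so the number of candidate GAPs is of order $p^{r+1}$, not $N^{O_{C,\eps}(1)}$; your parenthetical appeal to ``$p\le N^A$-style bounds on generators'' has no basis in the hypotheses. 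Likewise, fixing the discarded exceptional set costs a factor $\binom{N}{N^{\eps/2}}\approx N^{N^{\eps/2}}$, which is super-polynomial, so the surviving family $\CY_0$ only has density $N^{-N^{\eps/2}}p^{-O(1)}$ rather than $N^{-O_{C,\eps}(1)}$, and the final bound \eqref{eqn:special} does not follow. The paper's proof is built precisely to avoid both losses: the GAP $Q_Y$ and its generators are allowed to vary with $Y$ throughout, and one only pigeonholes on data that is integer-valued and bounded by $N^{O_{C,\eps}(1)}$ — first the $r$-tuple of indices in $I_Y$ whose entries span $Q_Y$ ($N^{O(1)}$ choices), then the integer coefficient matrices expressing those spanning entries in the (varying) generators, and finally, for each $i$, the bounded Cramer-rule coefficients $k, k_{ii_1},\dots,k_{ii_r}$ giving $kq_i(Y)+\sum_j k_{ii_j}q_{i_j}(Y)=0$. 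The exceptional sets are handled not by pigeonholing over subsets but by an averaging argument (Claim \ref{claim:bilinear:I}): one finds $I$ of size $N-3N^{\eps}$ such that each $i\in I$ lies in $I_Y$ for at least half of the good $Y$.

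A secondary issue is your last step. You state density bounds for $\CY_0$ as uniform counting density $|\CY_0|/p^N$ and then try to transfer from uniform $Z$ to $\alpha$-balanced $Z$ via a ``ratio argument''; the claimed per-coordinate factor $\alpha^{-N}$ is exponentially large and cannot be absorbed, and the uniform-density bound itself does not follow from $\P_Y(Y\in\CY)\ge\rho_b/2$ when $\xi$ is not uniform. No transfer is needed at all: since $Z$ has the same distribution as $Y$ and the set of good $Y$ with the fixed coefficients is contained in the solution set of the linear equation, one has directly $\P_Z\bigl(Z\cdot(k\row_i(B)+\sum_j k_{ii_j}\row_{i_j}(B))=0\bigr)\ge \P_Y(Y\in G_i'')\ge \rho_b/N^{O_{C,\eps}(1)}$, which is how the paper concludes.
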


In connection to our previous sections, the parameter $N$ will be chosen to satisfy $cn\le N \le n$ throughout this section. In our later application we just need to consider $p\ge \exp(n^c)$, so the above assumption on the range of $p$ is natural. On the other hand, it is of independent interest to extend the results of Theorem~\ref{theorem:ILO:bilinear} (and Theorem~\ref{theorem:ILO:quadratic} below) to small $p$, we hope to be able to address this issue elsewhere.

So in a way our result says that the rows of $B$ have low rank modulo some GAP noise. (Indeed it follows from \eqref{eqn:special} and from Theorem \ref{theorem:ILO} that for each $i\in I$, most of the entries of $k\row_i(B) + \sum_{j} k_{ii_j} \row_{i_j}(B)$ belong to a symmetric GAP $P_i$ of rank $O(1)$ and size $N^{O(1)}$. One can in fact unify these structures into one but we will not do it here.) We will also introduce a quadratic version later (Theorem \ref{theorem:ILO:quadratic}) and prove it in the appendix. For the rest of this section we give a proof of Theorem \ref{theorem:ILO:bilinear}, adapting the method of \cite{Ng} toward $\Z/p\Z$. 
 
We first rely on the following simple fact about generalized arithmetic progressions of small rank and large characteristic.
\begin{fact}\label{fact:GAP} Let $C>$ be given. Assume that $q_1,\dots,q_{r+1}$ are elements of a GAP of rank $r$ and of cardinality $N^C$ in $\Z/p\Z$, where $p\ge N^A$ for sufficiently large $A$ depending on $r$ and $C$, then there exist integer coefficients $\alpha_1,\dots,\alpha_r$ with $|\alpha_i|\le N^{rC}$, not all zero, such that in $\Z/p\Z$
$$\sum_{i=1}^{r+1} \alpha_i q_i =0.$$
\end{fact}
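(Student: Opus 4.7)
The plan is to exploit the rank-$r$ parametrization of $Q$ to reduce the problem to finding a small integer linear dependence among $r+1$ vectors in $\Z^r$, and then lift this dependence to a relation among the $q_i$'s that holds in the ambient group $\Z/p\Z$ for \emph{any} sufficiently large prime $p$.

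First I would normalize the GAP. By a standard Tao--Vu type lemma, any GAP of rank $r$ and cardinality $\le N^C$ embeds in a proper GAP of the same rank and cardinality $O_r(N^C)$, and by passing to the symmetric proper GAP $Q-Q$ at a further multiplicative cost $O_r(1)$, we may assume $Q = \{\sum_{j=1}^r x_j a_j : |x_j|\le M_j\}$ with $\prod_j(2M_j+1)=O_r(N^C)$, so in particular $\prod_j M_j \le O_r(N^C)$. Each $q_i$ then carries integer ``coordinates'' $\Bx_i=(x_{i,1},\dots,x_{i,r})\in\Z^r$ satisfying $|x_{i,j}|\le M_j$.

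The main step is now almost automatic: the $r+1$ vectors $\Bx_1,\dots,\Bx_{r+1}$ live in the $r$-dimensional ambient space $\Q^r$, hence are linearly dependent over $\Q$. Applying Cramer's rule (or Siegel's lemma) to the kernel of the integer matrix $X=[\Bx_1\mid\cdots\mid\Bx_{r+1}]\in\Z^{r\times(r+1)}$ produces nonzero integers $\alpha_1,\dots,\alpha_{r+1}$ with $X\alpha=0$ and
$$\max_i|\alpha_i|\;\le\; r!\,\max_{|J|=r}\bigl|\det X_J\bigr|\;\le\; r!\,\prod_{j=1}^r M_j\;\le\; O_r(N^C)\;\le\; N^{rC}$$
for $N$ sufficiently large. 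Substituting back yields the identity
$$\sum_{i=1}^{r+1}\alpha_i q_i=\sum_{j=1}^r\Bigl(\sum_{i}\alpha_i x_{i,j}\Bigr)a_j=0,$$
which holds \emph{in the ambient group}, and a fortiori in $\Z/p\Z$.

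The main obstacle is the normalization step when the GAP $Q$ is not symmetric, i.e. when the constant term $a_0$ cannot be absorbed. The standard remedy is to augment $X$ by the all-ones row, obtaining the $(r+1)\times(r+1)$ matrix $M$ of the excerpt, and to repeat the argument: if $\det M=0$ a kernel vector with the desired bound exists directly by Cramer; if $\det M\ne 0$ one instead applies Minkowski to the rank-$(r+1)$ congruence lattice $\{\Bc\in\Z^{r+1}:c_0 a_0+\sum_j c_j a_j\equiv 0\pmod p\}$ and transports the short vector back through $M^{-1}$, the hypothesis $p\ge N^A$ (with $A$ chosen large in terms of $r$ and $C$) ensuring that the resulting coefficient blow-up still lies within $N^{rC}$. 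This is the one place where the prime size enters the argument, and it is exactly why $A$ is taken sufficiently large depending on $r$ and $C$.
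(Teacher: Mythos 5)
The paper never actually proves Fact~\ref{fact:GAP} (it is stated as a ``simple fact''), so the only question is whether your argument stands on its own. Your central step does, and in the setting where the fact is used it is the whole proof: the GAPs to which the fact is applied come from Theorem~\ref{theorem:ILO} and Lemma~\ref{lemma:roworthogonal} and are \emph{proper and symmetric}, so each $q_i$ has coordinates $\Bx_i\in\Z^r$ with $|x_{i,j}|\le M_j$ and $\prod_j(2M_j+1)\le N^C$; the $r+1$ vectors $\Bx_i$ are $\Q$-linearly dependent, the kernel vector built from signed maximal minors is integral with entries at most $r!\prod_j M_j\le N^{rC}$ for large $N$ (your displayed inequality silently drops the $r!$, which is harmless), and the relation $\sum_i\alpha_i q_i=0$ then holds identically, so the largeness of $p$ is not even needed there. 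The Tao--Vu properness embedding is a legitimate citation but is much heavier machinery than the application requires.

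The genuine gap is your handling of the inhomogeneous case. First, passing to $Q-Q$ does not let you ``assume $Q$ symmetric'': the elements $q_i$ themselves do not lie in $Q-Q$, only their differences do, and the $r$ differences $q_i-q_{r+1}$ in a rank-$r$ symmetric GAP need not be linearly dependent, so the main step no longer applies after this reduction. Second, the Minkowski fallback in your last paragraph is quantitatively backwards: the congruence lattice $\{\Bc\in\Z^{r+1}:c_0a_0+\sum_j c_ja_j\equiv 0\pmod p\}$ has covolume of order $p$, so Minkowski only produces a nonzero vector of size roughly $p^{1/(r+1)}\ge N^{A/(r+1)}$, which is far larger than $N^{rC}$ precisely because $A$ is large (large $p$ makes this step harder, not easier), and $M^{-1}\Bc$ need not be integral in any case. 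Indeed no repair is possible, because with a nonzero constant term the statement is false: take $r=1$, $q_1=b$, $q_2=b+a_1$, which lie in the rank-one GAP $\{b+xa_1:0\le x\le N^C-1\}$ of cardinality $N^C$; a relation $\alpha_1q_1+\alpha_2q_2\equiv 0\pmod p$ with $0<\max_i|\alpha_i|\le N^{C}$ forces either $\alpha_1+\alpha_2=0$ and then $\alpha_2=0$, or $b\equiv -\alpha_2(\alpha_1+\alpha_2)^{-1}a_1$, and a residue $b$ avoiding these $O(N^{2C})$ values exists for every prime $p\ge N^A$ with $A>2C$. So the fact must be read, as the paper uses it, for symmetric (proper) GAPs with $a_0=0$; under that reading your main step is already a complete proof, and the correct fix is to delete the final paragraph (and the $Q-Q$ clause) rather than to repair them.
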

To prove Theorem \ref{theorem:ILO:bilinear}, we begin by applying Theorem \ref{theorem:ILO}.
\begin{lemma}\label{lemma:roworthogonal} 
Let $\ep<1$ and $C$ be positive constants. With the assumption as in Theorem \ref{theorem:ILO}, assume that $\rho_b \ge N^{-C}$. Then the following holds with probability at least $3\rho_b/4$ with respect to $Y$. There exist a proper symmetric GAP $Q_{Y}$ in $\Z/p\Z$ of rank $O_{C,\ep}(1)$ and size $O_{C,\ep}(1/\rho_b)$ and a set $I_{Y}$ of $N-N^\ep$ indices such that for each $i\in I_{Y}$ we have $\sum_{j}b_{ij}y_j \in Q_{Y}$.
\end{lemma}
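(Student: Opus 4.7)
\textbf{Proof plan for Lemma \ref{lemma:roworthogonal}.}

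The plan is to reduce the bilinear concentration to a linear one by conditioning on $Y$, and then apply Theorem~\ref{theorem:ILO} to the resulting linear form. For each outcome $Y = (y_1,\dots,y_N)$, define the vector
\[
\Bw(Y) = (w_1(Y),\dots,w_N(Y)), \qquad w_i(Y) := \sum_{j=1}^N b_{ij} y_j,
\]
so that $\sum_{i,j} b_{ij} x_i y_j = \sum_i x_i w_i(Y) = X \cdot \Bw(Y)$. Letting $a_0$ be a value achieving the supremum in the definition of $\rho_b$, we have $\E_Y \P_X(X \cdot \Bw(Y) = a_0) \ge \rho_b$, and since $\P_X(X\cdot \Bw(Y) = a_0) \le \rho_l(\Bw(Y))$, this gives $\E_Y \rho_l(\Bw(Y)) \ge \rho_b$.

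Next I would use a one-line reverse Markov argument: since $\rho_l(\Bw(Y)) \le 1$ always, for any $t \in (0,\rho_b)$,
\[
\rho_b \le \E_Y \rho_l(\Bw(Y)) \le \P_Y\!\bigl(\rho_l(\Bw(Y)) \ge t\bigr) + t.
\]
Taking $t = \rho_b/4$ yields $\P_Y(\rho_l(\Bw(Y)) \ge \rho_b/4) \ge 3\rho_b/4$. So, on an event $\CE$ in $Y$ of probability at least $3\rho_b/4$, the linear concentration $\rho_l(\Bw(Y))$ is at least $\rho_b/4 \ge N^{-C}/4$.

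For each $Y \in \CE$, I would then apply Theorem~\ref{theorem:ILO} (with constant $C$ replaced by $C + o(1)$, which is absorbed in the $O_{C,\ep}(\cdot)$ dependence) to $\Bw(Y)$, choosing the parameter $N' := \lceil N^{\ep}\rceil$ (which lies in the admissible range $[N^{\ep/2}\alpha^{-1}, N]$ for $N$ large). This produces a proper symmetric GAP $Q_Y \sub \Z/p\Z$ of rank $r = O_{C,\ep}(1)$ and of size
\[
|Q_Y| \le \max\!\left\{1, \; O_{C,\ep}\!\left(\rho_l(\Bw(Y))^{-1}/(\alpha N^{\ep})^{r/2}\right)\right\} = O_{C,\ep}(1/\rho_b),
\]
together with an index set $I_Y \sub [N]$ of size at least $N - N^{\ep}$ such that $w_i(Y) \in Q_Y$ for every $i \in I_Y$. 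This is precisely the conclusion of the lemma.

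I do not expect any substantive obstacle: the bilinear-to-linear reduction is a one-liner, the reverse Markov step is elementary, and the bulk of the work (producing the GAP from a linear concentration bound) is already done by Theorem~\ref{theorem:ILO}. The only care needed is to verify that the admissibility conditions on $N'$ and the size bound in Theorem~\ref{theorem:ILO} combine to give a GAP of size $O_{C,\ep}(1/\rho_b)$ as stated, which follows immediately from the choice $N' = \lceil N^{\ep}\rceil$.
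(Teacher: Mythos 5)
Your proposal is correct and follows essentially the same route as the paper: reduce the bilinear form to the linear form $X\cdot \Bw(Y)$ with $w_i(Y)=\sum_j b_{ij}y_j$, use an averaging (reverse Markov) argument to get an event of $Y$-probability at least $3\rho_b/4$ on which the linear concentration is at least $\rho_b/4$, and then apply Theorem~\ref{theorem:ILO} to $\Bw(Y)$ for each such $Y$. Your additional bookkeeping (the choice $N'=\lceil N^{\ep}\rceil$, the admissibility check, and the verification that the GAP size is $O_{C,\ep}(1/\rho_b)$) is exactly the "direct application" the paper leaves implicit, so there is nothing to add.
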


\begin{proof}
For short we write $\sum_{i,j} b_{ij}x_{i}y_j = \sum_{i=1}^N x_{i} B_i(Y)$, where 
$$B_i(Y):=\sum_{j}b_{ij}y_j.$$
We call a vector $Y$ {\it good} if $\P_{\Bx}(\sum_{i=1}^n x_{i}B_i(Y)=a) \ge \rho_b/4$. We call $Y$ {\it bad} otherwise. Let $G$ be the collection of good vectors. By averaging, one can show that   the probability of random vector $Y$ being good is at least, say $3\rho_b/4$.

Next, we consider good vectors $Y \in G$. By definition, $\P_{\Bx} (\sum_{i=1}^N x_{i} B_i(Y)=a) \ge \rho_b/4$. A direct application of Theorem \ref{theorem:ILO} to the sequence $B_i(Y)$, $i=1,\dots,N$ yields the desired result.
\end{proof}

By a useful property of GAP containment (see for instance \cite[Section 8]{TVinverse} and \cite[Theorem 6.1]{Ng}), we may assume that the $q_i(Y)$ span $Q_{Y}$  in $\Z/p\Z$. From now on we fix such a $Q_{Y}$ for each $Y$. Recall that $G$ is the collection of good vectors and we have  
\begin{equation}
\P_{Y}(Y\in G)\ge 3\rho_b/4.
\end{equation}

Now we state a main lemma for the proof of Theorem \ref{theorem:ILO:bilinear}.

\begin{lemma}\label{lemma:reduction} There exits an index set $I$ of size at least $N-2N^\ep$, an index set $I_0$ of size $O_{C,\ep}(1)$, and an integer $k\neq 0$ with $|k|\le N^{O_{C,\ep}(1)}$ such that for any index $i$ from $I$, there are numbers $k_{ii_0} \in \Z, i_0\in I_0$, all bounded by $n^{O_{C,\ep}(1)}$, such that 
$$\P_{Y}\Big(k B_i(Y)+ \sum_{i_0\in I_0} k_{ii_0} B_{i_0}(Y)=0 \mod p \Big)=\rho_b/N^{O_{C,\ep}(1)}.$$
\end{lemma}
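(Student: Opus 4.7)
The plan is to combine Lemma~\ref{lemma:roworthogonal} with a sequence of pigeonhole arguments, extracting from the family of $Y$-dependent GAP relations a single universal integer relation. For each good $Y \in G$, fix a presentation of $Q_Y$ of rank $r_Y \leq r = O_{C,\ep}(1)$ and identify its elements with lattice points in a box in $\Z^{r_Y}$ whose side length is $O_{C,\ep}(|Q_Y|^{1/r_Y}) \leq N^{O_{C,\ep}(1)}$; write $B_i(Y) \leftrightarrow \vec{x}_i(Y)$ for $i \in I_Y$. Choose $S_Y \subseteq I_Y$ of size $r_Y$ so that $\{\vec{x}_{i_0}(Y)\}_{i_0 \in S_Y}$ is a $\Q$-basis of the span of the $\vec{x}_i(Y)$. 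Cramer's rule, applied in $\Z$ (not merely modulo $p$) since all vectors are integer vectors of sup-norm $N^{O_{C,\ep}(1)}$, then yields for every $i \in I_Y$ an identity
\begin{equation}\label{eqn:cramer}
k^Y B_i(Y) \;=\; \sum_{i_0 \in S_Y} k^Y_{i_0,i}\, B_{i_0}(Y) \pmod{p},
\end{equation}
where $k^Y := \det A_Y \neq 0$ ($A_Y$ being the $r_Y \times r_Y$ basis matrix) depends on $Y$ but crucially \emph{not} on $i$, and both $k^Y$ and the $k^Y_{i_0,i}$ are integers of magnitude $N^{O_{C,\ep}(1)}$. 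This mirrors the mechanism of Fact~\ref{fact:GAP}, but arranged so that the leading coefficient $k^Y$ is guaranteed non-zero.

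Next, pigeonhole on the pair $(S_Y, k^Y)$. Since $S_Y$ is one of at most $\binom{N}{r} \leq N^r$ subsets of $[N]$ and $k^Y$ is one of at most $N^{O_{C,\ep}(1)}$ non-zero integer values, there exist a fixed set $I_0 \subseteq [N]$ with $|I_0| \leq r$ and a fixed non-zero integer $k$, $|k| \leq N^{O_{C,\ep}(1)}$, such that the event
\begin{equation*}
G' \;:=\; \{Y \in G : S_Y = I_0 \text{ and } k^Y = k\}
\end{equation*}
satisfies $\P(Y \in G') \geq \rho_b / N^{O_{C,\ep}(1)}$. Since $|I_Y^c| \leq N^\ep$ for every $Y \in G'$, Markov's inequality gives
\begin{equation*}
\#\bigl\{i \in [N] : \P(Y \in G',\; i \notin I_Y) > \tfrac12\P(G')\bigr\} \;\leq\; 2N^\ep,
\end{equation*}
so the complementary set, together with $I_0$ itself (for $i \in I_0$, \eqref{eqn:cramer} holds trivially using $k_{ii_0} = -k \cdot \mathbf{1}_{i_0 = i}$), gives an index set $I \subseteq [N]$ of size at least $N - 2N^\ep$.

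Finally, for each fixed $i \in I \setminus I_0$, pigeonhole on the $|I_0|$-tuple $(k^Y_{i_0,i})_{i_0 \in I_0}$, each coordinate of which takes at most $N^{O_{C,\ep}(1)}$ integer values, yielding a fixed tuple $(k_{ii_0})_{i_0 \in I_0}$ and a subset $\tilde G_i \subseteq G'$ with $\P(\tilde G_i \cap \{i \in I_Y\}) \geq \rho_b / N^{O_{C,\ep}(1)}$, on which \eqref{eqn:cramer} becomes the uniform relation $k B_i(Y) - \sum_{i_0 \in I_0} k_{ii_0} B_{i_0}(Y) \equiv 0 \pmod p$ required. The subtlety to watch is that $k$ must be chosen independently of $i$, and this is precisely why we introduce Cramer's rule: $k^Y = \det A_Y$ depends only on the basis $S_Y$, so once $S_Y$ is pinned to $I_0$ in the second pigeonhole, the single constant $k$ persists through all further refinements, and no losses beyond factors of $N^{O_{C,\ep}(1)}$ arise because only $O_{C,\ep}(1)$ integer parameters of magnitude $N^{O_{C,\ep}(1)}$ are fixed.
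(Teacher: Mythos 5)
Your proof is correct and takes essentially the same route as the paper: write the $B_i(Y)$ in terms of the generators of $Q_Y$, extract a Cramer-type determinant identity whose leading coefficient depends only on the spanning data and not on $i$, and then pigeonhole over $Y$ on that data (and, for each fixed $i$, on the remaining coefficients), exactly as in the paper's Claims \ref{claim:bilinear:common} and \ref{claim:bilinear:I}. The only deviations are cosmetic: you pin down just $(S_Y,\det A_Y)$ where the paper pins the full coefficient matrix of the spanning elements; your implicit assumption that the $\vec{x}_i(Y)$, $i\in I_Y$, span a full-rank lattice (so that $A_Y$ is square and $\det A_Y\neq 0$) is precisely the standing reduction ``the $q_i(Y)$ span $Q_Y$'' stated just before the lemma; and the side-length bound $O_{C,\ep}(|Q_Y|^{1/r_Y})$ is not literally correct for unbalanced GAPs, but the only thing used is that each dimension is at most $|Q_Y|\le N^{O_{C,\ep}(1)}$, which holds.
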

Assume this result for a moment.
\begin{proof}[Proof of Theorem \ref{theorem:ILO:bilinear}] From Lemma \ref{lemma:reduction}, for any fixed $i\in I$, we note that
$$k B_i(Y)+ \sum_{i_0\in I_0} k_{ii_0} B_{i_0}(Y) =\sum_j (k b_{ij} + \sum_{i_0} k_{ii_0} b_{i_0j }) y_j.$$
By the conclusion of Lemma \ref{lemma:reduction}, we have 
$$\sup_{a  \in \Z/p\Z}\P_{Y}(\sum_j (k b_{ij} + \sum_{i_0 \in I_0} k_{ii_0} b_{i_0j }) y_j = a \mod p)\ge \rho_b/N^{O_{C,\ep}(1)}.$$ 
\end{proof}
We now give a proof for Lemma \ref{lemma:reduction}.
\begin{proof}[Proof of Lemma \ref{lemma:reduction}] 
For each $Y\in G$, we choose from $I_{Y}$ $s$ indices $i_{(1,Y)},\dots,i_{{(s,Y)}}$ such that $q_{i_{{(j,Y)}}}(Y), 1\le j\le s$, span $Q_{Y}$  in $\Z/p\Z$, where $s$ is the rank of $Q_{Y}$. We note that $s=O_{C,\ep}(1)$ for all $Y\in G$. 

Consider the tuples $(i_{{(1,Y}},\dots,i_{{(s,Y}})$ for all $Y\in G$. Because there are $\sum_{s} O_{C,\ep}(N^s) = N^{O_{C,\ep}(1)}$ possibilities these tuples can take, by the pigeon-hole principle there exists a tuple, say $(1,\dots,r)$ (by rearranging the rows of $B=(b_{ij})$ if needed), such that $(i_{{(1,Y}},\dots,i_{{(s,Y}})=(1,\dots,r)$ for all $Y\in G'$, where $G'$ is a subset of $G$ satisfying 
\begin{equation}
\P_{Y}(Y\in G')\ge \P_{Y}(Y\in G)/N^{O_{C,\ep}(1)} =\rho_b/N^{O_{C,\ep}(1)}.
\end{equation}
For each $1\le i\le r$, we express $q_i(Y)$ in terms of the generators of $Q_{Y}$ for each $Y\in G'$, 
$$q_{i}(Y) = c_{i1}(Y)g_{1}(Y)+\dots + c_{ir}(Y)g_{r}(Y) \mod p,$$ 
where $c_{i1}(Y),\dots c_{ir}(Y)$ are integers bounded by $n^{O_{C,\ep}(1)}$, and $g_{i}(Y)$ are the generators of $Q_{Y}$. Furthermore, as the $q_i(Y)$ span $Q_{Y}$, the vectors $(c_{11},\dots,c_{1r}),\dots, (c_{r1},\dots c_{rr})$ span $\Z^{\rank(Q_{Y})}$.

We show that there are many $Y$ that correspond to the same coefficients $c_{i_1i_2}$. 

\begin{claim}\label{claim:bilinear:common} There exists a (``dense'') subset  $G''\subset G'$ such that the following holds
\begin{itemize}
\item  $\P_{Y}(Y\in G'')\ge \P_{Y}(Y\in G')/N^{O_{C,\ep}(1)} \ge \rho_b/N^{O_{C,\ep}(1)};$
\vskip .1in
\item there exist $r$ tuples  $(c_{11},\dots,c_{1r}),\dots, (c_{r1},\dots c_{rr})$, whose components are integers bounded by $n^{O_{C,\ep}(1)}$ and $(c_{11},\dots,c_{1r}),\dots, (c_{r1},\dots c_{rr})$ span $\Z^{\rank(Q_{Y})}$ such that for all $Y\in G''$
$$q_{i}(Y) = c_{i1}g_{1}(Y)+\dots + c_{ir}g_{r}(Y) \mbox{ for $i=1,\dots,r$.}$$
\end{itemize}
\end{claim}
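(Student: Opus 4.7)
The plan is a direct two-step pigeonhole argument on the $Y$-dependent integer coefficient matrices, with no serious obstacle. I expect the entire claim to follow from bookkeeping, with the only subtle point being that the rank $r = \rank(Q_Y)$ must be verified to be constant on the subfamily we work with.

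First, I would note that the earlier reduction already produced, for every $Y \in G'$, a tuple $(i_{(1,Y)},\dots,i_{(s,Y)}) = (1,\dots,r)$ of fixed length. Since $s$ is by construction the rank of the GAP $Q_Y$, this already forces $\rank(Q_Y) = r$ to be constant across $G'$. So for every $Y \in G'$ we have a well-defined $r \times r$ integer matrix
\[
M(Y) := \bigl(c_{ij}(Y)\bigr)_{1 \le i,j \le r},
\]
with each entry bounded in absolute value by $N^{O_{C,\ep}(1)}$. Hence the total number of possible matrices $M(Y)$ is at most $(2N^{O_{C,\ep}(1)}+1)^{r^2} = N^{O_{C,\ep}(1)}$, using $r = O_{C,\ep}(1)$.

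Second, I would apply the pigeonhole principle: there must exist a single integer matrix $M^* = (c_{ij})_{1 \le i, j\le r}$ (with the same entrywise bound) such that
\[
G'' := \{ Y \in G' \,:\, M(Y) = M^* \}
\]
satisfies $\P_Y(Y \in G'') \ge \P_Y(Y \in G') / N^{O_{C,\ep}(1)} \ge \rho_b / N^{O_{C,\ep}(1)}$. This gives the first bullet. For the second bullet, the spanning property of the rows of $M^*$ is inherited for free: for every $Y \in G''$, the identity $q_i(Y) = \sum_j c_{ij} g_j(Y)$ together with the fact (established in the paragraph preceding the claim) that $q_1(Y),\dots, q_r(Y)$ span $Q_Y$ and $g_1(Y),\dots,g_r(Y)$ generate $Q_Y$ forces the integer rows $(c_{i1},\dots,c_{ir})$, $1 \le i \le r$, to span $\Z^r = \Z^{\rank(Q_Y)}$. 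Since the rows of $M^*$ agree with those of $M(Y)$ for any single $Y \in G''$, the spanning property transfers to $M^*$.

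There is no real difficulty here; the claim is purely a pigeonhole step that converts $Y$-dependent structural data (the GAP coefficient matrices coming from Lemma~\ref{lemma:roworthogonal}) into uniform data (a single coefficient matrix valid on a positive-density subset). The only care needed is to confirm that $r$ is constant on $G'$, which is already built into the previous reduction, and to track the accumulated $N^{O_{C,\ep}(1)}$ losses, which is absorbed into the final exponent.
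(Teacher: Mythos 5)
Your proposal is correct and follows essentially the same route as the paper: a single pigeonhole over the $N^{O_{C,\ep}(1)}$ possible integer coefficient matrices $(c_{ij}(Y))$, yielding a subset $G''$ of density at least $1/N^{O_{C,\ep}(1)}$ inside $G'$ on which the matrix is constant. Your additional remarks that $\rank(Q_Y)=r$ is already constant on $G'$ (forced by the earlier pigeonhole on index tuples) and that the spanning property transfers to the fixed matrix are exactly the points the paper treats implicitly.
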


\begin{proof}[Proof of Claim \ref{claim:bilinear:common}]
Consider the collection $\mathcal{C}$ of the coefficient-tuples 
$$\mathcal{C}:=\Big\{\Big(\big(c_{11}(Y),\dots,c_{1r}(Y)\big);\dots; \big(c_{r1}(Y),\dots c_{rr}(Y)\big)\Big), Y\in G'\Big\} .$$ 
Because the number of possibilities these tuples can take is at most $(N^{O_{C,\ep}(1)})^{r^2} =N^{O_{C,\ep}(1)}$, again by the pigeon-hole principle there exists a coefficient-tuple, say  $\Big((c_{11},\dots,c_{1r}),\dots, (c_{r1},\dots c_{rr})\Big)\in \mathcal{C}$, such that  
\begin{align*}
&\quad \Big(\big(c_{11}(Y),\dots,c_{1r}(Y)\big);\dots; \big(c_{r1}(Y),\dots c_{rr}(Y)\big) \Big)\\ 
&=\Big((c_{11},\dots,c_{1r}),\dots, (c_{r1},\dots c_{rr})\Big)
\end{align*}
for all $Y$ from a subset $G''$ of $G'$ which satisfies 
\begin{equation}
  \P_{Y}(Y\in G'')\ge \P_{Y}(Y\in G')/N^{O_{C,\ep}(1)} \ge \rho_b/N^{O_{C,\ep}(1)}.
\end{equation}
\end{proof}

Now we focus on the elements of $G''$. Because $|I_{Y}|\ge N-N^\ep$ for each $Y\in G''$, by an averaging argument we can obtain the following.

\begin{claim}\label{claim:bilinear:I}
There is a set $I$ of size $N-3N^\ep$ such that $I \cap \{1,\dots,r\} =\emptyset$ and for each $i\in I$ we have 
\begin{equation}\label{eqn:optional}
\P_{Y}(i\in I_{Y}, Y\in G'') \ge \P_{Y}(Y\in G'')/2.
\end{equation}  
\end{claim}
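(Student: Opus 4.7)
The plan is to deduce Claim \ref{claim:bilinear:I} from a straightforward first-moment argument, using only two ingredients from the setup: the bound $|I_Y|\ge N-N^\ep$ for every $Y\in G''$, and the fact that $r=O_{C,\ep}(1)$. Concretely, for each index $i\in[N]$ I would introduce
$$p_i := \P_Y(i\in I_Y,\ Y\in G''),$$
and swap the order of summation in $\sum_i p_i$ to obtain
$$\sum_{i=1}^N p_i \;=\; \E_Y\bigl[\,|I_Y|\,\1_{Y\in G''}\bigr] \;\ge\; (N-N^\ep)\,\P_Y(Y\in G'').$$

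Next I would apply Markov's inequality in the dual form. Let $B:=\{i\in[N]:p_i<\P_Y(Y\in G'')/2\}$ denote the set of indices where the desired lower bound \eqref{eqn:optional} fails. Bounding $p_i\le \P_Y(Y\in G'')$ trivially on $[N]\setminus B$ and $p_i<\P_Y(Y\in G'')/2$ on $B$, the preceding inequality gives
$$(N-N^\ep)\,\P_Y(Y\in G'') \;\le\; \tfrac{|B|}{2}\,\P_Y(Y\in G'') + (N-|B|)\,\P_Y(Y\in G''),$$
and dividing by $\P_Y(Y\in G'')>0$ (which is positive by Claim \ref{claim:bilinear:common}) yields $|B|\le 2N^\ep$.

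Finally I would set $I:=[N]\setminus\bigl(B\cup\{1,\dots,r\}\bigr)$. Since $r=O_{C,\ep}(1)$, for $N$ sufficiently large we have $|I|\ge N-2N^\ep-r\ge N-3N^\ep$, and by construction $I\cap\{1,\dots,r\}=\emptyset$ and every $i\in I$ satisfies \eqref{eqn:optional}. I do not expect any serious obstacle here: the substantive combinatorial content was absorbed into Claim \ref{claim:bilinear:common}, which already pinned down a single normalized GAP representation on a dense subset $G''$; the present claim only needs to convert the resulting expectation bound on $|I_Y|$ into a pointwise-in-$i$ bound, which costs the (negligible) loss of at most $2N^\ep$ coordinates.
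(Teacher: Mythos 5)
Your proof is correct and is exactly the "averaging argument" the paper invokes (the paper leaves it implicit): a first-moment/Fubini bound on $\sum_i \P_Y(i\in I_Y,\,Y\in G'')$ followed by a Markov-type count of the bad indices, with the $r=O_{C,\ep}(1)$ special rows discarded at negligible cost. No gaps.
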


Now we conclude the proof of Lemma \ref{lemma:reduction}. Fix an arbitrary index $i$ from $I$. We concentrate on those $Y\in G''$ where the index $i$ belongs to $I_{Y}$. Because $q_{i}(Y) \in Q_{Y}$, we can write 
$$q_{i}(Y)= c_{1}(Y)g_{1}(Y)+\dots c_{r}(Y)g_{r}(Y) \mod p,$$ 
where $c_{1}(Y),\dots,c_r(Y)$ are integers bounded by $N^{O_{C,\ep}(1)}$.

For short, we denote by $v_{i,Y}$ the vector $(c_{1}(Y),\dots c_{r}(Y))$, we also use the shorthand $v_j$ for the vectors $(c_{j1},\dots,c_{jr})$ obtained from Claim \ref{claim:bilinear:common}. 

Because $Q_{Y}$ is spanned by $q_{1}(Y),\dots, q_{r}(Y)$, we must have $k:=\det({v}_1,\dots {v}_r)\neq 0  \mod p$ (and hence $k\neq 0$ because $p\ge n^A$ with sufficiently large $A$) and that in $\Z/p\Z$
\begin{align*}
&k q_i(Y) + \det({v}_{i,Y}, {v}_2,\dots, {v}_r)q_{1}(Y)+\dots \\
&+ \det({v}_{i,Y}, {v}_1,\dots, {v}_{r-1})q_{r}(Y)=0. 
\end{align*}

Furthermore, because each coefficient of the identity above is bounded by $n^{O_{C,\ep,\mu}(1)}$, there exists a subset $G_{i}''$ of $G''$ such that all $Y \in G_{i}''$ correspond to the same identity, and
\begin{align*}
\P_{Y}(Y \in G_{i}'') &\ge (\P_{Y}(Y \in G'')/2)/(N^{O_{C,\ep}(1)})^r\\
&\ge \rho_b/N^{O_{C,\ep}(1)}.
\end{align*}
In other words, there exist integers $k_1,\dots,k_r$, all bounded by $N^{O_{C,\ep}(1)}$, such that 
$$k q_i(Y) + k_1 q_{1}(Y)+ \dots + k_r q_{r}(Y)=0$$
for all $Y \in G_{i}''$. 

Note that $k$ is independent of the choice of $i$ and $Y$. Finally recall that $q_i = \sum_j b_{ij} y_j=B_i(Y)$, we thus complete the proof of Lemma \ref{lemma:reduction}.
\end{proof}

After proving our inverse result for bilinear forms, by using a decoupling method (similarly to Lemma \ref{lemma:decoupling1}) we can also obtain the following analog of Theorem \ref{theorem:ILO:bilinear} for quadratic forms, which is another highlight of the section.

\begin{theorem}\label{theorem:ILO:quadratic}
Let $C>0$ and $\eps>0$.  Then there exists a constant $A$ such that the following holds for any prime $p\ge n^A$. Assume that $B=(b_{ij})_{1\le i,j\le N}$ is a symmetric array of elements of $\Z/p\Z$ so that 
$$\rho_q:=\sup_{a \in \Z/p\Z} \P_{X}(\sum_{i,j\le n}b_{ij}x_{i}x_{j} =a)\ge N^{-C},$$
where $X=(x_{1},\dots,x_{n})$, and $x_{i}$ are i.i.d. copies of $\xi$.
Then, there exist an integer $k\neq 0, |k|=N^{O_{C,\eps}(1)}$, a set of $r=O(1)$ rows $\row_{i_1}, \dots, \row_{i_r}$ of $B$, and set $I$ of size at least $N-2N^\ep$ such that for each $i\in I$, there exist integers $k_{ii_1},\dots, k_{ii_r}$, all bounded by $N^{O_{C,\ep}(1)}$, such that the following holds.
\begin{equation}\label{eqn:special:q}
\P_{Z}\Big( Z \cdot (k\row_i(B) + \sum_{j=1}^r k_{ii_j} \row_{i_j}(B)) = 0 \mod p \Big)\ge N^{-O_{C,\ep}(1)},
\end{equation}
where $Z=(z_1,\dots,z_N)$ and $z_i$ are i.i.d. copies of $(u-u')(\xi-\xi')$, where $u,u',\xi,\xi$ are independent and $u,u'$ are Bernoulli random variables of parameter $1/2$.
\end{theorem}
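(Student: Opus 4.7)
The approach is to reduce the quadratic concentration problem to the bilinear concentration setting of Theorem~\ref{theorem:ILO:bilinear}. I first apply the $L^4$ Cauchy--Schwarz decoupling of Lemma~\ref{lemma:decoupling1} to $Q(X)=X^T B X$. For any partition $[N]=I\sqcup I^c$, two successive applications of Cauchy--Schwarz (first in $X_I$, then in $X_{I^c}$) cancel the diagonal blocks in $X_I$ and in $X_{I^c}$ and leave
$$|\E_X e_p(-tQ(X))|^4 \le \E_{Y_I,Y_{I^c}}\,e_p\bigl(-2t\,Y_I^T B_{I I^c} Y_{I^c}\bigr),$$
where $Y_I,Y_{I^c}$ are independent with i.i.d.\ entries distributed as $\xi-\xi'$. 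Summing over $t\neq 0$ and using the standard Fourier identity yields
$$\rho_q^4 \le \P\bigl(Y_I^T B_{I I^c} Y_{I^c}=0\bigr)+O(1/p).$$

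Next I would randomize the partition via an independent Bernoulli-$1/2$ vector $U=(u_1,\dots,u_N)$, setting $I=\{i:u_i=1\}$. Averaging over $U$, the previous bound becomes
$$\rho_q^4 \le \P\Bigl(\sum_{i\neq j}b_{ij}Z_iW_j=0\Bigr)+O(1/p),$$
with $Z_i=u_i\eta_i$ and $W_j=(1-u_j)\eta'_j$, for independent vectors $\eta,\eta'$ with i.i.d.\ $(\xi-\xi')$ entries. Because $\xi-\xi'$ is symmetric, $u(\xi-\xi')$ has the same distribution as $(u-u')(\xi-\xi')$, matching the $Z_i$ in the theorem statement; a short calculation shows this random variable is $(\alpha/2)$-balanced. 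The crucial observation is that although $Z$ and $W$ are globally coupled through $U$, conditioning on $U$ makes them supported on complementary index sets $I$ and $I^c$ and genuinely independent, so each conditional problem is a bona fide bilinear concentration on the off-diagonal block $B_{I I^c}$.

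Applying Theorem~\ref{theorem:ILO:bilinear} to $B_{II^c}$ (with $\alpha$ replaced by $\alpha/2$) for a positive fraction of $U$, I would obtain a nonzero integer $k$ of size $N^{O(1)}$, a set of $r=O(1)$ ``special'' rows $i_1,\dots,i_r$, and integer coefficients $k_{ii_j}$ of size $N^{O(1)}$, such that $Z\cdot\bigl(k\,\row_i(B)+\sum_j k_{ii_j}\row_{i_j}(B)\bigr)=0$ with probability at least $N^{-O(1)}$ for many rows $i$. Since the data $(k,i_1,\dots,i_r)$ takes only $N^{O(1)}$ values, a pigeonhole across $U$ selects one common choice valid for a positive-density collection of rows $i$, and a further pigeonhole on the bounded tuples of coefficients combined with the symmetry $b_{ij}=b_{ji}$ would promote the bound from the off-diagonal block $B_{II^c}$ to the entries of the full rows of $B$.

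The hard part will be this final stitching step. Theorem~\ref{theorem:ILO:bilinear} naturally delivers row structure only for the submatrix $B_{II^c}$ that is seen by a given partition $U$, whereas the conclusion of Theorem~\ref{theorem:ILO:quadratic} concerns full rows of $B$. Converting a family of $U$-dependent row-structures on off-diagonal blocks into a single structure on $\row_i(B)$ itself is where the symmetry of $B$ and a careful double pigeonhole---on the $N^{O(1)}$-many possible structural parameters and on the random partitions---must be orchestrated, in a manner analogous to the proofs of Lemmas~\ref{lemma:reduction} and Claim~\ref{claim:bilinear:common} but with the additional $U$-averaging introduced by the quadratic-to-bilinear decoupling carried through.
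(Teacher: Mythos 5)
Your setup coincides with the paper's: the same two-fold Cauchy--Schwarz decoupling over a partition $U\sqcup U^c$ reducing $\rho_q$ to the off-diagonal bilinear form $\sum_{i\in U,\,j\in U^c}b_{ij}y_iy_j$, then an application of Theorem~\ref{theorem:ILO:bilinear} to the block matrix $B_U$ (the paper in fact gets the hypothesis $\rho_b\geq N^{-4C}/16$ for \emph{every} $U$, since the decoupling bound is valid for each deterministic partition, so no "positive fraction" bookkeeping is needed), followed by pigeonholing the structural data $(k(U),I_0(U))$ and then the coefficient tuples $(k_{ii_0}(U))$ over the $2^N$ partitions, exactly as you describe and in the spirit of Lemma~\ref{lemma:reduction} and Claim~\ref{claim:bilinear:common}.

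However, the step you explicitly defer as "the hard part" is precisely where the paper's proof does its real work, so as written there is a genuine gap. After the double pigeonhole one fixes a row $i\in I$, a common tuple $(k,I_0,(k_{ii_0}))$, splits $I_0=I_0'\cup I_0''$ so that a $2^N/N^{O_{C,\eps}(1)}$-sized family of partitions satisfies $I_0''\subset U$, $I_0'\cap U=\emptyset$ and (say) $i\notin U$, and then writes, with $\Bu$ the indicator vector of $U$, the combined form as $\sum_{j}\bigl(kb_{ij}+\sum_{i_0'\in I_0'}k_{ii_0'}b_{i_0'j}-\sum_{i_0''\in I_0''}k_{ii_0''}b_{i_0''j}\bigr)u_jy_j+\sum_j\sum_{i_0''\in I_0''}k_{ii_0''}b_{i_0''j}y_j$, i.e.\ a linear form in the $u_jy_j$ plus a $U$-independent remainder. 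The conclusion is then obtained by one more Cauchy--Schwarz, this time in the Bernoulli vector $\Bu$ against an independent copy $\Bu'$: this simultaneously (a) eliminates the $U$-independent remainder and the restriction to the block $B_{UU^c}$, yielding a concentration statement for the \emph{full} rows $k\row_i(B)+\sum_j k_{ii_j}\row_{i_j}(B)$, and (b) produces exactly the weights $z_j=(u_j-u_j')(\xi_j-\xi_j')$ appearing in \eqref{eqn:special:q}. Your heuristic identification $u(\xi-\xi')\overset{d}{=}(u-u')(\xi-\xi')$ is true but is not how the $Z$-distribution arises, and it does not substitute for this final averaging argument; the symmetry of $B$ plays only the minor role of letting one treat rows of $B_U$ as restrictions of rows of $B$ regardless of whether the special index lies in $U$ or $U^c$ (handled in the paper by the split $I_0=I_0'\cup I_0''$), not the role of stitching blocks together. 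Compare Corollary~\ref{cor:quadratic:row} and the computation following it in Appendix~\ref{appendix:quadratic}.
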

A proof of this result is given in Appendix \ref{appendix:quadratic}. 

To conclude the section, when working with Laplacian matrices our random vectors will be adapted along an index set $I_0$ as in Definition~\ref{def:X}. However the above result automatically applies if we restrict to the coefficients indexed from $I_0$. In other words we have
\begin{cor}\label{cor:ILO:quadratic:lap}
Let $C>0$ and $\eps>0$.  Then there exists a constant $A$ such that the following holds for any prime $p\ge n^A$. Assume that  $X$ is adapted to $I_0$ as in Definiotion~\ref{def:X} where $|I_0|\ge \eps N$ and
$$\sup_{a \in \Z/p\Z} \P_{X}(\sum_{i,j\in I_0}b_{ij}x_{i}x_{j} =a)\ge N^{-C}.$$
Then, there exist an integer $k\neq 0, |k|=N^{O_{C,\eps}(1)}$, a set of $r=O(1)$ rows $\row_{i_1}, \dots, \row_{i_r}$ of $B$, and set $I\subset I_0$ of size at least $|I_0|-2N^\ep$ such that for each $i\in I$, there exist integers $k_{ii_1},\dots, k_{ii_r}$, all bounded by $N^{O_{C,\ep}(1)}$, such that the following holds.
\begin{equation}\label{eqn:special:q2}
\P_{Z}\Big( Z_{I_0} \cdot (k\row_i(B) + \sum_{j=1}^r k_{ii_j} \row_{i_j}(B))_{I_0} = 0 \mod p \Big)\ge N^{-O_{C,\ep}(1)},
\end{equation}
where $Z=(z_1,\dots,z_N)$ and $z_i$ are as in Theorem \ref{theorem:ILO:quadratic}. 
\end{cor}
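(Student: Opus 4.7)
The plan is to reduce the corollary to a direct application of Theorem~\ref{theorem:ILO:quadratic} applied to the principal submatrix $B' := (b_{ij})_{i,j \in I_0}$. Since the quadratic form appearing in the hypothesis only involves indices from $I_0$, and the non-adapted coordinates $x_i$ for $i \notin I_0$ do not enter the sum at all, the randomness in $\sum_{i,j \in I_0} b_{ij} x_i x_j$ is exactly that of a full quadratic form with i.i.d.\ entries $\xi$, but in the smaller dimension $|I_0|$. The symmetry of $B$ restricts to symmetry of $B'$, so Theorem~\ref{theorem:ILO:quadratic} applies verbatim to $B'$.

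First I would translate the hypothesis: writing $n' := |I_0| \geq \varepsilon N$, the assumed bound $N^{-C}$ on the concentration becomes at least $(n'/\varepsilon)^{-C} \geq {n'}^{-C - O_\varepsilon(1)}$ once $N$ (hence $n'$) is large enough that the factor $\varepsilon^{C}$ is absorbed into a slightly enlarged exponent $C' = C + O_\varepsilon(1)$. Next I would check the prime condition: the requirement $p \geq N^A$ in Theorem~\ref{theorem:ILO:quadratic} used for dimension $n'$ reads $p \geq (n')^{A'}$, and since $n' \leq N$, taking the $A$ in the corollary slightly larger than the $A'$ needed for parameters $(C', \varepsilon)$ in Theorem~\ref{theorem:ILO:quadratic} suffices.

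Then I would invoke Theorem~\ref{theorem:ILO:quadratic} on $B'$ to obtain $k \neq 0$ with $|k| = {n'}^{O_{C,\varepsilon}(1)} = N^{O_{C,\varepsilon}(1)}$, indices $i_1, \dots, i_r \in I_0$ with $r = O(1)$, a set $I \subset I_0$ with $|I| \geq n' - 2{n'}^\varepsilon \geq |I_0| - 2N^\varepsilon$, and integer coefficients $k_{ii_j}$ bounded by $N^{O_{C,\varepsilon}(1)}$, such that
\[
\P_{Z'}\bigl(Z' \cdot (k \row_i(B') + \textstyle\sum_j k_{ii_j} \row_{i_j}(B')) \equiv 0 \bmod p\bigr) \geq {n'}^{-O_{C,\varepsilon}(1)} \geq N^{-O_{C,\varepsilon}(1)},
\]
where $Z' = (z_i)_{i \in I_0}$ consists of i.i.d.\ copies of $(u-u')(\xi - \xi')$.

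Finally I would reinterpret rows of $B'$ as restrictions of rows of $B$: by definition $\row_i(B')$ is precisely $(\row_i(B))_{I_0}$, and linear combinations commute with restriction, so
\[
k \row_i(B') + \sum_j k_{ii_j} \row_{i_j}(B') = \bigl(k \row_i(B) + \sum_j k_{ii_j} \row_{i_j}(B)\bigr)_{I_0}.
\]
Writing $Z = (z_1, \dots, z_N)$ with $z_i$ i.i.d.\ copies of $(u-u')(\xi - \xi')$, the inner product above equals $Z_{I_0} \cdot (k \row_i(B) + \sum_j k_{ii_j} \row_{i_j}(B))_{I_0}$, which is exactly the expression appearing in \eqref{eqn:special:q2}. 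There is essentially no obstacle here beyond carefully tracking that the $O_{C,\varepsilon}(1)$ constants produced for dimension $n'$ translate to $O_{C,\varepsilon}(1)$ constants for dimension $N$, which is immediate from $\varepsilon N \leq n' \leq N$.
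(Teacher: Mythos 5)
Your proposal is correct and matches the paper's argument: the paper dispatches this corollary by simply noting that Theorem~\ref{theorem:ILO:quadratic} ``automatically applies if we restrict to the coefficients indexed from $I_0$,'' which is precisely your reduction to the principal submatrix $(b_{ij})_{i,j\in I_0}$. Your bookkeeping of the dimension change $|I_0|\ge \eps N$ (hypothesis exponent, prime threshold, and the identification $\row_i(B')=(\row_i(B))_{I_0}$) is exactly the routine verification the paper leaves implicit.
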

Theorem~\ref{theorem:ILO} will be exploited many times in our next section. Also, Corollary \ref{cor:ILO:quadratic:lap} will play a key role in the proof of Lemma~\ref{lemma:non-zero}.

\section{Treatment for large primes: proof of Propositions~\ref{prop:large:sym} and \ref{prop:large:alt}}\label{section:largeprimes} 

For a sufficiently positive small constant $c$ (to be fixed throughout the section) we let 
 $$\CP_n:=\Big\{p \textrm{ prime},   e^{n^{c}}\le p\le (C_\xi \sqrt{n})^n \Big\}.$$ 
\subsection{Outline for the symmetric and Laplacian models} As usual, $G_N$ stands for the principle minor $M_N$ for symmetric matrices, or the principle minor $L_N^\ast$ (after the neighbor reshuffling process) for laplacian matrices. 
 
Let us start from $G_{N_0}$, where 
$$N_0 = \lfloor c n \rfloor.$$

Consider the event $\CE_{N, non-sing}$ that the matrix $G_{N}$ is non-singular in $\R$ (or $\Z$); we are going to work on the intersection of all these events.

\begin{definition} Let $\CE_{non-sing}$ be the intersection of all $\CE_{N, non-sing}$, where $N_0\le N \le n$. Note that this event is independent of $p$. By Corollary \ref{cor:singularity},  
$$\P(\CE_{non-sing}) =1-\exp(-\Theta(n^{c})).$$ 
\end{definition}

Let $\CB_{N_0}$ be the list of primes $p \in \CP_n$ such that $p| \det(G_{N_0})$, this is the list of bad primes. Notice that as by Hadamard's bound, $|\det(G_N)|\le (C_\xi N)^{N/2}$, and so, counting multiplicities we have
$$|\CB_{N_0}| \le \frac{(N_0/2) \log (C_\xi N)}{n^{c}} \le n.$$

Notice that for $p\in \CB_{N_0}$ and $p\in \CP_n$, the  $\rank(G_{N_0}/p)$ could be as small as $N_0- \frac{(N_0/2) \log (C_\xi N)}{n^{c}}  \ge N_0- n^{1-c/2}$, but we will show that throughout the column exposure process below, this rank will be improved fast and achieves the value at least $n-1$ in the last step. 

For $N_0\le N \le n-1$, we consider the process of adding the $N+1$-th column and $N+1$-th row to form $G_{N+1}$, note that for the Laplacian case, this is Phase 2, obtained via the neighbors reshuffling. 

After each step, let $\CB_{N+1}$ be the collection of primes $p \in \CB_N$ that $G_{N+1}/p$ does not have full rank and the new primes $p$ that did not belong to $\CB_N$ but $\rank(G_{N+1}/p) \le N$. Notice that for these new primes, as $p\notin \CB_N$, we have $\rank(G_N/p) = N$, and so $\rank(G_{N+1}/p)$ must be $N$ in this case. As of now, we will want to make sure that for the newly arising primes the rank are not very small, especially in the very last steps, because otherwise adding the last few rows and columns will not increase the ranks to at least $n-1$ as desired. Another remark here is that, as $0<|\det(G_{N+1})|\le (C_\xi N)^{N/2}$, the number of newly arising primes is bounded by $(N/2)\log (C_\xi N)/n^{c} \le n$. So we trivially have 
$$|\CB_{N+1}| \le |\CB_N| + n \le (N+1)n.$$

Our key result of this section is the following proposition.

\begin{proposition}\label{prop:fullgrowth} Let $C>0$ be a given large constant. There is an event  $\CE=\CE_N$ in characteristic zero with probability at least $1-n^{-C}$ such that under this event, for any $p\in \CB_N$, and for all $N_0 \le N\le n-1$ we have
$$\rank(G_{N+1}/p) =\min \{ \rank(G_N/p)+2, N+1\}.$$
\end{proposition}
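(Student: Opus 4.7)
The plan is to proceed by induction on $N$ from $N_0$ up to $n-1$, working on the event $\CE_{non-sing}$ so that $G_N$ is non-singular over $\Z$ and $\CB_N$ is exactly the set of prime divisors of $\det(G_N)$ that lie in $\CP_n$ (with $|\CB_N|\le Nn$ as noted). Since $G_N$ is symmetric, after adjoining the column $X_{N+1}$ and its transpose the rank mod $p$ can grow by $0$, $1$ or $2$, and the failure event $\rank(G_{N+1}/p)<\min\{\rank(G_N/p)+2,N+1\}$ is precisely the event that $X_{N+1}|_{[N]}\in\col(G_N/p)=\ker(G_N/p)^\perp$. In other words, $X_{N+1}|_{[N]}$ must be orthogonal modulo $p$ to every vector of the nontrivial kernel $V_p:=\ker(G_N/p)$. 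First I would reduce the proposition to showing, for each individual $p\in\CB_N$ and each nonzero $\Bv\in V_p$, that the conditional concentration probability $\P(X_{N+1}|_{[N]}\cdot\Bv\equiv 0\pmod p\mid G_N)$ is at most $N^{-C'}$ outside a rare event, for $C'$ large enough that a union bound over the $\le Nn$ primes and the $\le n$ column-exposure steps still yields failure probability $\le n^{-C}$.

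Next I would take the contrapositive via Theorem~\ref{theorem:ILO}. Suppose for some $p\in\CB_N$ and some nonzero $\Bv\in V_p$ the concentration probability exceeds $N^{-C'}$. Then almost all coordinates of $\Bv$ (all but at most $N^{\eps}$ of them) lie in a proper symmetric GAP $Q\subset\F_p$ of rank $O(1)$ and cardinality $N^{O(1)}$. Because $p\ge e^{n^c}$ vastly exceeds these combinatorial thresholds, Fact~\ref{fact:GAP} converts any excess of generators into short integer relations that are \emph{characteristic-free}; this is the crucial feature of the inverse result used for the large prime regime. In the language of the later Lemma~\ref{lemma:passing}, the GAP-structured kernel vector mod $p$ is then forced to come from a single arithmetic event on $G_N$ (viewed over $\Z$), independent of the particular prime $p$, which one can control without any further union bound over $p$.

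The hard part will be making the last step work uniformly across all $p\in\CB_N$ simultaneously, and in particular for the Laplacian model $L_N^*$, where the dependence of the diagonal entries on the off-diagonals prevents a direct column-by-column independence argument. To handle this I would exploit the extra randomness produced by the reshuffling step adding $X_{N+1}$, so that the active coordinates of $X_{N+1}|_{[N]}$ can be treated as $I_0$-adapted in the sense of Definition~\ref{def:X} with $|I_0|=\Theta(n)$ coming from $I_{N+1}$ satisfying \eqref{eqn:I_N+1}; Theorem~\ref{theorem:ILO} still applies to the restriction to $I_0$, yielding the same GAP structure. The ``structure propagation'' machinery of Lemmas~\ref{lemma:strucnotstruc} and~\ref{lemma:strucnotstruc:sym} then lets us translate the Laplacian problem into a symmetric one with prescribed diagonals, where the passing lemma can be invoked.

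Finally, combining the steps: outside a characteristic-zero exceptional event of probability at most $n^{-C-1}$ (absorbed into $\CE$), none of the bad events over $p\in\CB_N$ can occur, so for every such $p$ we must have $X_{N+1}|_{[N]}\notin\col(G_N/p)$ and hence $\rank(G_{N+1}/p)=\min\{\rank(G_N/p)+2,N+1\}$. Iterating this bound across the at most $n$ column-exposure steps and taking the union of the exceptional events gives an overall event $\CE$ with $\P(\CE)\ge 1-n^{-C}$ on which the conclusion of the proposition holds, completing the induction.
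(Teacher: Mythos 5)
Your overall route is the same as the paper's: characterize the failure of the rank jump as an anti-concentration event for a kernel/normal vector of $G_N/p$ against the (re-randomized, $I_0$-adapted) new column, invoke the inverse Littlewood--Offord theorem (Theorem~\ref{theorem:ILO}) to extract GAP structure on the $I_0$-coordinates, upgrade partial structure to full structure via Lemmas~\ref{lemma:strucnotstruc} and~\ref{lemma:strucnotstruc:sym}, and then use Lemma~\ref{lemma:passing} to make the structured event prime-independent. (One legitimate simplification on your side: since the proposition only concerns $p\in\CB_N$, where $G_N/p$ has positive corank, the failure event really is the linear event $X_{N+1}|_{[N]}\in\col(G_N/p)$, so you can avoid the quadratic inverse theorem Corollary~\ref{cor:ILO:quadratic:lap} that the paper's Lemma~\ref{lemma:non-zero} also covers for the full-rank case.)

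The genuine gap is at the very end: after lifting, you assert that the characteristic-zero event ``a fully structured almost normal vector of $G_N$ exists over $\Z$'' can be ``controlled without any further union bound over $p$,'' but you never say how, and this is precisely the step that carries the proof. The paper's mechanism is to use the \emph{reduction} direction of Lemma~\ref{lemma:passing}: an integral structured almost normal vector reduces to a structured almost normal vector of $G_N/p'$ for a \emph{single} moderate prime $e^{n^{c/2}}\le p'<e^{n^{c}}$, and in that range the event is shown to have probability $n^{-\Theta(n)}$ by Proposition~\ref{prop:GAP:lap} (counting non-sparse GAP-structured vectors and beating the count with the anti-concentration bound, using $p'\ge e^{n^{c/2}}$), together with Lemma~\ref{lemma:sparse:1} to dispose of sparse almost normal vectors, which Proposition~\ref{prop:GAP:lap} does not cover. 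Without this reduction-plus-counting step your argument is circular: you have only shown that failure forces a structured normal vector over $\Z$, not that this is unlikely, and no direct bound is available for the original primes $p\ge e^{n^{c}}$ (that impossibility is the whole reason for the detour). Relatedly, your attribution of the ``characteristic-free'' transfer to Fact~\ref{fact:GAP} is off: Fact~\ref{fact:GAP} is an ingredient inside the inverse theorems, while the transfer between $\F_p$ and $\Z$ is the content of Lemma~\ref{lemma:passing} (resting on Lemma~\ref{lemma:passing:0}, which is where the largeness $p\ge e^{n^{c}}$ is actually used), and both its lifting and its reducing directions are needed. Finally, note that your claimed $n^{-C-1}$ bound for the exceptional event should instead come out as $e^{-\Theta(n^{c})}$ plus the $n^{-An}$ from the partial-to-full structure step (union-bounded over all of $\CP_n$, which is only possible because that bound is $n^{-An}$ with $A$ large), with the $n^{-C}$ loss arising solely from the per-step, per-watch-list-prime conditional concentration bound.
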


It is clear that this proposition would then imply Proposition~\ref{prop:large:sym} because with probability at least $1-O(n^{-C+3})$ (on the intersection of $\cap_{N_0\le N\le n} \CE_{N}$), for all $p\in \CP_n$ we have $\rank(G_{n-1}/p)\ge n-1$ as $\rank(G_n/p)$  becomes full after exposing the last $n$-th row and column for all $p\in \CB_{n-1}$, and for newly arising primes $p$ we have $\rank(G_n/p)= \rank(G_{n-1}/p)=n-1$ as $p \notin \CB_{n-1}$.

Before moving to discuss the technical details, we pause to compare the current method with that of \cite[Section 6]{NgW} for the i.i.d. model. Both methods rely on the ``watch list" argument. However, unlike in the i.i.d. case, here at the starting point $N=N_0$ some prime $p$ in the list $\CB_N$ might have very high multiplicities. Fortunately, each exposure step usually improves the rank by 2 (rather than by 1 as in the i.i.d. case) thanks to Proposition \ref{prop:fullgrowth}. So the rank over $\F_p$ will become almost full very fast. When the rank becomes full we remove $p$ from the watch list $\CB_N$, but this $p$ might reappear later in the process, the difference now is that the corank over $\F_p$ will be at most one as we have seen above. %

In the remaining part  we prove Proposition \ref{prop:fullgrowth}, which is an innovative part of the treatment. We will mainly focus on the Laplacian model (and hence $G_N = L_N^\ast$) as the symmetric case will follow almost automatically. Our proof consists of three steps outlined below
\begin{itemize}
\item Step 1. If 
$$\P(\rank(G_{N+1}/p) =\min \{ \rank(G_N/p)+2, N+1\}) \le 1- n^{-C},$$ 
where the randomness is on the $N+1$-th column and $N+1$-th row (of the reshuffling process), then by Theorem \ref{theorem:ILO:quadratic} and Corollary \ref{cor:ILO:quadratic:lap} there is a ``local" $O(1)$-normal vector of $G_N$ which has {\it partially rich} structure in $\F_p$. Here $C$ is large enough to compensate with the loss of the watching list argument after taking union bound over all primes from $\CB_N, N\le n-1$. We refer the reader to Subsection \ref{sub:partial} for precise statements.
\vskip .1in
\item Step 2. We then use Definition \ref{def:lap'} to pass to the symmetric model, showing that the event that a local normal vector is {\it partially structured} but not {\it fully structured} has probability $n^{-An}$ with large $A$, for which we can take union bound over all $p \in \CP_n$. This step is carried out in Subsection \ref{sub:partial-full}.
\vskip .1in
\item Step 3. It remains to estimate the event that there is a fully structured normal vector for each $p$ in the watch list. We then use the low rank and rich structure to show that this event for $p\ge e^{n^{c}}$ can be passed to characteristic zero, and henceforth to $\Z/p\Z$ for some $ e^{n^{c/2}} \le p\le e^{n^{c}}$, while for the latter setting we have shown in the previous section that this hold with probability $\exp(-\Theta(n^c))$. We will complete this final step in Subsection \ref{sub:full}.
\end{itemize}

From now on, if not specified otherwise, $p$ is a prime from $\CP_n$.

\subsection{Step 1: normal vectors with adapted structures.}\label{sub:partial}  In what follows, for the Laplacian case we recall that the randomness is from Phase 2 of Definition \ref{def:lap}. That is $I_0=I_{N+1}$ is the set of indices from $v_1,\dots, v_{N}$ that is connected to exactly one vertex from the pair $\{v_{N+1},v_{N+2}\}$. Recall \eqref{eqn:concentration}, by Chernoff's bound with probability at least $1- \exp(-\Theta(n^{1-c}))$, for all $N_0 \le N \le n-1$ we have
\begin{equation}\label{eqn:I_0}
 |I_0| \in [N/2 - N^{1-c/2},N/2 +N^{1-c/2}].
 \end{equation}

\begin{lemma}\label{lemma:non-zero} Let $0<\eps<1,C>0$ be given constants. Under $\CE_{non-sing}$, assume that $N\ge N_0$, and 
$$\P\Big(\rank(G_{N+1}/p) =\min \{ \rank(G_N/p)+2, N+1\}\Big) \le 1- n^{-C},$$ 
where the randomness is on the random reshuffling process with respect to $v_{N+1}$ and $v_{N+2}$. Then there exist a constant $C_\ast$ depending on $C,c,\eps$ and there exist a non-zero vector $\Bv$ and a subset $J_0 \subset I_0$ such that 
\begin{itemize}
\item $|J_0| \ge |I_0|-2N^{1-c/2}$;
\vskip .05in
\item all of the entries of $\Bv_{J_0}$ belong to a GAP of size at most $n^{C_\ast}$ and rank at most $C_\ast$,
\vskip .05in
\item $\Bv$ is orthogonal to all but at most $C_\ast$ rows of $G_N$.
\end{itemize}
\end{lemma}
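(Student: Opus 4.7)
The plan is to extract the structured vector $\Bv$ from the failure of the rank to jump by two at step $N{+}1$ of the reshuffling. Condition on all randomness in $L_n$ apart from the shuffle at step $N{+}1$ (Phase~2 of Definition~\ref{def:lap}); after this conditioning, the entries $x_{j,N+1}$ for $j \in I_0$ become i.i.d.\ $\tfrac12$-balanced Bernoulli, while $X_{N+1}|_{[N]\setminus I_0}$ is deterministic. We split by whether $G_N/p$ is singular.

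\textbf{Case 1: $\rank(G_N/p) = N-k$ with $k \ge 1$.} As derived before Claim~\ref{claim:principleminor}, the rank of $G_{N+1}/p$ fails to be $\rank(G_N/p) + 2$ precisely when $X_{N+1}|_{[N]}$ lies in $H_N := \operatorname{Col}(G_N/p)$. Pick any nonzero $\Bv$ in the left null space of $G_N/p$; then $\Bv \cdot X_{N+1}|_{[N]} = \Bv|_{I_0} \cdot X_{N+1}|_{I_0} + \mathrm{const}$, so the hypothesis gives $\sup_a \P(\Bv|_{I_0} \cdot X_{N+1}|_{I_0} = a) \ge n^{-C}$. Invoking Theorem~\ref{theorem:ILO} on the sequence $\Bv|_{I_0}$ with $N' = \lfloor N^{1-c/2} \rfloor$ produces a proper symmetric GAP $Q$ of rank $O_C(1)$ and size $n^{O_C(1)}$ containing all but at most $N^{1-c/2}$ of its entries; let $J_0 \subset I_0$ be the corresponding subset. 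Since $G_N$ is symmetric and $\Bv$ lies in the null space, $\Bv$ is orthogonal to every row of $G_N$, so the third bullet of the lemma holds trivially.

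\textbf{Case 2: $\rank(G_N/p) = N$.} Under $\CE_{non-sing}$ we may invert $G_N/p$ to obtain $B = (b_{ij})$, which is symmetric. The failure of the rank to become $N{+}1$ is the equation $X_{N+1}|_{[N]}^T B\, X_{N+1}|_{[N]} \equiv x_{N+1,N+1} \pmod p$. Splitting $X_{N+1}|_{[N]}$ into its random ($I_0$) and deterministic ($I_0^c$) parts, and writing the Laplacian diagonal $x_{N+1,N+1} = -\sum_{j\in I_0} x_{j,N+1} + \mathrm{const}$, this becomes a quadratic-plus-linear equation in the adapted variables $(x_{j,N+1})_{j\in I_0}$ with concentration probability $\ge n^{-C}$. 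Applying Lemma~\ref{lemma:decoupling1} to absorb the linear terms brings us precisely into the setting of Corollary~\ref{cor:ILO:quadratic:lap}. That corollary supplies $k \ne 0$ with $|k| = n^{O_{C,\epsilon}(1)}$, a set of $r = O_{C,\epsilon}(1)$ rows $\row_{i_1},\ldots,\row_{i_r}$ of $B$, a subset $I \subset I_0$ with $|I| \ge |I_0| - 2N^\epsilon$, and integers $k_{i i_j}$ of size $n^{O(1)}$ so that for any fixed $i \in I$,
$$\Bv := \bigl(k\,\row_i(B) + \textstyle\sum_{j=1}^r k_{i i_j}\,\row_{i_j}(B)\bigr)^{T}$$
satisfies $\sup_a \P(Z|_{I_0} \cdot \Bv|_{I_0} = a) \ge n^{-O_{C,\epsilon}(1)}$. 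Since $B G_N \equiv I \pmod p$ and $G_N$ is symmetric, one computes $\row_m(G_N)\,\Bv \equiv k\,\delta_{im} + \sum_j k_{i i_j}\,\delta_{i_j m} \pmod p$, which vanishes for every $m \notin \{i,i_1,\ldots,i_r\}$; hence $\Bv$ is orthogonal to all but $r{+}1$ rows of $G_N$, and $\Bv \ne 0$ because $k \ne 0$ and the rows of $B$ are linearly independent. A second application of Theorem~\ref{theorem:ILO} to $\Bv|_{I_0}$ with $N' = \lfloor N^\epsilon \rfloor$ yields the required GAP of rank and size $n^{O_{C,\epsilon}(1)}$ containing all but $N^\epsilon$ entries, giving a $J_0 \subset I$ with $|J_0| \ge |I_0| - 2N^\epsilon \ge |I_0| - 2N^{1-c/2}$ once $\epsilon < 1 - c/2$.

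Putting the two cases together gives the lemma with $C_* = C_*(C,c,\epsilon)$. The main technical obstacle is the decoupling step in Case~2, where the Laplacian diagonal couples the quadratic form in the adapted variables to a linear expression in those same variables; one must pass through (a slight variant of) Lemma~\ref{lemma:decoupling1} before invoking the quadratic inverse theorem. It is precisely here that the lower bound $|I_0| \ge N/2 - N^{1-c/2}$ from \eqref{eqn:I_0} is used, to ensure enough adapted randomness survives the symmetrization.
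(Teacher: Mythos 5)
Your proof is correct and follows essentially the same two-case strategy as the paper: in the singular case ($\rank(G_N/p)\le N-1$) the failure event forces a linear anticoncentration bound on an $I_0$-restricted normal vector, handled by Theorem~\ref{theorem:ILO}, and in the nonsingular case the failure event is the quadratic equation in $B=G_N^{-1}$, handled by Corollary~\ref{cor:ILO:quadratic:lap} with the orthogonality to all but $r+1$ rows read off from $BG_N=I$ and a final application of Theorem~\ref{theorem:ILO} to $\Bv_{I_0}$. The only deviations are minor: in the singular case you take a kernel vector of $G_N/p$ directly, whereas the paper constructs a ``local'' normal vector from a full-rank $k\times(k+1)$ submatrix (a choice the paper exploits later in Lemma~\ref{lemma:strucnotstruc:sym}, but which is not needed for the statement of this lemma), and you are, if anything, more explicit than the paper in noting that the linear terms contributed by the Laplacian diagonal must be absorbed via (a variant of) Lemma~\ref{lemma:decoupling1} before the quadratic inverse theorem is invoked.
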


\begin{proof}[Proof of Lemma \ref{lemma:non-zero}] As we are working with primes $p > \exp(n^c)$, if we are on the event $\CE_{non-sing}$ (in particularly $\det(G_N)\neq 0$) then as $|\det(G_N)| \le (C_\xi N)^{N/2}$, we have that 
$$k=\rank(G_N/p) \ge N - N^{1-c/2}.$$
{\bf Case 1.} Assume that $k\le N-1$. Let $G_{J \times J}$ be a submatrix in $G_N$ of full rank, $|J| =k$, where $J=\{i_1,\dots, i_k\}$. We add one more column corresponding to the vertex $v_{N+1}$ and consider the matrix $G_{J \times (J \cup \{v_{N+1}\})}$. Let $H$ be the subspace generated by the rows of this matrix, and let $\Bv'=(v_{i_1},\dots, v_{i_{k+1}})$ be a normal vector of $H$. When we expose the $N+1$-th vector $X_{N+1}$, as we have seen in Section \ref{section:rankevolving},  if $(x_{i_1},\dots, x_{i_k}, x_{N+1}) \notin H$ then $\rank(G_{N+1}/p) =\rank(G_N/p)+2$. So we must have $
(x_{i_1},\dots, x_{i_k}, x_{N+1}) \in H$, and hence 
\begin{equation}\label{eqn:orth}
\Bv' \cdot (x_{i_1},\dots, x_{i_k}, x_{N+1})=0.
\end{equation}
We will restrict the above event to the randomness over $x_{i_j}$ where $i_j\in I_0\cap J$. Recall that as $|J| \ge N - N^{1-c/2}$ and $I_0 \subset [N]$ satisfying \eqref{eqn:I_0}, we have 
$$| I_0\cap J | \ge N/2 -2 N^{1-c/2}.$$ 
By Theorem \ref{theorem:ILO}, the event of \eqref{eqn:orth} has probability smaller than $N^{-C}$ except  all but $N'=N^{1-c/2}$ of the entries of the $v_{i_j}, i_j\in  I_0\cap J$ belong to a GAP structure of size $O(\rho_l(\Bv')^{-1}/\sqrt{N^{1-c/2}})$. To complete the proof, by definition $\Bv'$ is orthogonal to all columns of $G_{J\times [N]}$ (because $G_{J \times J}$ has the same rank as $G_N$), and hence the vector $\Bv$ obtained from $\Bv'$ by appending $N-k-1$ components of value zero is orthogonal to all columns of $G_N$.
\vskip .05in
{\bf Case 2.}  Assume that $k= N$. In this case $G_{J \times J}=G_N$. Let $B=(b_{ij})$ be the inverse matrix $G_N^{-1}$. As we have seen in \eqref{eqn:b_{ij}}, by adding a new row and column (associated to $v_{N+1}$) we see that the rank does increase unless $\sum_{i,j\in I_0}b_{ij}x_{i}x_{j}=0$. Assume that this holds with probability at least $n^{-C}$, then we can apply Corollary \ref{cor:ILO:quadratic:lap}. For each $i\in I$ obtained by this corollary (where we recall that $I$ is of subset size at least $|I_0|-2n^\ep$ of $I_0$) let 
$$\row_i' = k\row_i(B) + \sum_{j=1}^r k_{ii_j} \row_{i_j}(B).$$
Because $B$ has rank $m$, the rows $\row_i(B)$ are linearly independent, and so $\row_i'$ is non-zero for $i\notin \{i_1,\dots, i_r\}$. We fix one such non-zero vector $\Bv'=\row_{i_0}'$, and in this case set $\Bv:=\Bv'$. By definition $\Bv$ is orthogonal to all but only $r+1$ columns  of indices from ${i_0, i_1,\dots,i_r}$. Furthermore by \eqref{eqn:special} we have that 
$$\P_Z\Big( Z \cdot \Bv = 0 \mod p \Big)\ge n^{-O_{C,\ep}(1)},$$
where $Z$ is adapted to $I_0$. As a direct application of Theorem \ref{theorem:ILO}, we then infer that all but $n^\eps$ of the entries of $\Bv_{I_0}$ belongs to a GAP structure of  size $O(n^{O(1)})$ and rank $O(1)$ as claimed.
 \end{proof}

We note from the proof above that $\Bv$ is defined via $B$, and hence $G_{J \times J}$, a submatrix of full rank in $G_N$. Hence it is natural to call $\Bv$ a {\it local} (w.r.t. $J$) almost normal vector of $G_N$. In what follows, for given $C$ and $c,\eps$, the parameter $C_\ast=C_\ast(C,c,\eps)$ is always chosen as in the conclusion of Lemma \ref{lemma:non-zero}. Motivated by this result, it is natural to define the following notion of partially structured vectors.
 
\begin{definition}\label{def:full} Let $J_0\subset [N]$ be given. Let $C_\ast,C_\ast', \eps>0$ be constants (where $C_\ast=C_\ast(C,c,\eps)$). 
\begin{itemize}
\item (partially structured) We say $\Bv \in \F_p^N$ is a $J_0$-structured almost normal vector with respect to $G_N$ (and with respect to the parameter $C_\ast$) if all components of $\Bv_{J_0}$ come from a GAP (over $\Z/p\Z$) with size at most $n^{C_\ast}$ and rank at most $C_\ast$ and $\Bv$ is orthogonal to all but at most $C_\ast$ columns of $G_N$. 
\vskip .1in
\item (fully structured) We say that $\Bv$ is a fully structured almost normal vector (with respect to the parameters $C_\ast',\eps$) if all but $n^{\eps}$ components of $\Bv$ come from a GAP (over $\Z/p\Z$) with size at most $n^{C_\ast'}$ and rank at most $C_\ast'$ and $\Bv$ is orthogonal to all but $C_\ast'$ columns of $G_N$. 
\end{itemize}
\end{definition}

By Lemma~\ref{lemma:non-zero}, if $G_N$ has an almost normal vector that is not locally structured then we would be done with the proof of Proposition~\ref{prop:fullgrowth}. Hence we need to work with the event that $G_N$ has a non-trivial almost normal vector that is $J_0$-structured for some $J_0$. On the other hand, for the step of passing to all primes in $\CP_n$ at once it is desirable to have fully structured vectors instead, which motivated us to introduce Definition \ref{def:full} and add another twist into the plan to treat with locally but not fully structured below. We notice that this problem only occurs in the Laplacian case; for the random symmetric model $I_0$ is already the whole set $[N]$ and in this case we can completely skip these extra treatments.

\subsection{Step 2: partially but not fully structured almost normal vectors of Laplacian matrices}\label{sub:partial-full} 
 Note that in the Laplacian model, after Phase 1 of Definition \ref{def:lap'} the degree sequence $\Bd=(d_1,\dots, d_{n+1})$ is fixed, and also by \eqref{eqn:degreesq} with probability $1- \exp(-\Theta(n))$ 
\begin{equation}\label{eqn:degreesq'}
d_1,\dots, d_{n+1}  \in [(1/2-\eps)n, (1/2+\eps)n].
\end{equation} 
For convenience we define
\begin{definition} Let $\CE_{mixing}$ be the intersection of the events from \eqref{eqn:degreesq'} and from \eqref{eqn:I_0} and  for all $N_0 \le N \le n-1$. Note that this event is on the graph and is independent of $p$ and
$$\P(\CE_{mixing}) \ge 1 - \exp(-\Theta(n^{1-c})).$$
\end{definition}
 We will show the following

\begin{lemma}[structure propagation in perturbed symmetric matrices]\label{lemma:strucnotstruc} Let $0<\eps<1$ and  $A,C>0$ be given constants. Then there exists a constant $C_\ast'$ such that the following holds for all $p\in \CP_n$, and on $\CE_{mixing}$. The probability of the intersection of the events that all principle minors $G_N, N_0 \le N \le n$ have non-zero determinant and that there exist $N$ in the above range and a local almost normal vector of $G_N/p$ that is $J_0$-structured (with respect to $C_\ast$ as in Lemma \ref{lemma:non-zero} for given $C,\eps,c$) for some $J_0 \subset [N]$ of size at least $N/2-2N^{1-c/2}$ but not fully structured with respect to $C_\ast',\eps$ is bounded by  $O(n^{-An})$, where the implied constants are allowed to depend on the given constants. 
\end{lemma}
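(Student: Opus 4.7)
The plan is to prove this by a ``structure propagation'' argument, using the $AB$-shuffle introduced after Definition~\ref{def:lap} to convert known structure on $\Bv_{J_0}$ into structure on the remaining coordinates $v_{j^*}$ for $j^* \in [N]\setminus J_0$. The point is that the shuffle preserves the distribution of $L_n$ and leaves the principal submatrix $G_{J_0\times J_0}$ intact, so the conditioned data defining the partial structure is unaffected, while it injects genuinely new $1/2$-balanced randomness into the entries connecting indices outside $J_0$ to indices inside $J_0$.

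First I would union bound over $N\in[N_0,n-1]$ (losing a factor $n$) and enumerate the combinatorial data specifying the partial structure: the set $J_0$ (at most $2^N$ choices), the $C_\ast$ exceptional rows/columns where orthogonality to $\Bv$ fails (at most $N^{O(C_\ast)}$ choices), and a proper symmetric GAP $Q\subset \F_p$ of rank $\leq C_\ast$ and size $\leq n^{C_\ast}$ containing $\Bv_{J_0}$. Specifying the generators and dimensions of $Q$ costs $p^{O(C_\ast)}\le e^{O(n^c)}$ possibilities; specifying $\Bv_{J_0}$ inside $Q$ costs at most $n^{C_\ast |J_0|}\le n^{C_\ast N}$ possibilities. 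Since $p\le (C_\xi\sqrt n)^n$, the total bookkeeping is at most $n^{O(N)}$. I would then condition on $G_{J_0\times J_0}$ together with the $C_\ast$ exceptional columns, which fixes $\Bv_{J_0}$ up to the above enumeration.

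For each index $j^*\in[N]\setminus J_0$, I would now apply an $AB$-shuffle with $A=[N]\setminus J_0$ and $B$ an even-sized subset of $J_0$ of size $\Theta(N)$ equipped with a fixed-point-free involution, chosen disjoint from the conditioned exceptional indices. The shuffle acts independently across the different $j^*\in A$. On the event $\CE_{mixing}$, for each $j^*$ a positive fraction of involution orbits $\{j,\bar j\}\subset B$ satisfy $x_{j^*,j}\ne x_{j^*,\bar j}$, so after the shuffle the entries $x'_{j^*,j}$, $j\in B$, are $1/2$-balanced and mutually independent in at least $cN$ coordinates. The orthogonality relation $X_{j^*}\cdot\Bv=0$ (which must still hold since the shuffled matrix has the same distribution and we are bounding the probability of the pathological event) then becomes, after substituting the Laplacian formula, an equation of the form
\[
v_{j^*}\sum_{j\in B^\ast} x'_{j^*,j} + \sum_{j\in B^\ast} x'_{j^*,j}(v_j-v_{j^*}) = R,
\]
where $B^\ast\subset B$ is the set of ``useful'' indices, $v_j\in Q$ for $j\in B^\ast$, and $R$ is a deterministic quantity in $\F_p$ depending only on the conditioned data. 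Applying the inverse Littlewood-Offord theorem (Theorem~\ref{theorem:ILO}) to this random linear equation, together with the fact that $v_j-v_{j^*}$ lies in $Q-v_{j^*}$ (a translate of $Q$), forces $v_{j^*}$ to lie in a GAP $Q'\supset Q$ of rank $\leq C_\ast+O(1)$ and size $\leq n^{C_\ast+O(1)}$ — unless we are on a bad event of probability at most $n^{-A'}$, where $A'$ can be taken arbitrarily large by choosing the constant in Theorem~\ref{theorem:ILO} accordingly.

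Taking a union bound over the $N-|J_0|\le N$ indices $j^*$ and over the $n^{O(N)}$ possibilities from the bookkeeping, and choosing $A'\ge A+C_\ast+2$, we conclude that the event under consideration has probability at most $O(n^{-An})$ with $C_\ast'=C_\ast+O(1)$. The main obstacle, and the reason this is called ``structure propagation,'' is ensuring that the enlarged GAP $Q'$ remains uniformly bounded in rank and size as we propagate: at each step the extra coefficients introduced by the inverse Littlewood-Offord step could in principle accumulate, but since $Q'$ is chosen once per $j^*$ and absorbs only $O(1)$ new generators before being frozen, the ranks and sizes stay bounded by $C_\ast'$ independently of $N$. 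A secondary technical point is that the shuffle must avoid touching the exceptional rows/columns and the diagonal entries of $G_{J_0\times J_0}$; this is exactly the property that the $AB$-shuffle leaves diagonal entries of the $([n]\setminus B)\times([n]\setminus B)$ submatrix fixed, which was noted right after the definition of the shuffle.
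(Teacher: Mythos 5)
Your argument has a genuine gap at its core: the coordinates of $\Bv$ outside $J_0$ are neither enumerated nor determined by your conditioning, so the quantity you call $R$ is not deterministic. Each row equation $X_{j^*}\cdot\Bv=0$ also contains the terms $x_{j^*,j'}v_{j'}$ for the \emph{other} unstructured indices $j'\in[N]\setminus J_0$, and since $p\ge e^{n^{c}}$ these coordinates range over a set far too large to enumerate; without pinning down the unstructured block you cannot run the dichotomy ``fix the coefficient vector, then either its concentration probability is $\le n^{-A'}$ or Theorem~\ref{theorem:ILO} applies.'' This is exactly what the paper's proof supplies first: after conditioning on a full-rank submatrix $M_1'$ (Claim~\ref{claim:AJ_1}), the unstructured part is expressed as $\Bv_1=-(M_1')^{-1}M_0'\Bv_0$, i.e.\ it is determined by the enumerated structured part, and only then are the remaining rows exploited. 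Even granting the dichotomy, your counting does not close: the bookkeeping is $n^{O(N)}$ (the GAP generators live in $\F_p$ with $p$ as large as $(C_\xi\sqrt n)^n$, and enumerating $\Bv_{J_0}$ inside $Q$ already costs $n^{C_\ast N}$; your intermediate claim $p^{O(C_\ast)}\le e^{O(n^c)}$ is false since $p\ge e^{n^c}$), while your per-row bad event is only $n^{-A'}$ for a constant $A'$, so the union bound gives $n^{O(N)-A'}$, nowhere near $O(n^{-An})$. To beat the enumeration one must multiply $\Theta(N)$ conditionally independent row equations, each contributing a factor like $n^{-K_0}$, as in the paper's estimate $(1/n^{K_0})^{|(J_1\setminus B)^-|}N^{A_0N}4^n\le n^{-An}$. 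Finally, producing a separate GAP $Q'_{j^*}$ for each of up to $N/2$ indices $j^*$ does not yield the single GAP of rank $\le C_\ast'$ required by ``fully structured''; the paper only has to merge $O(1)$ GAPs, one per iteration of Claim~\ref{claim:iteration}.

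Your use of the $AB$-shuffle is also not sound as stated: with $A=[N]\setminus J_0$ and $B\subset J_0$, the shuffle preserves only the diagonal entries \emph{outside} $B$, so the Laplacian diagonal entries at indices in $B\subset J_0$ do change and the principal submatrix $G_{J_0\times J_0}$ is not left intact, making the conditioning on $G_{J_0\times J_0}$ followed by a shuffle inconsistent; in any case the shuffle does not remove the dependence of the diagonal on the unexposed entries, which is the real obstruction in the Laplacian case. The paper takes a different route entirely: Lemma~\ref{lemma:strucnotstruc} is not proved directly for $L_n$ but deduced from Lemma~\ref{lemma:strucnotstruc:sym}, a statement about the symmetric model $M_N-D_N$ with a \emph{prescribed} diagonal, by conditioning on a good degree sequence $\Bd$ (Definition~\ref{def:lap'}, Corollary~\ref{cor:strucnotstruc}) and paying a factor $n^{4n}$ from $\P(G(n+1,1/2)\in\CG_{\Bd})\ge n^{-4n}$. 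That reduction is precisely why the target bound has the form $O(n^{-An})$ with $A$ arbitrary rather than merely $e^{-n^{\Theta(c)}}$, a point your proposal does not engage with.
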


Note again that it would be more natural to have the probability bound of the form $O(N^{-AN})$ for each $N$, but by changing $A$ we can replace the bounds by $O(n^{-An})$ for convenience. We will prove Lemma \ref{lemma:strucnotstruc} by passing to random symmetric matrices of given diagonal entries, via Definition \ref{def:lap'} of the Laplacians. We can do this thanks to the key lemma below which roughly says that the rare event is very rare. 

\begin{lemma}[partial structure in perturbed random symmetric matrices]\label{lemma:strucnotstruc:sym}  Let $0<\eps<1$ and  $A,C>0$ be given constants.  Then there exists a constant $C_\ast'$ such that the following holds. Let $p\in \CP_n$. Let $\Bd=(d_1,\dots, d_{n+1})$ be an degree sequence so that 
$$\P\big(G(n+1,1/2) \in G_{\Bd}\big)\ge n^{-4n}.$$ 
Assume that $M_n$ is the random symmetric matrix as in Theorem \ref{theorem:cyclic:sym}. Then on the intersection of the event $\CE_{non-sing}$ in $\Z$ that all principle minors $M_N-D_N,  N_0\le N\le n$ have non-zero determinant (where $D_N=\diag(d_1,\dots, d_N)$ is the diagonal matrix with entries $d_i$), the event that there exists $N$ in the above range and a local almost normal vector $\Bv$ of $(M_N-D_N)/p$ that is $J_0$-structured  (with respect to $C_\ast$) for some $J_0 \subset [N]$ satisfying $|J_0| \ge N/2-2N^{1-c/2}$ but $\Bv$ is not fully structured with respect to  $C_\ast' ,\eps$  has probability bounded by  $O(n^{-An})$.
\end{lemma}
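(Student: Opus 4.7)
The plan is to reduce the event under consideration to one about a random $\alpha$-balanced off-diagonal block of $M_N$ and then apply the ``structure propagation'' paradigm used in Lemma \ref{lemma:sparse:2} and Proposition \ref{prop:structure:subexp:lap'}: starting from the GAP structure of $\Bv$ on $J_0$, I would propagate it to almost all of $J_0^c$ using the orthogonality relations, and bound the rare event in which the propagation fails. A first union bound costs at most $n^{O(n)}$ over combinatorial data: the choice of $N$, of $J_0$, of the $\le C_\ast$ exceptional columns $E$, of the rank-$r\le C_\ast$ GAP $Q\subset \Z/p\Z$ of size $|Q|\le n^{C_\ast}$ (whose generators live in $\Z/p\Z$ with $p\le (C_\xi\sqrt n)^n$, contributing $p^{O(1)}=n^{O(n)}$), and of the assignment $\Bv_{J_0}\in Q^{J_0}$ (at most $n^{C_\ast N}$). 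So it suffices to bound, uniformly over each such choice, the probability of extending $\Bv_{J_0}$ to a $J_0$-structured but not fully structured almost normal vector by $n^{-A'n}$ for $A'$ as large as desired; the required $A'$ will be absorbed by taking $C_\ast'$ large compared to $A$, $C$ and $\eps$.

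Fix such combinatorial data and set $I=[N]\setminus E$. The orthogonality reads $(M_N-D_N)_{i,[N]}\Bv=0\pmod p$ for $i\in I$. I would first condition on the symmetric diagonal block $M_{J_0\times J_0}$. For each $i\in I\cap J_0$ the row-$i$ equation then takes the form $\sum_{j\in J_0^c} M_{ij} v_j = c_i$, where $c_i$ is determined by the conditioned entries, by the fixed $\Bv_{J_0}$, and by $d_i$. These $\ge |J_0|-C_\ast$ linear equations in the unknown $\Bv_{J_0^c}$ have coefficient matrix $M_{(I\cap J_0)\times J_0^c}$ with independent $\alpha$-balanced entries (although $M_N$ is symmetric, this block is independent of $M_{J_0\times J_0}$). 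By Lemma \ref{lemma:quadratic}, with probability $1-\exp(-\Omega(n^{1+2\eps}))$ this matrix has column rank $\ge |J_0^c|-n^{1/2+\eps}$ in $\F_p$, so $\Bv_{J_0^c}$ is confined to an affine flat of dimension $\le n^{1/2+\eps}$ whose parameters are integer combinations of entries of $\Bv_{J_0}$ and of conditioned entries of $M$.

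Transposing via the symmetry of $M$, the remaining row equations $i\in I\cap J_0^c$ then force the components of $\Bv_{J_0^c}$, outside at most $n^\eps$ exceptional indices, to be bounded-height integer combinations of the generators of $Q$ together with finitely many ``new'' parameters; this yields that $\Bv$ is fully structured with an enlarged constant $C_\ast'$ depending only on $C_\ast$ and $\eps$. Any failure of this propagation forces, via Lemma \ref{lemma:O} applied to an independent row of the unconditioned part of the block $M_{(I\cap J_0^c)\times J_0}$, a prescribed linear relation in $\F_p$ to hold on that row, contributing a factor $(1-\alpha)^{\Omega(n)}$; aggregating over the $\Omega(n)$ independent rows gives the desired $n^{-A'n}$ bound.

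The main obstacle is the quantitative bookkeeping of how the propagation enlarges the ambient GAP: each use of Lemmas \ref{lemma:O} and \ref{lemma:quadratic} can add $O(1)$ generators and a $p^{O(1)}$-size factor, and a naive iteration would conflict with the $n^{O(n)}$ combinatorial union bound. The resolution is to perform the propagation in a single pass by exposing the entire random block $M_{J_0\times J_0^c}$ at once, so that all the extra structure absorbing $\Bv_{J_0^c}$ sits in one GAP of rank $O(C_\ast)$ and size $n^{O(C_\ast)}$ independent of $n$. Combined with $|J_0|,|J_0^c|\gtrsim N/2$ (so both sides of the symmetry contribute genuine independent randomness), this will yield the advertised bound $O(n^{-An})$.
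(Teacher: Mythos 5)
Your proposal has the right general flavor (condition on blocks, exploit the independence of the off-diagonal block, union bound over the GAP data and the structured part), but it has a genuine gap in the two steps that actually carry the paper's proof. First, the randomness accounting breaks down: after you expose the whole block $M_{J_0\times J_0^c}$ to confine $\Bv_{J_0^c}$ to a low-dimensional affine flat, the block $M_{(I\cap J_0^c)\times J_0}$ to which you later want to apply Lemma \ref{lemma:O} is (by symmetry) exactly the transpose of what you just exposed, so there is no ``unconditioned part'' left and the advertised $(1-\alpha)^{\Omega(n)}$ per-row factors are not available; the only fresh entries are in $M_{J_0^c\times J_0^c}$, and to use those for anti-concentration you would need control on the unknown $\Bv_{J_0^c}$ itself, which is precisely what the lemma is trying to establish. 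Moreover, confining $\Bv_{J_0^c}$ to a flat of dimension $n^{1/2+\eps}$ is too weak: its points number $p^{n^{1/2+\eps}}$, and with $p$ as large as $(C_\xi\sqrt n)^n$ this union bound ($e^{\Theta(n^{3/2+\eps}\log n)}$) cannot be beaten by the gains you have. The paper avoids this by \emph{exactly determining} $\Bv_1=\Bv_{J_0^c}$ from $\Bv_0$ and a conditioned full-rank square minor whose rows lie mostly in $J_0$ (Claim \ref{claim:AJ_1}), so that the union bound is only over the discrete collection $\CC_0$ of size $N^{A_0N}$.

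Second, your claim that the remaining equations ``force'' the components of $\Bv_{J_0^c}$ into bounded-height combinations of the generators of $Q$ plus finitely many new parameters is asserted rather than proved, and it is not a linear-algebraic fact. In the paper this structure comes from a dichotomy: either $\rho_l$ of the relevant subvector of $\Bv_1$ is smaller than $n^{-K_0}$, in which case the $\Omega(n)$ remaining equations (using genuinely unconditioned entries indexed by $(J\setminus B)\times(J_1\setminus B)$) hold with probability $n^{-K_0\Omega(n)}$, small enough to beat the $N^{A_0N}$ union bound; or $\rho_l\geq n^{-K_0}$, and then the inverse Littlewood--Offord Theorem \ref{theorem:ILO} is what produces the GAP containing all but $n^\eps$ entries. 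Your proposal never invokes Theorem \ref{theorem:ILO}, and without it there is no mechanism producing the GAP structure on $\Bv_{J_0^c}$. Finally, even granting a single propagation pass, it only controls the coordinates hit by fresh-randomness rows; the residual block (the analogue of $B\cap J_1$, of size up to $4N^{1-c/2}$, far larger than $n^\eps$) is left uncontrolled, and handling it requires the iterative Stage 2 of the paper (Claim \ref{claim:iteration}), which reruns the determination/dichotomy argument at successively smaller scales until the unstructured support drops to $O(n^\eps)$. Your ``single pass'' resolution therefore does not close the argument.
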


We then deduce the following version for random matrices of given degree sequence.

\begin{corollary}\label{cor:strucnotstruc}  Let $0<\eps<1$ and $C>0$ be given. There there exists a constant $C_\ast'$ such that the following holds. Let $\Bd=(d_1,\dots, d_{n+1})$ be an degree sequence with $(1/2-c)n \le d_i \le (1/2+c)n$  so that 
$$\P(G(n+1,1/2) \in G_{\Bd})\ge n^{-4n}.$$ 
Let $G_\Bd$ be a random graph in $\CG_{\Bd}$ defined in Phase 2 of Definition \ref{def:lap'},
$$\P(G_{\Bd}=G) := \frac{\P(G(n+1,1/2) = G)}{\P(G(n+1,1/2) \in \CG_d)}, \mbox{ for each $G$ of degree sequence $\Bd$}.$$ 
Then the conclusion of Lemma \ref{lemma:strucnotstruc:sym} holds for the matrix $L_{n, \Bd}$ obtained from $M_{G_\Bd}-\diag(\Bd)$ by removing the last row and column.
\end{corollary}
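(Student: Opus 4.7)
\textbf{Proof proposal for Corollary \ref{cor:strucnotstruc}.} The plan is to transfer Lemma \ref{lemma:strucnotstruc:sym} from the (unconditioned) random symmetric setting to the conditional-on-degree-sequence setting, using the factorization of the Erd\H{o}s--R\'enyi measure as in Definition \ref{def:lap'}.

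First I would observe that, by the definition of $G_\Bd$ in Phase 2 of Definition \ref{def:lap'}, the graph $G_\Bd$ has exactly the distribution of $G(n+1,1/2)$ conditioned on the event $\{G(n+1,1/2)\in\CG_\Bd\}$. Consequently, if $L_n$ denotes the reduced Laplacian from the usual Erd\H{o}s--R\'enyi model (Definition \ref{def:lap}), then on the event $\{G(n+1,1/2)\in\CG_\Bd\}$ the diagonal of $L_n$ is deterministically $-d_i$, and $L_n$ coincides exactly with the matrix $L_{n,\Bd}$ built from the adjacency matrix $M_{G(n+1,1/2)}$ with diagonal replaced by $-\diag(\Bd)$ (and last row/column removed). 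Thus, letting $\CB$ denote the bad event in the conclusion of Lemma \ref{lemma:strucnotstruc:sym} (with the prescribed constants $C,\eps,c,A$ and resulting $C_\ast, C_\ast'$), we have
\begin{equation*}
\P\bigl(\CB \text{ for } L_{n,\Bd}\bigr) \;=\; \P\bigl(\CB \text{ for } L_n \,\big|\, G(n+1,1/2)\in\CG_\Bd\bigr) \;\le\; \frac{\P(\CB \text{ for } L_n)}{\P(G(n+1,1/2)\in\CG_\Bd)} \;\le\; n^{4n}\,\P(\CB \text{ for } L_n),
\end{equation*}
by the hypothesis $\P(G(n+1,1/2)\in\CG_\Bd)\ge n^{-4n}$.

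Next I would apply Lemma \ref{lemma:strucnotstruc:sym} to bound $\P(\CB \text{ for } L_n)$. Because $L_n$ is obtained from the adjacency of $G(n+1,1/2)$ minus its diagonal of row-sums, its first $n$ rows and columns fit the ``perturbed symmetric matrix'' framework $M_N - D_N$ of that lemma once we identify $\xi$ with a $\Ber(1/2)$ variable and $D_N$ with the (random) degree matrix. The lemma is stated for a fixed $\Bd$, but we want to apply it after the conditioning; the key point is that the unconditioned probability of $\CB$ (for a fixed target $\Bd$ appearing in the lemma) is already what appears in the lemma. Upgrading the given constant $A$ to a larger constant $A'$ (to be chosen below), Lemma \ref{lemma:strucnotstruc:sym} yields $\P(\CB \text{ for } L_n) = O(n^{-A'n})$ with a suitable $C_\ast'=C_\ast'(C,\eps,A')$.

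Combining the two displays, $\P(\CB \text{ for } L_{n,\Bd}) = O(n^{4n-A'n})$, and taking $A' \ge A+5$ (which only changes the downstream constant $C_\ast'$) we obtain the desired bound $O(n^{-An})$. The only real subtlety here is the matching of matrix models: one must verify that the bad event for $L_{n,\Bd}$ coincides, under the conditioning $G(n+1,1/2)\in\CG_\Bd$, with the bad event for the perturbed symmetric matrix $M_n - D_N$ of Lemma \ref{lemma:strucnotstruc:sym}. This is essentially a bookkeeping check: the off-diagonal $\Ber(1/2)$ entries are identical in the two models, and once $\Bd$ is fixed the diagonal agrees as well; the only mild discrepancy is the self-loop entries $x_{ii}$ present in the statement of Theorem \ref{theorem:cyclic:sym} but not in the adjacency picture, and this is absorbed into $D_N$ without affecting the structure of any local almost-normal vector. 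The main obstacle is therefore not in this step, but rather the prior Lemma \ref{lemma:strucnotstruc:sym} itself; granted that lemma, the corollary follows immediately from the conditioning inequality above.
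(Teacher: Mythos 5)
Your proposal is correct and follows essentially the same route as the paper: identify $G_\Bd$ as $G(n+1,1/2)$ conditioned on $\CG_\Bd$, observe that on this event the matrix agrees with the fixed-diagonal perturbed symmetric matrix $M_{n+1}-\diag(\Bd)$ of Lemma \ref{lemma:strucnotstruc:sym}, and then pay the factor $\P(\CG_\Bd)^{-1}\le n^{4n}$ against the unconditional $O(n^{-An})$ bound from that lemma (the paper simply records the result as $O(n^{-(A-4)n})$ rather than pre-enlarging $A$, which is the same bookkeeping). Just be careful that the unconditional probability you invoke is for the fixed-diagonal matrix $L_n(M_{n+1})$, not for the genuinely random-diagonal Laplacian $L_n$, since Lemma \ref{lemma:strucnotstruc:sym} is stated only for deterministic $D_N$ — a point you essentially acknowledge in your final remarks.
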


\begin{proof}[Proof of Corollary \ref{cor:strucnotstruc}] Let $M_{n+1}$ be the adjacency matrix of $G(n+1,1/2)$ and let $\CC_{\Bd}$ be the event that $G(n+1,1/2) \in \CG_\Bd$ (that is $\{M_{n+1} \in \CC_\Bd \}= \{G(n+1,1/2) \in \CG_\Bd\}$). Then by definition $\P(M_{n+1} \in \CC_\Bd) \ge n^{-4n}$ and for each $G\in \CG_\Bd$
$$\P(M_{G_\Bd}=M_G) =  \frac{\P(M_{n+1} = M_G)}{\P(M_{n+1} \in \CC_\Bd)}.$$
Let $\CF$ be the set of square matrices (of size $n$) where every $J_0$-structured normal vectors is fully structured, and let $\CE$ be the non-singularity event $\CE_{non-sing}$ considered in Lemma~\ref{lemma:strucnotstruc:sym}. We have (where $L_n(M_{n+1})$ is the principle minor of $M_{n+1} - \diag(\Bd)$)
\begin{align*}
\P(L_{n, \Bd} \in \CE \cap \bar{\CF}) &= \frac{\P(L_n(M_{n+1}) \in \CE \cap \bar{\CF} \wedge M_{n+1} \in \CC_\Bd)}{\P(M_{n+1}\in \CC_\Bd)}\le \frac{\P(L_n(M_{n+1}) \in \CE \cap \bar{\CF} \wedge M_{n+1} \in \CC_\Bd)}{n^{-4n}}\\  
&= O(n^{-(A-4)n}),
\end{align*}
where in the last estimate we applied Lemma \ref{lemma:strucnotstruc:sym}.
\end{proof}

\begin{proof}[Proof of Lemma \ref{lemma:strucnotstruc}]  On  $\CE_{mixing}$ we have (recalling that $G_n=L_n$)
\begin{align*}\P(L_n \in \CE \cap \bar{\CF} \cap \CE_{mixing}) & \le \sum_{\Bd; \P(G(n+1,1/2) \in \CG_{\Bd})\ge n^{-4n}}\P(M_{n+1} \in \CE \cap \bar{\CF} | M_{n+1} \in \CC_{\Bd})\P(M_{n+1} \in \CC_{\Bd})\\
& =\sum_{\Bd; \P(G(n+1,1/2) \in \CG_{\Bd})\ge n^{-4n}} \P(L_{n,\Bd}\in \CE \cap \bar{\CF})  \P(M_{n+1} \in \CC_{\Bd}) \\
&=O( n^{-(A-4)n}).
\end{align*}
\end{proof}
What remains is to justify the symmetric matrix model.

\begin{proof}[Proof of Lemma \ref{lemma:strucnotstruc:sym}] Our proof is somewhat similar to those of Proposition \ref{prop:structure:subexp:lap} and Lemma \ref{lemma:sparse:2}. It suffices to consider for a fixed $N$ in the range $N_0 = \lfloor c n \rfloor \le N \le n$. Assume that there is a local almost orthogonal vector $\Bv'$ (from both cases of the proof of Lemma \ref{lemma:non-zero}, not including the $N -k$ appended zero components) that is $J_0$-structured for some $J_0$ but not fully structured. We will show that the probability of this event is as small as expected via two stages. 
In what follows we will use the notation from the proof of Lemma  \ref{lemma:non-zero}.

{\bf Stage 1.} Let $\Bv'=(\Bv_0,\Bv_1)$ be the decomposition into the $J_0$-structured part $\Bv_0$ and the remaining part $\Bv_1$, where $J_0=\suppi(\Bv_0)$. 
We will  fix $J_0$ (there are crudely at most $2^{N}$ such $J_0$), where $|J_0| \ge N/2 -2N^{1-c/2}$ on $\CE_{mixing}$, and $|J_1|= |\suppi(\Bv_1)| =|J\bs J_0| +1 \le N/2+2N^{1-c/2}$ (see the left picture from Figure \ref{figure:local}.)  %

We will argue that $\Bv_1$ can be determined via $\Bv_0$ and some part of the matrix $G_N$. We will be focusing on Case 1 of the proof of  Lemma \ref{lemma:non-zero}, the other case is similar.  Using notation from that proof, we have $G_{J \times (J\cup \{N+1\})}\Bv'=0$, and so $M_0\Bv_0 + M_1\Bv_1 =0$ where $M_0 = G_{J \times J_0}$ and $M_1=G_{J \times J_1}$. As the matrix $G_{J \times (J\cup \{N+1\})}$ has full rank, the matrix $G_{J \times J_1}$ has rank at least $|J_1|-1$. Without loss of generality we assume that $G_{J \times J_1}$ has rank $|J_1|$, and hence it has a square submatrix $M_1'$ of size $|J_1|$ which is non-singular. 
\begin{claim}\label{claim:AJ_1} With probability at least $1-O(\exp(-n^{1+\eps}))$, $M_1'$ can take the form $M_{B \times J_1}$ where 
$$|B \cap J_1|\le 5N^{1-c/2}.$$
\end{claim}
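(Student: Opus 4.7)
\emph{Proof plan for Claim \ref{claim:AJ_1}.}

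The plan is to reduce the existence of such a $B$ to a rank statement about the off-diagonal block $G_{J_0\times J_1}$, and then apply the quadratic repulsion bound of Lemma~\ref{lemma:quadratic}. First observe that $J_0$ and $J_1$ are disjoint index sets in $[N+1]$: by construction $J_0 = \suppi(\Bv_0)\subset I_0\cap J \subset J$, while $J_1 = (J\setminus J_0)\cup\{N+1\}$. On $\CE_{mixing}$ we have $|I_0|\ge N/2 - N^{1-c/2}$ and $|J_0| \ge |I_0|-2N^{1-c/2}\ge N/2 - 3N^{1-c/2}$, while $|J|\le N$, so $|J_1| = |J|-|J_0|+1 \le N/2 + 4N^{1-c/2}$; in particular $|J_1|-|J_0| = O(N^{1-c/2})$.

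Next, the desired $M_1' = G_{B\times J_1}$ exists with $|B\cap J_1|\le 5N^{1-c/2}$ precisely when there is a subset $B_0\subset J_0 = J\setminus J_1$ of size at least $|J_1|-5N^{1-c/2}$ for which $G_{B_0\times J_1}$ has full row rank, because we can then append at most $5N^{1-c/2}$ rows indexed by $J_1\cap J$ using the fact that $M_1 = G_{J\times J_1}$ has full column rank $|J_1|$. This in turn is implied by the rank lower bound
$$
\rk(G_{J_0\times J_1})\ \ge\ |J_1|-5N^{1-c/2}.
$$
Since $|J_0|\ge |J_1|-7N^{1-c/2}$, this is equivalent (up to an adjustment of the constant from $5$ to a slightly larger one) to saying that $G_{J_0\times J_1}$ has rank within $O(N^{1-c/2})$ of the maximum possible value $\min(|J_0|,|J_1|)=|J_0|$.

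This rank bound is exactly what Lemma~\ref{lemma:quadratic} delivers. Apply that lemma with $m=n$ and $k=\lfloor N^{1-c/2}\rfloor$: for some small absolute constants $c'',c'>0$, with probability at least $1-\exp(-c'' N^{2-c}+c' n)$, \emph{every} pair of subsets $I,I'\subset[n]$ with $|I|=|I'|=N'$ satisfies $\rk(G_{I\times I'}/p)\ge N'-k$. Applied with $N'$ ranging over the at most $n$ integer values in $[N/2-3N^{1-c/2},N/2+4N^{1-c/2}]$, and combined with a trivial union bound, this yields the desired rank bound for $G_{J_0\times J_1}$ (take $N'=|J_0|$ and embed $J_0$ in $J_1$ by any injection of matching size). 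Because the conclusion of Lemma~\ref{lemma:quadratic} is uniform over all such $I,I'$, we do not need to condition on the (random) sets $J_0, J_1$ — this is the point where the uniformity of the quadratic repulsion estimate does the heavy lifting. For $c$ small enough (so that $2-c>1+\eps$) the failure probability $\exp(-c'' N^{2-c}+c'n)$ is bounded by $\exp(-n^{1+\eps})$ for all $N\ge N_0=\lfloor cn\rfloor$, giving the claim.

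The one minor subtlety is bookkeeping the exact constant: the approach above produces a $B$ with $|B\cap J_1|$ bounded by a slightly larger multiple of $N^{1-c/2}$ (reflecting the gap $|J_1|-|J_0|\le 7 N^{1-c/2}$), and one absorbs this into the constant $5$ by choosing $c$ small enough or by enlarging the bound; no new ideas are needed. The main obstacle is essentially routine: making sure that Lemma~\ref{lemma:quadratic} applies to the \emph{off-diagonal} block $G_{J_0\times J_1}$ uniformly in all random choices of $J_0,J_1$, which it does because of its built-in supremum over $I,J$.
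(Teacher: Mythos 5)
Your argument is correct and is essentially the paper's own proof: the paper likewise applies (the proof of) Lemma~\ref{lemma:quadratic} to a nearly-square block whose rows lie mostly in $J_0$ (there $B'=J_0\cup B''$ with the choice $k=n^{1/2+\eps}$) and then completes to a nonsingular $|J_1|\times|J_1|$ submatrix by exchanging at most $k$ rows, using that $G_{J\times J_1}$ has full column rank. Your variant — working with the purely off-diagonal block $G_{J_0\times J_1}$ and taking $k=N^{1-c/2}$ before appending rows — only perturbs the harmless constant in $5N^{1-c/2}$ (fixed by taking $k=N^{1/2+\eps}$ rather than by shrinking $c$, or simply by enlarging the constant, which changes nothing downstream), so the two proofs coincide in substance.
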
 
\begin{proof} We first choose $B'=J_0 \cup B''$ for any $B''\subset J_1$ so that $|B'|=J_1$. Then $|B'\cap J_1| \le 4N^{1-c/2}$. By the proof of Lemma \ref{lemma:quadratic}, we see that the matrix $M_{B \times J_1}$ already has rank at least $|J_1| - n^{1/2+\eps}$ with probability at least $O(\exp(-n^{1+2\eps}))$. Hence by interchanging with at most $ n^{1/2+\eps}$ row vectors of $M_{(J\bs B')\times J_1}$, we can obtain $B$ for which $M_{B \times J_1}$ has full rank, and that $|B \cap J_1| \le  |B'\cap J_1| + n^{1/2+\eps} \le 5 N^{1-c/2}$. 
\end{proof}
Using the matrix $M_1'=M_{B\times J_1}$ above, we will extract from $M_{ J \times (J\cup \{v\}) }\Bv'=0$ the portion $M_0' \Bv_0 + M_1' \Bv_1=0$, where $M_0' =M_{B \times J_0}$. After fixing $M_0'$ and $M_1'$ we have
 $$\Bv_1 = (M_1')^{-1} M_0 \Bv_0.$$ 
 In summary, by conditioning on $M_0'$ and $M_1'$, and on a realization of $\Bv_0$, the vector $\Bv_1$ is determined.  Note that by fixing $M_0'$ and $M_1'$, we have fixed a submatrix of size $J_1 \times [N]$ in $G_N$. 
 
 Set $A_0:=C_\ast$. There are $N^{C_\ast N/2}p^{O(1)}=N^{A_0 N/2}e^{O(n^c)}$ ways to choose the structured vector $\Bv_0$, and $O(2^N \times 2^N)=O(4^N)$ ways to choose $J_0$ and the row index for $M_1'$. Thus in total we have a collection $\CC_0$ of at most $N^{A_0 N}$ ways to choose the vectors $\Bv_0,\Bv_1$. 

\begin{figure}%

\centering
\begin{tikzpicture}
\draw (0,0) -- (4,0) -- (4,4) -- (0,4) -- (0,0);
\draw (2,0) -- (2,4);
\draw (0,3) -- (4,3);
\draw (0,1) -- (2,1)  node[above  left = .2in]{$M_0'$};
\draw (2,1) -- (4,1)  node[above  left = .2in]{$M_1'$};
\draw (0,4) -- (2,4)  node[above  left =.2in]{$\Bv_0$};
\draw (2,4) -- (4,4)  node[above  left =.2in]{$\Bv_1$};
\draw [line width=2pt] (2,4) -- (3,4)  node[above  left =.1in]{$\Bv_1'$};
\draw [dashed] (0,4) -- (4,0);
\draw (3,0) -- (3,4);
\end{tikzpicture}
\hfil
\begin{tikzpicture}
\draw (0,0) -- (4,0) -- (4,4) -- (0,4) -- (0,0);

\draw (2,0) -- (2,4);
\draw (0,4) -- (2,4)  node[above  left =.2in]{$\Bv_0$};
\draw (2,4) -- (2.5,4)  node[above=.2in]{$\Bv_1'$};
\draw (2,4) -- (2.4,4)  node[above=.05in]{$\Bv_1''$};
\draw [line width=2pt] (2,4) -- (2.5,4);

\draw  [dashed] (0,4) -- (4,0);
\draw (3,0) -- (3,4);
\draw (2.5,0)--(2.5,4);
\draw (0,1.5) -- (4,1.5);
\draw (0,1.5) -- (1,1.5)  node[above=.05in]{$M_0''$};
\draw (2,1.5) -- (2.5,1.5)  node[above=.05in]{$M_1''$};
\draw (3,1.5) -- (3.5,1.5)  node[above=.05in]{$M_0''$};
\draw (0,2.5) -- (4,2.5);
\end{tikzpicture}
\caption{Finding non-structured subvectors.}
\label{figure:local}
\end{figure}
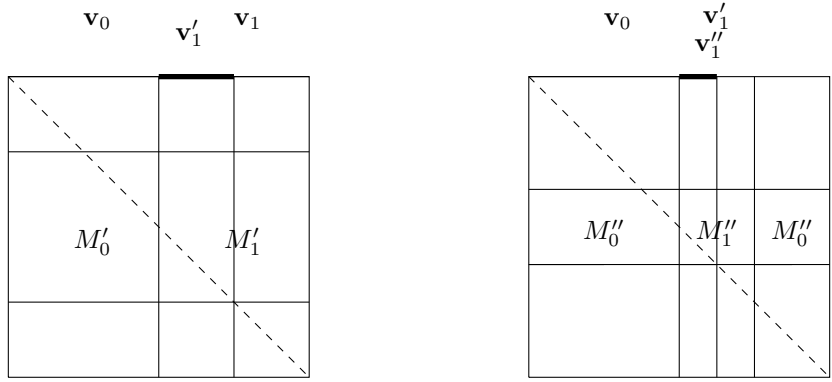

Next we show that for a fixed $\Bv_1$ in the way above, by exploiting the remaining randomness and the remaining equations from $M_0\Bv_0 + M_1\Bv_1 =0$, the most part of $\Bv_1$ must be structured with extremely high probability. To do this, let $(J_1\bs B)^+$ and $(J_1\bs B)^-$ be any two parts of $J_1\bs B$ of size almost equal. If the part $\Bv_1$ restricted to $J_1\bs B$ has the property that 
\begin{equation}\label{eqn:J1A}
\rho_l((\Bv_1)_{(J_1\bs B)^+}) \le n^{-K_0} \mbox{ or } \rho_l((\Bv_1)_{(J_1\bs B)^-}) \le n^{-K_0}
\end{equation}
for some large constant $K_0$ chosen depending on $A_0$ (i.e. on $C_\ast$) and $A$. Without loss of generality assume that $\rho_l((\Bv_1)_{(J_1\bs B)^+}) \le n^{-K_0}$. Then by using the i.i.d. decomposition as in \eqref{eqn:i.i.d.decomposition:nL} and  \eqref{eqn:i.i.d.decomposition:L} we can bound the probability that $M_{(J\cup \{N+1\}) \times J}\Bv'=0$ by
\begin{align*}
\P(\exists \Bv' \in \CC_0, M_{J \times (J\cup \{v\})}\Bv'=0) & \le \sup_{\Ba} \P(\exists \Bv'\in \CC_0, M_{(J\bs B) \times (J_1 \bs B)}\Bv'=\Ba)\\
& \le (1/n^{K_0})^{|(J_1\bs B)^-|} N^{A_0 N} 4^n   \le 1/n^{NK_0/4} \le 1/n^{A n}.
\end{align*}

Hence it remains to assume that $\rho_l((\Bv_1)_{(J_1\bs B)^+})  \ge n^{-K_0}$ and $\rho_l((\Bv_1)_{(J_1\bs B)^-})  \ge n^{-K_0}$. Under these assumptions, however, Theorem \ref{theorem:ILO} implies that $(\Bv_1)_{J_1\bs B}$ is structured in the sense that all but $n^\eps$ element of $(\Bv_1)_{J_1\bs B}$ belongs to a GAP of size bounded by $n^{K_0}$. Hence we have just shown that all but at most $n^\eps$ exceptional entries of the vector $\Bv'|_{ J_0 \cup (J_1\bs B)}$ belong to a GAP of size $n^{O(1)}$ and rank $O(1)$, where the implied constants depend on $\eps, K_0$. So crudely the collection $\CC_1$ of such structured vectors is now bounded by $n^{A_1 n}$ for some sufficiently constant depending on $A_0,A, \eps$.
  
{\bf Stage 2.} Because the contribution of $n^\eps$ exceptional entries can be easily absorbed into the total number of unstructured entries, let us focus only on the vector $\Bv_1|_{B \cap J_1}$, where we will show that with extremely high probability all but $n^{O(\eps)}$ entries of this vector are structured. We have learned that $N':=|B \cap J_1| \le 4 N^{1-c/2}$. If $N' \le n^\eps$ then we are done. Assume otherwise, we will then apply the following variant of Lemma \ref{lemma:quadratic} and Claim \ref{claim:AJ_1} above.
\begin{claim}\label{claim:iteration} Assume that $I$ is an index set with $|I|=N'$ where $n^\eps \le N'\le 4 N^{1-c/2}$. Then with probability at least $1-\exp(- \Theta(N \times {N'}^{\eps}))$ the matrix $G_{([N]\bs I) \times I}$ has a square submatrix of size $|I|$ of rank at least $|I|-{N'}^{1/2+\eps}$.
\end{claim}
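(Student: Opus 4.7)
My plan is to view $G_{([N]\bs I)\times I}$ as a rectangular matrix with genuinely independent entries, and then apply a standard Odlyzko-type bound to force it to have nearly full column rank; the presence of shuffling and the (skew-)symmetry constraint --- which forced the technical $AB$-shuffle in Lemma~\ref{lemma:quadratic} --- will not create any difficulty here, because the row and column index sets are disjoint.

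More precisely, the key observation is that $[N]\bs I$ and $I$ are disjoint, so each entry $G_{j,i}$ with $j\in[N]\bs I$ and $i\in I$ corresponds to a \emph{distinct} unordered pair $\{j,i\}$ with $j\ne i$. Thus no diagonal entry appears and no two such entries are tied together by the (skew-)symmetry or Laplacian constraints. In particular, for each of the three models $M_N$, $A_N$, $L_N^\ast$, the entries of $G_{([N]\bs I)\times I}$ are i.i.d.\ copies of an $\al$-balanced (respectively $\tfrac12$-balanced) random variable; for $L_N^\ast$ this uses Fact~\ref{fact:EoM} to identify its distribution with $L_N$ on these off-diagonal coordinates.

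With this independence in hand, I would first reduce the claimed statement about an $|I|\times|I|$ square submatrix of rank at least $|I|-{N'}^{1/2+\eps}$ to the equivalent statement that $\operatorname{rank} G_{([N]\bs I)\times I}\ge |I|-{N'}^{1/2+\eps}$: given any $r$ linearly independent rows of the rectangular matrix, one extends them to $|I|$ rows by adjoining arbitrary extra rows to obtain a square submatrix whose rank is exactly $r$. The core estimate is then a standard union bound. Setting $k=\lceil {N'}^{1/2+\eps}\rceil$, the event $\operatorname{rank} G_{([N]\bs I)\times I}\le N'-k$ implies that some $J\subset I$ of size $N'-k$ spans the column space while the remaining $k$ columns all lie in that span, which has dimension at most $N'-k$. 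Conditioning on the columns indexed by $J$, Lemma~\ref{lemma:O} bounds the probability that each of the other $k$ columns lies in this fixed subspace by $(1-\al)^{(N-|I|)-(N'-k)}$; independence across the $k$ columns multiplies this to $(1-\al)^{k(N-2N'+k)}$. Since $N'\le 4N^{1-c/2}$, the exponent is at least $kN/2$ for $N$ large, so a $\binom{N'}{k}\le(eN')^k$ union bound yields total probability at most $\exp(-\Theta(kN))=\exp(-\Theta(N\cdot {N'}^{1/2+\eps}))$, which comfortably beats the claimed $\exp(-\Theta(N\cdot {N'}^{\eps}))$.

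The only subtlety I anticipate is confirming that the Laplacian reshuffling in Phase 2 does not silently introduce dependencies among the entries of $G_{([N]\bs I)\times I}$; this should be routine from Fact~\ref{fact:EoM}, since the off-diagonal pairs $\{j,i\}$ indexing $G_{([N]\bs I)\times I}$ are entirely disjoint from any pair acted on by the reshufflings performed at later indices. Modulo this verification, the argument is essentially a one-line application of Odlyzko.
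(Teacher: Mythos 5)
Your proof is correct, and it takes a genuinely different route from the paper's. The paper proves the claim by partitioning the row set $[N]\setminus I$ into $\lfloor N/N'\rfloor$ disjoint blocks of size $N'$, applying the rank estimate of Lemma~\ref{lemma:quadratic} (with $N'$ in place of $n$) to each square block $G_{B_j\times I}$ to get failure probability $\exp(-\Theta({N'}^{1+2\eps}))$ per block, and then using independence across the blocks to multiply these failures, giving $\exp(-\Theta(N{N'}^{2\eps}))$ and, directly, a square block of nearly full rank. You instead exploit the observation that the rectangular block $G_{([N]\setminus I)\times I}$ has fully independent, $\alpha$-balanced entries (disjoint row and column index sets, no diagonal entries, no pair repeated), so a single Odlyzko-type union bound over column subsets $J$ of size $N'-k$, with $k=\lceil {N'}^{1/2+\eps}\rceil$, shows the whole rectangular matrix has rank at least $|I|-{N'}^{1/2+\eps}$ except with probability $\binom{N'}{k}(1-\alpha)^{k(N-2N'+k)}\le \exp(-\Theta(N{N'}^{1/2+\eps}))$, after which adjoining rows (possible since $N-N'\ge N'$) yields the required square submatrix. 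Your argument is more elementary, avoids invoking the quadratic-repulsion machinery (whose $AB$-shuffle is indeed unnecessary here precisely because the row and column sets are disjoint), and gives a stronger exponent $N{N'}^{1/2+\eps}$ versus the paper's $N{N'}^{2\eps}$; the paper's blockwise formulation has the mild advantage of reusing an already-proved lemma verbatim. Two trivial imprecisions: the extended square submatrix has rank \emph{at least} $r$, not exactly $r$ (which only helps), and in the skew-symmetric case the entries are independent $\alpha$-balanced but not literally identically distributed (some are $-\xi$), which again is all your application of Lemma~\ref{lemma:O} needs.
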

\begin{proof}
We decompose the matrix $G_{([N]\bs I) \times I}$ into $k=\lfloor N/N' \rfloor$ disjoint square matrices of type $M_{B_1 \times I}, \dots, M_{B_k \times I}$ where $B_1,\dots, B_k$ are arbitrary but disjoint, each has size $N'$ in $[N]\bs I$. For each such matrix, the probability it has rank at most $|I|-{N'}^{1/2+\eps}$ is bounded by $\exp(-\Theta({N'}^{1+2\eps}))$ by the argument of Lemma \ref{lemma:quadratic} (with $N'$ in place of $n$). Because these matrices are independent, the probability that at least one of these matrices has rank at least $|I|-{N'}^{1/2+\eps}$ is bounded below by 
$$1- \exp(-\Theta( k {N'}^{1+2\eps})) \ge 1-\exp(- \Theta(N \times {N'}^{\eps})).$$
\end{proof}
In our next step we work with $\Bv_1|_{B \cap J_1}$ as with $\Bv_1$ from Stage 1. By using the argument in Stage 1 (relying on $M_0\Bv_0 + M_1\Bv_1 =0$, but now with a fixed structured vector $\Bv'|_{ J_0 \cup (J_1\bs B)}$ from $\CC_1$) to passing to a potential unstructured vector $\Bv_1''$ (see the right-side illustration in Figure \ref{figure:local}) whose support is a subset of size at most ${N'}^{1/2+\eps}$ in $J_1$, where $\rho_l(\Bv_1'') \le n^{-K_2}$ for some $K_2$ chosen sufficiently large compared to $A_1,A,\eps$. 

One then iterate Claim \ref{claim:iteration} until we get a subvector supported on an index set of size $O(n^\eps)$. It is clear that this process terminates after $O(1)$ steps because the support size of the potential unstructured vectors decreases from $N'$ to ${N'}^{1/2+\eps}$ each time. 

By gathering the structures together, we obtain a unified GAP (over $\Z/p\Z$), which might have large size and rank, but still of order $O(n^{O(1)})$ and $O(1)$ respectively. Let $C_\ast'$ be the maximum of these constants (which depend on the choices of $K_i$, and hence of $A$ and $\eps$), we are done with the proof.
\end{proof}

To conclude this subsection, by combining Lemma \ref{lemma:non-zero} Lemma \ref{lemma:strucnotstruc} we obtain the following

\begin{lemma}\label{lemma:fullstruct} Let $C, \eps>0$ be given, where $C$ can be large and $\eps$ can be small. There exists a constant $C_\ast'$, and there exists an event $\CE_{partial-full}$ of probability $1-n^{-An}$ such that on the event $\CE_{mixing}\wedge \CE_{non-sing}\wedge \CE_{partial-full} $, assume that $N\ge c n$, and $\P(\rank(G_{N+1}/p) =\min \{ \rank(G_N/p)+2, N+1\}) \le 1- n^{-C}$ for some $p \in \CP_n$ (where the randomness is on the $N+1$-th column and $N+1$-th row of the reshuffling process if in the Laplacian case.) Then there exists a non-zero vector $\Bv$ such that all but $n^{\eps}$ of its entries belong to a GAP in $\Z/p\Z$ of size at most $n^{C_\ast'}$ and rank at most $C_\ast'$ such that $\Bv$ is orthogonal to all but at mot $C_\ast'$ rows of $G_N$.
\end{lemma}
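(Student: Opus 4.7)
The plan is to simply stitch together Lemma \ref{lemma:non-zero} and Lemma \ref{lemma:strucnotstruc}, with the event $\CE_{partial-full}$ being essentially the complement of the bad event from Lemma \ref{lemma:strucnotstruc}, taken over all relevant $N$ and $p$. Concretely, given $C, \eps >0$, I would first invoke Lemma \ref{lemma:non-zero} with these parameters (together with the fixed $c$ from the outline), which produces a constant $C_\ast = C_\ast(C, c, \eps)$ with the following property: whenever $\CE_{non-sing}$ holds and the rank-growth probability fails to exceed $1 - n^{-C}$ at some step $N$ for some prime $p$, there exists a non-zero local almost normal vector $\Bv$ of $G_N/p$ which is $J_0$-structured (in the sense of Definition~\ref{def:full}) for some $J_0 \sub [N]$ with $|J_0| \geq |I_0| - 2N^{1-c/2}$, and on $\CE_{mixing}$ we have $|J_0| \geq N/2 - 2N^{1-c/2}$.

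Next, I would apply Lemma \ref{lemma:strucnotstruc} with the same $C, \eps, c$ and with a sufficiently large constant $A'$ in place of $A$, where $A'$ is chosen large enough to absorb the cardinality losses coming from the union bounds below. This produces a constant $C_\ast'$ and tells us that for any fixed $N$ in the range $N_0 \leq N \leq n-1$ and any fixed $p \in \CP_n$, the event $\CF_{N,p}$ that $G_N$ admits a $J_0$-structured local almost normal vector over $\F_p$ which is \emph{not} fully structured with parameters $(C_\ast', \eps)$, has probability at most $n^{-A'n}$ (intersected with $\CE_{mixing}$ and the non-singularity of the principal minors). I would then define
\[
\CE_{partial-full} := \bigcap_{N_0 \leq N \leq n-1} \bigcap_{p \in \CP_n} \overline{\CF_{N,p}}.
\]
Since $|\CP_n| \leq n^{O(n)}$ (a crude bound from the upper cutoff $p \leq (C_\xi \sqrt{n})^n$ and the lower cutoff $p \geq e^{n^c}$) and there are only $O(n)$ choices of $N$, taking $A'$ large enough relative to the given target exponent $A$ yields
\[
\P(\CE_{partial-full}) \geq 1 - n \cdot n^{O(n)} \cdot n^{-A'n} \geq 1 - n^{-An}
\]
for all sufficiently large $n$.

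On the intersection $\CE_{mixing} \wedge \CE_{non-sing} \wedge \CE_{partial-full}$, if the rank-growth hypothesis fails at some $N$ for some $p \in \CP_n$, then Lemma \ref{lemma:non-zero} produces a $J_0$-structured vector $\Bv$, and by definition of $\CE_{partial-full}$ this $\Bv$ must in fact be fully structured with parameters $(C_\ast', \eps)$. This is exactly the conclusion of Lemma \ref{lemma:fullstruct}.

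There is no real obstacle here beyond bookkeeping: the two ingredients have already done the heavy lifting. The only subtle point is ensuring that the union bound over primes in $\CP_n$ (which is of size $n^{O(n)}$, not polynomial) is tamed by the $n^{-A'n}$ tail of Lemma \ref{lemma:strucnotstruc}; this is why Lemma \ref{lemma:strucnotstruc} was stated with an \emph{arbitrary} constant $A$, and it is precisely the reason the exponent in the probability bound in that lemma was taken to grow linearly in $n$ rather than being polynomial.
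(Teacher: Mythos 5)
Your proposal is correct and follows essentially the same route as the paper: the paper obtains Lemma \ref{lemma:fullstruct} by exactly this combination of Lemma \ref{lemma:non-zero} with Lemma \ref{lemma:strucnotstruc}, and its proof outline explicitly notes that the $n^{-An}$ bound with arbitrary $A$ in Lemma \ref{lemma:strucnotstruc} is there precisely so that one can take the union bound over all $p\in\CP_n$ (and over $N$), which is how you define $\CE_{partial-full}$. The bookkeeping with $|\CP_n|\le n^{O(n)}$ and the choice of $A'$ is the intended argument, so nothing is missing.
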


We end this section with a result for some restricting range of $p$, that will be useful in Step 3. This result is an analog of Proposition \ref{prop:structure:subexp:sym} and \ref{prop:structure:subexp:lap} for GAP structures.

\begin{proposition}\label{prop:GAP:lap} Let $\eps, C_\ast'$ be given. Assume that $N\ge c n$ and $e^{n^{c/2}} \le p$.
The probability that there exists a non-sparse vector $\Bv$ whose all but $n^\eps$ entries belong to a GAP of size at most $n^{C_\ast'}$ and rank at most $C_\ast'$ and  $\Bv$ is orthogonal to all but at mot $C_\ast'$ rows of $G_N$ is bounded by $n^{-\Theta(n)}$.
\end{proposition}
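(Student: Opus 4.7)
\textbf{Proof plan for Proposition~\ref{prop:GAP:lap}.}

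The strategy is a counting argument that exploits the enormous size of $p$ to pass the structural information from $\Z/p\Z$ to $\Z$, where counting is much cheaper. Because $p \geq e^{n^{c/2}}$ dominates every polynomial-in-$n$ quantity, any integer identity bounded by $n^{O(1)}$ that holds mod $p$ actually holds over $\Z$; this is the mechanism by which the GAP structure becomes rigid.

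First, I would decompose any structured $\Bv$. If $\Bv \in \F_p^N$ has all but $n^\epsilon$ entries in a GAP $Q = \{a_0 + \sum_{j=1}^r x_j a_j : |x_j| \leq M_j\}$ of rank $r \leq C_\ast'$ and size $\leq n^{C_\ast'}$, then on the non-exceptional coordinates we can write $\Bv \equiv a_0 \mathbf{1} + \sum_{j=1}^r a_j X_j \pmod{p}$, where each $X_j \in \Z^N$ has entries in $[-n^{C_\ast'}, n^{C_\ast'}]$. The generators $a_j \in \Z/p\Z$ remain over $\Z/p\Z$, but the ``coefficient vectors'' $X_j$ are integer vectors with only $n^{O(N)}$ possibilities in total (for all $r \leq C_\ast'$ jointly).

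Next I would lift the almost-orthogonality to $\Z$. The conditions $(G_N \Bv)_i \equiv 0 \pmod{p}$ on the $N - C_\ast'$ good rows become a homogeneous system in $(a_0, \ldots, a_r) \in (\Z/p\Z)^{r+1}$ whose coefficient matrix is the integer matrix $M := [\,G_N \mathbf{1} \mid G_N X_1 \mid \cdots \mid G_N X_r\,]$ restricted to the good rows. Since $|G_{N,ij}| \leq O(n)$ for the Laplacian (and $O(1)$ otherwise) and $\|X_j\|_\infty \leq n^{C_\ast'}$, each entry of $M$ is an integer of absolute value $\leq n^{O(1)}$, so every $(r+1)\times(r+1)$ minor is bounded by $n^{O(r)}$, which is far below $p$. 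Therefore a nontrivial mod-$p$ solution forces all these minors to vanish in $\Z$; equivalently, there is a primitive integer tuple $(c_0, \ldots, c_r) \neq 0$ such that the nonzero integer vector $\Bv^* := c_0 \mathbf{1} + \sum_j c_j X_j$ satisfies $G_N \Bv^* = 0$ on an index set of size $\geq N - C_\ast' - n^\epsilon$, over $\Z$ (not just mod $p$). A Minkowski/shortest-vector argument lets me restrict to $|c_j| \leq n^{O(1)}$, so the number of such tuples per $(X_j)$ is $n^{O(1)}$, and I can absorb the $n^\epsilon$ exceptional coordinates of $\Bv$ into an enlarged ``bad'' row set (effectively replacing $C_\ast'$ by $C_\ast' + n^\epsilon$).

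For each fixed non-zero integer $\Bv^*$ produced this way, $\Bv^*$ inherits non-sparseness from $\Bv$ (for the cases where $\Bv^*$ is degenerate, I would separately rule them out by checking that $\Bv$ cannot have been non-sparse). Using the i.i.d.\ decomposition \eqref{eqn:i.i.d.decomposition:nL}/\eqref{eqn:i.i.d.decomposition:L} with a subset $I_0 \subset [N]$ of size $\Theta(N)$ on which $\Bv^*_{I_0} - w\mathbf{1}$ is non-sparse for every $w$, Theorem~\ref{theorem:LO} gives $\rho_{(L)}(\Bv^*_{I_0}) = O(N^{-1/2})$; since $1/p \ll 1/\sqrt{N}$, the probability bound per $\Bv^*$ is $\bigl(1/p + O(N^{-1/2})\bigr)^{N - |I_0| - C_\ast'} = n^{-\Omega(N)}$. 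A final union bound over the $n^{O(N)}$ choices of $(X_j)$, the $n^{O(1)}$ choices of $(c_j)$, the $\binom{N}{n^\epsilon}$ choices of exception support, and the $\binom{N}{C_\ast'}$ choices of bad row set yields the claimed bound $n^{-\Theta(n)}$.

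The hard part will be keeping the constant in the per-vector probability exponent strictly larger than the constant $C_\ast'$ showing up in the logarithmic count of $(X_j)$; this requires a careful quantitative application of Theorem~\ref{theorem:LO} (so that the $O(N^{-1/2})$ is genuinely $\leq n^{-1/2 + o(1)}$) and choosing $|I_0|$ small enough that $N - |I_0|$ dominates the $C_\ast' N$ arising from the bounded-entry count of the $X_j$. A second delicate point is verifying that, when $\Bv^*$ itself turns out to be sparse (which could happen for non-generic $(X_j, c_j)$), one may reduce to a lower-rank reformulation and iterate the argument, analogously to the ``structure propagation'' step in the proof of Lemma~\ref{lemma:strucnotstruc:sym}; this handles the degenerate sub-case without destroying the overall bound.
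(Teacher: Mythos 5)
There is a genuine gap, and it is in the final union bound, exactly at the point you flag as "the hard part." After your lifting step, the data you sum over — the integer coefficient vectors $X_1,\dots,X_r$ with entries bounded by $n^{C_\ast'}$ (equivalently, the structured vectors themselves, since the GAP has size $n^{C_\ast'}$ per coordinate) — has entropy at least $n^{C_\ast' N}$, where $C_\ast'$ is a \emph{large} constant handed to you by Lemma~\ref{lemma:fullstruct}, not a parameter you control. On the other side, the best per-vector probability your argument produces is $\bigl(1/p+O(N^{-1/2})\bigr)^{N-|I_0|-O(1)}$, and since the per-row anti-concentration from Theorem~\ref{theorem:LO} is only $O(N^{-1/2})$ and the exponent is at most $N$, this is never smaller than roughly $N^{-N/2}=n^{-(1/2+o(1))N}$. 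So the product is at least $n^{(C_\ast'-1/2-o(1))N}\to\infty$ whenever $C_\ast'>1/2$, which it certainly is; no choice of $|I_0|$ or more careful bookkeeping in Theorem~\ref{theorem:LO} can repair this, because you cannot push the per-vector exponent above $N/2$ powers of $n$ while the count sits at $C_\ast' N$ powers of $n$. (Your lifting to $\Z$ also does not reduce this entropy — and note that this kind of lifting is essentially the paper's Lemma~\ref{lemma:passing}, which is applied \emph{before} Proposition~\ref{prop:GAP:lap} is invoked, precisely to reduce all huge primes to a single prime $p\in[e^{n^{c/2}},e^{n^c}]$; the proposition itself is the single-prime workhorse and must be proved without that crutch.)

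The missing idea is a dichotomy/stratification that trades count against concentration, which is how the paper's Appendix~B proof goes. One partitions $[N]$ into $\Theta(\lambda^{-1})$ segments, sets $\rho^\ast(\Bv)=\min_i\rho_l(\Bv_{I_i})$, and splits: if $\rho^\ast\le n^{-3C_\ast'/c}$, then the row-by-row bound $(1/p+\rho^\ast)^{(1-\lambda)N}\le n^{-2.5C_\ast' n}$ (here $p\ge e^{n^{c/2}}$ makes $1/p$ negligible) beats even the trivial count $n^{2C_\ast' n}$ of all structured vectors; if instead $\rho^\ast$ lies in a dyadic-type range $[n^{-(i+1)\delta},n^{-i\delta}]$, then Theorem~\ref{theorem:ILO} applied segment-wise shows most entries of each $\Bv_{I_i}$ lie in a GAP of size about $\rho^{\ast-1}/n^{\eps/2}$ — much smaller than $n^{C_\ast'}$ — so the class has only about $(\rho^{\ast-1}n^{-\eps/2})^{n}$ members, and this count times $(1/p+\rho^\ast)^{(1-\lambda)n}$ is $\le n^{-\Theta(\eps)n}$ once $\delta$ and $\lambda$ are chosen small relative to $\eps$ and $c/C_\ast'$. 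In short: you must show that structured vectors with weak anti-concentration are correspondingly rare, rather than using the flat $O(N^{-1/2})$ bound for all of them; without that inverse-Littlewood--Offord counting step your approach cannot reach $n^{-\Theta(n)}$.
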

We refer the reader to Appendix \ref{section:GAP:lap} for a proof of this result. We also refer the reader to \cite[Lemma 7.6]{NgW} for a similar statement for non-symmetric matrices.

\subsection{Step 3: passing to characteristic zero and back}\label{sub:full} In what follows we let $W_{m}/p$ be the subspace generated by the columns (rows) of $G_m$ over $p$. We first need the following

\begin{lemma}[Lifting sparse normal vectors from $\Z/p\Z$ to $\R$] \cite[Lemma 6.5]{NgW}\label{lemma:passing:0} Let $T>0$ be fixed \footnote{In our application, as the matrix entries in all models under consideration are bounded by $O(1)$, we can let $T=1$, say, and assume $n$ to be sufficiently large.}. Let $k,l,n$ be positive integers with $l\le k$, and $M$ a $l\times k$ matrix with integer entries $|M_{ij}|\leq n^T$.
If $p$ is a prime larger than $e^{(k \log k)/2+k T \log n}$, then the rank of $M$ over $\Q$ is equal to the rank of $M/p$ over $\Z/p\Z$.  This has the following corollaries. 
\begin{enumerate}
\item \label{i:lindep} If $Z_1,\dots, Z_k \in \Z^{l}$ are vectors with entries $|Z_{ij}|\leq n^T$, and $Z_1/p,\dots, Z_k/p$ are linearly dependent in $(\Z/p\Z)^{l}$, then $Z_1,\dots, Z_k$ are also linearly dependent in $\Z^{l}$. 
\item \label{i:sparse} Let %
$Z_1,\dots, Z_{i}\in \Z^m$ be vectors with entries $|Z_{ij}|\leq n^T$ . If there is a non-zero vector $\Bw \in (\Z/p\Z)^m$ with at most $k$ non-zero entries that is normal to $Z_1/p,\dots, Z_{l}/p$, then there is a non-zero vector $\Bw' \in \Z^m$ with at most $k$ non-zero entries and normal to $Z_1,\dots, Z_{l}$.
\item \label{i:kernel} The kernel of the map $M: \Z^k \ra \Z^l$ surjects onto the kernel of the map
$M: (\Z/p\Z)^k \ra (\Z/p\Z)^l$.

\end{enumerate}
\end{lemma}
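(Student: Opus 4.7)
The plan is to reduce everything to Hadamard's bound on integer minors of $M$. For the main rank equality, the inequality $\rank(M/p)\leq\rank_\Q(M)$ is immediate, since any integer minor that vanishes also reduces to zero modulo $p$. For the reverse direction I would pick a nonzero $r\times r$ integer minor $D$ of $M$ witnessing $r=\rank_\Q(M)$ and apply Hadamard's inequality row-by-row to its $r\times r$ submatrix, using $r\leq l\leq k$ together with the bound $n^T$ on the entries, to obtain $|D|\leq r^{r/2}n^{rT}\leq k^{k/2}n^{kT}=e^{(k\log k)/2+kT\log n}<p$. Thus $D$ is a nonzero integer of absolute value less than $p$, so its reduction mod $p$ is nonzero and hence $\rank(M/p)\geq r$.

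Corollaries (1) and (2) would then follow by applying the main equality to an appropriate submatrix. For (1), I would stack the $Z_i$ as the columns of an integer matrix (transposing if needed so that the dimension hypothesis of the main lemma is met) with entries bounded by $n^T$; a dependence mod $p$ forces rank $<k$ over $\F_p$, hence rank $<k$ over $\Q$ by the main statement, and clearing denominators turns a rational dependence into an integer one. For (2), let $S:=\supp(\Bw)$ so $|S|\leq k$, and restrict the matrix whose rows are the $Z_j$ to the columns indexed by $S$, giving an integer matrix $M_S$ with entries at most $n^T$. Since the nonzero $\Bw|_S$ lies in $\ker(M_S/p)$, we have $\rank(M_S/p)<|S|\leq k$; the main equality (applied to $M_S$ or its transpose) then yields $\rank_\Q(M_S)<|S|$, producing a nonzero rational kernel vector for $M_S$. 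Clearing denominators and padding by zeros outside $S$ gives the desired $\Bw'\in\Z^m$ with $\supp(\Bw')\subseteq S$ that is normal to every $Z_i$.

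The main obstacle is corollary (3), which asks to lift individual kernel elements rather than merely match ranks. The approach I plan to use is Smith normal form: write $M=UDV$ with $U\in\GL_l(\Z)$, $V\in\GL_k(\Z)$, and $D$ the $l\times k$ diagonal integer matrix with positive entries $d_1\mid d_2\mid\cdots\mid d_r$ and zeros elsewhere, where $r=\rank_\Q(M)$. The key observation will be that $d_1d_2\cdots d_r$ equals the gcd of the $r\times r$ minors of $M$ and in particular divides the specific minor $D$ from the first paragraph, so each invariant factor satisfies $d_i\leq d_1\cdots d_r\leq|D|<p$. Consequently every $d_i$ is a unit in $\F_p$, which forces $\ker(D/p)=\{0\}^r\times\F_p^{k-r}$ to be precisely the coordinatewise mod-$p$ reduction of $\ker_\Z(D)=\{0\}^r\times\Z^{k-r}$. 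Given any $\bar{\Bx}\in\ker(M/p)$, the vector $V\bar{\Bx}$ lies in $\ker(D/p)$; lifting its free $k-r$ coordinates to $\Z$ arbitrarily produces some $\By\in\ker_\Z(D)$ with $\By\equiv V\bar{\Bx}\pmod{p}$, and then $\Bx:=V^{-1}\By\in\ker_\Z(M)$ reduces to $\bar{\Bx}$ modulo $p$, completing the surjection.
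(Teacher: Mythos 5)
Your proof is correct. Note that the paper itself does not prove this lemma: it is imported verbatim from \cite[Lemma 6.5]{NgW}, so there is no in-paper argument to match against; comparing with the cited proof, your treatment of the rank equality and of parts (1)--(2) (Hadamard's bound on an $r\times r$ minor, which has size at most $k$ and entries at most $n^T$, hence absolute value at most $k^{k/2}n^{kT}<p$; then restriction to the support for the sparse statement) is exactly the standard argument. For part (3) you take a slightly different, more explicit route: instead of the usual dimension count (the kernel of $M:\Z^k\to\Z^l$ is a saturated, hence direct-summand, sublattice of rank $k-r$, so its reduction is a $(k-r)$-dimensional subspace of $\ker(M/p)$, which has dimension exactly $k-r$ by the rank equality, forcing surjectivity), you use the Smith normal form $M=UDV$ and the fact that $d_1\cdots d_r$ divides a nonzero $r\times r$ minor of absolute value $<p$, so every invariant factor is a unit mod $p$ and kernel elements can be lifted coordinatewise. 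Both mechanisms rest on the same Hadamard bound; yours has the small advantage of producing the lift explicitly, at the cost of invoking Smith normal form. One presentational point: in (1) and (2), rather than ``transposing so the dimension hypothesis is met'' (which would replace $k$ by a possibly larger dimension in the stated bound), it is cleaner to observe that all minors relevant to the rank comparison have size at most $k$ (the number of vectors, resp.\ the sparsity bound $|S|\le k$, with $l\le k$), so the Hadamard estimate $k^{k/2}n^{kT}<p$ applies directly; as you use it, this is harmless because both dimensions of the matrices in question are at most $k$.
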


 We say a submodule of $\Z^n$ is \emph{admissible} if it is generated by vectors with coordinates at most $n^{T}$ in absolute value. Recall that a vector is structured (or fully structured) if it satisfies the hypothesis and conclusion of Lemma \ref{lemma:fullstruct}.

\begin{lemma}[Lifting and reducing structured vectors]\label{lemma:passing} Let $\eps$ be given sufficiently small (given $c,C,A, T$). Let $M_0$ be an admissible submodule of $\Z^n$, and $p$ be a prime $\geq e^{n^{c}}$.
Then $M_0$ has a structured almost normal vector in $\Z$ (that is all but $n'=n^\eps$ entries of it belongs to a symmetric GAP with integral generators which has size at most $n^{C_\ast'}$ and rank at most $C_\ast'$, where $\C_\ast'$ is allowed to depend on $\eps$ as in the conclusion of Lemma \ref{lemma:fullstruct}) if and only if $M_0/p$ has a structured almost normal vector (of the same parameters).
\end{lemma}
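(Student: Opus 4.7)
My plan is to encode the existence of a structured almost normal vector as a discrete combinatorial choice together with a linear-algebra condition on integer matrices of polynomial size, and then to transfer the latter between $\Z$ and $\Z/p\Z$ via Lemma~\ref{lemma:passing:0}. The ``if and only if'' statement then falls out by matching the two sides type-by-type.

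\emph{Step 1 (discrete type $\tau$).} First I would record the discrete part of a structured almost normal vector: the exceptional set $E\subset[n]$ with $|E|\le n^\eps$; the set $S$ of at most $C_\ast'$ generators of $M_0$ not required to annihilate $\Bv$; the rank $r\le C_\ast'$ and dimensions $M_1,\dots,M_r\le n^{C_\ast'}$ of the symmetric GAP; and integer coefficients $(x_{j,i})_{j\notin E,\,1\le i\le r}$ with $|x_{j,i}|\le M_i$ encoding which GAP element each non-exceptional entry $v_j$ is. For fixed $n$ there are finitely many such types $\tau$, and I will handle each type separately.

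\emph{Step 2 (linear system).} For a fixed $\tau$, the remaining data are the GAP generators $a_0,a_1,\dots,a_r$ and the exceptional values $(v_j)_{j\in E}$, living either in $\Z$ or $\Z/p\Z$. Writing
\[
\Bv \;=\; a_0\,\1_{[n]\setminus E} + \sum_{i=1}^r a_i\,\Bx_i + \sum_{j\in E} v_j\,\Be_j,
\]
where $\Bx_i$ has $x_{j,i}$ in coordinate $j\notin E$ and $0$ elsewhere, the orthogonality of $\Bv$ to each generator $\Br\notin S$ of $M_0$ is the linear equation
\[
\sum_{i=0}^{r} a_i\,(\Br\cdot \Bx_i) \;+\; \sum_{j\in E} v_j\,\Br_j \;=\; 0,
\]
with integer coefficients bounded by $n\cdot n^T\cdot n^{C_\ast'}=n^{O_{T,C_\ast'}(1)}$ (using that $M_0$ is admissible). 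Let $L_\tau$ be the matrix of this system. Adjoining to $L_\tau$ the $n$ additional equations $v_j=0$ ($j\in E$) and $a_0+\sum_{i=1}^r x_{j,i}\,a_i=0$ ($j\notin E$) gives the enlarged matrix $L_\tau^+$ which encodes the condition $\Bv=0$. A \emph{structured almost normal vector of type $\tau$ exists over a commutative ring $R$} precisely when the $R$-kernel of $L_\tau$ is not contained in the $R$-kernel of $L_\tau^+$, i.e.\ $\rk_R(L_\tau^+)>\rk_R(L_\tau)$.

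\emph{Step 3 (transfer and conclusion).} Both $L_\tau$ and $L_\tau^+$ are integer matrices with $O(n)$ rows, $k:=r+1+|E|\le C_\ast'+1+n^\eps$ columns, and entries of absolute value at most $n^{T'}$ for some $T'=O_{T,C_\ast'}(1)$. By Lemma~\ref{lemma:passing:0}, for any prime $p>\exp\!\big(\tfrac12 k\log k + k T'\log n\big)=\exp(O(n^\eps\log n))$ the ranks of $L_\tau$ and $L_\tau^+$ agree over $\Q$ and over $\Z/p\Z$. Choosing $\eps<c$ (permissible since $C_\ast'$ depends only on $\eps,c,C,A,T$), the threshold is $\le e^{n^c}$ for all sufficiently large $n$, so the hypothesis $p\ge e^{n^c}$ suffices. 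Hence the rank inequality $\rk(L_\tau^+)>\rk(L_\tau)$ holds over $\Z$ iff it holds over $\Z/p\Z$; taking the union over the finitely many types $\tau$ yields the ``if and only if'' of the lemma.

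\emph{Main obstacle.} The delicate point is packaging the notion of being ``structured'' (a mixture of GAP membership and a handful of arbitrary exceptional entries) as a genuine \emph{linear} condition in new unknowns $(a_i,v_j)$, so that the non-triviality of $\Bv$ itself becomes a clean linear rank inequality; once this is done, the transfer between $\Z$ and $\Z/p\Z$ is the standard Lemma~\ref{lemma:passing:0}. The only quantitative check is to verify $\exp(O(n^\eps\log n))\le e^{n^c}$, which is exactly where the gap between $\eps$ and $c$ (and hence the hypothesis $p\ge e^{n^c}$) is used.
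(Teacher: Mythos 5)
Your argument is correct in substance, but it takes a genuinely different route from the paper's. The paper treats the two directions asymmetrically: to lift from $\Z/p\Z$ to $\Z$ it rewrites the orthogonality relations as a kernel condition in the unknowns (GAP generators plus exceptional coordinates) for a tall integer matrix $M'$ with $k=r+n'$ columns and invokes the kernel-surjectivity part (3) of Lemma \ref{lemma:passing:0}; to reduce mod $p$ it must rule out that a given integral structured vector reduces to zero, which it does by a minimality/divide-by-$p$ descent using the coefficient matrix $M_{\Bx}$. You instead freeze the whole discrete ``type'' (exceptional set, excluded generators, GAP rank and dimensions, and the coefficient pattern $x_{j,i}$) and encode the existence of a nonzero solution of that type as the rank inequality $\rk(L_\tau^+)>\rk(L_\tau)$ between two integer matrices of width $O(n^\eps)$ with polynomially bounded entries; rank preservation for $p\ge e^{n^{c}}$ (the minor/Hadamard bound behind Lemma \ref{lemma:passing:0}, with the threshold governed by the smaller dimension, which is exactly how the paper itself applies that lemma to $M'$) then transfers both directions at once. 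In particular your formulation sidesteps the paper's minimality argument, because mod-$p$ existence is read off from the rank drop rather than from reducing a specific integral vector; the price is the enumeration over types, which is harmless since the statement is deterministic. Two small points to tighten: the GAP in the lemma is symmetric, so the constant term $a_0$ should be dropped (or counted as an extra generator, which perturbs the parameters); and the type should carry the volume bound $\prod_i (2M_i+1)\le n^{C_\ast'}$ (equivalently, work with proper presentations, as the GAPs produced via Theorem \ref{theorem:ILO} are), so that the lifted or reduced vector is structured with literally the same parameters — this is the same implicit normalization the paper uses when it bounds $|x_{jl}|\le |Q|$.
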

We note that this result is similar to \cite[Lemma 7.7]{NgW}, where almost normal vectors were replaced by normal vectors. A proof of this result is included in Appendix \ref{section:passing} for convenience.

We now complete our main result of the section.

\begin{proof}[Proof of Proposition \ref{prop:fullgrowth} for the Laplacian model (and hence also for the symmetric model)] Here we assume $N\ge c n$. Let $W_N$ be the submodule in $\Z$ generated by the columns of $G_N$. We let $\mathcal{S}'$ be the set of  submodules $H_N$ of $\Z^N$ such that for all primes $p>e^{n^c}$, the vector space $H_N/p$ has no structured almost normal vector $\Bw$. First, we will bound $\P(W_{N}\not\in \mathcal{S}')$.  By Lemma~\ref{lemma:passing}, for $p\geq e^{n^{c}}$, if $W_{N}/p$ has a structured almost normal vector, then $W_{N}$ has a structured almost normal vector, and then
$W_{N}/p'$ has a structured almost normal vector for every prime $p'$ with $e^{n^{c/2}} \leq p' < e^{n^c}$. So it suffices to bound the condition that $W_{N}/p$ has a structured almost normal vector for $p$ is a prime  $e^{n^{c/2}} \le p< e^{n^{c}}.$ 

We will include in our upper bound the probability that $W_{N}/p$ has a non-zero almost normal vector $\Bw$ with $|\supp(\Bw)|\leq c N$ for some prime $p<e^{n^{c}}$, which is at most 
$e^{-\Theta(n)}$ by Lemmas~\ref{lemma:sparse:1}.  Then, otherwise, by Proposition \ref{prop:GAP:lap}, it is of probability at most 
$n^{-\Theta(n)}$ that, for some prime $e^{n^{c/2}} \le p \le e^{n^{c}}$, the space $W_{N}/p$ has a non-sparse structured almost normal vector $\Bw$.  We conclude that, unconditional on all overwhelming events such as  $\CE_{mixing}, \CE_{non-sing}, \CE_{partial-full}$ we have 
$$\P(W_{N}\in \mathcal{S}')\geq 1-e^{-\Theta(n^c)}.$$
Let $H_N\in \mathcal{S}'$, then $H_N/p$ has no structured almost normal vectors for any $p\ge e^{n^c}$. Then by Lemma \ref{lemma:fullstruct} we must have
$$\P(\rank(G_{N+1}/p) =\min \{ \rank(G_N/p)+2, N+1\}) \ge 1- n^{-C},$$ 
completing the proof.
\end{proof}

\subsection{Outline for the skew-symmetric model} As this case is fairly simple compared to the symmetric and especially the Laplacian case, we just sketch the proof. Our initial setting is similar to the case of symmetric and Laplacian matrices, where we rely on Corollary \ref{cor:singularity}  for $A_{2n}$ and the following analog of Proposition \ref{prop:fullgrowth}.

\begin{proposition}\label{prop:fullgrowth:alt} There is an event  $\CE_{alt}$ in characteristic zero with probability $1-e^{-O(n^c)}$ such that under this event, for any primes $p\ge e^{n^c}$ in the watch list $\CB_{2N}$, and for all $c n \le 2N\le 2n-2$ we have
$$\rank(A_{2N+1}/p) =\rank(A_{2N}/p)+2$$
and
$$\rank(A_{2N+2}/p) =\rank(A_{2N+1}/p)+2.$$
\end{proposition}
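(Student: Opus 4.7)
The plan is to adapt the three-step strategy developed for the symmetric and Laplacian models (Subsections \ref{sub:partial}, \ref{sub:partial-full}, \ref{sub:full}), but to exploit two substantial simplifications available in the skew-symmetric setting. The first simplification is that every principal submatrix $A_N$ is itself skew-symmetric, so its rank over $\F_p$ is always even; hence the transition $\rank(A_N/p) \to \rank(A_{N+1}/p)$ is either $0$ or $+2$, never $+1$. In particular, the ``bad'' event that the rank fails to jump by $2$ is purely the linear event that the newly exposed column $X_{N+1}|_{J}$ lies in the column span of a chosen full-rank principal submatrix $A_{J\times J}/p$. No quadratic form arises (in contrast to \eqref{eqn:b_{ij}} in the symmetric case), so throughout the argument only the linear inverse theorem (Theorem \ref{theorem:ILO}) is needed, not the bilinear/quadratic versions of Section \ref{section:bilinear}. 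The second simplification is that the randomness of $X_{N+1}$ is unconditioned i.i.d.\ $\alpha$-balanced over all of $[N]$, not adapted to some $I_0$ as in the Laplacian case; this will let us bypass the partial-to-full structure propagation of Subsection \ref{sub:partial-full} entirely.

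First I would prove the skew-symmetric analog of Lemma \ref{lemma:non-zero}: assume $p \in \CB_{2N}$, $p \ge e^{n^c}$, and $\P\bigl(\rank(A_{2N+1}/p) = \rank(A_{2N}/p)+2\bigr) \le 1 - n^{-C}$, with randomness over the new column $X_{2N+1}$. Let $J \subset [2N]$ index a full-rank principal submatrix of $A_{2N}/p$ with $|J| = k = \rank(A_{2N}/p) \ge 2N - n^{1-c/2}$ (using $\CE_{non-sing}$ from Corollary \ref{cor:singularity} together with Hadamard's bound), and let $\Bv'$ be a nonzero normal vector to the column span of $A_{J\times(J\cup\{2N+1\})}/p$ in $\F_p^{k+1}$. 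The bad event forces $\Bv' \cdot (x_{j_1},\dots,x_{j_k},x_{2N+1}) = 0$ with probability $\ge n^{-C}$; applying Theorem \ref{theorem:ILO} with parameter $N' = n^{1-c/2}$ yields a symmetric GAP in $\F_p$ of rank $O_C(1)$ and size $n^{O_C(1)}$ containing all but $N'$ coordinates of $\Bv'$. Padding by zeros outside $J$ produces a fully structured (in the sense of Definition \ref{def:full}) non-sparse almost normal vector $\Bv$ of $A_{2N}$. Because the randomness is i.i.d.\ on all entries, the structure already covers $(1-o(1))$-fraction of the coordinates directly and no propagation step is needed. The second transition $2N+1 \to 2N+2$ is handled by the identical argument applied to $A_{2N+1}$.

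Finally, I would close the proof by invoking Lemma \ref{lemma:passing}: any structured almost normal vector of $W_{2N}/p$ for $p \ge e^{n^c}$ lifts to one over $\Z$, which then reduces to a structured almost normal vector of $W_{2N}/p'$ for every prime $e^{n^{c/2}} \le p' \le e^{n^c}$. Proposition \ref{prop:GAP:lap} (which goes through verbatim for the skew-symmetric model, since its proof in the appendix depends only on the $\alpha$-balanced i.i.d.\ upper-triangular entries) bounds the probability of a non-sparse such vector by $n^{-\Theta(n)}$, while Lemma \ref{lemma:sparse:1} rules out the sparse case with probability $1 - e^{-\Theta(n)}$. Taking the intersection with $\CE_{non-sing}$ and summing the failure probabilities over the $O(n)$ transitions and over the watch lists (of size $O(n^2)$) defines the desired event $\CE_{alt}$ of probability $1 - e^{-\Theta(n^c)}$. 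The main obstacle, as in the symmetric case, is purely quantitative bookkeeping: the slack $n^{-C}$ in Step~1 must be chosen large enough to absorb all union bounds, which is harmless because the implied constants in Theorem \ref{theorem:ILO} depend only on $C$ and $\eps$. Conceptually no new idea is required beyond those already deployed for the non-symmetric i.i.d.\ case in \cite{NgW}.
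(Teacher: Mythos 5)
Your overall plan coincides with the paper's: for the skew-symmetric model the rank increment is $0$ or $2$, the bad event is linear, so one only needs Theorem \ref{theorem:ILO} (no quadratic inverse theorem), the partial-to-full propagation of Subsection \ref{sub:partial-full} can be skipped, and the conclusion follows from the Step-3 machinery (Lemma \ref{lemma:passing}, Proposition \ref{prop:GAP:lap}, Lemma \ref{lemma:sparse:1}, union bounds over the $O(n)$ steps and the watch lists). However, your implementation of Step 1 has a genuine gap. You take $J$ with $A_{J\times J}/p$ nonsingular of size $k=\rank(A_{2N}/p)$, let $\Bv'$ be the normal vector in $\F_p^{k+1}$ coming from the $k\times(k+1)$ (or $(k+1)\times k$) matrix $A_{J\times (J\cup\{2N+1\})}$, and claim the bad event forces $\Bv'\cdot(x_{j_1},\dots,x_{j_k},x_{2N+1,2N+1})=0$, to which you then apply Theorem \ref{theorem:ILO}. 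In the skew-symmetric case this is degenerate: up to scaling $\Bv'=(A_{J\times J}^{-1}u_J,-1)$ where $u_J$ is the new column restricted to $J$, the new diagonal entry is $0$, and the dot product equals $\pm\,u_J^{T}A_{J\times J}^{-1}u_J$, which vanishes identically because $A_{J\times J}^{-1}$ is again skew-symmetric. So the event you feed into Theorem \ref{theorem:ILO} holds with probability $1$ regardless of structure; moreover $\Bv'$ is not a fixed vector but is built out of the very randomness $u_J$ being exposed, and the only genuinely fresh coordinate ($x_{2N+1,2N+1}$) is deterministic, so Theorem \ref{theorem:ILO} does not apply and no GAP structure on $\Bv'$ can be extracted. (Indeed none should be expected: for typical $u_J$ the vector $A_{J\times J}^{-1}u_J$ is unstructured.) The same degeneracy is exactly the reason the rank cannot jump by $1$ here, so you identified the simplification correctly but then built Step 1 on the object that the simplification kills.

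The repair is what the paper actually does. Since $p\in\CB_{2N}$ we have $\rank(A_{2N}/p)\le 2N-2$, so the column space $W_{2N}/p$ has corank at least $2$ and admits nonzero \emph{fixed} normal vectors $\Bw$ (the vectors $(a_{i1},\dots,a_{ik},0,\dots,-1,\dots,0)$ from \eqref{eqn:x,a}), determined by $A_{2N}$ alone. By the parity/skew-symmetry argument recorded before Proposition \ref{prop:rankevolution:alt}, the bad event $\rank(A_{2N+1}/p)\ne\rank(A_{2N}/p)+2$ is exactly the event $X_{2N+1}|_{[2N]}\in W_{2N}/p$, which implies $\Bw\cdot X_{2N+1}|_{[2N]}=0$ with the entries of $X_{2N+1}|_{[2N]}$ i.i.d.\ $\alpha$-balanced and independent of $\Bw$. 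If this has probability at least $n^{-C}$, Theorem \ref{theorem:ILO} applied to the fixed $\Bw$ yields a fully structured (non-sparse, by Lemma \ref{lemma:sparse:1}) normal vector of $W_{2N}$, and then your Step 3 (lifting via Lemma \ref{lemma:passing} to a single moderate prime, ruling out structured normal vectors there via Proposition \ref{prop:GAP:lap}) goes through as you describe; the same argument handles the transition $2N+1\to 2N+2$. Also note a related slip in your first paragraph: the bad event is that $X_{2N+1}|_{[2N]}$ lies in the span of the columns of $A_{2N}$ (equivalently of $A_{[2N]\times J}$), not in the column span of the nonsingular block $A_{J\times J}$, which is all of $\F_p^{J}$ and carries no information.
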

Using this we can conclude Proposition \ref{prop:large:alt} as follows.

\begin{proof}[Proof of Proposition \ref{prop:large:alt}] We will condition on the event $\CE_{alt}$ from Proposition \ref{prop:fullgrowth:alt}. We first show that 
$$\rank(A_{2n}/p) \ge 2n-2, \forall p \ge e^{n^c}.$$ 
Indeed, if $\rank(A_{2n}/p) \le 2n-4$, then $p \in \CB_{2n-2}$. Let $N_{\min} \le n-1$ be a smallest index where for all $N\in [N_{\min}, n-1]$ we have $p \in \CB_{2N}$. Because of Proposition \ref{prop:fullgrowth:alt}, we certainly have $n/2\le N_{\min}$. Now if $N_{\min} < n-1$, then we have learned that $p\notin \CB_{2 N_{\min}-1}$, and hence $A_{2 N_{\min}-1}/p$ has full rank but $A_{2 N_{\min}}/p$ has rank $2 N_{\min}-2$. In this case,  by Proposition \ref{prop:fullgrowth:alt} $A_{2 N_{\min}+2}$ has rank $2 N_{\min}+2$, so $p \notin \CB_{2(N_{\min}+1)}$, a contradiction. So we must have $N_{\min}=n-1$. This implies that $A_{2(n-2)}/p$ has full rank and $A_{2(n-1)}/p$ has rank $2n-4$. Another application of Proposition \ref{prop:fullgrowth:alt} then yields that $A_{2n}/p$ has rank $2n-2$, another contradiction.

Now we show that 
$$\rank(A_{2n+1}/p) \ge 2n, \forall p \ge e^{n^c}.$$
If $A_{2n}/p$ has rank $2n$ then there is nothing to prove. If $A_{2n}/p$ has rank $2n-2$, then $p \in \CB_{2n}$, and hence by Proposition \ref{prop:fullgrowth:alt} we have $A_{2n+1}/p$ has rank $2n$.
\end{proof}

In what follows we sketch the idea to prove Proposition \ref{prop:fullgrowth:alt}. Our method follows the arguments from the previous section (especially for the symmetric matrices case) without the need of Subsection \ref{sub:partial-full}. Indeed, assume that  
$$\P\Big(\rank(A_{N+1}/p) =\min\big\{\rank(A_N/p)+2, N+( N\mod 2)\big\}\Big) \le 1- n^{-C},$$ 
where $C$ is chosen sufficiently large (so after taking union bound over $O(n^2)$ primes from the watch lists we still have the above event with overwhelming probability.) Then by Lemma \ref{lemma:linear:lap} (applied to the skew-symmetric matrix case) and by Theorem \ref{theorem:ILO} there is an almost normal vector of $A_N$ (the vectors of $a_{ij}$ from \eqref{eqn:x,a}) which is fully structured. (We remark that here there is no need  to use Theorem \ref{theorem:ILO:quadratic} as in the Laplacian case because it suffices to consider $X_{N+1} \notin W_N$ (where $X_{N+1}\in \F_p^N$ is the last column of $A_{N+1}/p$ without the last entry, and for this Theorem \ref{theorem:ILO} suffices). Finally we then use the methods from Subsection \ref{sub:full} to show that these events over different $p$ can be simultaneously treated by working only on one $p$ in the range $e^{n^{c/2}} \le p \le e^{n^c}$, and to this end we use Proposition \ref{prop:GAP:lap}.

\section{Further remarks and  directions}\label{section:further}

Our treatment for Laplacian graphs can be extended to random Erd\H{o}s-R\'enyi graphs of other parameters. 

\begin{theorem}\label{theorem:lap:gen} Let $q>0$ be fixed. The results of Theorem \ref{theorem:cyclic:L_G} and Theorem \ref{theorem:prodcyc:lap} also hold for the Laplacian of $G(n,q)$. 
\end{theorem}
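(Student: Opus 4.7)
The plan is to rerun the three-regime argument of the paper with Bernoulli($q$) entries in place of Bernoulli($1/2$), verifying that each input survives with (at most) a change of absolute constants, and then observing that the limiting formulas are themselves independent of $q$. First, note that the off-diagonal entries $x_{ij}$ of the adjacency matrix of $G(n,q)$ are $\alpha$-balanced in the sense of \eqref{eqn:alpha} with $\alpha=\min(q,1-q)>0$ for every prime $p\geq 2$. Thus all $\alpha$-balanced inputs (Lemmas~\ref{lemma:O}, \ref{cor:O}, \ref{lemma:sparse:1}, \ref{lemma:quadratic}, Theorem~\ref{theorem:LO}, Lemmas~\ref{lem:deg:1}, \ref{lem:deg:2}, \ref{lemma:sparse:2}, etc.) apply verbatim, and Theorem~\ref{theorem:W:lap} for the local statistics of $(S_G)_P$ already holds for general fixed $q$ (as proved in~\cite{Wood2017}).

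Next I would adapt the Laplacian sampling of Definition~\ref{def:lap}. Phase~1 produces degrees $d_1,\dots,d_{n+1}$ which, by Chernoff, satisfy $|d_i-nq|\leq t\sqrt{n}$ simultaneously with probability $1-2n\exp(-\Theta_q(t^2))$; this replaces \eqref{eqn:degreesq}. In Phase~2, the key observation is that the ``swap'' step at level $N$ is still a fair coin flip: conditioned on a pair of edges $((v_j,v_{N+1}),(v_j,v_{N+2}))$ being mixed, the two configurations $(1,0)$ and $(0,1)$ occur with equal conditional probability $\tfrac12$, irrespective of $q$, so Fact~\ref{fact:EoM} goes through unchanged. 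The size of the mixed set $I_N$ now concentrates around $2q(1-q)N$ rather than $N/2$, so \eqref{eqn:concentration} is replaced by $\P(|I_k|\in (2q(1-q)k-t\sqrt{n},2q(1-q)k+t\sqrt{n}))\geq 1-n\exp(-\Theta_q(t^2))$. Since $2q(1-q)>0$ is a fixed positive constant, this plays exactly the same role as $N/2$ in Propositions~\ref{prop:structure:subexp:lap}, \ref{prop:moderate:lap}, and \ref{prop:large:lap}, with only absolute constants changing (and in particular the value of $\lambda$ in Proposition~\ref{prop:structure:subexp:lap} may be taken as any constant $\le 2q(1-q)$).

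With these replacements in hand, I would then rerun the proof of Proposition~\ref{prop:moderate:lap}: the concentration discrepancy analysis in Section~\ref{SS:ConcDisc}, the structural results of Section~\ref{section:normalvector}, and the rank-evolution computation of Section~\ref{section:rankevolving} all depend on $\alpha$-balancedness and on having a positive linear proportion of mixed pairs, both of which hold with $\alpha=\min(q,1-q)$ and proportion $2q(1-q)$. The output is that Theorem~\ref{theorem:rankstatistics:lap} and hence Proposition~\ref{prop:moderate:lap} hold for general fixed $q$, with constants now depending on $q$. For the large-prime regime (Proposition~\ref{prop:large:lap}), the argument of Section~\ref{section:largeprimes} uses only: (i) $\alpha$-balancedness, (ii) Chernoff control on degrees, and (iii) the Phase~2 reshuffling producing fair coin flips on mixed pairs; Lemmas~\ref{lemma:strucnotstruc} and~\ref{lemma:strucnotstruc:sym} go through with the substitution $d_i\in[(q-\eta)n,(q+\eta)n]$ in place of $d_i\in[(1/2-c)n,(1/2+c)n]$, and Definition~\ref{def:lap'} generalizes by replacing the ``good'' degree window accordingly. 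Finally, the passage from the main inputs to the theorems in Section~\ref{section:method} is purely a summation argument over primes and so is independent of $q$; combined with the fact that the local statistics of Theorem~\ref{theorem:W:lap} are $q$-independent, the limiting formulas in Theorems~\ref{theorem:cyclic:L_G} and~\ref{theorem:prodcyc:lap} come out unchanged.

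The main obstacle I anticipate is bookkeeping in Phase~2: one must verify that the ``structure propagation'' arguments of Lemmas~\ref{lemma:sparse:2}, \ref{lemma:strucnotstruc}, and \ref{lemma:strucnotstruc:sym} still extract enough fresh randomness when the mixed-pair proportion is $2q(1-q)$ rather than $1/2$. This is essentially a constant change, since $2q(1-q)>0$ is fixed, but one must re-tune the small parameters $\delta,\lambda_N,c$ so that the quantitative bounds (e.g.\ the $\rho_L$ estimates) continue to beat the union bound; the arguments of Section~\ref{section:normalvector} are written in a sufficiently flexible form that this re-tuning is mechanical rather than conceptual.
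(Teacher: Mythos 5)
Your proposal matches the paper's own argument essentially step for step: the same replacement of the degree and mixed-pair concentration bounds (degrees near $qn$, $|I_k|$ near $2q(1-q)k$), the same observation that the Phase~2 reshuffling is a fair coin flip independent of $q$, the same appeal to $\alpha$-balancedness with $\alpha=\min(q,1-q)$ for the moderate- and large-prime machinery, the $q$-generalized good-degree window in Definition~\ref{def:lap'}, and the $q$-independence of the local statistics from \cite{Wood2017}. No gaps beyond the constant re-tuning the paper itself leaves implicit.
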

We sketch the main ideas below, omitting the details. 
\begin{itemize}
\item First, the neighbor reshuffling process remains the same where we reshuffle the pairs of edge and non-edge in Definition \ref{def:lap} by a fair coin flip (hence the randomness created by reshuffling are i.i.d. Bernoulli taking values $0,1$ with probability 1/2). Here by Chernoff we just need to modify \eqref{eqn:degreesq} to $\P(\wedge_{i=1}^{n+1} qn - t\sqrt{n} \le d_i \le qn + t\sqrt{n}) \ge 1-n \exp(-\Theta_q(t^2))$ and \eqref{eqn:concentration} to $\P\big(\wedge_{k \ge c n} |I_k| \in (2q(1-q)k - t\sqrt{n}, 2q(1-q)k+ t\sqrt{n}\big) \ge 1- n\exp(-\Theta_q(t^2))$. We also need to modify \eqref{eqn:I_0} and \eqref{eqn:degreesq'} accordingly.
\vskip .1in
\item Next, for our treatment of small primes, by \cite[Theorem 1.1]{Wood2017}, Theorem~\ref{theorem:W:lap} extends to the Laplacian model of $G(n,q)$. 
\vskip .1in
\item For moderate primes, to establish Proposition~\ref{prop:moderate:lap} we will need Theorem~\ref{theorem:rankstatistics:lap}. For this, the main results of Sections \ref{section:lemmas} and \ref{section:normalvector}, Lemma~\ref{lemma:quadratic}, Proposition~\ref{prop:structure:subexp:lap}, Lemma~\ref{lemma:sparse:2}, and Proposition~\ref{prop:structure:subexp:lap'} are all valid in the more general setting of $G(n,q)$ with the modified reshuffling process. Here we note that Theorem~\ref{theorem:LO} and Theorem~\ref{theorem:Halasz} work under fairly general assumption of randomness (such as \eqref{eqn:alpha}).
\vskip .1in
\item Lastly, for large primes, to establish Proposition~\ref{prop:large:lap} we use the ``watch list" method of Section \ref{section:largeprimes}. Here, again we rely on the inverse results of Theorem~\ref{theorem:ILO} and \ref{theorem:ILO:quadratic} of Section \ref{section:bilinear}, as well as Lemma~\ref{lemma:strucnotstruc} by passing to the matrix model of Definition \ref{def:lap'}.  
\end{itemize} 
It is an interesting problem to extend our results to random sparse graphs and matrices. This is not impossible, especially when $q \ge n^{-1+\eps}$, but the proofs are expected to be extremely technical. It is also interesting to generalize our results to Laplacian of general random matrices. (In this paper we relied on the neighbor reshuffling process, which is rather specific to $\{0,1\}$ matrices.) Furthermore, it would be interesting to extend our results to adjacency matrices of random graphs of given degrees $d_i$ (say all of order $n$, for instance the model $\CG_\Bd$ from Def. \ref{def:lap'}). Some of our results, such as Corollary \ref{cor:strucnotstruc}, are applicable to this setting, but other ingredients such as Theorem~\ref{theorem:W:lap} \footnote{Heuristically one can hope to use the method of \cite{Me} for this, but the implementation seems highly non-trivial.} and Theorem~\ref{theorem:rankstatistics:lap} are largely missing.

Finally, there are other interesting global statistics that we do not access in this paper, such as the probability that the group has square-free order (or cube-free order in the even dimensional skew-symmetric case), it would be interesting to address these issues.

\section*{Acknowledgements} The first author is partially supported by NSF CAREER grant DMS-1752345. The second author is partially supported by a Packard Fellowship for Science and Engineering, NSF CAREER grant DMS-2052036, and NSF Waterman award DMS-2140043.

\appendix

\section{Proof of Theorem \ref{theorem:ILO:quadratic}}\label{appendix:quadratic}
Let $f(X)=\sum_{1\le i\le j\le N} b_{ij} x_i x_j$. We write 
\begin{align*}
|\P( f(X) = a) -\frac{1}{p}|&=\frac{1}{p} |\sum_{t \neq 0, \in \F_p} \E e_p(f(X)t) e_p(-at)| \le \frac{1}{p}\sum_{t \neq 0, \in \F_p} |\E e_p(f(X)t)|.
\end{align*}
For now we let $U \subset [N]$ and let $U^c=[N]\backslash U$. This set $U$ will be randomized at the end, but now it is deterministic. By using Cauchy Schwarz as in the proof of Lemma \ref{lemma:decoupling1}
\begin{align*}
 (\frac{1}{p}\sum_{t \neq 0, \in \F_p} |\E e_p(f(X)t)|)^4 &\le \frac{1}{p} \sum_{t \in \F_p, t\neq 0} |\E e_p(f(X)t)|^4\le \frac{1}{p} \sum_{t \in \F_p, t\neq 0} |\E_{X_{U}} \E_{X_{U^c}, X_{U^c}'}e_p( (f(X_{U},X_{U^c}) -f(X_{U}, X_{U^c}'))t)|^2\\
&= \frac{1}{p} \sum_{ t \in \F_p, t\neq 0}  \E_{X_{U^c}, X_{U^c}'} \E_{X_{U}, X_{U}'}e_p((f(X_{U},X_{U^c}) -f(X_{U}, X_{U^c}') - f(X_{U}', X_{U^c})+f(X_{U}', X_{U^c}'))t)\\
&= \frac{1}{p} \sum_{t \in \F_p, t\neq 0}  \E_{Y}e_p(\sum_{i\in U, j\in U^c} b_{ij}y_i y_j t)=\P(Y_{U} B Y_{U^c} = 0) - \frac{1}{p}=\P(Y_1 B_U Y_2 = 0) - \frac{1}{p},
\end{align*}
where $B_U$ is the matrix $B_U(ij) = b_{ij}$ iff $i\in U, j\in U^c$ and $B_U(ij)=0$ otherwise, and $Y_1,Y_2$ are independent with entries as i.i.d. copies of $\xi-\xi'$. 

If $\P( f(X) = a) \ge n^{-C}$ and if $p\ge n^{A}$ with sufficiently large $A$ (given $C$) then $|\P( f(X) = a) -1/p|\ge N^{-C}/2$, and hence $\P(Y_1 B_U Y_2 = 0)  \ge n^{-4C}/16 +1/p \ge N^{-4C}/16$. We can then apply Lemma \ref{theorem:ILO:bilinear} to this bilinear form to obtain

\begin{cor}\label{cor:quadratic:row} There exist a set $I_0(U)$ of size $O_{C,\ep}(1)$ and a set $I(U)$ of size at least $n-n^\ep$ such that for any $i\in I$, there are integers $0\neq k(U)$ and $k_{ii_0}(U), i_0\in I_0(U)$, all bounded by $n^{O_{C,\ep}(1)}$, such that 
$$\P_Y\big(\langle k(U)\row_{B_U}(i),Y \rangle + \langle \sum_{i_0\in I_0} k_{ii_0}(U) \row_{B_U}(i_0),Y \rangle = 0\big)= N^{-O_{C,\ep}(1)},$$
where $Y=(y_1,\dots,y_N)$ and $y_i$ are i.i.d. copies of $\xi-\xi'$.
\end{cor}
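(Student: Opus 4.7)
\textbf{Proof proposal for Corollary \ref{cor:quadratic:row}.} The plan is simply to apply Theorem~\ref{theorem:ILO:bilinear} to the bilinear form $Y_1^T B_U Y_2$, treating the randomness on $Y_1,Y_2$ as the two independent vectors in the hypothesis of that theorem. The decoupling computation just before the corollary has already given us the key input: under the assumption $\P(f(X)=a)\ge N^{-C}$ (and $p\ge N^A$ for $A$ sufficiently large given $C$), we have
\[
\P(Y_1^T B_U Y_2 = 0) \;\ge\; \tfrac{1}{16}N^{-4C}.
\]
Thus the bilinear concentration constant $\rho_b$ attached to the matrix $B_U$ and the random variable $\xi-\xi'$ is at least polynomially large in $N^{-1}$.

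First I would verify that $\xi-\xi'$ satisfies the $\alpha'$-balanced condition required by Theorem~\ref{theorem:ILO:bilinear}. By a one-line calculation, for any $r\in\F_p$,
\[
\P(\xi-\xi'=r) \;\le\; \P(\xi-\xi'=0) \;=\; \sum_{x}\P(\xi=x)^2 \;\le\; \max_x\P(\xi=x) \;\le\; 1-\alpha,
\]
so $\xi-\xi'$ is $\alpha$-balanced in $\F_p$ with the same constant $\alpha$ (up to an irrelevant adjustment). Consequently the hypotheses of Theorem~\ref{theorem:ILO:bilinear} are satisfied for the $N\times N$ matrix $B_U$ (whose $(i,j)$ entry is $b_{ij}$ for $i\in U$, $j\in U^c$, and $0$ otherwise) with $C$ replaced by $4C+o(1)$ and with $\xi$ replaced by $\xi-\xi'$.

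Applying Theorem~\ref{theorem:ILO:bilinear} directly produces: an integer $k=k(U)\ne 0$ with $|k|\le N^{O_{C,\ep}(1)}$, a set of $r=O_{C,\ep}(1)$ distinguished row indices (which I take to be $I_0(U)$), and an index set $I(U)\subset[N]$ of size at least $N-N^\ep$, so that for each $i\in I(U)$ one can find integers $k_{ii_0}(U)$, $i_0\in I_0(U)$, all bounded by $N^{O_{C,\ep}(1)}$, with
\[
\P_Y\!\left(\Big\langle k(U)\row_{B_U}(i)+\sum_{i_0\in I_0(U)}k_{ii_0}(U)\row_{B_U}(i_0),\,Y\Big\rangle=0\right)\;\ge\;N^{-O_{C,\ep}(1)},
\]
where $Y$ has i.i.d.\ $\xi-\xi'$ entries. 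This is exactly the claim of the corollary after regrouping the inner products.

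There is no real obstacle here; the work has been front-loaded into the Cauchy--Schwarz/decoupling step that turned the quadratic form $f(X)$ into a bilinear form $Y_1^TB_U Y_2$, and into the earlier proof of Theorem~\ref{theorem:ILO:bilinear}. The only mild bookkeeping is to track how the $O_{C,\ep}(1)$ exponents depend on $C$ after the fourth-power Cauchy--Schwarz inflation; since $4C$ is still a constant in $C$, the resulting exponents remain $O_{C,\ep}(1)$, as claimed.
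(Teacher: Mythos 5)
Your proposal is correct and follows essentially the same route as the paper: the decoupling step already yields $\P(Y_1 B_U Y_2=0)\ge N^{-4C}/16$, and the corollary is then an immediate application of Theorem~\ref{theorem:ILO:bilinear} to the matrix $B_U$ with the symmetrized variable $\xi-\xi'$ (whose $\alpha$-balancedness, which you verify explicitly, the paper leaves implicit, consistent with the computation in Section~\ref{SS:ConcDisc}).
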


Note that Corollary \ref{cor:quadratic:row} holds for all $U$. In what follows we gather the information together to obtain structures for the entire matrix $B$ (rather than for $B_U$). 

 As $I_0(U)\subset [N]^{O_{C,\ep}(1)}$ and $k(U)\le n$, there are only $N^{O_{C,\ep}(1)}$ possibilities that $(I_0(U),k(U))$ can take. Thus there exists a tuple $(I_0,k)$ such that 
$I_0(U)=I_0$ and $k(U)=k$ for $2^N/N^{O_{C,\ep}(1)}$ different $U$. Let us denote this set of $U$ by $\mathcal{U}$. Thus 
$$|\mathcal{U}|\ge 2^N/N^{O_{C,\ep}(1)}.$$
Next, let $I$ be the collection of $i$ which belong to at least $|\mathcal{U}|/2$ index sets $I_U$. Then we have
 \begin{align*}                         
|I||\mathcal{U}| + (N-|I|)|\mathcal{U}|/2 & \ge (N-N^\ep )|\mathcal{U}|\\
|I| &\ge  N-2N^\ep.
\end{align*}
Fix an $i\in I$. Consider the tuples $(k_{ii_0}(U), i_0\in I_0)$ where $i\in I_U$. Because there are only $N^{O_{C,\ep}(1)}$ possibilities such tuples can take, there must be a tuple, say $(k_{ii_0}, i_0\in I_0)$, such that $(k_{ii_0}(U), i_0\in I_0)=(k_{ii_0}, i_0\in I_0)$ for at least $|\mathcal{U}|/2N^{O_{C,\ep}(1)}=2^N/N^{O_{C,\ep}(1)}$ sets $U$. 

Because $|I_0|=O_{C,\ep}(1)$, there is a way to partition $I_0$ into $I_0' \cup I_0''$ such that there are $2^N/N^{O_{C,\ep}(1)}$ sets $U$ above satisfying that $I_0''\subset U$ and  $U\cap I_0'=\emptyset$. Let $\mathcal{U}_{I_0',I_0''}$ denote the collection of these $U$.

By passing to consider a subset of  $\mathcal{U}_{I_0',I_0''}$ if needed, we may assume that either $i\notin U$ or $i\in U$ for all $U\in  \mathcal{U}_{I_0',I_0''}$. Without loss of generality, we assume the first case that $i\notin U$. (The other case can be treated similarly).

Let $U\in \mathcal{U}_{I_0',I_0''}$ and $\Bu=(u_1,\dots,u_N)$ be its characteristic vector, that is $u_j=1$ if $j\in U$, and $u_j=0$ otherwise. Then, by the definition of $B_U$, and because $I_0''\subset U$ and $I_0'\cap U=\emptyset$, for $i_0'\in I_0'$ and $i_0''\in I_0''$ we can respectively write 
$$\langle \row_{i_0'}(B_U),Y \rangle = \sum_{j=1}^N a_{i_0'j}u_jy_j, \mbox{ and } \langle \row_{i_0''}(A_U),Y \rangle = \sum_{j=1}^N a_{i_0''j}(1-u_j)y_j.$$
Also, because $i\notin U$, we have $\langle \row_{i}(B_U),Y \rangle = \sum_{j=1}^N a_{ij}u_jy_j$. Thus, 
\begin{align*}
\langle k\row_i(B_U),Y \rangle + \sum_{i_0\in I_0} \langle k_{ii_0} \row_{i_0}(B_U),Y \rangle & = \langle k\row_i(B_U),Y \rangle +  \langle \sum_{i_0'\in I_0'} k_{ii_0'} \row_{i_0'}(B_U),Y \rangle + \langle \sum_{i_0''\in I_0''} k_{ii_0''} \row_{i_0''}(B_U),Y \rangle\\ 
&= \sum_{j=1}^N kb_{ij} u_jy_j + \sum_{j=1}^N \sum_{i_0'\in I_0'} k_{ii_0'} b_{i_0'j} u_jy_j +  \sum_{j=1}^N \sum_{i_0''\in I_0''} k_{ii_0''} b_{i_0''j} (1-u_j)y_j\\
&= \sum_{j=1}^n (kb_{ij} + \sum_{i_0'\in I_0'} k_{ii_0'} b_{i_0'j}- \sum_{i_0''\in I_0''} k_{ii_0''} b_{i_0''j} ) u_jy_j +  \sum_{j=1}^N \sum_{i_0''\in I_0''} k_{ii_0''} b_{i_0''j} y_j
\end{align*}
Next, by Corollary \ref{cor:quadratic:row}, for each $U\in \mathcal{U}_{I_0',I_0''}$ we have  $\P_Y\big (\langle k \row_i(B_U),Y \rangle + \sum_{i_0\in I_0} \langle k_{ii_0} \row_{i_0}(B_U),Y \rangle = 0\big)=N^{-O_{C,\ep}(1)}$. Also, note that $|\mathcal{U}_{I_0',I_0''}|= 2^N/N^{O_{C,\ep}(1)}$. Hence, 
$$\E_Y\E_U \big(k\langle \row_i(B_U),Y \rangle + \sum_{i_0\in I_0} \langle k_{ii_0} \row_{i_0}(B_U),Y \rangle =0\big) \ge n^{-O_{C,\ep}(1)}.$$
Finally, when $U$ runs through the subsets of $[N]$, we can view $\Bu$ as a random vector with i.i.d. $\{0,1\}$-Bernoulli with parameter 1/2.  By applying the Cauchy-Schwarz inequality, we obtain 
\begin{align*}
N^{-O_{C,\ep}(1)}&\le \left(\E_Y \E_U(k\langle \row_i(B_U),Y \rangle + \sum_{i_0\in I_0} \langle k_{ii_0} \row_{i_0}(B_U),Y \rangle =0)\right)^2 \\
&\le  \E_Y \left(\E_U(k\langle \row_i(B_U),Y \rangle + \sum_{i_0\in I_0} \langle k_{ii_0} \row_{i_0}(B_U),Y \rangle =0) \right)^2 \\
&= \E_Y \left(\E_{\Bu}(\sum_{j=1}^n (kb_{ij}+ \sum_{i_0'\in I_0'} k_{ii_0'} b_{i_0'j}-\sum_{i_0''\in I_0''} k_{ii_0''} b_{i_0''j}) u_jy_j+  \sum_{j=1}^n \sum_{i_0''\in I_0''} k_{ii_0''} b_{i_0''j} y_j= 0)\right)^2\\
&\le \E_Y \E_{\Bu,\Bu'}\big(\sum_{j=1}^n (k b_{ij}+\sum_{i_0'\in I_0'} k_{ii_0'} b_{i_0'j}-\sum_{i_0''\in I_0''} k_{ii_0''} b_{i_0''j}) (u_j-u_j')y_j= 0\big)\\
&= \E_Z \big(\sum_{j=1}^n (kb_{ij}+\sum_{i_0'\in I_0'} k_{ii_0'} b_{i_0'j}-\sum_{i_0''\in I_0''} k_{ii_0''} b_{i_0''j})z_j =0\big) 
\end{align*}
where $Z=(z_1,\dots, z_n)$ and $z_j:=(u_j-u_j')y_j$, and in the last inequality we used the simple observation that $\E_{u,u'}(f(u)=0,f(u')=0) \le \E_{u,u'}(f(u)-f(u')=0)$.

\section{Proof of Proposition \ref{prop:GAP:lap}} \label{section:GAP:lap}

First of all, by definition the number of structured vectors $\Bv$ is bounded by $n^{C_\ast' n} p^{n^\eps} \le n^{2C_\ast' n}$. Let $\la>0$ be a constant to be chosen sufficiently small later (for instance $\la \le \eps c /24C_\ast'$ would work). We divide $[N]$ into $\lfloor \la^{-1} \rfloor$ index intervals $I_i$ of length approximately $\la N$ each. Let $\rho_i = \rho_l(\Bv_{I_i})$, and let
$$\rho^\ast(\Bv) = \min_i \rho_i.$$
First assume that $\Bv$ is such that 
\begin{equation}\label{eqn:ast:large}
\rho^\ast(\Bv)\le n^{-3C_\ast' c^{-1}}.
\end{equation}
Then, supposing that $\rho^\ast$ is attained at $I_0$, by the decomposition from \eqref{eqn:i.i.d.decomposition:nL} we thus obtain that
$$\P(G_N \Bv = 0 \mbox{ in $n-O(1)$ coordinates}) \le n^{O(1)} (1/p + \rho^\ast)^{(1-\la)n} \le n^{-2.5 C_\ast' c^{-1} N} \le n^{-2.5 C_\ast' n}.$$
Here we used the assumption that $p$ is larger than $e^{n^{c/2}}$.
 
So the contribution of $\P(G_N \Bv = 0)$ over $\Bv$ satisfying \eqref{eqn:ast:large} is bounded by  $n^{2C_\ast' n}n^{-2.5 C_\ast' n}\le n^{-C_\ast' n/2}$.

Now we assume that 
$$n^{-3C_\ast' c^{-1}} \le \rho^\ast =O( n^{-1/2}),$$ 
where we note that $n^{-1/2}$ is the upper bound because our vectors are non-sparse. We divide this range into intervals $L_i$ of forms $[n^{-(i+1)\delta }, n^{-i\delta}]$ where $\delta$ is sufficiently small (such as $\delta\le \eps/4$) and consider the class $\CC_i$ where $\rho^\ast \in L_i$.

We claim that this class $\CC_i$ has at most $(n^{(i+1)\delta}/n^{\eps/2})^{n} p^{n^{\eps/2} \la^{-1}}$ vectors. Indeed this is because over each interval $I$ of length $\la n$ we have $\rho(\Bv_I) \ge \rho^\ast$. So by Theorem \ref{theorem:ILO}, we then have all but $n^{\eps}$ entries of $\Bv_I$ belong to a GAP of size $\rho^\ast/\sqrt{n^{\eps}}$. If we glue all the subvectors together we then obtain the bound as claimed, where $p^{\la^{-1}n^{\eps} }$ is the number of ways to choose the unstructured entries altogether. 

Now for each $\Bv \in \CC_i$, by using the decomposition from \eqref{eqn:i.i.d.decomposition:L} we have
$$\P(G_N \Bv = 0) \le (1/p + \rho^\ast)^{(1-\la)n} \le (2  n^{-i\delta})^{(1-\la)n},$$
where we again used the assumption that $p\ge e^{n^{c/2}}$. Taking union bound over $\Bv\in \CC_i$ we have
\begin{align*}
\P(\exists \Bv\in \CC_i, G_N \Bv = 0  \mbox{ in $n-O(1)$ coordinates})   & \le (n^{(i+1)\delta}/n^{\eps/2})^{n} p^{n^{\eps} \la^{-1}} (2  n^{-i\delta})^{(1-\la)n} \\
& \le 2^n p^{n^{\eps} \la^{-1}} n^{-(\eps/2 - \delta - i \delta \la)n }\\
&\le n^{-(\eps/8)n},
\end{align*}
provided that $\delta \le \eps/4$ and $\la \le \eps c/8C_\ast'$ (and so $i \delta \la \le 3C_\ast' c^{-1} \eps c/24C_\ast' =\eps/8$). Summing over all $i$ we thus obtain the claim.

\section{Proof of Lemma \ref{lemma:passing}}\label{section:passing} With room to spare, we use $O(1)$ to replace any quantity that might depend on $C,A,c,\eps, C_\ast, C_\ast'$. By restricting to the $n-O(1)$ columns that are orthogonal to $\Bw$, it suffices to consider the case that all column vectors generating $M_0$ are orthogonal to $\Bw$. In what follows $n'=n^\eps$.

We will first prove the ``if'' direction.
Assume that the first $m=n-n'$ entries of the normal vector $\Bw=(w_1,\dots, w_n)$ belong to a symmetric well-bounded GAP $Q$ with $r$ generators $a_1,\dots, a_r$ in $\Z/p\Z$, and $w_{j}  = \sum_{l=1}^r  x_{j l} a_l$ for $ 1\le j\le m$. 
Let $M$ be the matrix with entries at most $n^T$ in absolute value whose columns generate $M_0$.   Let $R_1,\dots, R_n$  be the rows of $M$. We have the equality modulo $p$
$$0 = \sum_{j=1}^m w_{j} R_{j}+   \sum_{j=m+1}^n w_{j} R_{j}  = \sum_{l=1}^r a_l (\sum_{j=1}^m x_{j l} R_{j}) +  \sum_{j=m+1}^n w_{j} R_{j} .$$
Now for $1\leq l\leq r$, let  $Z_l:= \sum_{j=1}^m x_{j l} R_{j}$.
We have $|x_{j l}|\leq |Q|\leq n^{O(1)}$.
  The entries of $Z_l$ are then bounded by $O(n^{T+O(1)})$, while the entries of $R_{{m+1}},\dots, R_{n}$ are bounded by $n^T$. 
Let $M'$ be the matrix whose columns are $Z_1,\dots,Z_r,R_{m+1},\dots R_n$.   
  The above identity then implies that $(a_1,\dots,a_r,w_{m+1},\dots w_n)^T$ is in the kernel of $M'$.
  Lemma \ref{lemma:passing:0} \eqref{i:kernel} applied to $M'$, with $k=r+n'$  implies that as long as $p\ge  e^{(k \log k)/2 + k(T+O(1)) \log n}$ (which is satisfied because $p\ge e^{n^{c}}$, and $n'=n^{\eps}$, and $n$ is sufficiently large), then
  there exist integers $a_l', w_j'$, reducing mod $p$ to $a_l, w_j$, for $1\leq l\leq r$ and $m+1\leq j\leq n$,
   such that
$$\sum_{k=1}^r a_l' Z_l + \sum_{j=m+1}^n w_j' R_{j}=0.$$
Let $\Bw'=(w_1',\dots,w_n')$ where $w_{j}'= \sum_{l=1}^r  x_{j l} a_l'$ for $1\le j\le m$. By definition the $w_{j}'$ for $1\le j\le m$ belong to the symmetric  GAP with generators $a'_l$ and with the same rank and dimensions as $Q$, and   $\Bw'$ is  normal to $M_0$.  Further $\Bw'$ is non-zero since it reduces to $w$ mod $p$.

The ``only if'' direction appears easier at first---if we start with a structured normal vector, we can reduce the generators of the GAP and the normal vector mod $p$ for any prime $p$.  However, the difficulty is that for general primes $p$ it is possible for the generators $a_l$ of the GAP to be not all $0$ mod $p$, but yet the resulting normal vector  $\Bw$ to be $0$ mod $p$.  
Given $M_0$, we choose $\Bw$ minimal (e.g. with $\sum_i |w_i|$ minimal) so that the first $m=n-n'$ entries (without loss of generality) of the normal vector $\Bw=(w_1,\dots, w_n)$ to $M_0$ belong to a symmetric well-bounded GAP $Q$ with $r$ generators $a_1,\dots, a_r$ in $\Z$, and $w_{j}  = \sum_{l=1}^r  x_{j l} a_l $ for  $1\le j\le m$ and $w$ is non-zero. 
 Let $M_\Bx$ be the $n\times (r+n')$ matrix with entries $x_{jl}$ in the first $m$ rows and $r$ columns, the $n'\times n'$ identity matrix in the last $n'$ rows and columns, and zeroes elsewhere.  
So for $\Ba:=(a_1,\dots,a_r,w_{m+1},\dots w_n)^T$, we have $M_\Bx \Ba=\Bw^T.$ 

Certainly by minimality of $\Bw$ at least some coordinate of $\Bw$  is not divisible by $p$ (else we could divide the $a_l$ and $w_j$ all by $p$ and produce a smaller structured normal $\Bw$).  
Suppose, for the sake of contradiction that all of the coordinates of $\Bw$ are divisible by $p$.
 The entries of $M_\Bx$ are bounded by $n^{O(1)}$,
so, as above, for $p\geq e^{n^{c}}$, by Lemma \ref{lemma:passing:0} \eqref{i:kernel} we have that $\ker M_\Bx|_{\Z^{r+n'}}$ surjects onto $\ker M_\Bx/p.$   So $\Ba/p$ is in the kernel of $M_\Bx/p$, and choose some lift $\Ba':=(a'_1,\dots,a'_r,w'_{m+1},\dots w'_n)^T\in\Z^n$ of $\Ba/p$ in the kernel of $M_\Bx$.  Then $\Ba-\Ba'\in p \Z^n$, and $M_\Bx(\frac{1}{p}(\Ba-\Ba'))=\frac{1}{p}\Bw$.  Note that $\frac{1}{p}\Bw$ is non-zero integral normal vector to $A$, and the equality $M_\Bx(\frac{1}{p}(\Ba-\Ba'))=\frac{1}{p}\Bw$ shows that all but $n'$ of the coordinates of 
$\frac{1}{p}\Bw$ belong to a symmetric well-bounded GAP with integral generators and the same rank and volume as $Q$, contradicting the minimality of $\Bw$.  Thus we conclude that $\Bw/p$ is non-zero and thus a structured normal vector of $M_0/p$ for the GAP $Q/p$.

\bibliography{MyLibrary.bib,Laplib.bib}

\newcommand{\etalchar}[1]{$^{#1}$}
\begin{thebibliography}{CMMM21}

\bibitem[AV12]{CC}
Carlos~A. Alfaro and Carlos~E. Valencia.
\newblock On the sandpile group of the cone of a graph.
\newblock {\em Linear Algebra Appl.}, 436(5):1154--1176, 2012.

\bibitem[BdlHN97]{Bacher}
Roland Bacher, Pierre de~la Harpe, and Tatiana Nagnibeda.
\newblock The lattice of integral flows and the lattice of integral cuts on a
  finite graph.
\newblock {\em Bull. Soc. Math. France}, 125(2):167--198, 1997.

\bibitem[Big99]{Big}
N.~L. Biggs.
\newblock Chip-firing and the critical group of a graph.
\newblock {\em J. Algebraic Combin.}, 9(1):25--45, 1999.

\bibitem[BKL{\etalchar{+}}15]{Bhargava2015b}
Manjul Bhargava, Daniel~M. Kane, Hendrik~W. Lenstra, Jr., Bjorn Poonen, and
  Eric Rains.
\newblock Modeling the distribution of ranks, {{Selmer}} groups, and
  {{Shafarevich}}-{{Tate}} groups of elliptic curves.
\newblock {\em Cambridge Journal of Mathematics}, 3(3):275--321, 2015.

\bibitem[BN09]{BN1}
Matthew Baker and Serguei Norine.
\newblock Harmonic morphisms and hyperelliptic graphs.
\newblock {\em Int. Math. Res. Not. IMRN}, (15):2914--2955, 2009.

\bibitem[BS10]{BSbook}
Zhidong Bai and Jack~W. Silverstein.
\newblock {\em Spectral analysis of large dimensional random matrices}.
\newblock Springer Series in Statistics. Springer, New York, second edition,
  2010.

\bibitem[Car54]{C}
L.~Carlitz.
\newblock Representations by quadratic forms in a finite field.
\newblock {\em Duke Math. J.}, 21:123--137, 1954.

\bibitem[CJMS21]{CJMS}
Marcelo Campos, Matthew Jenssen, Marcus Michelen, and Julian Sahasrabudhe.
\newblock The singularity probability of a random symmetric matrix is
  exponentially small, 2021.

\bibitem[CKL{\etalchar{+}}15]{Clancy2015}
Julien Clancy, Nathan Kaplan, Timothy Leake, Sam Payne, and Melanie~Matchett
  Wood.
\newblock On a {{Cohen}}\textendash{{Lenstra}} heuristic for {{Jacobians}} of
  random graphs.
\newblock {\em Journal of Algebraic Combinatorics}, pages 1--23, May 2015.

\bibitem[CLP15]{CLP}
Julien Clancy, Timothy Leake, and Sam Payne.
\newblock A note on {J}acobians, {T}utte polynomials, and two-variable zeta
  functions of graphs.
\newblock {\em Exp. Math.}, 24(1):1--7, 2015.

\bibitem[CMMM21]{CMMM}
Marcelo Campos, Let\'{\i}cia Mattos, Robert Morris, and Natasha Morrison.
\newblock On the singularity of random symmetric matrices.
\newblock {\em Duke Math. J.}, 170(5):881--907, 2021.

\bibitem[Coo17]{Cook}
Nicholas~A. Cook.
\newblock On the singularity of adjacency matrices for random regular digraphs.
\newblock {\em Probab. Theory Related Fields}, 167(1-2):143--200, 2017.

\bibitem[CTV06]{CTV}
Kevin~P. Costello, Terence Tao, and Van Vu.
\newblock Random symmetric matrices are almost surely nonsingular.
\newblock {\em Duke Math. J.}, 135(2):395--413, 2006.

\bibitem[Del01]{Delaunay2001}
Christophe Delaunay.
\newblock Heuristics on {{Tate}}-{{Shafarevitch Groups}} of {{Elliptic Curves
  Defined}} over {{$\mathbb{Q}$}}.
\newblock {\em Experimental Mathematics}, 10(2):191--196, 2001.

\bibitem[Dha90]{Dhar}
Deepak Dhar.
\newblock Self-organized critical state of sandpile automaton models.
\newblock {\em Phys. Rev. Lett.}, 64(14):1613--1616, 1990.

\bibitem[Ede88]{Edelman}
Alan Edelman.
\newblock Eigenvalues and condition numbers of random matrices.
\newblock {\em SIAM J. Matrix Anal. Appl.}, 9(4):543--560, 1988.

\bibitem[ESY10]{ESY}
L\'{a}szl\'{o} Erd\H{o}s, Benjamin Schlein, and Horng-Tzer Yau.
\newblock Wegner estimate and level repulsion for {W}igner random matrices.
\newblock {\em Int. Math. Res. Not. IMRN}, (3):436--479, 2010.

\bibitem[FG15]{FG}
Jason Fulman and Larry Goldstein.
\newblock Stein's method and the rank distribution of random matrices over
  finite fields.
\newblock {\em Ann. Probab.}, 43(3):1274--1314, 2015.

\bibitem[FJ19]{FJ}
Asaf Ferber and Vishesh Jain.
\newblock Singularity of random symmetric matrices---a combinatorial approach
  to improved bounds.
\newblock {\em Forum Math. Sigma}, 7:Paper No. e22, 29, 2019.

\bibitem[FJLS21]{FJLS}
Asaf Ferber, Vishesh Jain, Kyle Luh, and Wojciech Samotij.
\newblock On the counting problem in inverse {L}ittlewood-{O}fford theory.
\newblock {\em J. Lond. Math. Soc. (2)}, 103(4):1333--1362, 2021.

\bibitem[FJSS21]{FJSS}
Asaf Ferber, Vishesh Jain, Ashwin Sah, and Mehtaab Sawhney.
\newblock Random symmetric matrices: rank distribution and irreducibility of
  the characteristic polynomial, 2021.

\bibitem[FL16a]{FL}
Matthew Farrell and Lionel Levine.
\newblock Co{E}ulerian graphs.
\newblock {\em Proc. Amer. Math. Soc.}, 144(7):2847--2860, 2016.

\bibitem[FL16b]{FL2}
Matthew Farrell and Lionel Levine.
\newblock Multi-{E}ulerian tours of directed graphs.
\newblock {\em Electron. J. Combin.}, 23(2):Paper 2.21, 7, 2016.

\bibitem[GK19]{GK}
Darren Glass and Nathan Kaplan.
\newblock Chip-firing games and critical groups, 2019.

\bibitem[KKS95]{KKS}
Jeff Kahn, J\'{a}nos Koml\'{o}s, and Endre Szemer\'{e}di.
\newblock On the probability that a random {$\pm 1$}-matrix is singular.
\newblock {\em J. Amer. Math. Soc.}, 8(1):223--240, 1995.

\bibitem[KN22]{KNg}
Jake Koenig and Hoi Nguyen.
\newblock Rank of near uniform matrices.
\newblock {\em J. Comb.}, 13(3):397--436, 2022.

\bibitem[KNP21]{KNgP}
Jake Koenig, Hoi~H. Nguyen, and Amanda Pan.
\newblock A note on inverse results of random walks in abelian groups, 2021.

\bibitem[LLT{\etalchar{+}}21]{Tik}
Alexander~E. Litvak, Anna Lytova, Konstantin Tikhomirov, Nicole
  Tomczak-Jaegermann, and Pierre Youssef.
\newblock Circular law for sparse random regular digraphs.
\newblock {\em J. Eur. Math. Soc. (JEMS)}, 23(2):467--501, 2021.

\bibitem[LMN20]{LMNg}
Kyle Luh, Sean Meehan, and Hoi~H. Nguyen.
\newblock Some new results in random matrices over finite fields.
\newblock {\em Journal of the London Mathematical Society}, 103(4):1209–1252,
  Nov 2020.

\bibitem[Lor89]{Lo0}
Dino~J. Lorenzini.
\newblock Arithmetical graphs.
\newblock {\em Math. Ann.}, 285(3):481--501, 1989.

\bibitem[Lor91]{Lo1}
Dino~J. Lorenzini.
\newblock A finite group attached to the {L}aplacian of a graph.
\newblock {\em Discrete Math.}, 91(3):277--282, 1991.

\bibitem[Lor08]{Lorenzini2008}
Dino Lorenzini.
\newblock Smith normal form and {{Laplacians}}.
\newblock {\em Journal of Combinatorial Theory. Series B}, 98(6):1271--1300,
  2008.

\bibitem[M\'20]{Me}
Andr\'{a}s M\'{e}sz\'{a}ros.
\newblock The distribution of sandpile groups of random regular graphs.
\newblock {\em Trans. Amer. Math. Soc.}, 373(9):6529--6594, 2020.

\bibitem[Mac69]{Mac}
Jessie MacWilliams.
\newblock Orthogonal matrices over finite fields.
\newblock {\em Amer. Math. Monthly}, 76:152--164, 1969.

\bibitem[Mac15]{Macdonald2015}
I.~G. Macdonald.
\newblock {\em Symmetric Functions and {{Hall}} Polynomials}.
\newblock Oxford {{Classic Texts}} in the {{Physical Sciences}}. {The Clarendon
  Press, Oxford University Press, New York}, second edition, 2015.

\bibitem[Map13a]{M1}
Kenneth Maples.
\newblock Singularity of random matrices over finite fields, 2013.

\bibitem[Map13b]{Msym}
Kenneth Maples.
\newblock Symmetric random matrices over finite fields announcement, 2013.

\bibitem[McK81]{Mckay}
Brendan~D. McKay.
\newblock Subgraphs of random graphs with specified degrees.
\newblock {\em Congr. Numer.}, 33:213--223, 1981.

\bibitem[Meh67]{Mehta}
M.~L. Mehta.
\newblock {\em Random matrices and the statistical theory of energy levels}.
\newblock Academic Press, New York-London, 1967.

\bibitem[MR]{MR}
M~L Mehta and N~Rosenzweig.
\newblock Distribution laws for the roots of a random antisymmetric hermitian
  matrix.
\newblock {\em Nucl. Phys., A109: 449-56(1968).}

\bibitem[Ngu12]{Ng}
Hoi~H. Nguyen.
\newblock Inverse {L}ittlewood-{O}fford problems and the singularity of random
  symmetric matrices.
\newblock {\em Duke Math. J.}, 161(4):545--586, 2012.

\bibitem[Ngu18]{NgF}
Hoi~H. Nguyen.
\newblock Random matrices: overcrowding estimates for the spectrum.
\newblock {\em J. Funct. Anal.}, 275(8):2197--2224, 2018.

\bibitem[NP20]{NgP}
Hoi.~H. Nguyen and Elliot Paquette.
\newblock Surjectivity of near-square random matrices.
\newblock {\em Combin. Probab. Comput.}, 29(2):267--292, 2020.

\bibitem[NTV17]{NgTV}
Hoi Nguyen, Terence Tao, and Van Vu.
\newblock Random matrices: tail bounds for gaps between eigenvalues.
\newblock {\em Probab. Theory Related Fields}, 167(3-4):777--816, 2017.

\bibitem[NW22]{NgW}
Hoi~H. Nguyen and Melanie~Matchett Wood.
\newblock Random integral matrices: universality of surjectivity and the
  cokernel.
\newblock {\em Invent. Math.}, 228(1):1--76, 2022.

\bibitem[Pas72]{Pastur}
L.~A. Pastur.
\newblock The spectrum of random matrices.
\newblock {\em Teoret. Mat. Fiz.}, 10(1):102--112, 1972.

\bibitem[PR12]{Poonen2012}
Bjorn Poonen and Eric Rains.
\newblock Random maximal isotropic subspaces and {{Selmer}} groups.
\newblock {\em Journal of the American Mathematical Society}, 25(1):245--269,
  2012.

\bibitem[Rus90]{Rush}
Joseph~J. Rushanan.
\newblock Combinatorial applications of the {S}mith normal form.
\newblock In {\em Proceedings of the {T}wentieth {S}outheastern {C}onference on
  {C}ombinatorics, {G}raph {T}heory, and {C}omputing ({B}oca {R}aton, {FL},
  1989)}, volume~73, pages 249--254, 1990.

\bibitem[SS16]{SS}
Philippe Sosoe and Uzy Smilansky.
\newblock On the spectrum of random anti-symmetric and tournament matrices.
\newblock {\em Random Matrices Theory Appl.}, 5(3):1650010, 33, 2016.

\bibitem[TV07]{TV}
Terence Tao and Van Vu.
\newblock On the singularity probability of random {B}ernoulli matrices.
\newblock {\em J. Amer. Math. Soc.}, 20(3):603--628, 2007.

\bibitem[TV09]{TVinverse}
Terence Tao and Van~H. Vu.
\newblock Inverse {L}ittlewood-{O}fford theorems and the condition number of
  random discrete matrices.
\newblock {\em Ann. of Math. (2)}, 169(2):595--632, 2009.

\bibitem[TV10a]{TVcir}
Terence Tao and Van Vu.
\newblock Random matrices: universality of {ESD}s and the circular law.
\newblock {\em Ann. Probab.}, 38(5):2023--2065, 2010.
\newblock With an appendix by Manjunath Krishnapur.

\bibitem[TV10b]{TVbook}
Terence Tao and Van~H. Vu.
\newblock {\em Additive combinatorics}, volume 105 of {\em Cambridge Studies in
  Advanced Mathematics}.
\newblock Cambridge University Press, Cambridge, 2010.
\newblock Paperback edition [of MR2289012].

\bibitem[Ver14]{Ver}
Roman Vershynin.
\newblock Invertibility of symmetric random matrices.
\newblock {\em Random Structures Algorithms}, 44(2):135--182, 2014.

\bibitem[Wig58]{Wigner}
Eugene~P. Wigner.
\newblock On the distribution of the roots of certain symmetric matrices.
\newblock {\em Ann. of Math. (2)}, 67:325--327, 1958.

\bibitem[Woo17]{Wood2017}
Melanie Wood.
\newblock The distribution of sandpile groups of random graphs.
\newblock {\em Journal of the American Mathematical Society}, 30(4):915--958,
  2017.

\bibitem[Woo19]{W1}
Melanie~Matchett Wood.
\newblock Random integral matrices and the {C}ohen-{L}enstra heuristics.
\newblock {\em Amer. J. Math.}, 141(2):383--398, 2019.

\end{thebibliography}

\bibliographystyle{alpha}

\end{document}